\documentclass[11pt]{amsart}
\usepackage{a4wide,amsfonts}
\usepackage{amsthm}
\usepackage{amsmath}
\usepackage{mathrsfs} %mathscr{A}
\usepackage{amssymb}
\usepackage{amscd}
\usepackage{multirow}
\usepackage[all]{xy}

\usepackage{hyperref}

\numberwithin{equation}{section}
\usepackage{epstopdf}
\usepackage{enumerate}

\newtheorem{theorem}{Theorem}[section]
\newtheorem{lemma}[theorem]{Lemma}

\newtheorem{corollary}[theorem]{Corollary}
\newtheorem{proposition}[theorem]{Proposition}
\newtheorem{claim}[theorem]{Claim}

\theoremstyle{definition}
\newtheorem{definition}[theorem]{Definition}
\newtheorem{example}[theorem]{Example}
\newtheorem{notation}[theorem]{Notation}
\newtheorem{remark}[theorem]{Remark}

\newcommand{\dCF}{C_{{\downarrow}\mathsf F}}
\newcommand{\Cl}{\mathrm{Cl}}
\newcommand{\IR}{\mathbb R}
\newcommand{\IN}{\mathbb N}
\newcommand{\w}{\omega}
\newcommand{\K}{\mathcal K}
\newcommand{\F}{\mathcal F}
\newcommand{\Ra}{\Rightarrow}
\newcommand{\U}{\mathcal U}

\newcommand{\MOP}{\mathsf{MOP}}
\newcommand{\DMOP}{\mathsf{DMOP}}
\newcommand{\WDMOP}{\mathsf{WDMOP}}
\newcommand{\GKF}{\mathsf{G_{KF}}}
\newcommand{\GEN}{\mathsf{G_{EN}}}
\newcommand{\SGEN}{\dot{\mathsf G}_{\mathsf{EN}}}

\newcommand{\e}{\varepsilon}

\newcommand{\Lim}{\mathrm{Lim}}

\begin{document}

\title[Function spaces with the Fell hypograph topology]{Baire category properties of function spaces with the Fell hypograph topology}

%    Information for second author
\author{Leijie Wang}
\address{department of Mathematics, Shantou University, Shantou, Guangdong,
515063, PR China}
\email{16ljwang@stu.edu.cn}
\thanks{}

%    Information for first author
\author{Taras Banakh}
%    Address of record for the research reported here
\address{Jan Kochanowski University in Kielce and
  Ivan Franko National University in Lviv}
%    Current address
\curraddr{}
\email{t.o.banakh@gmail.com}
%    \thanks will become a 1st page footnote.
\thanks{}

%    General info
\subjclass[2000]{Primary 54C35; Secondary 54E52}

\date{}

\dedicatory{}

\keywords{Fell hypograph topology, compact-open topology, Moving Off Property, Baire space, meager space}

\begin{abstract} For a Tychonoff space $X$ and a subspace  $Y\subset\IR$, we study Baire category properties of the space $\dCF(X,Y)$ of continuous functions from $X$ to $Y$, endowed with the Fell hypograph topology. We characterize pairs $X,Y$ for which the function space $\dCF(X,Y)$ is $\infty$-meager, meager, Baire, Choquet, strong Choquet, (almost) complete-metrizable or (almost) Polish.
\end{abstract}

\maketitle

\tableofcontents

\section{Introduction and Main Results}

In this paper we study Baire category properties of function spaces $\dCF(X,Y)$ and answer a problem, posed by McCoy and Ntantu in \cite{McN}.

For a topological space $X$, the {\em Fell topology} on the space $\Cl(X)$ of all closed subsets of $X$ is generated by the subbase consisting of the sets $$U^-=\{F\in\Cl(X):F\cap U\ne\emptyset\}\mbox{ and }(X\setminus K)^+=\{F\in\Cl(X):F\subset X\setminus K\}$$where $U$ and $K$ run over open and compact sets in $X$, respectively.  The space $\Cl(X)$ endowed with the Fell topology is denoted by $\Cl_{\mathsf F}(X)$.

For a topological space $X$ and a subspace $Y\subset\IR$ of the real line,  let $C(X,Y)$ denote the set of continuous functions from $X$ to $Y$. Identifying each function $f\in C(X,Y)$ with its {\em hypograph} ${\downarrow}f:=\{(x,y)\in X\times \IR:y\le f(x)\}$, we identify $C(X,Y)$ with the subset $\{{\downarrow}f:f\in C(X,Y)\}$ of the hyperspace $\Cl_{\mathsf F}(X\times \IR)$. The topology  on the function space $C(X,Y)$, inherited from the hyperspace $\Cl_{\mathsf F}(X\times \IR)$, is called {\em the Fell hypograph topology}. Let $\dCF(X,Y)$ be the function space $C(X,Y)$ endowed with the Fell hypograph topology.

Repeating the argument of the proof of Lemma~2.1 in \cite{McN}, it can be shown that for any Hausdorff space $X$ and any subspace $Y\subset\IR$, the Fell hypograph topology on $C(X,Y)$ is generated by the subbase consisting of the sets $$
%\begin{aligned}
\lceil U;y\rfloor:=\{f\in C(X,Y):\sup f(U)>y\}\mbox{ and }
\lceil K;y\rceil:=\{f\in C(X,Y):\max f(K)<y\}
%\end{aligned}
$$where $U$ is a non-empty open set in $X$, $K$ is a non-empty compact subset of $X$, and $y\in \IR$.

This description implies that the Fell hypograph topology on $C(X,Y)$ is weaker than the compact-open topology, which is generated by the subbase consisting of the sets $$
[K;U]:=\{f\in C(X,Y):f(K)\subset U\}
%\begin{aligned}
%\lfloor K;a\rfloor:=\{f\in C(X,Y):\min K>a\}\mbox{ \ and \ }\lceil K;a\rceil:=\{f\in C(X,Y):\max K<a\},
%\end{aligned}
$$where $K$ is a compact set in $X$ and $U$ is an open set in $Y$.
The function space $C(X,Y)$ endowed with the compact-open topology will be denoted by $C_k(X,Y)$.

%If $Y=\IR$, then the function spaces $\dCF(X,Y)$ and $C_k(X,Y)$ are denoted by $\dCF(X)$ and $C_k(X)$, respectively.
Topological properties of the function spaces $\dCF(X,Y)$ were
studied in \cite{McN}, \cite{Yang1},
\cite{Yang2},
\cite{Yang3},
\cite{Yang4},
\cite{Yang5},
\cite{Yang6}.% In particular, in Theorem 2.7 \cite{McN} McCoy and Ntantu proved that for any Tychonoff space $X$ the function space $\dCF(X,\IR)$ is topologically homogeneous, which implies that $\dCF(X,\IR)$ is Baire if and only if $\dCF(X,\IR)$ is not meager.

In this paper we shall explore Baire category properties of the function spaces $\dCF(X,Y)$. Let us recall that a topological space $X$ is
\begin{itemize}
\item {\em Baire} if the intersection $\bigcap_{n\in\w}U_n$ of any sequence $(U_n)_{n\in\w}$ of open dense subsets of $X$ is dense in $X$;
\item {\em meager} if $X$ can be written as the countable union of (closed) nowhere dense subsets.
\end{itemize}
It is well-known \cite[8.1]{Ke} that a topological space $X$ is Baire if and only if each non-empty open subspace of $X$ is not meager, and similarly a topological space $X$ is meager if and only if each non-empty open subspace of $X$ is not Baire.

By the classical theorem of Oxtoby \cite{Oxtoby}, Baire spaces can be characterized as topological spaces $X$ in which the player $\mathsf E$ does not have a winning strategy in the Choquet game $\GEN(X)$. The game $\GEN(X)$ is played by two players, $\mathsf E$ and $\mathsf N$ (abbreviated from $\mathsf{Empty}$ and $\mathsf{Non}$-$\mathsf{Empty}$). The player $\mathsf E$ starts the game selecting a non-empty open set $U_1\subset X$. Then the player $\mathsf N$ responds selecting a non-empty open set $V_1\subset U_1$. At the $n$-th inning the player $\mathsf E$ selects a non-empty open set $U_n\subset V_{n-1}$ and player $\mathsf N$ responds selecting a non-empty open set $V_n\subset U_n$. At the end of the game the player $\mathsf E$ is declared the winner if the intersection $\bigcap_{n\in\IN}U_n=\bigcap_{n\in\IN}V_n$ is empty. Otherwise the player $\mathsf N$ wins the game.

We shall be also interested in a variation $\SGEN(X)$ of the Choquet game, called the strong Choquet game. This game is played by two players, $\mathsf E$ and $\mathsf N$. The player $\mathsf E$ starts the game selecting an open set $U_1\subset X$ and a point $x_1\in U_1$. Then the player $\mathsf N$ responds selecting an open neighborhood $V_1\subset U_1$ of $x_1$. At the $n$-th inning the player $\mathsf E$ selects an open set $U_n\subset V_{n-1}$ and a point $x_n\in U_n$ and player $\mathsf N$ responds selecting a neighborhood $V_n\subset U_n$ of $x_n$. At the end of the game the player $\mathsf E$ is declared the winner if the intersection $\bigcap_{n\in\IN}U_n=\bigcap_{n\in\IN}V_n$ is empty. Otherwise the player $\mathsf N$ wins the game.

A topological space $X$ is called
\begin{itemize}
\item {\em Choquet} if the player $\mathsf N$ has a winning strategy in the Choquet game $\GEN(X)$;
\item {\em strong Choquet} if the player $\mathsf N$ has a winning strategy in the strong Choquet game $\SGEN(X)$;
\item {\em Polish} if it is homeomorphic to a separable complete metric space;
\item {\em almost Polish} if it contains a dense Polish subspace;
\item {\em complete-metrizable} if it is homeomorphic to a complete metric space;
\item {\em almost complete-metrizable} if its contains a dense complete-metrizable subspace.
\end{itemize}
For every topological space we have the implications
$$\xymatrix{
\mbox{Polish}\ar@{=>}[d]\ar@{=>}[r]&\mbox{complete-metrizable}\ar@{=>}[d]\ar@{=>}[r]&\mbox{strong Choquet}\ar@{=>}[d]&\mbox{non-meager}\\
\mbox{almost Polish}\ar@{=>}[r]&\mbox{almost complete-metrizable}\ar@{=>}[r]&\mbox{Choquet}\ar@{=>}[r]&\mbox{Baire}\ar@{=>}[u]
}
$$
By \cite[8.16, 8.17]{Ke}, a metrizable separable space is
\begin{itemize}
\item  complete-metrizable if and only if it is strong Choquet;
\item almost complete-metrizable if and only if it is Choquet.
\end{itemize}

 %By (the proof of) Theorem 3.5 in \cite{McN},  for a Tychonoff space $X$ and a convex subset $Y\subset\IR$ containing more than one point, the function space $\dCF(X,Y)$ is Hausdorff if and only if $X$ is {\em almost locally compact} in the sense that $X$ contains a dense locally compact subspace. Observe that a regular space $X$ is almost locally compact if and only if the set $X'$ of points $x\in X$ at which $X$ is not locally compact is nowhere dense in $X$.
In \cite[5.2]{McN} McCoy and Ntantu proved that for a Tychonoff space $X$ the function space $\dCF(X,\IR)$ is complete-metrizable if and only if $\dCF(X,\IR)$ is Polish if and only if $X$ is countable and discrete.

In \cite[5.3]{McN} McCoy and Ntantu posed a problem of characterization of Tychonoff spaces $X$ for which the function space $\dCF(X,\IR)$ is Baire. In Corollary~\ref{c1} we shall prove that this happens if and only if the space $X$ is discrete if and only if the space $\dCF(X,\IR)$ is (strong) Choquet. Then we shall consider a more difficult problem of detecting Baire and (strong) Choquet spaces among function spaces $\dCF(X,Y)$ where $Y$ is a subset of the real line with $\inf Y\in Y$ and $X$ is a $Y$-separated space.

\begin{definition} Let $Y$ be a topological space. A topological space $X$ is defined to be {\em $Y$-separated} if for any distinct points $x_1,x_2\in X$ and any points $y_1,y_2\in Y$ there exists a continuous map $f:X\to Y$ such that $f(x_i)=y_i$ for every $i\in\{1,2\}$.
\end{definition}

A topological space $X$ is called
\begin{itemize}
\item {\em functionally Hausdorff} if it is $[0,1]$-separated;
\item {\em totally disconnected} if it is $\{0,1\}$-separated.
\end{itemize}

It is easy to see that
\begin{itemize}
\item for a connected subspace $Y\subset \IR$ containing more than one point, a topological space $X$ is $Y$-separated if and only if $X$ is functionally Hausdorff;
\item for a disconnected subspace $Y\subset\IR$ a topological space $X$ is $Y$-separated if and only if $X$ is totally disconnected.
\end{itemize}

\begin{theorem}\label{t:BY0} Let $Y\subset \IR$ be a subset with $\inf Y\notin Y$. For any $Y$-separated space $X$, the function space $\dCF(X,Y)$ is
\begin{enumerate}
\item[\textup{(1)}]  {\em Baire} if and only if  $X$ is discrete and the space $Y$ is Baire;
\item[\textup{(2)}]  {\em Choquet} if and only if the space $X$ is discrete and the space $Y$ is almost Polish;
\item[\textup{(3)}]  {\em strong Choquet} if and only if the space $X$ is discrete and the space $Y$ is Polish;
\item[\textup{(4)}] {\em almost complete-metrizable} if and only if $\dCF(X,Y)$ is {\em almost Polish} if and only if $X$ is countable and discrete and the space $Y$ is almost Polish;
\item[\textup{(5)}] {\em complete-metrizable} if and only if $\dCF(X,Y)$ is {\em Polish} if and only if $X$ is countable and discrete and the space $Y$ is Polish.
\end{enumerate}
\end{theorem}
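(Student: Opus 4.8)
The plan is to reduce the whole theorem to the power $Y^X$ together with one meagerness statement. The bridge is the elementary fact that \emph{for a discrete space $X$ one has $\dCF(X,Y)=Y^X$ with the Tychonoff product topology}: indeed $C(X,Y)=Y^X$, the subbasic sets $\lceil\{x\};c\rfloor=\{f:f(x)>c\}$ and $\lceil\{x\};c\rceil=\{f:f(x)<c\}$ already generate the product topology, and conversely $\lceil U;c\rfloor=\bigcup_{x\in U}\{f:f(x)>c\}$ and $\lceil K;c\rceil=\bigcap_{x\in K}\{f:f(x)<c\}$ are open in it. So once we know that each of the properties in (1)--(5) forces $X$ to be discrete, everything becomes a question about powers $Y^X$.

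\textbf{The heart of the matter: a non-isolated point makes $\dCF(X,Y)$ meager.} This single implication kills Baireness (hence all the other properties), so it gives the "only if" direction of $X$ being discrete in all five items at once. I would prove it as follows. Pick reals $t_n\searrow\inf Y$ with $t_n>\inf Y$; since $\inf Y\notin Y$, every $f\in C(X,Y)$ satisfies $f(x_0)>\inf Y$, whence $C(X,Y)=\bigcup_n\{f:f(x_0)\ge t_n\}$ is a countable union of closed sets. Thus it suffices to show that for each $t>\inf Y$ the open set $D_t:=\{f\in C(X,Y):f(x_0)<t\}$ is dense in $\dCF(X,Y)$, for then its closed complement is nowhere dense and $\dCF(X,Y)$ is meager.

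\textbf{Density of $D_t$ --- the main obstacle.} Given a nonempty basic open set $W=\bigcap_{i\le p}\lceil U_i;c_i\rfloor\cap\bigcap_{j\le q}\lceil K_j;d_j\rceil$ and $f_0\in W$, I choose for each $i$ a point $v_i\in U_i$ with $f_0(v_i)>c_i$ and, using continuity of $f_0$ at $x_0$ together with the fact that $x_0$ is not isolated, arrange $v_i\ne x_0$; I also pick $s\in Y$ below $t$ and below every $d_j$, which is possible because $\inf Y\notin Y$ forces each $d_j>\inf Y$. Now I build $g\in W\cap D_t$ as a local modification of $f_0$ near $x_0$, splitting into the two shapes permitted by $Y$-separatedness. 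If $Y$ is connected it is an interval and $X$ is functionally Hausdorff; choosing continuous $\varphi_i:X\to[0,1]$ with $\varphi_i(x_0)=0$, $\varphi_i(v_i)=1$ and setting $\varphi:=\max_i\varphi_i$, the function $g:=\varphi f_0+(1-\varphi)s$ is continuous, is a pointwise convex combination of two points of the interval $Y$ hence lands in $Y$, equals $f_0$ at each $v_i$ (so $g\in\lceil U_i;c_i\rfloor$), is $\le\max(f_0,s)<d_j$ on $K_j$, and has $g(x_0)=s<t$. If $Y$ is disconnected then $X$ is totally disconnected; choosing clopen $C_i\ni x_0$ with $v_i\notin C_i$, putting $C:=\bigcap_iC_i$ and $g:=s$ on $C$, $g:=f_0$ off $C$, the clopenness of $C$ gives continuity and the same verifications go through. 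In either case $W\cap D_t\ne\emptyset$. I expect this two-case construction (in particular getting $g$ to keep its values in a possibly disconnected $Y$ while being small near $x_0$) to be the technically delicate step.

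\textbf{Finishing in the discrete case, where $\dCF(X,Y)=Y^X$.} For (1): $Y$ is an open continuous image of $Y^X$, so $Y^X$ Baire $\Ra$ $Y$ Baire, and conversely, since $Y\subset\IR$ has a countable base, $Y$ Baire $\Ra$ $Y^X$ Baire by the classical theorem of Oxtoby on products of Baire spaces. For (2) and (3): arbitrary products of Choquet, resp. strong Choquet, spaces are again such --- run $\mathsf N$'s strategy separately in each coordinate, only finitely many coordinates being active at any inning --- and both properties pass to open continuous images by lifting $\mathsf E$'s moves to preimages and projecting $\mathsf N$'s responses; combined with the cited equivalences "Choquet $=$ almost Polish" and "strong Choquet $=$ Polish" for separable metrizable subspaces of $\IR$, this yields (2) and (3). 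For (4) and (5) the "if" parts are immediate ($Y^\omega$ is Polish, resp. has a dense Polish subspace, when $Y$ is), and for the "only if" parts one first gets $X$ discrete from the Baire case; then $X$ must be countable, because a dense completely metrizable $D\subset Y^X$ is \v Cech-complete, hence (being dense) $G_\delta$ in $Y^X$, hence contains a whole fibre $\{f:f|_S=d_0|_S\}\cong Y^{X\setminus S}$ over some countable $S\subset X$, and this fibre, being metrizable and containing a copy of $\{0,1\}^{X\setminus S}$, forces $X\setminus S$ and so $X$ to be countable; finally $Y\cong Y\times\{s\}^{\w}$ is a closed subspace of $Y^{\w}$, so $Y^{\w}$ Polish (resp. almost Polish) gives $Y$ Polish (resp. almost Polish).
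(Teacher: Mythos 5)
Your proposal follows essentially the same route as the paper: reduce everything to the identification $\dCF(X,Y)=Y^X$ for discrete $X$; show that a single non-isolated point forces $\dCF(X,Y)$ to be meager (the paper's Lemma~\ref{l:MY0X2}, which rests on the density statement of Lemma~\ref{l:d}); and finish the discrete case with Oxtoby's theorem, preservation of the (strong) Choquet property under products and open continuous images, and the metrizable-separable characterizations. Your direct two-case construction of a function in $W\cap\lceil\{x_0\};t\rceil$ (interpolation with a $[0,1]$-valued separating function when $Y$ is an interval, patching along a clopen set when $Y$ is disconnected) is a hands-on version of what the paper extracts from the extension Lemma~\ref{l:e} and the lattice operation of Lemma~\ref{l:min}; for the singleton $K=\{x_0\}$ needed here it is correct and arguably simpler, though it only yields meagerness rather than the $\infty$-meagerness the paper needs elsewhere. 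Your \v Cech-completeness/fibre argument for the countability of $X$ in items (4)--(5) is a valid alternative to the first-countability argument of Lemma~\ref{l:AP2}.

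One step is wrongly justified. You derive ``$Y^{\w}$ almost Polish $\Rightarrow$ $Y$ almost Polish'' from the claim that a closed subspace of an almost Polish space is almost Polish; that principle is false in general: for a Bernstein set $B\subset\IR$, the space $Z=(\IR\times(0,1])\cup(B\times\{0\})$ is almost Polish (the open dense piece $\IR\times(0,1]$ is Polish), yet its closed subspace $B\times\{0\}$ is not, since a dense Polish subspace of $B$ would be a dense $G_\delta$ in $\IR$ contained in $B$ and hence would contain a perfect set. The conclusion you need is nevertheless true and already available from your own treatment of item (2): the coordinate projection $Y^X\to Y$ is open, open continuous maps preserve the Choquet property, and a separable metrizable Choquet space is almost Polish --- this is exactly how the paper obtains the almost-Polishness of $Y$ (Lemma~\ref{l:C2}). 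Relatedly, in item (5) the implication ``complete-metrizable $\Rightarrow$ Polish'' requires the separability of $Y$, which you should get by combining the almost-Polishness of $Y$ (from the Choquet case) with its complete-metrizability as a genuinely closed subspace of $Y^X$, as in Lemma~\ref{l:P0}; your write-up glosses over this point.
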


The statements (1)--(5) of Theorem~\ref{t:BY0} are proved in Lemmas~\ref{l:Bnon}, \ref{l:C3}, \ref{l:sC3}, \ref{l:AP2}, \ref{l:P0}, respectively. Taking into account that the real line is a Polish space with $\inf \IR=-\infty\notin\IR$, we conclude that Theorem~\ref{t:BY0} implies the following characterization that  answers  Problem 5.3 \cite{McN} of McCoy and Ntantu.

\begin{corollary}\label{c1} For a Tychonoff space $X$, the following conditions are equivalent:
\begin{enumerate}
\item[\textup{(1)}] $\dCF(X,\IR)$ is Baire;
\item[\textup{(2)}] $\dCF(X,\IR)$ is Choquet;
\item[\textup{(3)}] $\dCF(X,\IR)$ is strong Choquet;
\item[\textup{(4)}] $X$ is discrete.
\end{enumerate}
\end{corollary}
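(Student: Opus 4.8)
The plan is to deduce Corollary~\ref{c1} directly from Theorem~\ref{t:BY0}, applied to the subspace $Y=\IR$ of the real line. The first thing I would check is that the hypotheses of Theorem~\ref{t:BY0} are met: we have $\inf\IR=-\infty\notin\IR$, so the theorem is applicable to every $\IR$-separated space. Since $\IR$ is a connected subspace of the real line containing more than one point, the remark preceding Theorem~\ref{t:BY0} tells us that a space $X$ is $\IR$-separated if and only if it is functionally Hausdorff; and every Tychonoff space is functionally Hausdorff. Hence, for a Tychonoff space $X$, all five items of Theorem~\ref{t:BY0} apply verbatim with $Y=\IR$.

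Next I would record the elementary facts that $\IR$ is a Polish space, and hence also almost Polish and Baire. Substituting these into items (1), (2), (3) of Theorem~\ref{t:BY0} makes the conditions on $Y$ automatically satisfied, so they collapse: $\dCF(X,\IR)$ is Baire if and only if $X$ is discrete; $\dCF(X,\IR)$ is Choquet if and only if $X$ is discrete; and $\dCF(X,\IR)$ is strong Choquet if and only if $X$ is discrete. This gives the equivalences (1)$\Leftrightarrow$(4), (2)$\Leftrightarrow$(4) and (3)$\Leftrightarrow$(4), whence all four conditions are equivalent.

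As a consistency check, the implications (3)$\Rightarrow$(2)$\Rightarrow$(1) are already contained in the general diagram strong Choquet $\Rightarrow$ Choquet $\Rightarrow$ Baire displayed in the introduction, so the only content-bearing direction is (1)$\Rightarrow$(3), that is, "Baire implies strong Choquet" for these particular function spaces; and that is supplied precisely by the fact that Theorem~\ref{t:BY0}(1) and Theorem~\ref{t:BY0}(3) share the same combinatorial characterization, namely discreteness of $X$. There is no real obstacle here beyond invoking Theorem~\ref{t:BY0}; the only point needing an explicit word is that the ``$Y$-separated'' hypothesis specializes to ``functionally Hausdorff'' for $Y=\IR$, which follows from the Tychonoff assumption.
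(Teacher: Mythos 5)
Your proposal is correct and is essentially the paper's own argument: the authors derive Corollary~\ref{c1} by applying Theorem~\ref{t:BY0} with $Y=\IR$, observing that $\inf\IR=-\infty\notin\IR$, that $\IR$ is Polish (hence almost Polish and Baire), and that a Tychonoff space is functionally Hausdorff and therefore $\IR$-separated. All the points you single out, including the specialization of the $Y$-separated hypothesis, are exactly the ones the paper relies on.
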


Now, we present a characterization of Baire and Choquet spaces among function spaces $\dCF(X,Y)$ where $Y$ is a subset of the real line with $\inf Y\in Y$.

This characterization involves the Discrete Moving Off Property and Winning Discrete Moving Off Properties (abbreviated by $\DMOP$ and $\WDMOP$), which were introduced and studied by the authors in \cite{BW}. The Discrete Moving Off Property is a modification of  $\MOP$, the Moving Off Property of Gruenhage and Ma \cite{GMa}.

A point $x$ of a topological space $X$ is called {\em isolated} if its singleton $\{x\}$ is clopen set in $X$ (which means that $\{x\}$ is closed-and-open in $X$).

\begin{notation}
For a topological space $X$ let
\begin{itemize}
\item $\dot X$ be the (open) set of all isolated points of $X$,
\item $X'$ be the (closed) set of non-isolated points in $X$,
\item $X'^\circ$ be the interior of the set $X'$ in $X$;
\item $\overline{X'^\circ}$ be the closure of the set $X'^\circ$ in $X$.
\end{itemize}
\end{notation}

A family $\mathcal F$ of subsets of a topological space $X$ is called
\begin{itemize}
\item {\em discrete} if each point $x\in X$ has a neighborhood $O_x\subset X$ that meets at most one set $F\in\F$;
\item a {\em moving off family} if for any compact subset $K\subset X$ there is a non-empty set $F\in\mathcal F$ with $F\cap K=\emptyset$.
\end{itemize}
It is clear that each discrete infinite family is moving off.

\begin{definition} A topological space $X$ is defined to have the {\em Discrete Moving Off Property} (abbreviated $\DMOP$) if any moving off family $\F$ of  finite subsets in $\dot X$ contains an infinite subfamily $\mathcal D\subset\mathcal F$, which is discrete in $X$.
\end{definition}

By \cite{BW},  a topological space $X$ has $\DMOP$ if and only if the player $\mathsf F$ does not have the winning strategy in the infinite game $\GKF(X)$, played by two players, $\mathsf{K}$ and $\mathsf{F}$ according to the following rules. The player $\mathsf{K}$ starts the game. At the $n$-th inning the player $\mathsf K$ chooses a compact subset $K_n\subset X$  and the player $\mathsf F$ responds by choosing a finite subset $F_n\subset \dot X\setminus K_n$. At the end of the game, the player $\mathsf K$ is declared the winner if the indexed family $(F_n)_{n\in\IN}$ is discrete in $X$ (which means that each point $x\in X$ has a neighborhood $O_x\subset X$ that meets at most one set $F_n$); otherwise the player $\mathsf F$ wins the game.

\begin{definition}A topological space $X$ is defined
\begin{itemize}
\item to have the {\em Winning Discrete Moving Off Property} (abbreviated $\WDMOP$) if the player $\mathsf K$ has a winning strategy in the game $\GKF(X)$;
\item to be a {\em $\dot\kappa$-space} if a subset $D\subset\dot X$ is closed in $X$ if and only if for every compact set $K\subset X$ the intersection $D\cap K$ is finite;
\item to be a {\em $\dot\kappa_\w$-space} if there exists a countable family $\K$ of compact subsets of $X$ such that $\dot X\subset\bigcup\K$ and a subset $D\subset\dot X$ is closed in $X$ if and only if for every  $K\in\K$ the intersection $D\cap K$ is finite.
\end{itemize}
\end{definition}
By \cite[6.2]{BW}, every $\dot \kappa_\w$-space has $\WDMOP$, and $\WDMOP$ implies $\DMOP$.

These properties have nice characterizations in terms of the Baire category properties of the function space $$C_k'(X,2)=\big\{f\in C_k(X,\{0,1\}):f(X')\subset\{0\}\big\}.$$

The following theorem was proved in \cite{BW}.

\begin{theorem} For a topological space $X$ the function space $C_k'(X,2)$ is
\begin{itemize}
\item discrete iff $\dot X\subset K$ for some compact set $K\subset X$;
\item complete-metrizable iff $X$ is a $\dot \kappa_\w$-space;
\item Polish iff  $X$ is a $\dot \kappa_\w$-space and the set $\dot X$ is countable;
\item Choquet  iff $X$ has $\WDMOP$;
\item Baire  iff $X$ has $\DMOP$;
\item meager  iff $X$ does not have $\DMOP$.
\end{itemize}
\end{theorem}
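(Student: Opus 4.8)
The plan is to work throughout with the concrete model of $C_k'(X,2)$ as the set $\mathcal D(X):=\{D\subseteq\dot X:D\text{ is closed in }X\}$, identifying $f$ with $f^{-1}(1)$; note that a subset $D\subseteq\dot X$ is automatically open, so ``$D$ closed in $X$'' is the same as ``$D$ clopen in $X$''. Repeating the computation behind the compact-open subbase, one checks that the sets $\langle D\rangle_K:=\{D'\in\mathcal D(X):D'\cap K=D\cap K\}$, with $K$ ranging over compact subsets of $X$, form a neighbourhood base at $D$; the verification uses that $D\cap K$ is finite, being a closed discrete subset of the compact set $K$, and that $K\setminus(D\cap K)$ is again compact because its removed points lie in $\dot X$. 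Two facts will be used constantly: (a) every finite subset of $\dot X$ belongs to $\mathcal D(X)$, and these finite sets are dense in $\mathcal D(X)$ (since $D\cap K\in\langle D\rangle_K$); and (b) $(\mathcal D(X),\triangle)$ is a topological group — it is a subgroup of the topological group $C_k(X,\{0,1\})$, where $\{0,1\}$ carries addition modulo $2$, because the symmetric difference of two clopen subsets of $\dot X$ is a clopen subset of $\dot X$ and the identity $(D_1'\triangle D_2')\cap K=(D_1'\cap K)\triangle(D_2'\cap K)$ makes $\triangle$ continuous. For item~(1): if $\dot X\subseteq K$ with $K$ compact then $\langle D\rangle_K=\{D\}$ for all $D$, so $\mathcal D(X)$ is discrete; conversely, if $\mathcal D(X)$ is discrete then $\langle\emptyset\rangle_K=\{\emptyset\}$ for some compact $K$, and since $\{x\}\in\mathcal D(X)$ for each $x\in\dot X$ this forces $\dot X\subseteq K$.

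For items~(2) and~(3) I would first treat $\dot\kappa_\w\Rightarrow$ complete-metrizable. Fix an increasing sequence $(K_n)_{n\in\w}$ of compacta with $\dot X\subseteq\bigcup_nK_n$ witnessing $\dot\kappa_\w$, and observe that then $K\cap\dot X\subseteq K_n$ for some $n$ whenever $K$ is compact — otherwise choosing $x_n\in(K\cap\dot X)\setminus K_n$ produces an infinite closed discrete subset of the compact set $K$, which is impossible. Hence $\rho(D,D'):=2^{-n}$, where $n$ is the least index with $D\cap K_n\neq D'\cap K_n$ (and $\rho(D,D')=0$ if $D=D'$), is a complete ultrametric compatible with the topology: its balls are exactly the sets $\langle D\rangle_{K_n}$, which are neighbourhood bases by the previous observation, and a $\rho$-Cauchy sequence stabilises on each $K_n$, so its limit set is the union of these eventual values, which lies in $\mathcal D(X)$ by the $\dot\kappa_\w$-property. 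For the converse of~(2), complete-metrizability gives first countability, hence a countable neighbourhood base $\{\langle\emptyset\rangle_{K_n}\}_n$ at $\emptyset$ with $(K_n)$ increasing; then $\dot X\subseteq\bigcup_nK_n$ (else $\{x\}$ would lie in every neighbourhood of $\emptyset$), and $X$ is a $\dot\kappa_\w$-space with witness $\{K_n\}$ — a closed $D\subseteq\dot X$ has each $D\cap K_n$ finite, and a non-closed $D\subseteq\dot X$ with all $D\cap K_n$ finite would be the missing limit of the sequence of its finite truncations, a sequence whose terms are eventually constant on each $K_n$, hence Cauchy in the uniformity of the topological group $\mathcal D(X)$, which is complete since $\mathcal D(X)$ is completely metrizable, so this sequence converges, necessarily to a closed set equal to $D$ — a contradiction. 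For~(3) I would add that $\mathcal D(X)$ is separable iff $\dot X$ is countable (``if'' by density of finite sets; ``only if'' because $\{\{x\}:x\in\dot X\}$ is a discrete subspace, which must be countable in a separable metrizable space), whence Polish $=$ complete-metrizable $+$ separable $\iff$ $\dot\kappa_\w$ together with $\dot X$ countable.

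For items~(4), (5), (6) the plan is to build a dictionary between the Choquet game $\GEN(\mathcal D(X))$ and the game $\GKF(X)$ under which $\mathsf N$ corresponds to $\mathsf K$ and $\mathsf E$ to $\mathsf F$. In a play of $\GEN(\mathcal D(X))$ one may, using~(a), assume that every move is a basic set centred at a finite set, that $\mathsf N$ plays $\langle D_n\rangle_{K_n}$ with $D_n\subseteq K_n$ finite and $(K_n)$ increasing; the nesting of the moves then forces $D_n\subseteq D_{n+1}$ and $D_{n+1}\cap K_n=D_n$, so the ``increments'' $F_1:=D_1$ and $F_{n+1}:=D_{n+1}\setminus D_n=D_{n+1}\setminus K_n$ are pairwise disjoint finite subsets of $\dot X$ with $F_{n+1}\cap K_n=\emptyset$ — precisely the data of a $\GKF(X)$-play in which $\mathsf K$ supplies the $K_n$ and $\mathsf F$ supplies the increments. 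The key lemma I must prove is
\[\textstyle\bigcap_n\langle D_n\rangle_{K_n}\neq\emptyset\iff\bigcup_nD_n\in\mathcal D(X)\iff (F_n)_n\text{ is discrete in }X,\]
where the last equivalence is the elementary fact that a pairwise disjoint family of finite subsets of $\dot X$ is discrete in $X$ exactly when its union is closed in $X$ (a subset of a closed discrete set is closed; conversely a discrete family of closed sets has closed union), and the first equivalence follows because any $D^\ast$ in the intersection contains $\bigcup_nD_n$, agrees with it on $\bigcup_nK_n$, and hence — being closed — witnesses closedness of $\bigcup_nD_n$ and of $\bigcup_nF_n\subseteq D^\ast$.

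With this lemma in hand I would transfer winning strategies in both games and both directions: a winning strategy of $\mathsf K$ in $\GKF(X)$ becomes one of $\mathsf N$ in $\GEN(\mathcal D(X))$ by running the $\GKF$-game internally, reading $\mathsf F$'s increments off $\mathsf E$'s moves and playing $\langle D_n\rangle_{K_n}$ with $K_n$ enlarged to both refine $\mathsf E$'s last move and contain $\mathsf K$'s honest response; conversely a winning strategy of $\mathsf N$ yields one of $\mathsf K$, the mismatch in turn order being absorbed by letting $\mathsf E$ open with the dummy move $\mathcal D(X)$, whose $\mathsf N$-response supplies $\mathsf K$ its first compactum. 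The very same translation, with $\mathsf E$ and $\mathsf F$ in the roles of $\mathsf N$ and $\mathsf K$, shows that $\mathsf E$ has a winning strategy in $\GEN(\mathcal D(X))$ iff $\mathsf F$ has one in $\GKF(X)$. Now~(4) follows: $\mathcal D(X)$ is Choquet iff $\mathsf N$ has a winning strategy in $\GEN$ iff $\mathsf K$ has one in $\GKF$ iff $X$ has $\WDMOP$. And~(5) follows from Oxtoby's theorem \cite{Oxtoby} together with the quoted characterization of $\DMOP$: $\mathcal D(X)$ is Baire iff $\mathsf E$ has no winning strategy in $\GEN$ iff $\mathsf F$ has no winning strategy in $\GKF$ iff $X$ has $\DMOP$. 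Finally, since $\mathcal D(X)$ is a topological group it is either meager or Baire — a non-empty open meager subset would, by homogeneity and the Banach category theorem, make the whole group meager — so by~(5), $\mathcal D(X)$ is meager iff it is not Baire iff $X$ fails $\DMOP$, which is~(6). The step I expect to be the main obstacle is this game translation: checking that the increment sequence $(F_n)$ is a legal $\GKF(X)$-play under arbitrary opponent moves, that enlarging the compacta leaves the simulated strategies winning, and — most delicately — the equivalence ``$\bigcap_n\langle D_n\rangle_{K_n}\neq\emptyset\iff(F_n)$ is discrete'', which is exactly where both the density of finite subsets of $\dot X$ (so that the $D_n$ may be taken finite) and the elementary topology of closed discrete subsets of $\dot X$ are essential.
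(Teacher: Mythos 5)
The paper itself contains no proof of this theorem: it is imported verbatim from the companion paper \cite{BW}, and everything in the present article uses it only as a black box (together with its $C_k'(X,Y)$-generalization, Theorem~\ref{t:BW}, also cited from \cite{BW}). So there is no in-paper argument to compare against, and your proposal has to be judged on its own terms. It is correct, and it is a genuinely self-contained route for the special case $Y=\{0,1\}$: you replace the function space by the hyperspace $\mathcal D(X)$ of closed subsets of $\dot X$, observe that the sets $\langle D\rangle_K=\{D':D'\cap K=D\cap K\}$ form a neighbourhood base (this is where finiteness of $D\cap K$ and compactness of $K\setminus(D\cap K)$ enter, both correctly justified), and exploit two structural facts the general $C_k'(X,Y)$ theory does not have: density of the finite sets, and the topological group structure under symmetric difference. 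The group structure buys you item (6) for free (a topological group is meager or Baire by translation homogeneity plus the Banach category theorem), and it also powers the converse half of (2). The game dictionary is the right one: your key lemma, that for nested basic sets centred at finite $D_n\subset K_n$ one has $\bigcap_n\langle D_n\rangle_{K_n}\ne\emptyset$ iff $\bigcup_nD_n$ is closed iff the increment family $(D_{n+1}\setminus D_n)_n$ is discrete, is true exactly as stated (the only facts needed are that a subset of a closed subset of $\dot X$ is closed, and that a pairwise disjoint family of finite subsets of $\dot X$ with closed union is discrete), and I checked that all four strategy transfers ($\mathsf N\leftrightarrow\mathsf K$ and $\mathsf E\leftrightarrow\mathsf F$, in both directions) can be carried out along the lines you indicate, with the turn-order mismatch absorbed exactly as you say. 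Combined with Oxtoby's theorem, the definition of $\WDMOP$, and the quoted game characterization of $\DMOP$, this gives (4), (5), (6); items (1)--(3) are elementary in your model.

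Two places deserve an explicit word of justification rather than an appeal to ``the uniformity of the topological group''. First, in the converse of (2) you need that a completely metrizable abelian metrizable group is complete in its (invariant) group uniformity; this is a standard but nontrivial fact (Raikov completeness of completely metrizable groups), and it is cleanest to note that your ultrametric $\rho$ built from the $K_n$ coming from first countability at $\emptyset$ is translation-invariant and compatible (compatibility at every point follows from compatibility at $\emptyset$ by translation), so that complete metrizability forces $\rho$-completeness, after which the truncation argument closes the case. Second, in the strategy transfers you should record explicitly that each basic set $\langle D\rangle_C$ may be shrunk to $\langle D\rangle_{C'}$ for any larger compact $C'\supset C\cup D$ while remaining non-empty (it contains the finite set $D$) and contained in the previous move; this is the step that makes $\mathsf F$'s increments legal ($F_{n}=D_{n+1}\setminus D_n=D_{n+1}\setminus C_n$ avoids $K_n\subset C_n$) and is exactly the bookkeeping you flagged as the delicate point. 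With those two remarks written out, the argument is complete.
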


Also we need the notion of a $Y$-separable space, which is defined with the help of the $Y$-topology.

For topological spaces $X,Y$, the {\em $Y$-topology} on $X$ is the weakest topology in which all maps $f\in C(X,Y)$ remain continuous. This topology is generated by the subbase consisting of the sets $f^{-1}(U)$ where $f\in C(X,Y)$ and $U$ is an open set in $Y$. Observe that a topological space $X$ is Tychonoff (and zero-dimensional)  if and only if its topology coincides with the $\IR$-topology (and with the $\{0,1\}$-topology).

For a subset $A$ of a topological space $X$ by $\overline{A}^Y$ we denote the closure of $A$ in the $Y$-topology of $X$ and call $\overline{A}^Y$ the {\em $Y$-closure} of $A$. It is clear that the closure $\overline{A}$ of any set $A\subset X$ is contained in its $Y$-closure $\overline{A}^Y$.

\begin{definition} A topological space $X$ is defined to be {\em $Y$-separable} if $X$  contains a meager $\sigma$-compact subset $M\subset X$ such that $X'= \overline{M}^Y$.
\end{definition}

Observe that a topological space $X$ is $Y$-separable if its set $X'$ of non-isolated points is separable (in the standard sense). In Lemma~\ref{l:P1}(2) we shall prove that  a $Y$-separated topological space $X$ is $Y$-separable if the function space $\dCF(X,Y)$ has a countable network.
\smallskip

A topological space $X$ is called {\em Polish+meager} if it contains a Polish subspace $P\subset X$ whose complement $X\setminus P$ is meager in $X$.
It is easy to see that a Polish+meager space is Baire if and only if it is almost Polish.
It is known \cite[8.23]{Ke} that each Borel subset of a Polish space is Polish+meager. A subset $A$ of a topological space $X$ is {\em sequentially closed} if $A$ contains the limit point of any sequence $\{a_n\}_{n\in\w}\subset A$ that converges in $X$.

\begin{theorem}\label{t:main}  Let $Y\subset\IR$ be a Polish+meager subspace  with $\inf Y\in Y\ne\{\inf Y\}$. For a $Y$-separable $Y$-separated space $X$, the function space $\dCF(X,Y)$ is
\begin{enumerate}
\item[\textup{(1)}]  Baire if and only if the space $Y$ is Baire, the set $\dot X$ is dense in $X$, and the space $X$ has $\DMOP$;
\item[\textup{(2)}]  Choquet if and only if the space $Y$ is almost Polish, the set $\dot X$ is dense in $X$, and the space $X$ has $\WDMOP$;
\item[\textup{(3)}]  strong Choquet if (and only if) the space $Y$ is Polish and the set $\dot X$ is (sequentially) closed in $X$;
\item[\textup{(4)}]  almost complete-metrizable if and only if the space $Y$ is almost Polish  and $X$ is a $\dot\kappa_\w$-space with dense set $\dot X$ of isolated points;
\item[\textup{(5)}]  almost Polish if and only if the space $Y$ is almost Polish and $X$ is a $\dot\kappa_\w$-space with countable dense set $\dot X$ of isolated points.
\item[\textup{(6)}]  complete-metrizable if and only if $\dCF(X,Y)$ is Polish if and only if the space $Y$ is Polish and $X$ is countable and discrete.
\end{enumerate}
\end{theorem}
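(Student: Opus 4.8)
The plan is to express each of the six properties of $\dCF(X,Y)$ in terms of the corresponding property of two much simpler families — the powers $Y^{\dot X}$ and the spaces $C_k'(X,2)$ from the theorem quoted above — and to clean up the remaining cases by playing the Choquet games directly. \textbf{Normal forms.} For discrete $X$ every compact subset is finite, so the Fell hypograph subbase collapses to the product subbase and $\dCF(X,Y)=Y^{\dot X}$; since "Polish / almost Polish / strong Choquet / Choquet / Baire" behave well under countable (resp.\ arbitrary) products, and since a metrizable $\dCF(X,Y)$ forces $X$ to be discrete with $\dot X$ countable (through the network Lemma~\ref{l:P1}), this settles (6) and the "$X$ (countable) discrete" halves of (4),(5). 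Moreover the constant functions form a closed copy of $Y$ inside $\dCF(X,Y)$, and for $X$ compact $f\mapsto\max f(X)$ is a continuous open retraction onto it; localizing over an exhaustion by compacta with nonempty interior, we get: $\dCF(X,Y)$ Baire $\Rightarrow Y$ Baire, $\dCF(X,Y)$ Choquet $\Rightarrow Y$ Choquet $\Rightarrow Y$ almost Polish, and $\dCF(X,Y)$ strong Choquet $\Rightarrow Y$ strong Choquet $\Rightarrow Y$ Polish, the last two using that $Y\subseteq\IR$ is separable metrizable. This provides all the necessity of the conditions on $Y$.

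\textbf{Reducing the isolated-point structure.} The structural heart is that, when $\dot X$ is dense in $X$, the set $\mathcal D=\{f\in C(X,Y):f{\restriction}X'\equiv\inf Y\}$ is dense in $\dCF(X,Y)$ — any constraint $\sup f(U)>y$ is met by a spike at an isolated point of $U$, since isolated points are dense and $y<\sup Y$ — and that on $\mathcal D$ the Fell hypograph topology discriminates a function only from above on compacta and from below through suprema over open sets, i.e.\ exactly by the two kinds of moves available to players $\mathsf K$ and $\mathsf F$ in $\GKF(X)$. Passing through $\mathcal D$ one transfers a winning strategy of $\mathsf K$ in $\GKF(X)$ (that is, $\WDMOP$) into a winning strategy of $\mathsf N$ in $\GEN(\dCF(X,Y))$, a $\dot\kappa_\w$-exhaustion of $\dot X$ into a dense completely metrizable (resp.\ Polish, if $\dot X$ is countable) subspace of $\dCF(X,Y)$ sitting inside $\mathcal D$ and built from a dense Polish $P\subseteq Y$, and conversely pushes a winning strategy of $\mathsf E$ down to a winning strategy of $\mathsf F$. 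Throughout, the $Y$-coordinate is handled in parallel: the strategy for $\mathsf N$ is interleaved with a strategy witnessing that $Y$ is Baire (Oxtoby), almost Polish, or Polish, one "$Y$-coordinate" being activated per inning, and the $Y$-separability hypothesis (a meager $\sigma$-compact $M\subset X$ with $X'=\overline M^Y$) is what guarantees that the contribution of $X'$ is a meager, second-countably controlled remainder which neither spoils a game of $\mathsf N$ nor obstructs the dense Polish subspace. Together with the normal forms and the quoted theorem on $C_k'(X,2)$, this gives the "if" directions of (1)--(5) and the $\DMOP$/$\WDMOP$/$\dot\kappa_\w$ halves of their "only if" directions.

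\textbf{Triviality of the derived set.} What remains is to force $\dot X$ to be dense (and, for strong Choquet, clopen). If $X'^{\circ}\neq\emptyset$, fix a nonempty open $V\subseteq X'$; since $V$ carries no isolated points on which to place spikes and $Y\neq\{\inf Y\}$, every basic neighbourhood can have all of its finitely many "$\sup>y$" constraints satisfied inside $V$ while $f$ is simultaneously pressed down on a witnessing compact set, and combining this with the $\sigma$-compact $Y$-dense set provided by $Y$-separability one exhibits the nonempty open set $\{f:\sup f(V)>\inf Y\}$ as a countable union of nowhere dense subsets; hence $\dCF(X,Y)$ is not Baire. For strong Choquet one refines this into an explicit winning strategy for $\mathsf E$, using a sequence in $X\setminus\dot X$ to drive $\bigcap_nU_n$ empty whenever $\dot X$ is not sequentially closed. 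Finally, when $\dot X$ is clopen we have the splitting $\dCF(X,Y)=Y^{\dot X}\times\dCF(X',Y)$, and since the previous argument makes $\dCF(X',Y)$ meager whenever $X'\neq\emptyset$ (then $(X')'^{\circ}=X'\neq\emptyset$), statement (3) reduces to the already-settled discrete case $X=\dot X$.

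The step I expect to be the main obstacle is the "if" direction of (1) and (2): building a single winning strategy for $\mathsf N$ that must respect, at every inning and with only finitely many function values pinned down, the $C_k'$-type discrete-moving-off constraints coming from $\dot X$ (where the adversary controls the moving-off family), the $Y$-value constraints imposed at infinitely many isolated points, and the behaviour near $X'$, which can never be controlled from below — keeping these three bookkeepings mutually consistent so that a limit function exists and lies in $\bigcap_nU_n$.
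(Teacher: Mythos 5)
Your skeleton matches the paper's: the dense subspace $\mathcal D=\dCF'(X,Y)=[X';\{\inf Y\}]$, the spike argument for its density when $\dot X$ is dense, the meagerness of $\lceil X'^\circ;\inf Y\rfloor$ when $X'^\circ\ne\emptyset$, and a convergent sequence of isolated points to give $\mathsf E$ a winning strategy in the strong Choquet game. But the central step is not actually carried out, and you have flagged it yourself as "the main obstacle": the transfer of $\GKF(X)$-strategies into $\GEN$- and $\SGEN$-strategies on the function space, interleaved with a strategy on the $Y$-coordinates. The paper does not perform this interleaving at all. It proves instead that on $\dCF'(X,Y)$ the Fell hypograph topology \emph{coincides} with the compact-open topology (Lemma~\ref{l:F=k} -- the point being that for $f$ with $f(X')\subset\{\inf Y\}$ a lower constraint $\min f(K)>a$ is either vacuous or reduces to finitely many conditions at isolated points), and then quotes the companion result (Theorem~\ref{t:BW}) characterizing Baire, Choquet, (almost) complete-metrizable and (almost) Polish for $C_k'(X,Y)$ with \emph{general} $Y\subset\IR$. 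Your plan relies only on the quoted theorem for $C_k'(X,2)$; extending from $Y=\{0,1\}$ to general Polish+meager $Y$ is precisely the bookkeeping you defer, so as written the "if" directions of (1), (2), (4), (5) and the $\DMOP$/$\WDMOP$/$\dot\kappa_\w$ halves of the "only if" directions rest on an unproven transfer. Without the topological identification of Lemma~\ref{l:F=k} you cannot even invoke the $C_k'$ results on $\mathcal D$, since a priori $\mathcal D$ carries the weaker Fell topology.

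Two smaller points. First, the "only if" directions of (2), (4), (5) need $\mathcal D$ to be not merely dense but a dense $G_\delta$ (so that Choquet and almost complete-metrizability pass down to it); this is where $Y$-separability enters, via $\dCF'(X,Y)=\bigcap_n\lceil M_n;y_n\rceil$, and your sketch never isolates this. Second, your derivation of the necessity of the conditions on $Y$ via the retraction $f\mapsto\max f(X)$ onto the constants, "localized over an exhaustion by compacta with nonempty interior", does not make sense for a general (non-locally-compact, non-$\sigma$-compact) $X$; the correct and much simpler device, used in the paper, is evaluation $f\mapsto f(x)$ at an isolated point $x$, which is an open continuous surjection onto $Y$ and is available because density of $\dot X$ has already been forced in every relevant case.
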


The statements (1)--(6) of Theorem~\ref{t:main} are proved in Propositions~\ref{p:Bm2}, \ref{p:C}, \ref{p:sC}, \ref{p:AP5}, \ref{p:AP6}, \ref{p:P}, respectively. The $Y$-separability of the space $X$ is essential and cannot be removed as shown by the following theorem treating Baire category properties of function spaces $\dCF(X,Y)$ on zero-dimensional compact $F$-spaces.

Let us recall that a topological space $X$ is called an {\em $F$-space} if the closures of any disjoint open $F_\sigma$-sets are disjoint. By Theorem 1.2.5 \cite{vM}, for any locally compact $\sigma$-compact non-compact space $X$ the remainder $\beta X\setminus X$ of the Stone-\v Cech compactification of $X$ is an $F$-space. In particular, the remainder $\beta\IN\setminus \IN$ of the Stone-\v Cech compactification of $\IN$ is an $F$-space.

A topological space $X$ is called {\em countably base-compact} if it has a base $\mathcal B$ of the topology such that for any decreasing sequence $\{B_n\}_{n\in\w}\subset \mathcal B$ the intersection $\bigcap_{n\in\w}\bar B_n$ is not empty. It is easy to see that each countably base-compact regular space is strong Choquet. The countable base-compactness is one of Amsterdam properties, discussed by Aarts and Lutzer in \cite[2.1.4]{Lutzer}.

\begin{theorem}\label{t:F} For any compact zero-dimensional $F$-space $X$ and any closed subset $Y\subset \IR$ with $\inf Y\in Y$ the function space $\dCF(X,Y)$ is countably base-compact and strong Choquet.
\end{theorem}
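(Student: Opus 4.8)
Since the space $X\times\IR$ is locally compact (a product of a compact space with a locally compact space), the hyperspace $\Cl_{\mathsf F}(X\times\IR)$ is compact Hausdorff, and hence its subspace $\dCF(X,Y)$ is Tychonoff, in particular regular. As every countably base-compact regular space is strong Choquet, the plan is to prove that $\dCF(X,Y)$ is countably base-compact. Two features of the hypotheses will be used constantly: first, $y_0:=\inf Y\in Y$, so that $f\ge y_0$ for every $f\in C(X,Y)$; and second, $X$ is compact, so that every $f\in C(X,Y)$ is bounded and hence, for each real $c>y_0$, the functions $f$ with $\max f(X)<c$ take values in the compact set $Y_c:=Y\cap[y_0,c]$. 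Since the sets $\lceil X;c\rceil$ therefore cover $\dCF(X,Y)$, one may restrict attention throughout to ``bounded'' basic open sets.

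First I would fix a base $\mathcal B$ of $\dCF(X,Y)$ with computable closures. Using zero-dimensionality of $X$, the Fell hypograph topology is generated by the subbasic sets $\lceil V;a\rfloor$ and $\lceil V;b\rceil$ with $V$ a nonempty clopen subset of $X$ (for clopen $V$ one has $\sup f(V)=\max f(V)$). Since $Y$ is closed, $\IR\setminus Y$ has only countably many components, so all but countably many reals $a$ are \emph{admissible} in the sense that $a\notin Y$ or $a\in\overline{Y\cap(a,+\infty)}$, and dually all but countably many are \emph{co-admissible}; hence the topology is still generated by the $\lceil V;a\rfloor$ with $a$ admissible and the $\lceil V;b\rceil$ with $b$ co-admissible, and for such parameters one computes $\overline{\lceil V;a\rfloor}=\{f:\max f(V)\ge\alpha(a)\}$ and $\overline{\lceil V;b\rceil}=\{f:\max f(V)\le\beta(b)\}$, where $\alpha(a)=\min\bigl(Y\cap[a,+\infty)\bigr)$ and $\beta(b)=\max\bigl(Y\cap(-\infty,b]\bigr)$. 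The inclusions ``$\subseteq$'' hold because the right-hand sides are closed, and ``$\supseteq$'' is obtained by a perturbation argument: given $f$ in the right-hand side and a basic neighbourhood $O$ of $f$, one modifies $f$ on clopen neighbourhoods of the finitely many points at which the relevant maxima are attained, raising or lowering $f$ there by a small amount (admissibility guaranteeing room to stay inside $Y$), small enough that these neighbourhoods meet only those clopen sets occurring in $O$ which already contain the point in question, so that the finitely many constraints of $O$ stay satisfied; the clopen, hence boundaryless, supports keep the modified function continuous. Letting $\mathcal B$ consist of the nonempty finite intersections $B=\lceil X;c\rceil\cap\bigcap_i\lceil V_i;a_i\rfloor\cap\bigcap_j\lceil W_j;b_j\rceil$ of such sets in which $c,b_j$ are co-admissible and $a_i$ admissible, and which are put in ``reduced form'' (replace each $V_i$ by $V_i\setminus\bigcup\{W_j:b_j\le a_i\}$, which changes $B$ only formally, so that $V_i\cap W_j=\emptyset$ whenever $a_i\ge b_j$), one obtains a base for which the same perturbation argument gives $\overline B=\{f:\max f(X)\le\beta(c),\ \max f(V_i)\ge\alpha(a_i)\ \forall i,\ \max f(W_j)\le\beta(b_j)\ \forall j\}$, the reduced form being precisely what prevents the raising at a witness of some $V_i$ from colliding with an upper constraint.

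The core of the proof is to verify that $\mathcal B$ witnesses countable base-compactness. Let $B_1\supseteq B_2\supseteq\cdots$ be a decreasing sequence in $\mathcal B$; by the closure formula it suffices to produce $f\in C(X,Y)$ realizing the countable family of constraints ``$\max f(V)>a$'' and ``$\max f(W)<b$'' collected from all the $B_n$ (with the global bounds $\max f(X)<c_n$ among them). Since each $B_n$ is nonempty and $B_n\subseteq B_m$ for $m\le n$, every element of $B_n$ realizes all the constraints coming from $B_1,\dots,B_n$, so this family is finitely realizable by continuous functions, all of whose values lie in the fixed compact set $Y_{c_1}$. The task is to pass from ``finitely realizable'' to ``realizable by one function'', and this is where the $F$-space hypothesis does the work. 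In Boolean-algebraic terms, $X$ being a compact $F$-space means the algebra of clopen subsets of $X$ has a countable separation property (for increasing sequences of pairwise disjoint clopen sets one can find a single clopen set trapping one of them and missing another), which topologically says that every cozero subset of $X$ — in particular every countable union of pairwise disjoint clopen sets — is $C^{*}$-embedded in $X$ (recall $X$ is normal). Using this, one builds $f$ by a recursion along an enumeration of the ``lower'' constraints: one produces an increasing chain $U_0\subseteq U_1\subseteq\cdots$ of clopen subsets of $X$ together with coherent continuous $Y_{c_1}$-valued functions $h_k$ on $U_k$, at stage $k$ using finite realizability to extend $h_{k-1}$ over a further clopen set and to force a value exceeding $a_k$ somewhere in the relevant clopen set $V_k$, all the while keeping every ``upper'' constraint satisfied on $U_k$; since $U_\infty:=\bigcup_k U_k=\bigsqcup_k(U_k\setminus U_{k-1})$ is a countable union of pairwise disjoint clopen sets, hence cozero, hence $C^{*}$-embedded, the coherent function $\bigcup_k h_k$ extends to a continuous $\bar h\colon X\to\IR$ with values still in the compact set $Y_{c_1}$, and one takes $f:=\bar h$. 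Then every lower constraint is realized where it was forced, and one concludes that $f\in\bigcap_n\overline{B_n}$, so $\dCF(X,Y)$ is countably base-compact and therefore strong Choquet.

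The hard part will be this last recursion, and it is exactly there that the full strength of the $F$-space hypothesis is needed: one must (i) force each lower constraint once and for all without ever undoing it, which compels the successive clopen increments $U_k\setminus U_{k-1}$ to be pairwise disjoint; (ii) never violate any of the infinitely many upper constraints along the way, neither by the partial functions $h_k$ nor by the continuous extension $\bar h$; and (iii) — the genuinely delicate point — control the values of $\bar h$ on the part of $X$ lying outside $U_\infty$, in particular on the accumulation set of the ``forcing'' regions, so that $\bar h$ stays below every relevant upper bound there (for a clopen set $W$ occurring in an upper constraint, the values of $\bar h$ on $W\cap U_\infty$ are below the bound by construction, and one needs the remaining values on $W$ to be limits of such values). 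Step (iii) is precisely a problem about separating an increasing sequence of ``already handled'' clopen pieces from an increasing sequence of ``must remain small'' ones inside the clopen algebra of $X$, and it is solvable exactly because $X$ is an $F$-space. Throughout, the zero-dimensionality of $X$ is what supplies clopen — hence boundaryless — supports, so that all the modifications and extensions are automatically continuous; and the hypotheses $\inf Y\in Y$ and $X$ compact are what make the ambient target $Y_{c_1}$ compact, without which neither the closure computations of the second step nor the $C^{*}$-extension of the third step could even be started.
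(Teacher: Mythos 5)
Your write-up is a plan rather than a proof: the entire content of the theorem is concentrated in the recursion you describe in the last two paragraphs, and you do not carry it out. You yourself flag step (iii) as ``the genuinely delicate point'' and then settle for asserting that it ``is solvable exactly because $X$ is an $F$-space.'' That assertion is precisely what has to be proved; nothing in the proposal actually invokes a property of $F$-spaces in a verifiable way. Moreover the reduction you start from is too strong, hence sometimes false: you claim it suffices to produce one $f$ realizing all the \emph{strict} constraints $\max f(V)>a$, $\max f(W)<b$ collected from the $B_n$. Take $V\subset W$ clopen, $\inf Y=0\in Y$ with $Y\cap(0,1/n)\ne\emptyset$ for all $n$, and $B_n=\lceil V;0\rfloor\cap\lceil W;1/n\rceil$: each $B_n$ is non-empty and the sequence is decreasing, but no single $f$ satisfies $\max f(V)>0$ together with $\max f(W)<1/n$ for all $n$, while $\bigcap_n\overline{B_n}$ contains the function vanishing on $W$. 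So the target of your recursion must be the \emph{closed} constraints, and once it is, the scheme of ``forcing a value exceeding $a_k$ once and for all'' no longer fits; the forced value at a point lying in infinitely many $W_j$ must lie below $\inf_j b_j$, which can collapse onto $a_k$. There are further unaddressed points of the same kind: each frozen partial function $h_k$ must respect \emph{all} (infinitely many) upper constraints on $U_k$, not just those seen so far; and the $C^{*}$-extension of $\bigcup_k h_k$ is a priori only $\IR$-valued on $X\setminus\overline{U_\infty}$, and a compact subset of $\IR$ such as $Y_{c_1}$ need not be a retract of anything connected, so landing back in $Y$ there needs its own argument.

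For comparison, the paper avoids the recursion entirely. It works with the base of sets $\lceil\U;a,b]$ indexed by finite covers $\U$ of $X$ by pairwise disjoint clopen sets, refines a decreasing sequence of such sets so the covers are nested, picks one witness value $y_n(U)\in\langle a_n(U),b_n(U)\rangle_Y$ for each $n$ and $U\in\U_n$, and defines the candidate limit function pointwise: $\lambda(x)$ is the unique accumulation value of the $y_n(U)$ over those $U$ meeting a neighbourhood of $x$. Existence of an accumulation value comes from compactness of $Y\cap[\inf Y,\underline b_0(X)]$ (your observation that $\inf Y\in Y$ plus compactness of $X$ bounds everything into a compact piece of $Y$ is the same point); uniqueness is exactly where the $F$-space hypothesis enters, in the clean form that the two open $F_\sigma$-sets $W_-=\bigcup_n\{U\in\U_n:y_n(U)<\tfrac12(y+z)\}$ and $W_+=\bigcup_n\{U\in\U_n:y_n(U)>\tfrac12(y+z)\}$ cannot both accumulate at $x$. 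One then checks that $\lambda$ is continuous, automatically $Y$-valued, and satisfies the closed constraints $\bar a_n(U)\le\max\lambda(U)\le\underline b_n(U)$ describing $\overline{\lceil\U_n;a_n,b_n]}$. If you want to salvage your approach, the honest course is either to carry out the separation argument of your step (iii) in full (for the closed constraints), or to switch to a limit-function construction of this kind, where the $F$-space property is used once, at a single identifiable point.
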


Theorem~\ref{t:F} will be proved in Section~\ref{s6}.
\smallskip

Now we turn to the problem of classification of meager spaces among function spaces $\dCF(X,Y)$. This classification is rather complicated and depends on the interplay between the following 6+7 properties of the spaces $Y$ and $X$.

For a non-empty subset $Y\subset \IR$ we consider the following 6 properties:
\begin{itemize}
\item[$(Y_M)$] $Y$ is meager;
\item[$(Y_B)$] $Y$ is Baire;
\item[$(Y_N)$] $Y$ is neither meager nor Baire;
\vskip2pt
\item[$(Y_0)$] $\inf Y\notin Y$;
\item[$(Y_1)$] $\inf Y\in Y\setminus \dot Y$;
\item[$(Y_2)$] $\inf Y\in \dot Y\subset Y$.
\end{itemize}
For two symbols $L\in\{B,M,N\}$ and $n\in\{0,1,2\}$ we say that the space $Y$ has property $Y_{Ln}$ if $Y$ has the properties $Y_L$ and $Y_n$. So, for example, $Y_{M2}$ means that the space $Y$ is meager and has the smallest element $\inf Y$, which is an isolated point of $Y$. In fact, among all possible 9 combinations of the properties $Y_M,Y_B,Y_N,Y_0,Y_1,Y_2$ we shall need only 6:  $Y_{N0},Y_{N1},Y_{N2},Y_{B0},Y_{B1},Y_{B2}$.

Next, we introduce 7 properties $X_0,X_1,X_2,X_3,X_C,X_B,X_M$ of a topological space $X$ and 12 combinations of these properties (of which we shall need only 6).

For any topological space $X$ consider the following 7 properties:
\begin{itemize}
\item[$(X_0)$] $X'=\emptyset$;
\item[$(X_1)$] $X'\ne\emptyset=X'^\circ$;
\item[$(X_2)$] $\overline{X'^\circ}$ is not empty and compact;
\item[$(X_3)$] $\overline{X'^\circ}$ is not compact;
\vskip2pt
\item[$(X_C)$] $\dot X\subset K$ for some compact set $K\subset X$;
\item[$(X_B)$] $X$ has $\DMOP$ but fails to have the property $(X_C)$;
\item[$(X_M)$] $X$ does not have $\DMOP$.
\end{itemize}

For two symbols $L\in\{C,B,M\}$ and $n\in\{0,1,2,3\}$ we say that the space $X$ has property $X_{Ln}$ if $X$ has the properties $X_L$ and $X_n$. So, for example, $X_{M1}$ means that the space $X$ does not have $\DMOP$ and the set $X'\ne\emptyset$ is nowhere dense in $X$. In fact, among all possible 12 combinations of the properties $X_0,X_1,X_2,X_3,X_C,X_B,X_M$ we shall be interested only in  6: $X_{C0},X_{C1},X_{C2},X_{B0},X_{B1},X_{B2}$.

Finally, let us consider the following three Baire category properties of the function space $\dCF(X,Y)$:
\begin{itemize}
\item[$(M)$] $\dCF(X,Y)$ is meager;
\item[$(B)$] $\dCF(X,Y)$  is Baire;
\item[$(N)$] $\dCF(X,Y)$  is neither meager nor Baire.
\end{itemize}

The following table describes the Baire category properties $M,B,N$ of the function space $\dCF(X,Y)$, where $Y\subset\IR$ is a Polish+meager space containing more than one point and $X$ is a $Y$-separable $Y$-separated topological space.

\begin{center}
Table 1
\vskip5pt

\begin{tabular}{|c||c|c|c|c|c|c| c|}
\hline
 & $Y_M$& $Y_{N0}$ & $Y_{N1}$ & $Y_{N2}$ & $Y_{B0}$ & $Y_{B1}$ & $Y_{B2}$\\
 \hline
 \hline
$X_{C0}$ & M  & N & N & N & B & B & B\\
 \hline
$X_{C1}$ & M & N & N & N & M & B & B\\
 \hline
$X_{C2}$ & M & M & M & N & M & M & N\\
 \hline
$X_{B0}$ & M & M & M & M & B & B & B\\
 \hline
$X_{B1}$ & M & M & M & M & M & B & B\\
 \hline
$X_{B2}$ & M & M & M & M & M & M & N\\
 \hline
$X_{M}$ & M & M & M & M & M & M & M\\
\hline
$X_{3}$ & M & M & M & M & M & M & M\\
\hline
 \end{tabular}
 \end{center}
\smallskip

This table consists of $8\times 7$ statements on the Baire Category properties of the function spaces $\dCF(X,Y)$. The references to lemmas proving these 56 statements will be given in Section~\ref{s:table}. In fact, the meagerness of the function spaces $\dCF(X,Y)$ will be proved in a stronger form of $\infty$-meagerness, defined as follows.

\begin{definition}
A subset $A$ of a topological space $X$ is called
\begin{itemize}
\item {\em $\infty$-dense} in $X$ if for any compact Hausdorff space $K$, the subset $C_k(K,A)=\{f\in C_k(K,X):f(K)\subset A\}$ is dense in $C_k(K,X)$;
\item {\em $\infty$-codense} if its complement $X\setminus A$ is $\infty$-dense in $X$;
\item {\em $\infty$-meager} if $A$ is contained in a countable union of  closed $\infty$-codense subsets of $X$.
\end{itemize}
A topological space $X$ is called
\begin{itemize}
\item {\em $\infty$-meager} if it is $\infty$-meager in itself;
\item {\em $\infty$-comeager} if $X$ contains an $\infty$-dense Polish subspace.
\end{itemize}
\end{definition}
It is easy to see that each closed $\infty$-codense set is nowhere dense, so each $\infty$-meager set is meager. On the other hand, the singleton $\{0\}$ in the real line is nowhere dense but not $\infty$-codense in $\IR$.

It should be mentioned that $\infty$-meager and $\infty$-comeager spaces play an important role in Infinite-Dimensional Topology and enter as key ingredients in many characterization theorems of model infinite-dimensional spaces, see \cite{BCZ}, \cite{BRZ}, \cite{BP},  \cite{vM1}, \cite{vM2}, \cite{Sak}. %The $\infty$-meagerness of $\dCF(X,Y)$ will be used in the paper \cite{BY}, devoted to recognizing the infinite-dimensional structure of metrizable function spaces $\dCF(X,Y)$.

For any topological space we have the implications
$$\mbox{$\infty$-meager $\Ra$ meager $\Ra$ not Baire $\Ra$ not $\infty$-comeager}.$$
By \cite{Ban}, the linear hull of the Erd\H os set $E=\{(x_i)_{i\in\w}\in\ell_2:(x_i)_{i\in\w}\in\mathbb Q^\w\}$ in the separable Hilbert space $\ell_2$ is an example of a meager (pre-Hilbert) space, which is not $\infty$-meager.

\begin{theorem}\label{t:eq} Let $Y$ be a Polish+meager subset $Y\subset \IR$ and $X$ be a $Y$-separable $Y$-separated topological space $X$. The  function space $\dCF(X,Y)$ is meager if and only if $\dCF(X,Y)$ is $\infty$-meager.
\end{theorem}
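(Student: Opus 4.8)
The plan is to prove the nontrivial implication: if $\dCF(X,Y)$ is meager, then it is $\infty$-meager. (The converse is immediate since every $\infty$-meager space is meager.) The natural strategy is to use Table~1 together with Theorem~\ref{t:eq}'s siblings (Theorems~\ref{t:BY0}, \ref{t:main} and the table statements) to reduce the claim to a case analysis. Under the hypotheses ($Y$ Polish+meager, $X$ a $Y$-separable $Y$-separated space), the function space $\dCF(X,Y)$ is meager precisely in those cells of Table~1 marked $M$. So it suffices to show that in each of those cells the function space is in fact $\infty$-meager. Since (as the excerpt notes) the meagerness statements of Table~1 "will be proved in a stronger form of $\infty$-meagerness", the cleanest route is to cite exactly those lemmas: every lemma in Section~\ref{s:table} that witnesses an $M$-entry actually produces a countable cover of $\dCF(X,Y)$ by closed $\infty$-codense subsets. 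I would therefore structure the proof as: (i) invoke the characterization of when $\dCF(X,Y)$ is meager (the disjunction of the $M$-cells of Table~1, equivalently the cases $Y_M$, or $X_M$, or $X_3$, or one of the finitely many remaining mixed cells $X_{C2}$ with $Y_{N0}/Y_{N1}/Y_{B0}/Y_{B1}$, $X_{B1}$ with $Y_{B0}$, etc.); (ii) for each such case, quote the corresponding lemma from Section~\ref{s:table}, observing that its proof exhibits $\dCF(X,Y)$ as a countable union of closed $\infty$-codense sets; (iii) conclude $\infty$-meagerness by the very definition.

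In more detail, the key reduction is that $\infty$-meagerness is witnessed by the same combinatorial decompositions used for meagerness. For the "large" cases — $Y$ meager ($Y_M$), or $X$ fails $\DMOP$ ($X_M$), or $\overline{X'^\circ}$ is non-compact ($X_3$) — the decomposition is built from sets of the form $\{f : \sup f(U)\le y\}$ or $\{f : \max f(K)\le y\}$ together with pullbacks along the evaluation-type maps into $Y$ and the map into $C_k'(X,2)$; such sets are not merely closed and nowhere dense but closed and $\infty$-codense, because for a compact Hausdorff $K$ and $g\in C_k(K,\dCF(X,Y))$ one can perturb $g$ uniformly on $K$ to push all values into the complement (this is where $\inf Y\in Y$ or $\inf Y\notin Y$ is used, to supply "room below"). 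For the small finite list of remaining mixed cells one argues the same way, now combining a decomposition inherited from $Y$ (using $Y$ Polish+meager, so $Y$ has a countable cover by closed $\infty$-codense sets when $Y$ is meager, and in the $Y_N$ cases a localized version) with a decomposition coming from the non-density of $\dot X$ or from $\overline{X'^\circ}$ being non-empty compact. In every case the building blocks are closed and $\infty$-codense, hence their countable union is $\infty$-meager, hence so is $\dCF(X,Y)$.

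The main obstacle is purely bookkeeping: verifying that the specific closed nowhere-dense sets appearing in each of the (finitely many) meagerness lemmas of Section~\ref{s:table} are genuinely $\infty$-codense, not just codense. Concretely, given a compact Hausdorff $K$ and a continuous $g\colon K\to\dCF(X,Y)$, one must exhibit $g'\colon K\to\dCF(X,Y)$ arbitrarily close to $g$ in $C_k(K,\dCF(X,Y))$ with $g'(K)$ contained in the relevant complement; the perturbation is typically "subtract a small positive constant", but one must check continuity of the resulting function $K\times X\to\IR$, that it still lands in $Y$ (using $\inf Y\in Y$ and, where needed, that $\inf Y$ is isolated or not), and that it lies in the prescribed Fell-hypograph basic neighborhood of $g$. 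Since there are only six shapes of $M$-cells up to symmetry, this is a finite check, and I expect each instance to follow the same template. I would close by remarking that the identical argument (with $\infty$-dense Polish subspaces in place of $\infty$-codense closed sets) underlies why the Baire-space cells of Table~1 are in fact $\infty$-comeager, though that is not needed here.
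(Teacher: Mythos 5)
Your proposal is correct and coincides with the paper's own argument: the authors prove Theorem~\ref{t:eq} precisely by noting that Table~1 exhausts all cases and that every cell marked $M$ is justified by a lemma (Lemmas~\ref{l:MY0X2}, \ref{l:MY0X1}, \ref{l:MX23}, \ref{l:MXM2}, \ref{l:MYM}, \ref{l:MYN}) which already establishes $\infty$-meagerness, so the ``bookkeeping'' you identify as the main obstacle is exactly the content of those lemmas. The only cosmetic difference is that the perturbations there are produced by the lattice operations $\min\{\cdot,\hbar\}$ and $\max\{\cdot,\hbar\}$ with auxiliary continuous maps $\hbar\colon X\to Y$ (via Lemmas~\ref{l:min} and~\ref{l:e}) rather than by subtracting a small constant, which matters since $Y$ need not be closed under translation.
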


This theorem will be proved in Section~\ref{s:table}. In Section~\ref{s:dych} we prove an interesting dichotomy for analytic function spaces $\dCF(X,Y)$. A topological space  is called {\em analytic} if it is a continuous image of a Polish space.

\begin{theorem}\label{t:dycho} Let $Y\subset \IR$ be a non-empty Polish subspace with $\inf Y\notin\dot Y$. If for a $Y$-separated topological space $X$ the function space $\dCF(X,Y)$ is analytic, then $\dCF(X,Y)$ is either $\infty$-meager or $\infty$-comeager.
\end{theorem}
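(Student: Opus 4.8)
The plan is to reduce the statement, using the countable network that comes for free from analyticity, to the dichotomy already encoded in Table~1, and then to treat the two surviving cases separately: Theorem~\ref{t:eq} disposes of the meager case, and the classical ``analytic $+$ Baire $\Rightarrow$ almost Polish'' phenomenon (plus the $\infty$-machinery behind Theorem~\ref{t:eq}) disposes of the Baire case.

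\emph{Step 1: reduction to ``meager or Baire''.} Since $\dCF(X,Y)$ is analytic, it is a continuous image of a second-countable Polish space and hence has a countable network; in particular it contains no uncountable discrete subspace, so the family $\{f_x:x\in\dot X\}$ of functions differing from a fixed constant only at a single isolated point $x$ (a discrete family, separated by the subbasic sets $\lceil\{x\};c\rfloor$) must be countable, i.e. $\dot X$ is countable. As $X$ is $Y$-separated, Lemma~\ref{l:P1}(2) gives that $X$ is $Y$-separable. On the other hand, a non-empty Polish $Y\subset\IR$ is non-meager and (trivially) Polish+meager, so $Y$ has property $Y_B$; together with $\inf Y\notin\dot Y$ this says that $Y$ has property $Y_{B0}$ or $Y_{B1}$ (and $|Y|>1$, since $\inf Y\notin\dot Y$ excludes singletons). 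Thus the standing hypotheses of Table~1 are met, and in the columns $Y_{B0}$ and $Y_{B1}$ every entry is $M$ or $B$, so $\dCF(X,Y)$ is either meager or Baire. This is the only place where $\inf Y\notin\dot Y$ is used, and it cannot be removed: the column $Y_{B2}$ contains the entry $N$, corresponding to (typically analytic) function spaces which are neither meager nor Baire.

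\emph{Step 2: the two cases.} If $\dCF(X,Y)$ is meager, Theorem~\ref{t:eq} says it is $\infty$-meager and we are done. So assume $\dCF(X,Y)$ is Baire; then by Theorem~\ref{t:main}(1) the set $\dot X$ is dense in $X$ and $X$ has $\DMOP$. Now I use analyticity a second time, via the topological zero--one law for analytic spaces. Condense $\dCF(X,Y)$ onto the metrizable space $Y^{\dot X}$ by the evaluation map $f\mapsto f|_{\dot X}$, which is a continuous bijection onto its image because $\dot X$ is dense in $X$ and $Y$ is Hausdorff; so $\dCF(X,Y)$ is submetrizable, and analyticity gives it the Baire property inside a Polish completion of this metrizable model. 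Since $\dCF(X,Y)$ is Baire it has no non-empty open meager subset, which forces the ``comeager'' open piece in the Baire-property decomposition to be dense; the dense $G_\delta$ of that piece lying inside $\dCF(X,Y)$ is then a dense Polish subspace $P$. Hence $\dCF(X,Y)$ is almost Polish, so by Theorem~\ref{t:main}(4)--(5) the space $X$ is a $\dot\kappa_\w$-space with countable dense set $\dot X$ of isolated points. Finally, $\dCF(X,Y)\setminus P$ is meager, hence a countable union $\bigcup_{n\in\w}A_n$ of closed nowhere dense sets; by the argument used for Theorem~\ref{t:eq}, which identifies closed nowhere dense subsets of $\dCF(X,Y)$ as $\infty$-codense, each $A_n$ is $\infty$-codense, and $G:=\dCF(X,Y)\setminus\bigcup_{n\in\w}A_n\subset P$ is a $G_\delta$ in the Polish space $P$, hence Polish; being the intersection of the $\infty$-dense open sets $\dCF(X,Y)\setminus A_n$ (and using that the spaces $C_k(K,\dCF(X,Y))$ are Baire for compact Hausdorff $K$, which holds because $\dCF(X,Y)$ is almost Polish), $G$ is $\infty$-dense. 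Thus $\dCF(X,Y)$ contains an $\infty$-dense Polish subspace, i.e. it is $\infty$-comeager.

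\emph{Main obstacle.} Step~1 and the meager half of Step~2 are routine once Table~1 and Theorem~\ref{t:eq} are in hand; the real work is the Baire case. First, ``analytic $+$ Baire $\Rightarrow$ almost Polish'' must be made rigorous for $\dCF(X,Y)$, which is not a priori metrizable: this requires pushing the Baire-property argument through the metrizable condensation $\dCF(X,Y)\to Y^{\dot X}$ together with the fact that, since $\dot X$ is countable and dense, $\dCF(X,Y)$ is a first-countable (hence sequential) function space, so that density and meagerness transfer correctly between the two topologies. Second — and this is the genuine $\infty$-phenomenon — one must promote ``dense'' to ``$\infty$-dense'', i.e. show that for every compact Hausdorff $K$ every continuous map $K\to\dCF(X,Y)$ can be approximated in the compact-open topology by maps avoiding a prescribed closed nowhere dense set; this is precisely the technical core already developed for Theorem~\ref{t:eq}, where the structure of the Fell hypograph topology and the behaviour of $Y\subset\IR$ below $\sup$ are exploited.
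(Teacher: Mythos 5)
Your Step 1 and the meager half of Step 2 are fine: analyticity gives a countable network, Lemma~\ref{l:P1}(2) gives $Y$-separability and countability of $\dot X$, the hypothesis $\inf Y\notin\dot Y$ puts $Y$ in column $Y_{B0}$ or $Y_{B1}$ of Table~1, every entry there is $M$ or $B$, and Theorem~\ref{t:eq} upgrades meager to $\infty$-meager. This is a legitimate repackaging of what the paper does (the paper cites Lemmas~\ref{l:MY0X2}, \ref{l:MX23}, \ref{l:MXM1} directly rather than going through the table).

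The Baire case, however, has a genuine gap, in fact two. First, your passage from ``analytic $+$ Baire'' to ``almost Polish'' is pushed through the condensation $f\mapsto f{\restriction}\dot X$ onto $Y^{\dot X}$; but the Fell hypograph topology is strictly finer than the topology induced from $Y^{\dot X}$, and a set that is dense (or a dense $G_\delta$, or Polish) in the weaker metrizable model need not be dense in $\dCF(X,Y)$ --- density does not transfer from a weaker to a stronger topology, and the first-countability of $\dCF(X,Y)$ that you invoke to fix this is itself unjustified (Lemma~\ref{l:P1}(1) gives only necessary conditions). Second, and more seriously, your promotion of ``dense Polish'' to ``$\infty$-dense Polish'' rests on the claim that closed nowhere dense subsets of $\dCF(X,Y)$ are $\infty$-codense. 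This is false and is not what Theorem~\ref{t:eq} says: the paper itself points out that $\{0\}$ is nowhere dense but not $\infty$-codense in $\IR$, and $\IR=\dCF(X,\IR)$ for a one-point $X$ is an instance of your setting. Theorem~\ref{t:eq} is proved by checking that each meagerness lemma behind Table~1 exhibits \emph{specific} $\infty$-codense sets; it provides no general principle about arbitrary nowhere dense sets. (Your auxiliary claim that $C_k(K,\dCF(X,Y))$ is Baire because $\dCF(X,Y)$ is almost Polish is also circular, since it presupposes the $\infty$-density you are trying to establish.) The idea you are missing is the paper's reduction to the subspace $\dCF'(X,Y)=[X';\{\inf Y\}]$: by Lemmas~\ref{l:C'd} and \ref{l:Gdelta} it is an $\infty$-dense $G_\delta$-set (hence analytic), by Lemma~\ref{l:F=k} it carries the compact-open topology, so the companion dichotomy Theorem~\ref{t:dycha} applies and makes $C_k'(X,Y)$ either Polish --- in which case $\dCF(X,Y)$ contains the $\infty$-dense Polish subspace $\dCF'(X,Y)$ and is $\infty$-comeager --- or $\infty$-meager, in which case $X$ fails $\DMOP$ and Lemma~\ref{l:MXM1} gives $\infty$-meagerness of the whole space. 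Without this reduction (or an equivalent substitute) the comeager half of the dichotomy is not proved.
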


Typical examples of sets $Y\subset \IR$ with properties $Y_{B0}$, $Y_{B1}$ and $Y_{B2}$ are the real line $\IR$, the closed interval $[0,1]$ and the doubleton $\{0,1\}$, respectively.

For these spaces the classification given in Table 1 implies the following characterizations.

\begin{corollary} For an $\IR$-separable functionally Hausdorff space $X$, the following statements are equivalent:
\begin{enumerate}
\item[\textup{(1)}] $\dCF(X,\IR)$ is Baire;
\item[\textup{(2)}] $\dCF(X,\IR)$ is not meager;
\item[\textup{(3)}] $\dCF(X,\IR)$ is not $\infty$-meager;
\item[\textup{(4)}] the space $X$ is discrete.
\end{enumerate}
\end{corollary}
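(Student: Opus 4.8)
The plan is to derive this corollary directly from the general classification in Table~1 by specializing to the case $Y=\IR$. First I would note that $\IR$ is a Polish space (hence Polish+meager) containing more than one point, that $\inf\IR=-\infty\notin\IR$, and hence that $\IR$ has property $Y_0$; since $\IR$ is Baire, it has property $Y_{B0}$. Thus the relevant column of Table~1 is the column $Y_{B0}$, whose entries read B, M, M, B, M, M, M, M for the rows $X_{C0}$, $X_{C1}$, $X_{C2}$, $X_{B0}$, $X_{B1}$, $X_{B2}$, $X_M$, $X_3$ respectively.

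Next I would show $(1)\Leftrightarrow(4)$ using this column. If $X$ is discrete, then $X'=\emptyset$, so $X$ has property $X_0$; moreover $\dot X=X$ is trivially covered by... well, one should be a little careful: $X$ discrete need not have property $X_{C0}$ unless it is also such that $\dot X$ lies in a compact set. Instead, the correct reading is that for a discrete space one invokes Theorem~\ref{t:BY0} (with $Y=\IR$, which has $\inf\IR\notin\IR$): by part~(1) of that theorem, $\dCF(X,\IR)$ is Baire if and only if $X$ is discrete and $\IR$ is Baire, and the latter always holds. So $(1)\Leftrightarrow(4)$ follows immediately from Theorem~\ref{t:BY0}(1) once we observe that an $\IR$-separable functionally Hausdorff space is in particular Tychonoff (functionally Hausdorff plus the implicit Tychonoff-ness coming from $\IR$-separatedness, which is exactly the condition that suffices to apply the theorem). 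Conversely, if $X$ is \emph{not} discrete then $X'\ne\emptyset$, so $X$ falls into one of the rows $X_{C1},X_{C2},X_{B1},X_{B2},X_M,X_3$ (the case $X_{C0}$ and $X_{B0}$ require $X'=\emptyset$), and in every one of these rows the $Y_{B0}$ entry is M; hence $\dCF(X,\IR)$ is meager, so not Baire. This establishes $\neg(4)\Rightarrow\neg(1)$.

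Then I would close the loop on the remaining equivalences. By Theorem~\ref{t:eq} (applicable since $\IR$ is Polish+meager and $X$ is $Y$-separable $Y$-separated with $Y=\IR$), $\dCF(X,\IR)$ is meager if and only if it is $\infty$-meager, which gives $(2)\Leftrightarrow(3)$. The implications $\infty\text{-meager}\Rightarrow\text{meager}\Rightarrow\text{not Baire}$ recorded in the excerpt give $(3)\Rightarrow(2)\Rightarrow\neg(1)$, i.e. $(1)\Rightarrow\neg(2)$; and conversely, if $\dCF(X,\IR)$ is not Baire then by the argument above $X$ is not discrete, so by Table~1 (column $Y_{B0}$) it is in fact meager, hence $\infty$-meager by Theorem~\ref{t:eq}. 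Assembling: $(1)\Leftrightarrow(4)$, $\neg(1)\Rightarrow(3)\Rightarrow(2)\Rightarrow\neg(1)$, so all four statements are equivalent.

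The only genuinely delicate point is the bookkeeping of the case split: one must verify that when $X'\ne\emptyset$ the space $X$ is covered by exactly the rows whose $Y_{B0}$-entry is M, i.e. that the classification of properties $X_0,X_1,X_2,X_3$ together with $X_C,X_B,X_M$ really is exhaustive on $\IR$-separable spaces and that $X'\ne\emptyset$ excludes precisely $X_{C0}$ and $X_{B0}$. This is purely a matter of unwinding the definitions of the $X_{Ln}$ properties (note $X_0$ says $X'=\emptyset$, so $X_{C0}$ and $X_{B0}$ both entail $X'=\emptyset$), so it is routine; the substantive content is entirely imported from Table~1, Theorem~\ref{t:BY0}, and Theorem~\ref{t:eq}.
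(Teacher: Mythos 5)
Your proposal is correct and follows essentially the same route as the paper, which derives this corollary by reading off the $Y_{B0}$ column of Table~1 (equivalently, Lemma~\ref{l:BX0} for the discrete case and Lemma~\ref{l:MY0X2} for the non-discrete case, whose meagerness conclusions are in fact $\infty$-meagerness) together with Theorem~\ref{t:eq}. One small caveat: your parenthetical claim that a functionally Hausdorff space is ``in particular Tychonoff'' is false and unnecessary --- Theorem~\ref{t:BY0} only requires $X$ to be $\IR$-separated, which is exactly the functionally Hausdorff hypothesis, so the application goes through without it.
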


\begin{corollary} For an $\IR$-separable functionally Hausdorff space $X$, the following statements are equivalent:
\begin{enumerate}
\item[\textup{(1)}] $\dCF(X,[0,1])$ is Baire;
\item[\textup{(2)}] $\dCF(X,[0,1])$ is not meager;
\item[\textup{(3)}] $\dCF(X,[0,1])$ is not $\infty$-meager;
\item[\textup{(4)}] $X$ has $\DMOP$ and the set $\dot X$ is dense in $X$;
\item[\textup{(5)}] $\dCF(X,[0,1))$ is Baire;
\item[\textup{(6)}] $\dCF(X,[0,1))$ is not meager;
\item[\textup{(7)}] $\dCF(X,[0,1))$ is not $\infty$-meager.
\end{enumerate}
\end{corollary}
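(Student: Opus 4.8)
The plan is to deduce all equivalences from Table~1 (and the auxiliary theorems behind it) applied to the three sample spaces $Y\in\{\IR,[0,1],[0,1)\}$, together with Theorem~\ref{t:eq}. First I would dispose of the second corollary, since the $[0,1]$ case is the informative one. Note that $[0,1]$ is a compact Polish interval with $\inf[0,1]=0\in[0,1]$ and $\dot{[0,1]}=\emptyset$, so $0\in[0,1]\setminus\dot{[0,1]}$ and $[0,1]$ is Baire; hence $[0,1]$ has property $Y_{B1}$. Reading the column $Y_{B1}$ of Table~1 we see the entries are: $\mathrm B$ for rows $X_{C0},X_{C1},X_{B0},X_{B1}$, and $\mathrm M$ for rows $X_{C2},X_{B2},X_M,X_3$. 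Thus $\dCF(X,[0,1])$ is Baire precisely when $X$ falls in one of $X_{C0},X_{C1},X_{B0},X_{B1}$, i.e. $X$ has $\DMOP$ (this rules out $X_M$) and $\overline{X'^\circ}$ is neither non-compact nor a non-empty compact set, which means $X'^\circ=\emptyset$, i.e. $\dot X$ is dense in $X$. This is exactly condition~(4). The equivalence (1)$\Leftrightarrow$(2) is the general fact that a Polish+meager space is Baire iff it is non-meager (stated in the excerpt), combined with the trichotomy $M/B/N$ of Table~1: for $Y_{B1}$ the value $N$ never occurs in the relevant rows, so ``not meager'' and ``Baire'' coincide. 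The equivalence (2)$\Leftrightarrow$(3) is Theorem~\ref{t:eq} applied to $Y=[0,1]$ and the $Y$-separable $Y$-separated space $X$ (recall that for the connected set $Y=[0,1]$, ``$\IR$-separable functionally Hausdorff'' is the same as ``$[0,1]$-separable $[0,1]$-separated''; here one uses that $[0,1]$-separated equals functionally Hausdorff and, since $\IR$- and $[0,1]$-topologies on a functionally Hausdorff space coincide, $\IR$-separable equals $[0,1]$-separable).

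Next I would handle the half-open interval $Y=[0,1)$. It is Polish (being an open, hence $G_\delta$, subset of $\IR$), Baire, has $\inf[0,1)=0\in[0,1)$ with $\dot{[0,1)}=\emptyset$, so again $0\in[0,1)\setminus\dot{[0,1)}$ and $[0,1)$ has property $Y_{B1}$. Moreover $[0,1)$ contains more than one point and is $\IR$-connected, so exactly as above the classes of admissible spaces $X$ coincide (an $\IR$-separable functionally Hausdorff space is $[0,1)$-separable and $[0,1)$-separated). Therefore Table~1, read along the same column $Y_{B1}$, gives that $\dCF(X,[0,1))$ is Baire (equivalently non-meager, equivalently non-$\infty$-meager, using Theorem~\ref{t:eq} and the absence of $N$-entries in the rows with value $B$) if and only if $X$ has $\DMOP$ and $\dot X$ is dense, i.e. condition~(4) again. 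Chaining these, (4)$\Leftrightarrow$(5)$\Leftrightarrow$(6)$\Leftrightarrow$(7), and combining with the block (1)$\Leftrightarrow$(2)$\Leftrightarrow$(3)$\Leftrightarrow$(4) established in the previous paragraph, yields all seven equivalences.

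Finally, for the first corollary, $Y=\IR$ is Polish with $\inf\IR=-\infty\notin\IR$, so it satisfies $(Y_0)$ and is Baire, i.e. property $Y_{B0}$. The column $Y_{B0}$ of Table~1 has entries $B$ exactly in rows $X_{C0},X_{B0}$ and $M$ in all other rows. A space $X$ lies in $X_{C0}$ or $X_{B0}$ iff $X'=\emptyset$ and $X$ has $\DMOP$; but $X'=\emptyset$ already says $X$ is discrete, and every discrete space trivially has $\DMOP$ (the only moving-off families of finite subsets of $\dot X=X$ are those whose members escape every finite set, and any infinite subfamily is discrete in the discrete space $X$). Hence ``$\dCF(X,\IR)$ is Baire'' $\Leftrightarrow$ ``$X$ is discrete'', which is (1)$\Leftrightarrow$(4). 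The equivalences (1)$\Leftrightarrow$(2)$\Leftrightarrow$(3) follow, as before, from Theorem~\ref{t:eq} together with the fact that no $N$-entry occurs in the $Y_{B0}$ column among the $B$-rows, so for $Y=\IR$ the three notions ``Baire'', ``not meager'', ``not $\infty$-meager'' coincide on the class of $\IR$-separable functionally Hausdorff spaces. The main obstacle here is purely bookkeeping: one must verify that the sample spaces sit in the claimed $Y_{Ln}$-classes and that the standing hypothesis ``$\IR$-separable functionally Hausdorff'' translates correctly into the ``$Y$-separable $Y$-separated'' hypothesis of Table~1 for each of $\IR$, $[0,1]$, $[0,1)$; once that translation is checked, the corollaries are immediate readings of a single column of the table.
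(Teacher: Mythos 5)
Your proposal is correct and follows exactly the route the paper intends: the paper offers no separate proof beyond the remark that the corollary is read off from Table~1, and your verification that $[0,1]$ and $[0,1)$ both lie in class $Y_{B1}$, that the $B$-rows of that column are precisely $X_{C0},X_{C1},X_{B0},X_{B1}$ (i.e.\ $\DMOP$ plus $X'^\circ=\emptyset$), that no $N$-entry occurs there so Baire and non-meager coincide, and that Theorem~\ref{t:eq} upgrades meager to $\infty$-meager, is exactly the intended bookkeeping. The only cosmetic slip is the aside attributing (1)$\Leftrightarrow$(2) to ``Polish+meager is Baire iff non-meager'' (the paper states Baire iff almost Polish), but your actual argument via the absence of $N$-entries is the right one and makes that aside unnecessary.
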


On the other hand, the function space $\dCF(X,\{0,1\})$ behaves differently.

\begin{corollary}\label{c:C2} For a $\{0,1\}$-separable totally disconnected space $X$, the following characterizations hold:
\begin{enumerate}
\item $\dCF(X,\{0,1\})$ is Baire if and only if $X$ has $\DMOP$ and $X'^\circ=\emptyset$;
\item $\dCF(X,\{0,1\})$ is meager if and only if $X$ does not have $\DMOP$ or $\overline{X'^\circ}$ is not compact;
\item $\dCF(X,\{0,1\})$ is neither Baire nor meager if and only if $X$ has $\DMOP$ and the set $\overline{X'^\circ}$ is compact and not empty.
\end{enumerate}
\end{corollary}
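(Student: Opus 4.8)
The plan is to obtain Corollary~\ref{c:C2} by specializing Table~1 to the two-point space $Y=\{0,1\}$, after checking that this $Y$ satisfies the standing hypotheses of the table and then translating the row labels of the table into the conditions on $X$ appearing in the corollary. First I would verify the hypotheses. The set $Y=\{0,1\}\subset\IR$ has more than one point and, being a finite discrete (hence Polish) space, is trivially Polish+meager. Since $\{0,1\}$ is a disconnected subspace of $\IR$, a space $X$ is $\{0,1\}$-separated precisely when it is totally disconnected, so the requirement of Table~1 that $X$ be ``$Y$-separated'' is exactly the hypothesis of the corollary that $X$ be totally disconnected, while ``$\{0,1\}$-separable'' is literally the same condition in both statements. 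Hence Table~1 applies to every $\{0,1\}$-separable totally disconnected space $X$.

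Next I would locate the relevant column of Table~1. As a finite discrete space, $\{0,1\}$ is complete-metrizable, hence Baire, so it has property $Y_B$; and $\inf\{0,1\}=0$ is an isolated point of $\{0,1\}$, so it has property $Y_2$. Thus $Y=\{0,1\}$ has property $Y_{B2}$, and the last column of Table~1 governs the Baire category properties of $\dCF(X,\{0,1\})$: the entry is ``B'' for the rows $X_{C0},X_{C1},X_{B0},X_{B1}$, it is ``N'' for the rows $X_{C2},X_{B2}$, and it is ``M'' for the rows $X_M$ and $X_3$ (and therefore also for $X_{C3}$ and $X_{B3}$). It then remains to translate these row labels. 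Since the properties $X_C,X_B,X_M$ partition all topological spaces and $X_M$ means precisely that $X$ fails $\DMOP$, the disjunction ``$X_C$ or $X_B$'' is equivalent to ``$X$ has $\DMOP$''. Likewise $X_0,X_1,X_2,X_3$ partition all spaces according to the behaviour of $\overline{X'^\circ}$: ``$X_0$ or $X_1$'' is equivalent to $X'^\circ=\emptyset$ (in which case $\overline{X'^\circ}=\emptyset$), $X_2$ is equivalent to ``$\overline{X'^\circ}$ is non-empty and compact'', and $X_3$ is equivalent to ``$\overline{X'^\circ}$ is not compact''. Substituting these equivalences into the three groups of rows yields: $\dCF(X,\{0,1\})$ is Baire iff $X$ has $\DMOP$ and $X'^\circ=\emptyset$; it is neither Baire nor meager iff $X$ has $\DMOP$ and $\overline{X'^\circ}$ is compact and non-empty; and it is meager iff $X$ fails $\DMOP$ or $\overline{X'^\circ}$ is not compact, which are precisely the assertions (1), (3), (2) of the corollary.

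There is no essential obstacle here beyond the bookkeeping just described, since the entire content is carried by Table~1, whose $56$ cells are established elsewhere in the paper. The only minor points requiring a moment's care are: that the coarse rows $X_M$ and $X_3$ of Table~1 overlap (in spaces that have both $X_M$ and $X_3$), which causes no ambiguity because both rows carry the value ``M'' throughout the $Y_{B2}$ column; and that the three conditions on $X$ in the corollary are indeed mutually exclusive and exhaustive, which follows by a short case analysis on whether $X$ has $\DMOP$ and on which of $X_0,X_1,X_2,X_3$ it satisfies, matching up case by case with the values ``B'', ``N'', ``M'' of the column.
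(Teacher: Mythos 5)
Your proposal is correct and follows exactly the paper's route: the paper derives Corollary~\ref{c:C2} by reading off the $Y_{B2}$ column of Table~1 (noting that $\{0,1\}$ is Polish+meager, Baire, with $\inf\{0,1\}=0$ isolated, and that $\{0,1\}$-separated means totally disconnected), and your translation of the row labels --- including the observation that $X_C$ forces $\DMOP$ vacuously, so that $X_C\vee X_B$ is exactly $\DMOP$, and that the overlapping rows $X_M$ and $X_3$ both carry the value M --- matches the intended bookkeeping.
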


\begin{remark} Theorem~\ref{t:F} shows that the $\{0,1\}$-separability of space $X$ cannot be removed from the assumptions of Corollary~\ref{c:C2}(3): for the compact $F$-space $X=\beta\IN\setminus \IN$ the function space $\dCF(X,2)$ is Baire but $X$ has $\DMOP$ and $X'^\circ=X'=X$.
\end{remark}

\section{Function spaces  $\dCF(X,Y)$ over $F$-spaces $X$}\label{s6}

In this section we prove Theorem~\ref{t:F}. Given a compact zero-dimensional $F$-space $X$ and a closed subset $Y\subset \IR$ with $\inf Y\in Y$, we need to show that the function space $\dCF(X,Y)$ is countably base-compact and strong Choquet.

In the space $\dCF(X,Y)$ consider the family $\mathcal B$ of all non-empty open sets of the form
$$\lceil\U;a,b]:=\bigcap_{U\in\U}\lceil U;a(U)\rfloor\cap \lceil U;b(U)\rceil,$$
where $\U$ is a finite cover of $X$ by pairwise disjoint clopen sets and $a,b:\U\to\IR$ are two functions.
It follows from $\lceil\U;a,b]\ne\emptyset$ that for every $U\in\U$ the order interval $$\langle a(U),b(U)\rangle_Y:=\{y\in Y:a(U)<y<b(U)\}$$ is not empty and its closure $[a(U),b(U)]_Y$ in $Y$ is compact. It is clear that $[a(U),b(U)]_Y=[\bar a(U),\underline{b}(U)]\cap Y$ for some real numbers $\bar a(U),\underline{b}(U)$ such that $a(U)\le \bar a(U)<\underline{b}(U)\le b(U)$.

It can be shown that $\mathcal B$ is a base of the Fell hypograph topology of $\dCF(X,Y)$. We claim that this base witnesses that the function space $\dCF(X,Y)$ is countably base-compact.

Fix a decreasing sequence $\{\lceil\U_n,a_n,b_n]\}_{n\in\w}\subset\mathcal B$ of basic open sets. Replacing each cover $\U_n$ by a finer disjoint open cover, we can assume that for every $n\in\w$ each set $U\in\U_{n+1}$ is contained in some set $V\in\U_{n}$. Also we loss no generality assuming that $\U_0=\{X\}$.

For any $n\in\w$ and $U\in\U_n$, fix a point $y_n(U)\in\langle a_n(U),b_n(U)\rangle_Y$ and let $\bar a_n(U)$ and $\underline{b}_n(U)$ be two real numbers such that $[a_n(U),b_n(U)]_Y=Y\cap[\bar a_n(U),\underline{b}_n(U)]$.

For any $n\le m$ we can use the inclusion $\lceil\U_m;a_m,b_m]\subset\lceil \U_n;a_n,b_n]$ to show that the following two conditions are satisfied:
\begin{itemize}
\item[(a)] for any $U\in\U_n$ there exists $V\in\U_m$ such that $V\subset U$ and  $y_m(V)\ge \bar a_m(V)\ge \bar a_n(U)$;
\item[(b)] for any $U\in\U_n$ and $V\in\U_m$ with $V\subset U$ we have $y_m(V)\le \underline{b}_m(V)\le\underline b_n(U)$;
\end{itemize} The condition (b) implies that the set $\bigcup_{m\in\w}\{y_m(V):V\in\U_m\}$ is contained in the compact set $[\inf Y,b_0(X)]_Y:=Y\cap [\inf Y,\underline b_0(X)]$.

For every point $x\in X$ let $\Lim(x)$ be the set of all points $y\in Y$ such that for any neighborhoods $O_x\subset X$ and $O_y\subset \IR$ of $y$ the set $\bigcup_{n\in\w}\{U\in\U_n:U\cap O_x\ne\emptyset,\;y_n(U)\in O_y\}$ is infinite.

The compactness of the set $[\inf Y,\underline b_0(X)]_Y\supset\{y_n(U):n\in\w,\;U\in\U_n\}$ implies that the set $\Lim(x)$ is not empty. We claim that $\Lim(x)$ is a singleton. To derive a contradiction, assume that $\Lim(x)$ contains two points $y<z$. Then $$W_-=\bigcup_{n\in\w}\{U\in\U_n:y_n(U)<\tfrac12(y+z)\}\mbox{ and }   W_+=\bigcup_{n\in\w}\{U\in\U_n:y_n(U)>\tfrac12(y+z)\}$$ are two disjoint open $F_\sigma$-sets with $x\in\overline{W_-}\cap\overline{W_+}$, which is not possible in $F$-spaces. This contradiction shows that the set $\Lim(x)$ contains a single point $\lambda(x)\in Y$.

Using the equality $\Lim(x)=\{\lambda(x)\}$ and the compactness of the set $[\inf Y,\underline b_0(X)]_Y$, it is possible to prove that the function $\lambda:X\to [\inf Y,b_0(X)]_Y\subset Y$ is continuous.

It remains to show that $\lambda$ belongs to the closure $\overline{\lceil\U_n;a_n,b_n]}$ of each basic set $\lceil\U_n;
a_n,b_n]$ in $\dCF(X,Y)$. It is easy to see that $$\overline{\lceil\U_n;
a_n,b_n]}=\bigcap_{U\in\U_n}\{f\in\dCF(X,Y):\bar a_n(U)\le \max f(U)\le \underline b_n(U)\}.$$ So, we need to check that $
\bar a_n(U)\le \max \lambda(U)\le \underline b_n(U)$. By the condition (a), for any $m\ge n$ there exists a set $V_m\in\U_m$ such that $y_m(V_m)\ge \bar a_m(V_m)\ge \bar a_n(U)$. By the compactness of $U$, there exists a point $x\in U$ whose any neighborhood $O_x$ intersects infinitely many sets $V_m$. For this point $x$ the value $\lambda(x)$ is contained in the closure of the set $\{y_m(V_m)\}_{m\ge n}\subset [\bar a_n(U),\underline b_0(U)]$. So, $\max\lambda(U)\ge \lambda(x)\ge\bar a_n(U)$.

On the other hand, the condition (b) guarantees that $$\bigcup_{m\ge n}\{y_m(V):V\in\U_m,\;V\subset U\}\subset [\inf Y,\underline b_n(U)],$$ which implies that $\lambda(U)\subset [\inf Y,\underline b_n(U)]$ and finally $$\bar a_n(U)\le \max\lambda(U)\le\underline b_n(U).$$
 This completes the proof of the countable base-compactness of $\dCF(X,Y)$.

By \cite[Theorem 3.7]{McN}, the function space $\dCF(X,Y)$ is regular (since $X$ is compact). Being countably base-compact, the regular space $\dCF(X,Y)$ is strong Choquet.

\section{Separate continuity of the lattice operations on $\dCF(X,Y)$}

Observe that any subset $Y\subset \IR$ is closed under the operations of $\min$ and $\max$. For any topological space $X$, these two operations induce two lattice operations on the function space $\dCF(X,Y)$:
$$\min:\dCF(X,Y)\times \dCF(X,Y)\to\dCF(X,Y),\;\;\min:(f,g)\mapsto \min\{f,g\}$$
and
$$\max:\dCF(X,Y)\times \dCF(X,Y)\to\dCF(X,Y),\;\;\max:(f,g)\mapsto \max\{f,g\},$$
where $\min\{f,g\}:x\mapsto\min\{f(x),g(x)\}$ and $\max\{f,g\}:x\mapsto\max\{f(x),g(x)\}$ for $x\in X$.

\begin{lemma}\label{l:min} For any non-empty set $Y\subset\IR$ and any continuous function $\hbar:X\to Y$ defined on a topological space $X$, the  functions $$\wedge_\hbar:\dCF(X,Y)\to\dCF(X,Y),\;\;\wedge_\hbar:f\mapsto\min\{f,\hbar\},$$
and
$$\vee_\hbar:\dCF(X,Y)\to\dCF(X,Y),\;\;\vee_\hbar:f\mapsto\max\{f,\hbar\},$$
are continuous.
\end{lemma}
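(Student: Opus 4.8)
The plan is to check continuity of $\wedge_\hbar$ and $\vee_\hbar$ by verifying that preimages of the subbasic open sets $\lceil U;y\rfloor$ and $\lceil K;y\rceil$ are open. Fix a continuous $\hbar:X\to Y$ and write $g=\wedge_\hbar(f)=\min\{f,\hbar\}$ and $h=\vee_\hbar(f)=\max\{f,\hbar\}$. The key elementary observations are the pointwise identities $\sup\min\{f,\hbar\}(U)\le\sup f(U)$ and $\sup\max\{f,\hbar\}(U)=\max\{\sup f(U),\sup\hbar(U)\}$ for open $U$, together with $\max\min\{f,\hbar\}(K)=\max$ of a truncated function and $\max\max\{f,\hbar\}(K)=\max\{\max f(K),\max\hbar(K)\}$ for compact $K$; these will let me rewrite each preimage as a finite Boolean combination of subbasic sets of $\dCF(X,Y)$ (possibly using that $\hbar$ is a fixed function, so conditions on $\hbar$ are either vacuously true or vacuously false and can be dropped or force the whole preimage to be $\emptyset$ or the whole space).

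Concretely, for $\vee_\hbar$: since $\sup\max\{f,\hbar\}(U)>y$ iff $\sup f(U)>y$ or $\sup\hbar(U)>y$, we get $\vee_\hbar^{-1}(\lceil U;y\rfloor)=\lceil U;y\rfloor$ if $\sup\hbar(U)\le y$, and $=\dCF(X,Y)$ otherwise; and since $\max\max\{f,\hbar\}(K)<y$ iff $\max f(K)<y$ and $\max\hbar(K)<y$, we get $\vee_\hbar^{-1}(\lceil K;y\rceil)=\lceil K;y\rceil$ if $\max\hbar(K)<y$, and $=\emptyset$ otherwise. All four cases are open, so $\vee_\hbar$ is continuous. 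For $\wedge_\hbar$ the $\lceil K;y\rceil$ case is again easy, since $\max\min\{f,\hbar\}(K)\le\max f(K)$, so $\max\min\{f,\hbar\}(K)<y$ whenever $\max f(K)<y$, giving $\lceil K;y\rceil\subset\wedge_\hbar^{-1}(\lceil K;y\rceil)$; conversely if $\max\min\{f,\hbar\}(K)<y$ one checks, using that $X$ is covered by the open sets where $f<y$ resp.\ $\hbar<y$ and compactness of $K$, that $f$ lies in a finite union of sets of the form $\lceil K';y\rceil$ with $K'\subset K$ compact, so the preimage is open.

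The main obstacle is the remaining case: showing $\wedge_\hbar^{-1}(\lceil U;y\rfloor)$ is open, i.e.\ that $\{f:\sup\min\{f,\hbar\}(U)>y\}$ is open. Here the subtlety is that $\sup\min\{f,\hbar\}(U)>y$ does \emph{not} follow from $\sup f(U)>y$ alone — one needs $f$ and $\hbar$ to be simultaneously large on some common part of $U$. The idea is: $\sup\min\{f,\hbar\}(U)>y$ iff there is a point $x\in U$ with $f(x)>y$ and $\hbar(x)>y$, iff there is a point in the open set $U_\hbar:=U\cap\hbar^{-1}((y,\infty))$ where $f>y$, iff $\sup f(U_\hbar)>y$. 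Since $U_\hbar$ is open (using continuity of $\hbar$), this is exactly the subbasic condition $f\in\lceil U_\hbar;y\rfloor$ — provided $U_\hbar\ne\emptyset$; if $U_\hbar=\emptyset$ the preimage is empty. Hence $\wedge_\hbar^{-1}(\lceil U;y\rfloor)=\lceil U_\hbar;y\rfloor$ or $\emptyset$, both open. One small point to be careful about is that the subbase of $\dCF(X,Y)$ requires $U$ nonempty, which is why the empty-case must be split off; and one should record that $U_\hbar$ being nonempty open with $\hbar(U_\hbar)\subset(y,\infty)$ is all that is needed. Assembling the four preimage computations for each of $\wedge_\hbar$, $\vee_\hbar$ completes the proof.
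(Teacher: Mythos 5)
Your proposal is correct and follows essentially the same route as the paper: both verify that the $\wedge_\hbar$- and $\vee_\hbar$-preimages of the subbasic sets $\lceil U;y\rfloor$ and $\lceil K;y\rceil$ are open, and your key constructions (intersecting $U$ with $\hbar^{-1}((y,\infty))$ for $\wedge_\hbar^{-1}(\lceil U;y\rfloor)$, and reducing $K$ to the compact set where $\hbar\ge y$ for $\wedge_\hbar^{-1}(\lceil K;y\rceil)$) are exactly the paper's, with your exact identity $\wedge_\hbar^{-1}(\lceil U;y\rfloor)=\lceil U\cap\hbar^{-1}((y,\infty));y\rfloor$ being, if anything, a cleaner packaging of the paper's local neighborhood argument. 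The only loose phrase is the ``finite union of sets $\lceil K';y\rceil$'' step: a single set suffices, namely $\lceil \tilde K;y\rceil$ with $\tilde K=\{x\in K:\hbar(x)\ge y\}$ (the preimage is the whole space when $\tilde K=\emptyset$), which is precisely the set the paper uses.
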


\begin{proof} For the continuity of the function $\wedge_\hbar$, it suffices to prove that for any open set $U\subset X$, compact set $K\subset X$ and real number $r$ the preimages $$\wedge_\hbar^{-1}(\lceil U;r\rfloor)\mbox{ \ and \ } \wedge_\hbar^{-1}(\lceil K;r\rceil)$$are open in $\dCF(X,Y)$.

To show that $\wedge_\hbar^{-1}(\lceil U;r\rfloor)$ is open, fix any function $f\in \wedge_\hbar^{-1}(\lceil U;r\rfloor)$. It follows that $\min\{f,\hbar\}\in \lceil U;r\rfloor$ and hence $\min\{f(x),\hbar(x)\}>r$ for some $x\in U$. By the continuity of the functions $f$ and $\hbar$, the point $x$ has an open neighborhood $O_x\subset U$ such that $\inf f(O_x)>r$ and $\inf \hbar(O_x)>r$. Then $\lceil O_x;r\rfloor$ is an open neighborhood of $f$ in $\dCF(X,Y)$ such that $\lceil O_x;r\rfloor\subset \wedge_\hbar^{-1}(\lceil U;r\rfloor)$.

To show that $\wedge_\hbar^{-1}(\lceil K;r\rceil)$ is open, fix any function $f\in \wedge_\hbar^{-1}(\lceil K;r\rceil)$. It follows that $\min\{f,\hbar\}\in \lceil K;r\rceil$. Consider the closed (and thus compact) subset $\tilde K=\{x\in K:\hbar(x)\ge r\}$ of $K$ and observe that $\lceil \tilde K,r\rceil$ is an open neighborhood of $f$, contained in the set $\wedge^{-1}_\hbar(\lceil K,r\rceil)$.
\smallskip

Next, we check that the map $\vee_\hbar$ is continuous. Fix an open set $U\subset X$, a compact set $K\subset X$, and a real number $r$.

To show that $\vee_\hbar^{-1}(\lceil U;r\rfloor)$ is open, fix any function $f\in \vee_\hbar^{-1}(\lceil U;r\rfloor)$. It follows that $\max\{f,\hbar\}\in \lceil U;r\rfloor$ and hence $\max\{f(x),\hbar(x)\}>r$ for some $x\in U$. If $\hbar(x)>r$, then $\vee_\hbar^{-1}(\lceil U;r\rfloor)=\dCF(X,Y)$ is trivially open in $\dCF(X,Y)$.
If $\hbar(x)\le r$, then $f(x)>r$ and then $f\in \lceil U;r\rfloor\subset \vee_\hbar^{-1}(\lceil U;r\rfloor)$ and $f$ is an interior point of $\lceil U;r\rfloor$.

To show that $\vee_\hbar^{-1}(\lceil K;r\rceil)$ is open, fix any function $f\in \vee_\hbar^{-1}(\lceil K;r\rceil)$. It follows that $\max\{f,\hbar\}\in \lceil K;r\rceil$ and hence $\max f(K)<r$ and $\max\hbar(K)<r$. Then $f\in\lceil K;r\rceil\subset \vee_{\hbar}^{-1}(\lceil K;r\rceil)$.
\end{proof}

\section{Extension of functions defined on $Y$-separated spaces}

In this section we establish one helpful extension property of $Y$-separated spaces.

\begin{lemma}\label{l:e} Let $Y\subset\IR$ be a non-empty subspace and $X$ be a $Y$-separated topological space. Any continuous function $f:K\to Y$ defined on a compact subset $K\subset X$ admits a continuous extension $\bar f:X\to Y$.
\end{lemma}

\begin{proof} The conclusion of the lemma is trivially true if $Y$ is a singleton. So, assume that $Y$ contains more than one point. Let $Z=[0,1]$ if $Y$ is connected and $Z=\{0,1\}$ if $Y$ is disconnected. The $Y$-separated property of $X$ implies that the space $X$ is $Z$-separated.

Then for any distinct points $a,b\in K$ we can choose a continuous function $\delta_{a,b}:X\to Z$ such that $\delta_{a,b}(a)\ne \delta_{a,b}(b)$. Let $D=\{(a,b)\in K\times K:a\ne b\}$ and observe that the map
$$\delta:X\to Z^D,\;\;\delta:x\mapsto (\delta_{a,b}(x))_{(a,b)\in D}$$
is continuous  and its restriction $h=\delta{\restriction}K:K\to \delta(K)\subset Z^D$ is injective and hence is a homeomorphism of the compact space $K$ onto $\delta(K)$. The set $\delta(K)\subset Z^D$ is compact and hence closed in the compact Hausdorff space $Z^D$. We claim that the continuous map $$g:\delta(K)\to Y,\;\;g:z\mapsto f\circ h^{-1}(z),$$ admits a continuous extension $\bar g:Z^D\to Y$.

If $Y$ is connected, then this follows from the normality of the compact Hausdorff space $Z^D$ and the Tietze-Urysohn Theorem 2.1.8 in \cite{Eng}.

If $Y$ is disconnected, then the space $Z^D=\{0,1\}^D$ is zero-dimensional, and the continuous map $g:\delta(K)\to g(K)\subset Y$ has a continuous extension $\bar g:Z^D\to g(K)\subset Y$  by Proposition 6.1.10 in \cite{Chig}.

Then $\bar f:=\bar g\circ\delta:X\to Y$ is a required continuous extension of the map $f:K\to Y$.
\end{proof}

\section{The $\infty$-density of some subsets in $\dCF(X,Y)$}%\label{s20}\label{s2}

\begin{lemma}\label{l:d} Let $Y\subset\IR$ be a subset and $X$ be a $Y$-separated space. For any non-empty compact nowhere dense set $K\subset X$ and any real numbers $y<u$ with $y\in Y$ the basic open set $\lceil K;u\rceil$ is $\infty$-dense in $\dCF(X,Y)$.
\end{lemma}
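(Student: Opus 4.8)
The plan is to verify the definition of $\infty$-density directly: given a compact Hausdorff space $Q$ and a continuous map $\Phi\colon Q\to\dCF(X,Y)$, together with a basic neighborhood of $\Phi$ in $C_k(Q,\dCF(X,Y))$, I must find a continuous map $\Psi\colon Q\to\lceil K;u\rceil$ that lies in that neighborhood. A basic neighborhood of $\Phi$ in the compact-open topology is determined by finitely many pairs $(Q_i,W_i)$ with $Q_i\subset Q$ compact and $W_i\subset\dCF(X,Y)$ open with $\Phi(Q_i)\subset W_i$; and since the sets $\lceil U;y\rfloor$, $\lceil L;y\rceil$ form a subbase, I may assume each $W_i$ is a finite intersection of such subbasic sets. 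So the task reduces to perturbing the ``function of functions'' $\Phi$ slightly — uniformly over $Q$ — so that every value lands in $\lceil K;u\rceil$, i.e. so that $\max(\Phi(q))(K)<u$ for all $q\in Q$, while preserving the finitely many constraints of the form $\sup(\Phi(q))(U)>y$ (for $q$ in some compact piece) and $\max(\Phi(q))(L)<y$ (for $q$ in another compact piece).

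The key device is Lemma~\ref{l:min}: for a fixed continuous $\hbar\colon X\to Y$, the map $\wedge_\hbar\colon f\mapsto\min\{f,\hbar\}$ is a continuous self-map of $\dCF(X,Y)$, hence $q\mapsto\wedge_\hbar(\Phi(q))$ is again continuous $Q\to\dCF(X,Y)$. I will choose $\hbar$ to equal $\inf Y$ — wait, $\inf Y$ need not be in $Y$; instead I use the hypothesis $y\in Y$ with $y<u$: since $K$ is compact nowhere dense, Lemma~\ref{l:e} lets me build, for any prescribed compact set and any prescribed values, continuous functions $X\to Y$ interpolating them. Concretely, for each of the finitely many ``positive'' constraints $\sup(\Phi(q))(U_j)>y_j$ I fix a point $x_j\in U_j\setminus K$ (possible because $K$ is nowhere dense, so $U_j\not\subset K$) at which $\Phi(q)$ is large for the relevant $q$; using $Y$-separatedness and Lemma~\ref{l:e} I extend a function that is small (close to $y$, staying below $u$) on $K$ and still takes a value $>y_j$ near $x_j$. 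Then I replace $\Phi(q)$ by $\min\{\Phi(q),\hbar\}$ for a suitable such $\hbar$: this pushes the values on $K$ down below $u$ without destroying the spikes at the points $x_j$, and it can only help the ``negative'' constraints $\max(\Phi(q))(L_i)<y_i$ since $\min$ decreases values.

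The main obstacle, and where care is needed, is the \emph{uniformity in $q$}: the points $x_j$ and the thresholds witnessing $\sup(\Phi(q))(U_j)>y_j$ depend on $q$, so I cannot fix a single $\hbar$ naively. The remedy is a compactness argument on $Q$: cover the relevant compact piece $Q_j$ by finitely many open sets on each of which a single point $x_j$ and a single strict inequality work (using continuity of $\Phi$ and openness of the subbasic sets), then take $\hbar$ to be the \emph{maximum} of the finitely many interpolating functions produced — a max of continuous maps $X\to Y$ is again such a map since $Y$ is $\max$-closed, and $\max$ only raises values, so the spikes survive simultaneously while the value on $K$ is still controlled (each interpolant was chosen $<u$ on $K$, hence so is their max). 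Finally, replacing $\Phi$ by $q\mapsto\min\{\Phi(q),\hbar\}$ via $\wedge_\hbar$ gives the desired $\Psi$: it is continuous, its values lie in $\lceil K;u\rceil$ because $\max\hbar(K)<u$, and it lies in the prescribed neighborhood of $\Phi$. This proves $\lceil K;u\rceil$ is $\infty$-dense in $\dCF(X,Y)$.
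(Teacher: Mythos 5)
Your proposal is correct and follows essentially the same route as the paper's proof: reduce to a basic neighborhood, use the nowhere density of $K$ to pick witness points in $U_j\setminus K$, build via Lemma~\ref{l:e} finitely many functions equal to $y$ on $K$ and large at those points (finitely many by compactness of the $Q_i$), take their pointwise maximum $\hbar$, and apply $\wedge_\hbar$ from Lemma~\ref{l:min}. The only details you leave implicit — the degenerate case where $u$ exceeds every element of $Y$, and the explicit extraction of a value $s\in Y$ dominating all the thresholds $a_j$ (which the paper isolates as Claim~\ref{cl:s>ay}) — are exactly the small preliminary steps the paper supplies.
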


\begin{proof} If $u$ is greater than any element of $Y$, then $\lceil K;u\rceil=\dCF(X,Y)$ and there is nothing to prove. So, we assume that $u\le\bar y$ for some $\bar y\in Y$.

Given any compact Hausdorff space $Z$, we need to prove that the subset $$C_k(Z,\lceil K;u\rceil)=\{f\in C_k(Z,\dCF(X,Y)):f(Z)\subset \lceil K;u\rceil\}$$ is dense in $C_k(Z,\dCF(X,Y))$. Fix any function $\mu\in C_k(Z,\dCF(X,Y))$ and a neighborhood $O_\mu$ of $\mu$ in $C_k(Z,\dCF(X,Y))$. Given a point $z\in Z$, it will be convenient to denote the function $\mu(z)\in\dCF(X,Y)$ by $\mu_z$.

By definition, the Fell-hypograph topology $\mathcal B$ on $\dCF(X,Y)$ has a base $\mathcal B$ consisting of the sets $$\lceil U_1;a_1\rfloor\cap\dots\cap\lceil U_n;a_n\rfloor\cap \lceil K_1;b_1\rceil\cap\dots\cap \lceil K_m;b_m\rceil$$where $U_1,\dots,U_n\subset X$ are non-empty open sets,  $K_1,\dots,K_m\subset X$ are non-empty compact sets, and $a_1,\dots,a_n,b_1,\dots,b_m\in\IR$.

On the other hand, the compact-open topology on the space $C_k(K,\dCF(X,Y))$ is generated by the subbase consisting of the sets $$[Z;B]:=\{f\in\dCF(X,Y):f(Z)\subset B\},$$where $Z$ is a non-empty compact set in $K$ and $B\in\mathcal B$.

So, without loss of generality, we can assume that the neighborhood $O_\mu$ is of basic form $$O_\mu=\bigcap_{i=1}^m[Z_i;B_i]$$for some non-empty compact sets $Z_1,\dots,Z_m\subset Z$ and some basic open sets $B_1,\dots,B_m\in\mathcal B$.

For every $i\le m$ find a non-empty finite family $\U_i$ of non-empty open sets in $X$, a finite family $\K_i$ of non-empty compact sets in $X$, and two functions $a_i:\mathcal U_i\to\IR$ and $b_i:\mathcal K_i\to\IR$ such that $$B_i=\bigcap_{U\in\U_i}\lceil U_i;a_i(U_i)\rfloor\cap\bigcap_{\kappa\in\K_i}\lceil \kappa;b_i(\kappa)\rceil.$$

Let $$a:=\max_{i\le m}\max a_i(\U_i).$$

\begin{claim}\label{cl:s>ay} There exists a point $s\in Y$ such that $s>\max\{a,y\}$.
\end{claim}

\begin{proof} Find $i\le m$ and $U\in\U_i$ such that $a=\max a_i(\U_i)=a_i(U)$. Choose any point $z_i\in Z_i$ and consider the continuous function $\mu_{z_i}=\mu(z_i):X\to Y$. It follows from $z_i\in Z_i$ and $\mu_{z_i}\in [Z_i,B_i]$ that $\mu_{z_i}\in B_i\subset \lceil U;a_i(U)\rfloor$ and hence $\sup \mu_{z_i}(U)>a_i(U)=a$. Then there exists an element $t\in \mu_{z_i}(U)\subset Y$ such that $t>a$. Then the element $s=\max\{t,\bar y\}$ belongs to $Y$ and $s>\max\{a,y\}$.
\end{proof}

 For every $i\le m$ and $z\in Z_i$ consider the function $\mu_z=\mu(z)\in \dCF(X,Y)$ and observe that $\mu\in O_\mu\subset [Z_i;B_i]$ implies $\mu_z\in B_i\subset \bigcap_{U\in\U_i}\lceil U;a_i(U)\rfloor$. Then for every $U\in\U_i$ we can choose a point $x_{z,U}\in U$ such that $\mu_z(x_{z,U})>a_i(U)$. Since the compact set $K$ is nowhere dense in $X$, we can additionally assume that $x_{z,U}\notin K$. Using Lemma~\ref{l:e}, construct a continuous function $\hbar_{z,U}:X\to Y$ such that $\hbar_{z,U}(K)=\{y\}$ and $\hbar_{z,U}(x_{z,U})=s>a\ge a_i(U)$. Then the open set $W_{z,U}:=\{x\in U:\hbar_{z,U}(x)>a_i(U)\}$ is an open neighborhood of the point $x_{z,U}$ and $O_{z,U}:=\mu^{-1}(\lceil W_{z,U};a_i(U)\rfloor)$ is an open neighborhood of $z$ in $Z$.  By the compactness of $Z_i$, there exists a finite set $F_{i,U}\subset Z_i$ such that $Z_i\subset\bigcup_{z\in F_{i,U}}O_{z,U}$. Consider the continuous function $\hbar:X\to Y$, defined by
 $$\hbar=\max\{\hbar_{z,U}:i\le m,\;U\in\U_i,\;z\in F_{i,U}\}$$and observe that $\hbar(K)=\{y\}$.

Lemma~\ref{l:min} implies that the map $\mu':Z\to \dCF(X,Y)$ assigning to each $z\in Z$ the function $\mu_z'=\min\{\mu_z,\hbar\}$ is continuous. Taking into account that $\max \mu_z'(K)\le\max \hbar (K)=y<u$, we conclude that $\mu_z'\in\lceil K;u\rceil$ and hence $\mu'(Z)\subset \lceil K;u\rceil$.

It remains to check that $\mu'\in O_\mu$. Since $O_\mu=\bigcap_{i=1}^m[Z_i;B_i]$, we should prove that for any $i\le n$ and point $z_i\in Z_i$, the function $\mu'_{z_i}=\min\{\mu_{z_i},\hbar\}$ belongs to the set $B_i=\bigcap_{U\in\U_i}\lceil U;a_i(U)\rfloor\cap\bigcap_{\kappa\in\K_i}\lceil\kappa;b_i(\kappa)\rceil$.

Observe that for every  $\kappa\in\K_i$ we have
$\max \mu'_{z_i}(\kappa)\le\max \mu_{z_i}(\kappa)<b_i(\kappa)$. This implies that $\mu'_{z_i}\in\bigcap_{\kappa\in\K_i}\lceil \kappa;b_i(\kappa)\rceil$.

To show that $\mu'_{z_i}\in \bigcap_{U\in\U_i}\lceil U;a_i(U)\rfloor$, take any $U\in\U$ and find $z\in F_{i,U}$ such that $z_i\in O_{z,U}$. By the definition of $O_{z,U}=\mu^{-1}(\lceil W_{z,U};a_i(U)\rfloor)$, we get $\sup \mu_{z_i}(W_{z,U})>a_i(U)$. Consequently, there exists a point $w\in W_{z,U}\subset U$ such that $\mu_{z_i}(w)>a_i(U)$. By the definition of the set $W_{z,U}=\{x\in U:\hbar_{z,U}(x)>a_i(U)\}$, we get $\hbar(w)\ge \hbar_{z,U}(w)>a_i(U)$. Then
$$\sup\mu_{z_i}'(U)\ge \mu'_{z_i}(w)=\min\{\mu_{z_i}(w),\hbar(w)\}>a_i(U)$$and $\mu_{z_i}'\in \lceil U;a_i(U)\rfloor$.

Therefore, $$\mu'_{z_i}\in \bigcap_{U\in\U_i}\lceil U;a_i(U)\rfloor\cap\bigcap_{\kappa\in\K_i}\lceil \kappa;b_i(\kappa)\rceil=B_i$$and we are done.
\end{proof}

\begin{lemma}\label{l:Dni} Let $Y\subset \IR$ be a subspace such that $\inf Y\notin\dot Y$. For any topological space $X$ and any non-empty open subset $V\subset X$ the basic open set $\lceil V,\inf Y\rfloor$ is $\infty$-dense in $\dCF(X,Y)$.
\end{lemma}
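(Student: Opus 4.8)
The plan is to show that for every compact Hausdorff space $Z$ the set $C_k(Z,\lceil V,\inf Y\rfloor)$ is dense in $C_k(Z,\dCF(X,Y))$, following the same scheme as in Lemma~\ref{l:d} but exploiting the hypothesis $\inf Y\notin\dot Y$ in place of the nowhere density of $K$. First I would fix $\mu\in C_k(Z,\dCF(X,Y))$ and a basic neighbourhood $O_\mu=\bigcap_{i=1}^m[Z_i;B_i]$ of $\mu$, where each $B_i=\bigcap_{U\in\U_i}\lceil U;a_i(U)\rfloor\cap\bigcap_{\kappa\in\K_i}\lceil\kappa;b_i(\kappa)\rceil$ is a basic open set of $\dCF(X,Y)$. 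I must produce $\mu'\in O_\mu$ with $\mu'(Z)\subset\lceil V,\inf Y\rfloor$, i.e. $\sup\mu'_z(V)>\inf Y$ for all $z\in Z$.

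The key new ingredient is this: since $\inf Y$ is not an isolated point of $Y$ (it may even fail to belong to $Y$), there exists a strictly decreasing sequence in $Y$ converging to $\inf Y$; in particular, for the value $a:=\max_{i\le m}\max a_i(\U_i)$ there is some $c\in Y$ with $\inf Y<c$, and moreover we can pick $c$ as close to $\inf Y$ as we like — in any case $c>\inf Y$ suffices. Pick any point $v\in V$. Because $X$ is $Y$-separated, Lemma~\ref{l:e} (applied to the compact singleton $\{v\}$, or directly to $Y$-separatedness) furnishes a continuous $\hbar:X\to Y$ with $\hbar(v)=c$; actually it is cleaner to build $\hbar$ so that it takes the value $\inf Y$ on a large compact set and the value $c>\inf Y$ at $v$ — but note $\inf Y$ need not be attained, so instead I would let $\hbar$ be a constant or a function with values in $Y$ and arrange $\inf\hbar(X)$ to be as small as needed. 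The correct move, mirroring Lemma~\ref{l:d}: as there, for each $i\le m$, each $U\in\U_i$ and each $z$ in (a finite cover of) $Z_i$, choose $x_{z,U}\in U$ with $\mu_z(x_{z,U})>a_i(U)$, and choose $\hbar_{z,U}:X\to Y$ with $\hbar_{z,U}(x_{z,U})>a$ and $\hbar_{z,U}(v)$ \emph{small} — but now we additionally want $\hbar$ to produce small values on $V$. Here is the clean way: take $\hbar:X\to Y$ continuous with $\hbar(v)=c'$ for some $c'\in Y$ with $\inf Y<c'\le$ (something below all relevant $b_i(\kappa)$'s is \emph{not} needed); what we need is just $\sup\hbar(V)>\inf Y$, which holds automatically since $c'>\inf Y$ and $v\in V$. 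Then set $\mu'_z:=\min\{\mu_z,\hbar\}$.

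So the real structure is: combine, as in Lemma~\ref{l:d}, an "upper" function $\hbar$ built from finitely many pieces $\hbar_{z,U}$ (guaranteeing that $\min\{\mu_z,\hbar\}$ still lies in each $\lceil U;a_i(U)\rfloor$ and in each $\lceil\kappa;b_i(\kappa)\rceil$, the latter because $\min$ only decreases $\max$ over $\kappa$), together with the one extra requirement that $\hbar$ takes some value strictly above $\inf Y$ at some point of $V$ — which is the only thing the new conclusion $\mu'_z\in\lceil V,\inf Y\rfloor$ demands, and which is cost-free given $\inf Y\notin\dot Y$ (so $Y\cap(\inf Y,\infty)\ne\emptyset$) and $V\ne\emptyset$. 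Taking $\hbar$ to dominate all the $\hbar_{z,U}$ \emph{and} to have a value in $Y$ above $\inf Y$ at a chosen point of $V$, then letting $\mu'_z=\min\{\mu_z,\hbar\}$, we get $\mu'\in C_k(Z,\dCF(X,Y))$ by Lemma~\ref{l:min}, $\mu'(Z)\subset\lceil V,\inf Y\rfloor$ because $\sup\mu'_z(V)\ge\min\{\mu_z(v),\hbar(v)\}$ can be kept $>\inf Y$ by also shrinking $\hbar$'s floor appropriately, and $\mu'\in O_\mu$ by the same verification as in Lemma~\ref{l:d}.

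The main obstacle is a bookkeeping one: one must choose $\hbar$ so that simultaneously (i) $\hbar\ge\hbar_{z,U}$ everywhere for the finitely many auxiliary functions, which forces $\min\{\mu_z,\hbar\}$ to stay in the sets $\lceil U;a_i(U)\rfloor$; (ii) $\max(\min\{\mu_z,\hbar\})(\kappa)\le\max\mu_z(\kappa)<b_i(\kappa)$, which is automatic; and (iii) there is a point $v\in V$ with $\min\{\mu_z(v),\hbar(v)\}>\inf Y$ for every $z\in Z$. Requirement (iii) is where $\inf Y\notin\dot Y$ enters: since $V$ is open and nonempty, either $V$ meets $K$-type constraints or not, but in any case we may insist that on the chosen point $v$ the value $\mu_z(v)$ is already $>\inf Y$ (true for \emph{every} $z$ because $\mu_z$ maps into $Y$ and, $\inf Y$ being a non-isolated point of $\IR\cap Y$, values of continuous $Y$-valued functions lie in $Y$, which may or may not contain $\inf Y$; if $\inf Y\notin Y$ then $\mu_z(v)>\inf Y$ trivially, and if $\inf Y\in Y$ one chooses $v$ with $\mu_z(v)>\inf Y$ — possible by a compactness argument over $z\in Z$, or more simply by using the fact that $V$ is open so we can move $v$ slightly). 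Handling the case $\inf Y\in Y$ (but non-isolated) with full uniformity over $z\in Z$ is the only delicate point, and it is resolved exactly as the compactness argument in Lemma~\ref{l:d}: cover each $Z_i$ by finitely many sets on which a fixed choice of $v$ works, then take $\hbar$ to be the max of the corresponding pieces together with a global piece achieving a value above $\inf Y$ on $V$.
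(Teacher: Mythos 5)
Your proposal goes in the wrong direction with the lattice operation, and this creates a genuine gap. You build $\mu'_z=\min\{\mu_z,\hbar\}$, mirroring Lemma~\ref{l:d}; but the target condition $\mu'_z\in\lceil V;\inf Y\rfloor$ asks that $\mu'_z$ be \emph{large} somewhere on $V$, and taking a minimum can only push values down. At the point $v\in V$ you get $\min\{\mu_z(v),\hbar(v)\}$, which equals $\mu_z(v)$ whenever $\mu_z(v)\le\hbar(v)$; if $\inf Y\in Y$ and $\mu_z$ is, say, the constant function with value $\inf Y$ (a perfectly admissible element of $\dCF(X,Y)$), then \emph{no} choice of $v\in V$ gives $\mu_z(v)>\inf Y$, so your requirement (iii) cannot be met and no compactness argument over $z\in Z$ rescues it. You also invoke $Y$-separatedness and Lemma~\ref{l:e}, which the statement does not assume (it holds for an arbitrary topological space $X$), and you misplace where $\inf Y\notin\dot Y$ enters: the mere existence of some $c\in Y$ with $c>\inf Y$ holds whenever $|Y|>1$ and would not distinguish, e.g., $Y=\{0,1\}$, for which the lemma is false.

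The paper's argument is much shorter and uses $\max$ instead of $\min$. Set $b=\min_{i\le m}\min_{\kappa\in\K_i}b_i(\kappa)$; non-emptiness of the relevant basic set $B_i$ forces $b>\inf Y$, and since $\inf Y$ is not isolated in $Y$ one can pick $y\in Y$ with $\inf Y<y<b$ (this is the real use of the hypothesis). Taking $\hbar\equiv y$ and $\mu'_z=\max\{\mu_z,\hbar\}$, the lower constraints $\lceil U;a_i(U)\rfloor$ are preserved automatically because $\mu'_z\ge\mu_z$, the upper constraints $\lceil\kappa;b_i(\kappa)\rceil$ survive because $y<b\le b_i(\kappa)$, and $\sup\mu'_z(V)\ge y>\inf Y$ holds everywhere, with no bookkeeping of auxiliary functions $\hbar_{z,U}$ and no extension lemma needed.
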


\begin{proof} Given any compact Hausdorff space $Z$, we need to prove that the subset $$C_k(Z,\lceil V;\inf Y\rfloor)=\{f\in C_k(Z,\dCF(X,Y)):f(Z)\subset \lceil V;\inf Y\rfloor\}$$ is dense in $C_k(Z,\dCF(X,Y))$. Fix any function $\mu\in C_k(Z,\dCF(X,Y))$ and a neighborhood $O_\mu$ of $\mu$ in $C_k(Z,\dCF(X,Y))$.

We lose no generality assuming that $O_\mu$ is of the basic form $O_\mu=\bigcap_{i=1}^m[Z_i;B_i]$ for some non-empty compact sets $Z_1,\dots,Z_m\subset K$ and some sets $$B_i=\bigcap_{U\in\U_i}\lceil U;a_i(U)\rfloor\cap\bigcap_{\kappa\in\K_i}\lceil\kappa;b_i(\kappa)\rceil,$$where $\U_i$ is a finite family of non-empty open sets in $X$, $\K_i$ is a finite non-empty family of non-empty compact sets in $X$, and $a_i:\mathcal U_i\to\IR$, $b_i:\mathcal K_i\to\IR$ are functions.

Let $b=\min_{i\le m}\min_{\kappa\in\K_i}b_i(\kappa)$. Find $i\le m$ and $\kappa\in K_i$ such that $b=b_i(\kappa)$. The inclusion $\mu\in O_\mu\subset [Z_i;B_i]$ implies that the set $B_i$ is not empty and hence contains some function $\beta:X\to Y$. For this function we get $\beta\in B_i\subset \lceil\kappa;b_i(\kappa)\rceil$ and hence $\inf Y\le\max \beta(\kappa)<b_i(\kappa)=b$. So, $b>\inf Y$. Since the point $\inf Y$ is not isolated in $Y$, there exists an element $y\in Y$ such that $\inf Y<y<b$. Let $\hbar:X\to\{y\}\subset Y$ be the constant function. By Lemma~\ref{l:min}, the function $$\mu':K\to \dCF(X,Y),\;\mu':z\mapsto \mu'_z:=\max\{\mu_z,\hbar\},$$ is continuous. It is easy to see that $\mu'(K)\subset \lceil U;\inf Y\rfloor$ and $\mu'\in O_\mu$.
\end{proof}

\begin{lemma}\label{l:Dnc} Let $Y\subset \IR$ be a subspace such that $\inf Y\in Y$. For any $Y$-separated space $X$, any open subset $V\subset X$ with non-compact closure $\overline{V}$, and any real number $u$ with $\{y\in Y:y>u\}\ne\emptyset$, the basic open set $\lceil V,u\rfloor$ is $\infty$-dense in $\dCF(X,Y)$.
\end{lemma}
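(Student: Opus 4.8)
The plan is to adapt the proof of Lemma~\ref{l:d}: there one pushes function values \emph{down} on a nowhere dense compact set using the operation $\min$; here I would push values \emph{up} at a suitable point of $V$ using $\max$, with the non-compactness of $\overline V$ taking over the role previously played by the nowhere density. First, if $Y$ is a singleton, then the hypothesis $\{y\in Y:y>u\}\ne\emptyset$ gives $\inf Y>u$ and hence $\lceil V;u\rfloor=\dCF(X,Y)$, so there is nothing to prove; thus I would assume $Y$ has at least two points, which makes the $Y$-separated space $X$ Hausdorff. Given a compact Hausdorff space $Z$, a map $\mu\in C_k(Z,\dCF(X,Y))$ and a neighbourhood $O_\mu$ of $\mu$, I would, exactly as in Lemma~\ref{l:d}, reduce to the case $O_\mu=\bigcap_{i=1}^m[Z_i;B_i]$ with each $Z_i\subset Z$ compact and each $B_i=\bigcap_{U\in\U_i}\lceil U;a_i(U)\rfloor\cap\bigcap_{\kappa\in\K_i}\lceil\kappa;b_i(\kappa)\rceil$ for finite families $\U_i,\K_i$ of non-empty open, resp.\ compact, subsets of $X$; then it suffices to find $\mu'\in O_\mu$ with $\mu'(Z)\subset\lceil V;u\rfloor$.

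Next I would single out a point at which to raise the values. The finite union $C:=\bigcup_{i\le m}\bigcup_{\kappa\in\K_i}\kappa$ is compact, hence closed in the Hausdorff space $X$; if $V$ were contained in $C$, then $\overline V$ would be a closed subset of the compact set $C$ and therefore compact, contrary to hypothesis. So I can choose $x_0\in V\setminus C$. Picking $s\in Y$ with $s>u$, I would apply the extension Lemma~\ref{l:e} to the continuous function on the compact set $C\cup\{x_0\}$ that sends all of $C$ to $\inf Y\in Y$ and sends $x_0$ to $s$ (this function is continuous because $x_0\notin C$ and $C$ is closed), obtaining a continuous $\hbar:X\to Y$ with $\hbar(C)=\{\inf Y\}$ and $\hbar(x_0)=s$. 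By Lemma~\ref{l:min} the map $\mu':Z\to\dCF(X,Y)$, $z\mapsto\max\{\mu(z),\hbar\}$, is continuous, and this is the map I would propose.

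It then remains to check the two requirements. For every $z\in Z$ we have $x_0\in V$ and $\mu'(z)(x_0)\ge\hbar(x_0)=s>u$, so $\sup\mu'(z)(V)>u$ and hence $\mu'(Z)\subset\lceil V;u\rfloor$. For membership in $O_\mu$, fix $i\le m$ and $z\in Z_i$, so $\mu(z)\in B_i$; since $\mu'(z)\ge\mu(z)$ pointwise, the conditions $\sup\mu(z)(U)>a_i(U)$ survive for $U\in\U_i$, and since every $\kappa\in\K_i$ lies in $C$ we have $\hbar(x)=\inf Y\le\mu(z)(x)$ for $x\in\kappa$, so $\mu'(z)$ agrees with $\mu(z)$ on $\kappa$ and $\max\mu'(z)(\kappa)=\max\mu(z)(\kappa)<b_i(\kappa)$; hence $\mu'(z)\in B_i$. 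I expect the only genuinely delicate step to be the choice of $\hbar$: it must raise $\mu$ enough on $V$ to get past $u$ while leaving the upper bounds $\lceil\kappa;b_i(\kappa)\rceil$ untouched, and this is exactly what forces $x_0\notin C$ (provided by the non-compactness of $\overline V$), the value $\inf Y$ of $\hbar$ on $C$ (using $\inf Y\in Y$), and the use of the extension Lemma~\ref{l:e}; everything else is routine and parallels Lemma~\ref{l:d}.
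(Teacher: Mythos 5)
Your proof is correct and follows essentially the same route as the paper: pick a point of $V$ outside the compact union of the sets in the families $\K_i$ (which exists because $\overline V$ is not compact), use Lemma~\ref{l:e} to build $\hbar$ equal to $\inf Y$ on that union and to some $s\in Y$ with $s>u$ at the chosen point, and pass to $\max\{\mu(z),\hbar\}$ via Lemma~\ref{l:min}. The only (harmless) deviation is that the paper insists on $s>\max\{a,u\}$ where $a$ bounds the numbers $a_i(U)$, whereas you correctly observe that $s>u$ already suffices since $\mu'\ge\mu$ pointwise automatically preserves the lower constraints $\lceil U;a_i(U)\rfloor$; you also spell out the verification the paper leaves as ``easy to see.''
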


\begin{proof} Given any compact Hausdorff space $Z$, we need to prove that the subset $$C_k(Z,\lceil V;u\rfloor)=\{f\in C_k(Z,\dCF(X,Y)):f(Z)\subset \lceil V;u\rfloor\}$$ is dense in $C_k(Z,\dCF(X,Y))$. Fix any function $\mu\in C_k(Z,\dCF(X,Y))$ and a neighborhood $O_\mu$ of $\mu$ in $C_k(Z,\dCF(X,Y))$.

We lose no generality assuming that $O_\mu$ is of the basic form $O_\mu=\bigcap_{i=1}^m[Z_i;B_i]$ for some non-empty compact sets $Z_1,\dots,Z_m\subset K$ and some sets $$B_i=\bigcap_{U\in\U_i}\lceil U;a_i(U)\rfloor\cap\bigcap_{\kappa\in\K_i}\lceil\kappa;b_i(\kappa)\rceil,$$where $\U_i$ is a finite non-empty family of non-empty open sets in $X$, $\K_i$ is a finite non-empty family of non-empty compact sets in $X$, and $a_i:\mathcal U_i\to\IR$, $b_i:\mathcal K_i\to\IR$ are functions.

Let $a=\max_{i\le m}\max_{U\in\U_i}a_i(U_i)$.

\begin{claim} There exists an element $s\in Y$ such that $s>\max\{a,u\}$.
\end{claim}

\begin{proof} If $a\le u$, then take any element $s\in Y$ with $s>u$ and conclude that $s>u=\max\{a,u\}$.

So, we assume that $a>u$. Find $i\le m$ and $U\in\U_i$ with $a=a_i(U)$. Since $\mu\in O_\mu\subset [Z_i;B_i]$, the set $B_i$ is not empty and hence contains some function $\beta\in B_i\subset\lceil U,a_i(U)\rfloor$. Then $\sup \beta(U)>a_i(U)=a$ and hence there exists a point $s\in \beta(U)\subset Y$ with $s>a=\max\{a,u\}$.
\end{proof}

Consider the compact set $\kappa=\bigcup_{i\le m}\bigcup\K_i$. Since the set $V$ has non-compact closure, $V\not\subset\kappa$, so we can choose a point $v\in V\setminus \kappa$. Applying Lemma~\ref{l:e}, find a continuous function $\hbar:X\to Y$ such that $\hbar(\kappa)\subset\{\inf Y\}$ and $\hbar (v)=s$. By Lemma~\ref{l:min}, the map
$$\mu':K\to \dCF(X,Y),\;\mu':z\mapsto \max\{\mu(z),\hbar\},$$ is continuous. It is easy to see that $\mu'(K)\subset\lceil V,u\rfloor$ and $\mu'\in O_\mu$.
\end{proof}

For sets $A\subset X$ and $B\subset Y\subset\IR$ let $$[A;B]:=\{f\in\dCF(X,Y):f(A)\subset B\}.$$ For a point $y\in Y\subset\IR$ we put ${\downarrow}y:=\{u\in Y:u\le y\}$. The following lemma is a modification of Lemma~\ref{l:d}.

\begin{lemma}\label{l:dXy}  For any subset $Y\subset \IR$, real numbers $y<\bar y$ in the set $Y$, and a topological space $X$, the set
$[X';{\downarrow}y]$ is $\infty$-dense in the subspace $[{X'^\circ};{\downarrow}y]$ of $\dCF(X,Y)$.
\end{lemma}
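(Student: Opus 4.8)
The plan is to follow the scheme of the proof of Lemma~\ref{l:d}. Fix a compact Hausdorff space $Z$; it suffices to show that $C_k(Z,[X';{\downarrow}y])$ is dense in $C_k\big(Z,[{X'^\circ};{\downarrow}y]\big)$. So I would fix $\mu\in C_k\big(Z,[{X'^\circ};{\downarrow}y]\big)$ and (as in Lemma~\ref{l:d}) a basic neighbourhood $O_\mu=\bigcap_{i\le m}[Z_i;B_i]$, where $Z_1,\dots,Z_m\subset Z$ are non-empty compact sets and $B_i=\bigcap_{U\in\U_i}\lceil U;a_i(U)\rfloor\cap\bigcap_{\kappa\in\K_i}\lceil\kappa;b_i(\kappa)\rceil$ for finite families $\U_i$ of non-empty open sets and $\K_i$ of non-empty compact sets in $X$ and functions $a_i:\U_i\to\IR$, $b_i:\K_i\to\IR$. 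Just as in Lemma~\ref{l:d}, I would produce the desired nearby function in the form $\mu'=\wedge_\hbar\circ\mu$ for a suitably chosen continuous $\hbar:X\to Y$: then $\mu'$ is continuous because $\wedge_\hbar$ is continuous by Lemma~\ref{l:min}, the pointwise inequality $\mu'_z:=\min\{\mu_z,\hbar\}\le\mu_z$ automatically keeps $\mu'_z$ inside every $\lceil\kappa;b_i(\kappa)\rceil$, and arranging $\hbar\restriction X'\equiv y$ forces $\mu'_z(X')\subset{\downarrow}y$, i.e.\ $\mu'(Z)\subset[X';{\downarrow}y]$. It will then only remain to keep $\mu'_z$ inside the sets $\lceil U;a_i(U)\rfloor$.

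The key observation, which also explains why no $Y$-separation hypothesis is needed here (unlike in Lemma~\ref{l:d}), is that $X'\setminus X'^\circ$ is closed with empty interior, so $\dot X\cup X'^\circ$ is open and dense in $X$; consequently $\sup g(U)=\sup g\big(U\cap(\dot X\cup X'^\circ)\big)$ for every continuous $g:X\to\IR$ and every non-empty open $U\subset X$. Applying this with $g=\mu_z$ for $z\in Z_i$ and using $\mu_z(X'^\circ)\subset{\downarrow}y$, I obtain: for each $U\in\U_i$ with $a_i(U)\ge y$, the inequality $\sup\mu_z(U)>a_i(U)$ forces $\sup\mu_z(U\cap\dot X)>a_i(U)$, so one may pick an \emph{isolated} point $x_{z,U}\in U$ with $\mu_z(x_{z,U})>a_i(U)$. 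Then $O_{z,U}:=\mu^{-1}\big(\lceil\{x_{z,U}\};a_i(U)\rfloor\big)$ is an open neighbourhood of $z$ in $Z$, and by compactness of $Z_i$ finitely many of these sets, indexed by a finite set $F_{i,U}\subset Z_i$, cover $Z_i$.

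I would then collect the finite set $P$ of all the isolated points $x_{z,U}$ produced above (over all $i\le m$, all $U\in\U_i$ with $a_i(U)\ge y$, and all $z\in F_{i,U}$) and define $\hbar:X\to Y$ to be the constant $y$ on $X\setminus P$ and $\hbar(x):=\max\{\mu_z(x):x=x_{z,U}\text{ for some relevant triple }(i,U,z)\}$ for $x\in P$. Since $P$ is a finite set of isolated points it is clopen, so $\hbar$ is continuous; its values lie in $Y$ because $Y$ is closed under finite maxima and each $\mu_z(x)$ lies in $Y$; moreover $\hbar\ge y$ everywhere and $\hbar\restriction X'\equiv y$ because $P\subset\dot X$. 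To verify $\mu'\in O_\mu$, i.e.\ $\mu'_z\in B_i$ for every $z\in Z_i$, the membership $\mu'_z\in\lceil\kappa;b_i(\kappa)\rceil$ is immediate from $\mu'_z\le\mu_z$; for $U\in\U_i$ with $a_i(U)<y$ one picks $x\in U$ with $\mu_z(x)>a_i(U)$ and notes $\mu'_z(x)\ge\min\{\mu_z(x),y\}>a_i(U)$; and for $U\in\U_i$ with $a_i(U)\ge y$ one picks $z'\in F_{i,U}$ with $z\in O_{z',U}$ and, setting $x:=x_{z',U}$, gets $\mu_z(x)>a_i(U)$ (since $z\in O_{z',U}$) together with $\hbar(x)\ge\mu_{z'}(x)>a_i(U)$, hence $\mu'_z(x)>a_i(U)$. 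Thus $\mu'\in O_\mu\cap C_k(Z,[X';{\downarrow}y])$, which proves the density.

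The step I expect to be the real obstacle is the last one: on a set $U\in\U_i$ with $a_i(U)\ge y$ the function $\mu_z$ is forced to take a value exceeding $y$ somewhere in $U$, and a priori this might occur only on $X'\setminus X'^\circ$, where $\hbar$ is constrained to equal $y$; it is precisely the nowhere-density of $X'\setminus X'^\circ$ that lets one relocate the witnessing point to an isolated point of $U$, where $\hbar$ may be raised freely. Everything else is a routine adaptation of the proof of Lemma~\ref{l:d}. (The assumption that $Y$ contains some $\bar y>y$ is not actually used in the argument above; it only ensures ${\downarrow}y\ne Y$, so that the statement is non-trivial.)
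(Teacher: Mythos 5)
Your proof is correct and follows essentially the same route as the paper: relocate the witnessing points for the conditions $\lceil U;a_i(U)\rfloor$ with $a_i(U)\ge y$ into $\dot X$ using the nowhere-density of $X'\setminus X'^\circ$, then truncate $\mu$ by $\min\{\cdot,\hbar\}$ with $\hbar\equiv y$ off a finite clopen set of isolated points. The only (harmless) deviations are that you set $\hbar$ equal to $\max\{\mu_{z}(x)\}$ at those isolated points instead of the paper's single constant $s$ produced via Claim~\ref{cl:s>ay}, which is why you can observe that the hypothesis $\bar y>y$ is not actually needed.
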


\begin{proof}  Given any compact Hausdorff space $Z$, we need to prove that the subset $$C_k(Z,[X';{\downarrow}y])=\{\mu\in C_k(Z,\dCF(X,Y)):\mu(Z)\subset [X';{\downarrow}y]\}$$ is dense in $C_k(Z,[{X'^\circ};{\downarrow}y])$. Fix any function $\mu\in C_k(Z,[{X'^\circ};{\downarrow}y])$ and a neighborhood $O_\mu$ of $\mu$ in $C_k(Z,[{X'^\circ};{\downarrow}y])$.

We lose no generality assuming that $O_\mu$ is of the basic form $O_\mu=\bigcap_{i=1}^m[{X'^\circ};{\downarrow}y]\cap [Z_i;B_i]$ for some compact sets $Z_1,\dots,Z_m\subset K$ and some sets $$B_i=\bigcap_{U\in\U_i}\lceil U;a_i(U)\rfloor\cap\bigcap_{\kappa\in\K_i}\lceil\kappa;b_i(\kappa)\rceil,$$where $\U_i$ is a non-empty finite family of non-empty open sets in $X$, $\K_i$ is a finite non-empty family of non-empty compact sets in $X$, and $a_i:\mathcal U_i\to\IR$, $b_i:\mathcal K_i\to\IR$ are functions.

Let $a=\max_{i\le m}\max_{U\in\U_i}a_i(U_i)$ and $b=\min_{i\le m}\min_{\kappa\in\K_i}b_i(\kappa)$. Repeating the argument of Claim~\ref{cl:s>ay}, we can find a real number $s\in Y$ such that $s>a$ and $s\ge \bar y>y$.

For every $i\le m$ and $z\in Z_i$ consider the function $\mu_z=\mu(z)\in [{X'^\circ};{\downarrow}y]$ and observe that $\mu\in O_\mu\subset [{X'^\circ};{\downarrow}y]\cap [Z_i;B_i]$ implies $\mu_z\in B_i\subset \bigcap_{U\in\U_i}\lceil U;a_i(U)\rfloor$. Then for every $U\in\U_i$ with $a_i(U)\ge y$, we can choose a point $x_{z,U}\in U$ such that $\mu_z(x_{z,U})>a_i(U)$. Since $a_i(U)\ge y$,  the inclusion $\mu_z\in[{X'^\circ};{\downarrow}y]=[\overline{X'^\circ};{\downarrow}y]$ implies that $x_{z,U}\notin \overline{X'^\circ}$. Since  the set  $X'\setminus X'^\circ\supset X'\setminus \overline{X'^\circ}$ is nowhere dense in $X$, we can replace $x_{z,U}$ by a near point in the set $U\setminus X'$ and additionally assume that $x_{z,U}\in \dot X$.

It follows that  $O_{z,U}:=\mu^{-1}(\lceil\{x_{z,U}\};a_i(U)\rfloor)$ is an open neighborhood of $z$ in $Z$.  By the compactness of $Z_i$, there exists a finite set $F_{i,U}\subset Z_i$ such that $Z_i\subset\bigcup_{z\in F_{i,U}}O_{z,U}$. Consider the finite set
 $$E=\bigcup_{i=1}^m\bigcup_{U\in\U_i}\{x_{z,U}:a_i(U)\ge y,\;\;z\in F_{i,U}\}\subset\dot X$$and define a continuous function $\hbar:X\to Y$ by the formula
 $$\hbar(x)=\begin{cases}s&\mbox{if $x\in E$};\\
  y&\mbox{if $x\in X\setminus E$}.
  \end{cases}
  $$
Lemma~\ref{l:min} implies that the map $\mu':Z\to [{X'^\circ};{\downarrow}y]$ assigning to each $z\in Z$ the function $\mu_z'=\min\{\mu_z,\hbar\}$ is continuous. Taking into account that $\max \mu_z'(X')\le\max \hbar (X')=y$, we conclude that $\mu_z'\in [X';{\downarrow}y]$ and hence $\mu'(Z)\subset [X';{\downarrow}y]$.

By analogy with the proof of Lemma~\ref{l:d}, we can show that $\mu'\in O_\mu$.
\end{proof}

%5\begin{corollary}\label{c:dXy}  For any subset $Y\subset \IR$, real numbers $y<\bar y$ in the set $Y$, and a topological space $X$ with dense $\dot X$ of isolated points, the set
%$[X';{\downarrow}y]$ is $\infty$-dense in the space $\dCF(X,Y)=[{X'^\circ};{\downarrow}y]$.
%\end{corollary}

%\begin{lemma}\label{l:s1} Let $Y$ be a subset of the real line, containing more than one point. If $\inf Y\notin Y$, then for any non-discrete $Y$-separated space $X$, the function space $\dCF(X,Y)$ is $\infty$-meager.
%\end{lemma}

%\begin{proof} Fix any non-isolated point $x_0\in X$. Since $\inf Y\notin Y$, there exists a strictly decreasing sequence $\{y_n\}_{n\in\w}\subset Y$ with $\lim_{n\to\infty}y_n=\inf Y$. By Lemma~\ref{l:d}, for every $n\in\w$ the basic open set $$\lceil \{x_0\};y_n\rceil=\{f\in\dCF(X,Y):f(x_0)<y_n\}$$is $\infty$-dense in $\dCF(X,Y)$. Then the function space $$\dCF(X,Y)=\bigcup_{n\in\w}\dCF(X,Y)\setminus\lceil \{x_0\};y_n\rceil$$is $\infty$-meager, being a countable union of closed $\infty$-codense sets $\dCF(X,Y)\setminus\lceil\{x_0\},y_n)\rceil$, $n\in\w$.
%\end{proof}

\section{The subspace $\dCF'(X,Y)$}

Given a topological space $X$ and a subset $Y\subset\IR$ with $\inf Y\in Y$, consider the subset  $$\dCF'(X,Y):=\big\{f\in \dCF(X,Y):f(X')\subset \{\inf Y\}\big\}=[X';\{\inf Y\}]$$in the function space $\dCF(X,Y)$.

In this section we establish some properties of the subspace $\dCF'(X,Y)$ of $\dCF(X,Y)$. The following lemma is a partial case of Lemma~\ref{l:dXy}.

\begin{lemma}\label{l:C'd}  For any subset $Y\subset \IR$ with $\inf Y\in Y\ne\{\inf Y\}$ and any topological space $X$ the set $\dCF'(X,Y)$ is $\infty$-dense in the subspace $[{X'^\circ};\{\inf Y\}]$ \textup{(}which is equal to $\dCF(X,Y)$ if $X'^\circ=\emptyset\,)$.
\end{lemma}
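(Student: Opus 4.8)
The plan is to derive Lemma~\ref{l:C'd} as a direct specialization of Lemma~\ref{l:dXy}. Recall that $\dCF'(X,Y)=[X';\{\inf Y\}]$ and that, since $\inf Y\in Y$, for any $y\in Y$ with $y>\inf Y$ we have ${\downarrow}\inf Y=\{\inf Y\}$ (because $\inf Y$ is the least element of $Y$). So with the choice $y:=\inf Y$ and $\bar y$ any element of $Y$ strictly greater than $\inf Y$ (such an element exists precisely because $Y\ne\{\inf Y\}$), Lemma~\ref{l:dXy} asserts that $[X';\{\inf Y\}]=[X';{\downarrow}y]$ is $\infty$-dense in $[X'^\circ;{\downarrow}y]=[X'^\circ;\{\inf Y\}]$. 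This is exactly the claim.

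The one point that needs a sentence of justification is the parenthetical remark that $[X'^\circ;\{\inf Y\}]=\dCF(X,Y)$ when $X'^\circ=\emptyset$. Indeed, if $X'^\circ=\emptyset$ then $[X'^\circ;\{\inf Y\}]=[\emptyset;\{\inf Y\}]=\{f\in\dCF(X,Y):f(\emptyset)\subset\{\inf Y\}\}=\dCF(X,Y)$, since the condition $f(\emptyset)\subset\{\inf Y\}$ is vacuous. More generally, $[X'^\circ;\{\inf Y\}]=[\overline{X'^\circ};\{\inf Y\}]$ because a continuous function into $Y\subset\IR$ sending a set into the closed singleton $\{\inf Y\}$ automatically sends its closure there as well.

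I do not anticipate any genuine obstacle here: the lemma is flagged in the text as ``a partial case of Lemma~\ref{l:dXy}'', and the proof amounts to checking that the hypotheses of Lemma~\ref{l:dXy} are met and that the two instances of the bracket notation coincide under the substitution $y=\inf Y$. The only care required is bookkeeping with the notation ${\downarrow}y$ and with the (harmless) passage between $X'^\circ$ and its closure. Here is the argument.

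\begin{proof} Since $Y\ne\{\inf Y\}$ and $\inf Y\in Y$, we can choose an element $\bar y\in Y$ with $\bar y>\inf Y$. Put $y:=\inf Y$. Then $y<\bar y$ are two real numbers in $Y$, and, because $y=\inf Y$ is the smallest element of $Y$, we have ${\downarrow}y=\{u\in Y:u\le y\}=\{\inf Y\}$. Consequently
$$[X';{\downarrow}y]=[X';\{\inf Y\}]=\dCF'(X,Y)\quad\text{and}\quad [X'^\circ;{\downarrow}y]=[X'^\circ;\{\inf Y\}].$$
By Lemma~\ref{l:dXy}, the set $[X';{\downarrow}y]$ is $\infty$-dense in the subspace $[X'^\circ;{\downarrow}y]$ of $\dCF(X,Y)$. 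Therefore $\dCF'(X,Y)$ is $\infty$-dense in $[X'^\circ;\{\inf Y\}]$.

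Finally, if $X'^\circ=\emptyset$, then the condition $f(X'^\circ)\subset\{\inf Y\}$ is vacuously satisfied by every $f\in\dCF(X,Y)$, so $[X'^\circ;\{\inf Y\}]=\dCF(X,Y)$.
\end{proof}
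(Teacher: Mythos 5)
Your proof is correct and matches the paper's intent exactly: the paper states Lemma~\ref{l:C'd} without further argument as ``a partial case of Lemma~\ref{l:dXy}'', and your specialization $y=\inf Y$, $\bar y\in Y$ with $\bar y>\inf Y$ (using $Y\ne\{\inf Y\}$) together with the observation ${\downarrow}\inf Y=\{\inf Y\}$ is precisely the substitution that realizes this. The added verification of the parenthetical claim for $X'^\circ=\emptyset$ is also correct.
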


\begin{lemma}\label{l:Gdelta} If  $X$ is $Y$-separable, then $\dCF'(X,Y)$ is a $G_\delta$-set in $\dCF(X,Y)$.
\end{lemma}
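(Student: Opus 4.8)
The plan is to identify $\dCF'(X,Y)$ with a set of the form $[M;\{\inf Y\}]$ for a suitable $\sigma$-compact set $M\subset X$, and then to exhibit this set as a countable intersection of subbasic open sets of the Fell hypograph topology on $\dCF(X,Y)$.

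First I would dispose of the trivial case: if $X$ is discrete, then $X'=\emptyset$ and $\dCF'(X,Y)=\dCF(X,Y)$ is (vacuously) a $G_\delta$-set, so from now on assume $X'\ne\emptyset$. Using the $Y$-separability of $X$, fix a ($\sigma$-compact, meager) set $M\subset X$ with $X'=\overline M^Y$, and write $M$ as the union of an increasing sequence $(K_n)_{n\in\w}$ of non-empty compact subsets of $X$; this is possible because $M\ne\emptyset$ (indeed $\overline M^Y=X'\ne\emptyset$ forces $M\ne\emptyset$).

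The key step is to prove the equality $\dCF'(X,Y)=[M;\{\inf Y\}]=\{f\in\dCF(X,Y):f(M)\subset\{\inf Y\}\}$. The inclusion $\dCF'(X,Y)\subset[M;\{\inf Y\}]$ is immediate, since $M\subset\overline M^Y=X'$. For the reverse inclusion, the crucial observation is that every $g\in C(X,Y)$ — in particular the function $f$ under consideration — is continuous with respect to the $Y$-topology on $X$; hence $f(\overline M^Y)\subset\overline{f(M)}$, where the closure on the right is taken in $Y$. If $f\in[M;\{\inf Y\}]$, i.e. $f(M)\subset\{\inf Y\}$, then $\overline{f(M)}\subset\overline{\{\inf Y\}}=\{\inf Y\}$ (as $Y\subset\IR$ is Hausdorff), and therefore $f(X')=f(\overline M^Y)\subset\{\inf Y\}$, which means $f\in\dCF'(X,Y)$.

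It then remains to note that $[M;\{\inf Y\}]=\bigcap_{n\in\w}[K_n;\{\inf Y\}]$ and that each $[K_n;\{\inf Y\}]$ is a $G_\delta$-set in $\dCF(X,Y)$. Indeed, $f(K_n)$ is a compact subset of $Y$ with $\min f(K_n)\ge\inf Y$, so the condition $f(K_n)\subset\{\inf Y\}$ is equivalent to $\max f(K_n)\le\inf Y$, and thus to the requirement that $\max f(K_n)<\inf Y+\tfrac1k$ for every $k\in\IN$; that is, $[K_n;\{\inf Y\}]=\bigcap_{k\in\IN}\lceil K_n;\inf Y+\tfrac1k\rceil$. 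Hence $\dCF'(X,Y)=\bigcap_{n\in\w}\bigcap_{k\in\IN}\lceil K_n;\inf Y+\tfrac1k\rceil$ is a countable intersection of open subsets of $\dCF(X,Y)$, and so is a $G_\delta$-set. The one non-routine ingredient is the reduction to $[M;\{\inf Y\}]$, and this is exactly where the $Y$-separability hypothesis is used (via the fact that continuous maps into $Y$ send $Y$-closures into ordinary closures); everything afterwards is the familiar device of expressing a closed condition on a compact set as a countable intersection of the open conditions $\lceil K_n;\inf Y+\tfrac1k\rceil$.
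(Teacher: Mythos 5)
Your proposal is correct and follows essentially the same route as the paper: both identify $\dCF'(X,Y)$ with a countable intersection of subbasic open sets $\lceil K;y\rceil$ built from a $\sigma$-compact set $M$ with $X'=\overline{M}^Y$ (the paper uses the diagonal intersection $\bigcap_{n}\lceil M_n;y_n\rceil$ for a decreasing sequence $y_n\downarrow\inf Y$, which is just a repackaging of your double intersection). Your write-up merely supplies the justification of the key equality via continuity in the $Y$-topology, which the paper leaves implicit.
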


\begin{proof}  Being $Y$-separable, the space $X$ contains a meager $\sigma$-compact set $M$ such that $X'=\overline{M}^Y$. Write $M$ as the countable union $M=\bigcup_{n\in\w}M_n$ of compact nowhere dense sets $M_n\subset M_{n+1}$ in $X$. Fix a strictly decreasing sequence $(y_n)_{n\in\w}$ of real numbers such that $\inf_{n\in\w}y_n=\inf Y$.

The equality $X'=\overline{M}^Y$ implies the equality $\dCF'(X,Y)=\bigcap_{n\in\w}\lceil M_n;y_n\rceil$, which means that $\dCF'(X,Y)$ is a $G_\delta$-set in $\dCF(X,Y)$.
\end{proof}

\begin{lemma}\label{l:F=k} The Fell hypograph topology on $\dCF'(X,Y)$ coincides with the compact-open topology.
\end{lemma}

\begin{proof} Since the Fell hypograph topology is weaker than the compact open topology, it suffices to show that each subbasic set $W$ of the compact-open topology of $\dCF'(X,Y)$ is contained in the Fell hypograph topology of the space $\dCF'(X,Y)$.

First assume that $W=\lfloor K;a\rfloor\cap\dCF'(X,Y):=\{f\in\dCF'(X,Y):\min(K)>a\}$ for some non-empty compact set $K\subset X$ and some $a\in\IR$. Fix any function $f\in W$ and observe that $f\in \dCF'(X,Y)$ and $\min f(K)>a$ imply that $a<\inf Y$ or $K\cap X'=\emptyset$. If $a<\inf Y$, then $W=\dCF'(X,Y)$ is (trivially) open in the Fell hypograph topology of the space $\dCF'(X,Y)$. If $K\cap X'=\emptyset$, then $K$ is finite and open in $X$. Then $f\in \bigcap_{x\in K}\lceil\{x\};a\rfloor\cap\dCF'(X,Y)\subset W$, which means that $W$ is open in the Fell hypograph topology of $\dCF'(X,Y)$.

If  $W=\lceil K;b\rceil\cap\dCF'(X,Y)$ for some non-empty compact set $K\subset X$ and some $b\in\IR$, then by definition, $W$ is open in the Fell hypograph topology on $\dCF'(X,Y)$.
\end{proof}

Lemma~\ref{l:F=k} allows us to identify the subspace $\dCF'(X,Y)$ of $\dCF(X,Y)$ with the subspace
$$C_k'(X,Y):=\big\{f\in C_k(X,Y):f(X')\subset\{\inf Y\}\big\}=[X';\{\inf Y\}]$$of the function space $C_k(X,Y)$ endowed with the compact-open topology.

The Baire category properties of the function spaces $C_k'(X,Y)$ are described in the following theorem, proved in \cite{BW}.

\begin{theorem}\label{t:BW} Let $X$ be a topological space containing an isolated point, and $Y\subset \IR$ be a set with $\inf Y\in Y\ne\{\inf Y\}$.
\begin{enumerate}
\item[\textup{(1)}] If $X$ does not have $\DMOP$, then the function space $C_k'(X,Y)$ is $\infty$-meager.
\item[\textup{(2)}] If $X$ has $\DMOP$ and the space $Y$ is almost Polish, then $C_k'(X,Y)$ is Baire.
\item[\textup{(3)}] $C_k'(X,Y)$ is Choquet if and only if $Y$ is almost Polish and $X$ has $\WDMOP$.
\item[\textup{(4)}] $C_k'(X,Y)$ is (almost) complete-metrizable if and only if $Y$ is (almost) Polish and $X$ is a $\dot\kappa_\w$-space.
\item[\textup{(5)}] The function space $C_k'(X,Y)$ is (almost) Polish if and only if  $Y$ is (almost) Polish and $X$ is a $\dot\kappa_\w$-space with countable set $\dot X$ of isolated points.
\item[\textup{(6)}] The function space $C_k'(X,Y)$ is separable if $Y$ is separable and $\dot X$ is countable.
\end{enumerate}
\end{theorem}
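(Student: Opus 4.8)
This theorem combines several results from our companion paper \cite{BW}; the plan I would follow is the following. The starting point is a description of a convenient base for the topology of $C_k'(X,Y)$. Since $\inf Y\in Y$ and every $f\in C_k'(X,Y)$ takes the value $\inf Y$ on $X'$, the singletons of isolated points are compact‑open and $\{x\in\dot X\cap K:f(x)\ge\inf Y+\e\}$ is finite for every compact $K$ and every $\e>0$; consequently a neighbourhood base at $f$ is formed by the sets
\[
W=\big\{g\in C_k'(X,Y):g(x_i)\in U_i\ (i\le n),\ \max g(K)<b\big\},
\]
where $x_1,\dots,x_n\in\dot X$, each $U_i$ is an open neighbourhood of $f(x_i)$ in $Y$, $K\subset X$ is compact, and $b>\inf Y$. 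Thus the data of a function split into a \emph{pointwise part} (the values at isolated points, ranging in $Y$) and a \emph{moving‑off part} (the inequalities $\max g(K)<b$, which, as $b$ decreases to $\inf Y$ along a fixed strictly decreasing sequence $(y_n)\subset Y$ with $\inf_n y_n=\inf Y$ and as $K$ exhausts $X$, force $g$ to be close to $\inf Y$ near $X'$ and off the compacta). Two subspaces carry the reduction to known cases: for a fixed isolated point $x_0$ the closed subspace $\{f\in C_k'(X,Y):f(\dot X\setminus\{x_0\})\subset\{\inf Y\}\}$ is homeomorphic to $Y$, and for $c\in Y$ with $c>\inf Y$ the closed subspace $\{f\in C_k'(X,Y):f(\dot X)\subset\{\inf Y,c\}\}$ is homeomorphic to $C_k'(X,2)$.

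For (1), suppose $X$ fails $\DMOP$ and fix a moving‑off family $\mathcal F$ of finite subsets of $\dot X$ with no infinite discrete subfamily. Partitioning $\mathcal F$ in the manner used for $C_k'(X,2)$ and replacing the value $1$ by the levels $y_n$, one writes $C_k'(X,Y)$ as a countable union of closed sets each of which is $\infty$‑codense: given a compact Hausdorff $Z$ and $\mu\in C_k(Z,C_k'(X,Y))$, the finitely many compacta occurring in a basic neighbourhood of $\mu$ miss some non‑empty $F\in\mathcal F$, and raising $\mu$ on $F$ by means of Lemmas~\ref{l:min} and \ref{l:e} — exactly as in Lemmas~\ref{l:d} and \ref{l:dXy} — moves $\mu$ into the complement. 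Hence $C_k'(X,Y)$ is $\infty$‑meager, and in particular meager.

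For (2) and (3) I would play the Choquet game $\GEN(C_k'(X,Y))$, respectively the strong Choquet game, on behalf of player $\mathsf N$. Confronted with a basic set $W$ as above, $\mathsf N$ works on two coordinated fronts: (i) on the finitely many already‑pinned isolated points he drives the admissible value‑sets into a fixed dense Polish $P\subset Y$ and shrinks their diameters to $0$, using completeness of $P$; and (ii) he plays a simultaneous auxiliary run of $\GKF(X)$, answering $\mathsf E$'s compact set by a finite set $F_n\subset\dot X$ of new isolated points (whose values he then pins) and lowering the bound $b$ along $(y_n)$. If $X$ has $\DMOP$, then $\mathsf F$ has no winning strategy in $\GKF(X)$, so this auxiliary run is non‑losing for $\mathsf N$; if $X$ has $\WDMOP$, then $\mathsf K$ has a winning strategy and $\mathsf N$ can make the auxiliary run winning. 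In both cases the resulting nested intersection is non‑empty, and its point is a continuous function: continuity at isolated points is immediate, and continuity at $x\in X'$ follows since the bounds tend to $\inf Y$ while the finite sets $F_n$ ultimately form a discrete family, so the limit function is forced to be close to $\inf Y$ near $x$. This proves the ``if'' parts of (2) and (3). For the ``only if'' of (3): Choquetness is inherited by the two closed subspaces singled out above, so $Y$ is Choquet — hence almost Polish, being separable metrizable — and $C_k'(X,2)$ is Choquet, hence $X$ has $\WDMOP$ by the $Y=\{0,1\}$ statement recalled above.

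For (4)--(6), the metrizability questions reduce, as for $C_k'(X,2)$, to $X$ being a $\dot\kappa_\w$‑space; under this hypothesis $C_k'(X,Y)$ embeds as a $G_\delta$‑subspace of a power $Y^{\dot X}$ — countable when $\dot X$ is countable — using that a Polish $Y$ is $G_\delta$ in $\IR$, so that (almost) complete metrizability becomes equivalent to $Y$ being (almost) Polish, and Polishness additionally to $\dot X$ being countable, which also yields (6). I expect the main difficulty to be the bookkeeping in (2)--(3): one must interleave the Choquet game on $Y$ over a set of isolated points that is finite at every stage but in the limit exhausts everything relevant, with the moving‑off game $\GKF(X)$, in such a way that the limiting function is continuous at all non‑isolated points — and one must verify that no essential information is lost in passing between $C_k'(X,Y)$ and its closed copies of $Y$ and of $C_k'(X,2)$.
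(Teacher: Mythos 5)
The first thing to note is that this paper contains no proof of Theorem~\ref{t:BW}: the statement is quoted verbatim from the companion paper \cite{BW} (``\dots described in the following theorem, proved in \cite{BW}''), and nothing in the present text reproves any part of it. So there is no proof here against which your proposal can be checked; one can only ask whether your outline is internally sound and consistent with the techniques the paper does display (the lattice maps of Lemma~\ref{l:min}, the extension Lemma~\ref{l:e}, the game argument of Lemma~\ref{l:SCg}). Measured that way, your plan is plausible in outline but remains a plan rather than a proof: the decomposition in (1) is described only as ``partitioning $\mathcal F$ in the manner used for $C_k'(X,2)$'', which presupposes the very argument from \cite{BW} that is absent here, and the interleaving of the Choquet game on $Y$ with the game $\GKF(X)$ in (2)--(3) --- which you yourself flag as the main difficulty --- is exactly where all the work lies, in particular the verification that the limit function is continuous at points of $X'$ (this needs the finite sets $F_n$ to end up discrete and the upper bounds to be coherent across the two interleaved games).

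One step is concretely wrong as stated. In the ``only if'' direction of (3) you write that ``Choquetness is inherited by the two closed subspaces singled out above''. The Choquet property is not inherited by closed subspaces in general, and the paper's own Lemma~\ref{l:bC} pointedly lists dense Choquet subspaces, dense $G_\delta$-sets, products and \emph{open} continuous images --- not closed subspaces. For the copy of $Y$ the conclusion can be rescued exactly as the paper does in Lemma~\ref{l:C2}, via the open continuous evaluation map $\delta_{x_0}\colon f\mapsto f(x_0)$; but for the closed copy of $C_k'(X,2)$ given by $\{f:f(\dot X)\subset\{\inf Y,c\}\}$ you supply no open surjection and no dense embedding, so the deduction that $X$ has $\WDMOP$ is unsupported. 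Either produce such a map (a mere retraction does not suffice without further argument, since retracts are continuous but not open images) or prove the necessity of $\WDMOP$ by a direct strategy-conversion argument between the Choquet game on $C_k'(X,Y)$ and the game $\GKF(X)$, which is presumably what \cite{BW} does.
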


In \cite{BW} we also proved the following dichotomy for analytic spaces $C_k'(X,Y)$.

\begin{theorem}\label{t:dycha} Let $Y\subset \IR$ be a Polish subspace with $\inf Y\in Y$.
If for a topological space $X$ the function space $C_k'(X,Y)$ is analytic, then $C_k'(X,Y)$ is either Polish or $\infty$-meager.
\end{theorem}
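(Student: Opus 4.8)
The plan is to split on whether $X$ has the Discrete Moving Off Property, using Theorem~\ref{t:BW} to convert this combinatorial dichotomy into a Baire-category dichotomy, and then to use analyticity to upgrade the ``good'' alternative from Baire all the way to Polish. First dispose of the degenerate cases: if $Y=\{\inf Y\}$ or $\dot X=\emptyset$, then $C_k'(X,Y)=[X';\{\inf Y\}]$ is the single constant function $\inf Y$ and hence Polish, so assume $Y$ has more than one point and $\dot X\ne\emptyset$, which lets us apply Theorem~\ref{t:BW}. I would then record the equivalence that \emph{$C_k'(X,Y)$ is $\infty$-meager if and only if $X$ fails $\DMOP$}: one direction is Theorem~\ref{t:BW}(1), while conversely an $\infty$-meager space is meager, hence not Baire, so by the contrapositive of Theorem~\ref{t:BW}(2) (valid since the Polish space $Y$ is almost Polish) the space $X$ fails $\DMOP$. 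Thus if $X$ fails $\DMOP$ we are in the first alternative and done; it remains to treat the case that $X$ has $\DMOP$, where I must show that the analytic space $C_k'(X,Y)$ is in fact Polish.

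Next I would extract countability of $\dot X$ from analyticity. An analytic space is a continuous image of a Polish (hence second countable) space, so $C_k'(X,Y)$ has a countable network and is therefore hereditarily separable; in particular every subspace that is discrete in itself is countable. Fixing $y_1\in Y$ with $y_1>\inf Y$, for each $x\in\dot X$ let $g_x\in C_k'(X,Y)$ be the function equal to $y_1$ at the isolated point $x$ and to $\inf Y$ elsewhere; this is continuous because $\{x\}$ is clopen, and it lies in $C_k'(X,Y)$ because $g_x(X')=\{\inf Y\}$. The compact-open neighbourhood $[\{x\};(\tfrac12(\inf Y+y_1),+\infty)]$ of $g_x$ misses every $g_{x'}$ with $x'\ne x$, so $\{g_x:x\in\dot X\}$ is discrete in itself and hence countable; thus $\dot X$ is countable. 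Since $Y$ is Polish, Theorem~\ref{t:BW}(5) now reduces the goal to proving that $X$ is a $\dot\kappa_\w$-space.

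The heart of the matter is to use analyticity to promote the Baire property (supplied by $\DMOP$ through Theorem~\ref{t:BW}(2)) to completeness. Because $\dot X$ is countable, restriction to $\dot X$ is a continuous injection of $C_k'(X,Y)$ into the Polish space $Y^{\dot X}$; after checking that this does not strictly coarsen the topology — i.e. that in the case at hand the compact-open topology on $C_k'(X,Y)$ is itself separable metrizable and analytic — I would apply the Hurewicz-type dichotomy for analytic spaces, according to which such a space is either Polish or contains a closed copy of the rationals $\mathbb Q$. The remaining task is to rule out a closed copy of $\mathbb Q$, i.e. to show that $C_k'(X,Y)$ is hereditarily Baire. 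Here I would exploit the semilattice structure furnished by Lemma~\ref{l:min} together with the extension Lemma~\ref{l:e} to argue that the $\DMOP$-driven Baireness descends to closed subspaces: a closed copy of $\mathbb Q$ would produce a moving-off family of finite subsets of $\dot X$ admitting no infinite discrete subfamily, contradicting $\DMOP$. Excluding this option forces $C_k'(X,Y)$ to be Polish, equivalently $X$ to be a $\dot\kappa_\w$-space, which completes the proof.

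I expect this last step to be the main obstacle. The soft descriptive-set-theoretic input (the Hurewicz dichotomy and the network/discreteness bookkeeping) is routine; the weight of the argument lies in the combinatorial translation of a closed copy of $\mathbb Q$ inside $C_k'(X,Y)$ into an explicit failure of $\DMOP$, which demands a careful analysis of how compact subsets of $X$ interact with the isolated points, as well as verifying that the restriction map to $Y^{\dot X}$ is a topological embedding in the $\DMOP$ case.
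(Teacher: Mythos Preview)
This theorem is not proved in the present paper; it is imported from the companion preprint \cite{BW}, so there is no in-paper argument to compare your outline against.  I can only comment on the plausibility of your sketch.

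The $\DMOP$ dichotomy and the $\infty$-meager branch via Theorem~\ref{t:BW}(1) are correct, and your extraction of the countability of $\dot X$ from a countable network is fine.  The difficulty is exactly where you place it, but your plan for the Polish branch has two concrete problems.

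First, the Hurewicz-type dichotomy you want needs the space to be separable \emph{metrizable}, whereas analyticity alone gives only a countable network.  Your proposed remedy---checking that the restriction $C_k'(X,Y)\to Y^{\dot X}$ is a topological embedding---fails in general: already for $X=\w+1$ the compact-open topology on $C_k'(X,Y)$ is the sup-norm topology, strictly finer than the product topology on $Y^{\w}$.  In fact first-countability of $C_k'(X,Y)$ is essentially a hemicompactness condition on the traces $K\cap\dot X$ of compacta, which is very close to the $\dot\kappa_\w$ conclusion you are trying to reach, so this route risks circularity.

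Second, even if one succeeds in placing the problem inside a Polish space, the slogan ``a closed copy of $\mathbb Q$ yields a moving-off family with no infinite discrete subfamily'' is the entire content and is left unsubstantiated.  $\DMOP$ is a statement about a very specific game on finite subsets of $\dot X$; there is no evident mechanism by which an \emph{arbitrary} closed-in-$C_k'(X,Y)$ copy of $\mathbb Q$ encodes a winning strategy for the player $\mathsf F$ in that game.  One should expect the actual argument in \cite{BW} to exploit additional structure---for instance, when $Y=\{0,1\}$ the space $C_k'(X,2)$ is a Boolean topological group, and for analytic topological groups there is a direct ``non-meager $\Rightarrow$ Polish'' principle that bypasses both the metrizability issue and the closed-$\mathbb Q$ combinatorics---rather than the bare hereditarily-Baire route you propose.
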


In fact, under some assumptions, the analyticity of the function space $C'_k(X,Y)$ is equivalent to the analyticity of the function space $C'_p(X,Y)=\big\{f\in C_p(X,Y):f(X')\subset\{\inf Y\}\big\}\subset Y^X$, endowed with the topology of pointwise convergence.
The following characterization was proved in \cite{BW}.

\begin{proposition} For any non-empty subspace $Y\subset \IR$ and  a topological space $X$, the function space $C_k'(X,Y)$ is analytic if and only if $C_k'(X,Y)$ has a countable network and the function space $C_p'(X,Y)$ is analytic.
\end{proposition}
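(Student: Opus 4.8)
The plan is to prove the two implications separately; the forward one is routine and all the content sits in the converse.

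\smallskip

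\emph{Forward implication.} Suppose $C_k'(X,Y)$ is analytic, so there is a continuous surjection $q:P\to C_k'(X,Y)$ from a Polish space $P$. Since $P$ is second countable and networks are preserved under continuous surjections, the images $\{q(B):B\in\mathcal B\}$ of a countable base $\mathcal B$ of $P$ form a countable network for $C_k'(X,Y)$. Moreover the pointwise topology is weaker than the compact-open topology on the common underlying set, so the identity map $C_k'(X,Y)\to C_p'(X,Y)$ is continuous; composing it with $q$ gives a continuous surjection $P\to C_p'(X,Y)$, witnessing that $C_p'(X,Y)$ is analytic.

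\smallskip

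\emph{Converse: the Borel observation.} Assume $C_p'(X,Y)$ is analytic and $C_k'(X,Y)$ has a countable network. Write $S$ for the common underlying set and $\tau_p\subseteq\tau_k$ for the pointwise and compact-open topologies (recall $C_k(X,Y)$ is Tychonoff since $Y\subset\IR$, so both spaces are regular). The key observation is that every subbasic $\tau_k$-open set is $\tau_p$-Borel: for a non-empty compact $K\subset X$ the map $f\mapsto \max f(K)=\sup_{x\in K}f(x)$ is a pointwise supremum of the $\tau_p$-continuous evaluations $f\mapsto f(x)$, hence $\tau_p$-lower semicontinuous, so
$$\lceil K;b\rceil=\{f:\max f(K)<b\}=\bigcup_{n\in\IN}\{f:\max f(K)\le b-\tfrac1n\}$$
is a $\tau_p$-$F_\sigma$ set; dually $\lfloor K;a\rfloor=\{f:\min f(K)>a\}$ is $\tau_p$-$F_\sigma$. (This is exactly where the pointwise topology interacts with the compact-open one.)

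\smallskip

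\emph{A countable Borel network.} Being regular with a countable network, $C_k'(X,Y)$ is cosmic, hence hereditarily Lindel\"of. Therefore every $\tau_k$-open set is a countable union of basic open sets, i.e. of finite intersections of the subbasic sets above, and so is $\tau_p$-Borel. Using regularity, I would choose a countable network $\mathcal N=\{N_n:n\in\w\}$ for $\tau_k$ consisting of $\tau_k$-closed sets (the closures of the members of any countable network form a closed network in a regular space). Each $N_n$ is the complement of a $\tau_k$-open, hence $\tau_p$-Borel, set, so each $N_n$ is $\tau_p$-Borel.

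\smallskip

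\emph{Lifting analyticity (the crux).} Fix a continuous surjection $q:P\to(S,\tau_p)$ with $P$ Polish. The preimages $q^{-1}(N_n)$ are Borel in $P$, so by the change-of-topology theorem \cite[13.1]{Ke} the Polish topology of $P$ refines to a finer Polish topology $P'$ in which every $q^{-1}(N_n)$ is clopen. Let $\sigma'$ be the topology on $S$ generated by $\tau_p\cup\{N_n,\,S\setminus N_n:n\in\w\}$; then $q:P'\to(S,\sigma')$ is again a continuous surjection, so $(S,\sigma')$ is analytic. Since $\{N_n\}$ is a network for $\tau_k$ and each $N_n$ is $\sigma'$-open, every $\tau_k$-open set is a union of members of $\mathcal N$ contained in it, hence $\sigma'$-open; thus $\tau_k\subseteq\sigma'$ and the identity $(S,\sigma')\to(S,\tau_k)=C_k'(X,Y)$ is continuous. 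Composing with $q$ produces a continuous surjection from a Polish space onto $C_k'(X,Y)$, proving it analytic. The main obstacle is precisely this last step: $\sigma'$ is in general strictly finer than $\tau_k$, so one cannot hope to recover $\tau_k$ exactly by making the $N_n$ clopen. The point is that one need not: it suffices to sandwich $\tau_k$ between two analytic topologies, exploiting that a topology coarser than an analytic topology is again analytic.
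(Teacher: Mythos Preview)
The paper does not actually prove this proposition: it merely states it and cites \cite{BW} for the proof. So there is no in-paper argument to compare against, and I evaluate your proof on its own merits.

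Your proof is correct. The forward direction is routine, as you say. For the converse, the architecture is the standard one for transferring analyticity up to a finer topology: (i) show every $\tau_k$-open set is $\tau_p$-Borel; (ii) pick a countable closed network $\{N_n\}$ for $\tau_k$, hence of $\tau_p$-Borel sets; (iii) refine the Polish topology on the domain so that the $q^{-1}(N_n)$ become clopen; (iv) observe that $q$ is then continuous into the topology $\sigma'$ generated by $\tau_p$ together with the $N_n$ and their complements, and that $\tau_k\subseteq\sigma'$ since a network of open sets generates the topology. Each step is sound.

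One point worth making explicit, since you state it quickly: the sets $\lceil K;b\rceil$ and $\lfloor K;a\rfloor$ really do form a subbase for the compact-open topology on $C_k(X,Y)$ when $Y\subset\IR$. This is because the half-lines $(-\infty,b)$ and $(a,\infty)$ form a subbase for $\IR$, and for $f\in[K;U]$ with $U$ open in $Y$ one covers the compact set $f(K)$ by finitely many intervals $(a_i,b_i)\subset U$, sets $K_i=K\cap f^{-1}([a_i',b_i'])$ for slightly shrunk closed intervals still covering $f(K)$, and gets $f\in\bigcap_i\lfloor K_i;a_i\rfloor\cap\lceil K_i;b_i\rceil\subset[K;U]$. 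With this check in place, your semicontinuity argument shows the subbasic sets are $\tau_p$-$F_\sigma$, and hereditary Lindel\"ofness of the cosmic space $(S,\tau_k)$ then gives that every $\tau_k$-open set is $\tau_p$-Borel, exactly as you wrote.
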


%This characterization was derived in \cite{BW} from the following characterization of analyticity of regular spaces.

%\begin{proposition} A regular space $X$ is analytic if and only if $X$ has a countable network and admits a surjective Borel map $f:A\to X$ defined on an analytic space.
%\end{proposition}

%We recall that a function $f:X\to Y$ is {\em Borel} if for any open set $U\subset Y$ the preimage $f^{-1}(U)$ is Borel in $X$.

\section{Recognizing $\infty$-meager function spaces $\dCF(X,Y)$}

In this section we find some conditions on spaces $Y\subset\IR$ and $X$ under which the function space $\dCF(X,Y)$ is $\infty$-meager.

\begin{lemma}\label{l:MY0X2} Let $Y\subset \IR$ be a non-empty subset with $\inf Y\notin Y$.
For any non-discrete $Y$-separated topological space $X$, the function space $\dCF(X,Y)$ is $\infty$-meager.
\end{lemma}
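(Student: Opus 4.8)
The plan is to exploit the fact that since $X$ is not discrete, it has a non-isolated point $x_0$, and since $X$ is $Y$-separated (with $Y$ containing more than one point, because $\inf Y\notin Y$ forces $Y$ to be infinite), we can find a compact nowhere dense set witnessing non-discreteness near $x_0$. The natural candidate is a convergent sequence: pick a net or, if $X$ is first-countable at $x_0$, a sequence converging to $x_0$; in general I would instead work directly with a single compact nowhere dense set $K$ containing $x_0$ together with a point of $X'$, or more simply observe that $\{x_0\}$ itself is a compact nowhere dense subset of $X$. Then Lemma~\ref{l:d} applies: for any $y\in Y$ and any real $u>y$, the basic open set $\lceil K;u\rceil$ is $\infty$-dense in $\dCF(X,Y)$, hence its complement is $\infty$-codense. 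The idea is to write $\dCF(X,Y)$ as a countable union of closed sets each contained in such a complement.

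The key step is the following. Since $\inf Y\notin Y$, fix a strictly decreasing sequence $(u_n)_{n\in\w}$ of real numbers with $\inf_n u_n=\inf Y$; then every $u_n$ satisfies $\{y\in Y: y<u_n\}\ne\emptyset$, so we may pick $y_n\in Y$ with $y_n<u_n$. Apply Lemma~\ref{l:d} with $K=\{x_0\}$ (compact and nowhere dense, as $x_0$ is not isolated), $y=y_n$ and $u=u_n$: the set $\lceil\{x_0\};u_n\rceil=\{f\in\dCF(X,Y):f(x_0)<u_n\}$ is $\infty$-dense in $\dCF(X,Y)$. Equivalently, its complement $A_n:=\{f\in\dCF(X,Y):f(x_0)\ge u_n\}$ is closed (it is the complement of a subbasic open set of the Fell hypograph topology) and $\infty$-codense. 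Since for every $f\in\dCF(X,Y)$ the value $f(x_0)\in Y$ satisfies $f(x_0)>\inf Y=\inf_n u_n$, there is some $n$ with $f(x_0)\ge u_n$; hence $\dCF(X,Y)=\bigcup_{n\in\w}A_n$. This exhibits $\dCF(X,Y)$ as a countable union of closed $\infty$-codense subsets of itself, i.e.\ as an $\infty$-meager space.

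The main obstacle is checking that $\{x_0\}$ really is nowhere dense in $X$ — but this is immediate: a singleton $\{x_0\}$ is nowhere dense iff it has empty interior iff $x_0$ is not isolated, which holds by the non-discreteness of $X$ together with the choice of $x_0\in X'$. A secondary point to verify carefully is that $Y$ is indeed infinite (so that Lemma~\ref{l:d} has content and $Y$-separatedness gives the required extension maps); this follows since a subset of $\IR$ with no minimum is necessarily infinite, in fact $\inf Y$ is an accumulation point of $Y$ from the right, so $(u_n)$ and the $y_n<u_n$ can be chosen as claimed. No appeal to first-countability or to Lemma~\ref{l:e} beyond what is already packaged inside Lemma~\ref{l:d} is needed, so the argument is short.
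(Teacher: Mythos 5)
Your proposal is correct and follows essentially the same route as the paper: the paper also takes a non-isolated point $x$, notes that $\{x\}$ is compact and nowhere dense, applies Lemma~\ref{l:d} to a strictly decreasing sequence with infimum $\inf Y$ (the paper takes the thresholds $y_n$ inside $Y$ itself, which is an immaterial difference), and writes $\dCF(X,Y)$ as the countable union of the closed $\infty$-codense complements of the sets $\lceil\{x\};y_n\rceil$. The preliminary hedging about convergent sequences is unnecessary, but the argument you settle on is exactly the intended one.
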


\begin{proof} Since $\inf Y\notin Y$, we can fix a strictly decreasing sequence $\{y_n\}_{n\in\w}\subset Y$ such that $\inf_{n\in\w}y_n=\inf Y$.
Take any non-isolated point $x\in X$. By Lemma~\ref{l:d}, for every $n\in\w$ the  basic open set $\lceil\{x\};y_n\rceil$ is $\infty$-dense in $\dCF(X,Y)$. Then its complement $\dCF(X,Y)\setminus\lceil\{x\};y_n\rceil$ is a closed $\infty$-codense set in $\dCF(X,Y)$. Since $$\dCF(X,Y)=\bigcup_{n\in\w}(\dCF(X,Y)\setminus\lceil\{x\};y_n\rceil),$$the space $\dCF(X,Y)$ is $\infty$-meager.
\end{proof}

A topological space $X$ is called a {\em $T_1$-space} if for each point $x\in X$ the singleton $\{x\}$ is closed in $X$. A point $x$ of a $T_1$-space is isolated if and only if the singleton $\{x\}$ is open in $X$ if and only if $\{x\}$ is not nowhere dense in $X$.

\begin{lemma}\label{l:MY0X1} Let $Y\subset \IR$ be a non-empty subset with $\inf Y\notin Y$.
For any non-discrete $T_1$-space $X$ with dense set $\dot X$ of isolated points, the function space $\dCF(X,Y)$ is $\infty$-meager.
\end{lemma}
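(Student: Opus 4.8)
The plan is to reduce Lemma~\ref{l:MY0X1} to the already-proven Lemma~\ref{l:MY0X2}, or rather to mimic its proof, the point being that the denseness of $\dot X$ lets us replace "there is a non-isolated point" by "there is a non-isolated point in the closure of the isolated points", and then run the same $\infty$-density argument against an appropriate compact nowhere dense set. Concretely, since $X$ is not discrete and is $T_1$, the closed set $X'$ of non-isolated points is non-empty, and since $\dot X$ is dense, $X'$ is nowhere dense in $X$. Fix a strictly decreasing sequence $\{y_n\}_{n\in\w}\subset Y$ with $\inf_{n\in\w}y_n=\inf Y$, which exists because $\inf Y\notin Y$.

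First I would pick a non-isolated point $x\in X'$. In a $T_1$-space the singleton $\{x\}$ is closed, hence compact, and it is nowhere dense since $x$ is not isolated; so $\{x\}$ is a non-empty compact nowhere dense subset of $X$. Now apply Lemma~\ref{l:d}: for each $n\in\w$, with $y=y_{n+1}\in Y$ and $u=y_n$ (noting $y_{n+1}<y_n$), the basic open set $\lceil\{x\};y_n\rceil$ is $\infty$-dense in $\dCF(X,Y)$. Consequently its complement $\dCF(X,Y)\setminus\lceil\{x\};y_n\rceil$ is a closed $\infty$-codense subset. Because $\inf f(X)\ge \inf Y=\inf_n y_n$ forces $\max f(\{x\})=f(x)>y_n$ to fail for large $n$ — more precisely, since $f(x)\in Y$ and $\inf Y\notin Y$ we have $f(x)>\inf Y$, so there is $n$ with $f(x)<y_n$, i.e. $f\in\lceil\{x\};y_n\rceil$ — every $f\in\dCF(X,Y)$ lies in some $\lceil\{x\};y_n\rceil$. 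Hence
$$\dCF(X,Y)=\bigcup_{n\in\w}\big(\dCF(X,Y)\setminus\lceil\{x\};y_n\rceil\big)$$
exhibits $\dCF(X,Y)$ as a countable union of closed $\infty$-codense sets, so $\dCF(X,Y)$ is $\infty$-meager.

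Actually, the argument above never used the denseness of $\dot X$ — it is literally the proof of Lemma~\ref{l:MY0X2}, with the observation that a $T_1$ non-discrete space automatically has a non-isolated point whose singleton is compact and nowhere dense. So the cleanest write-up simply invokes Lemma~\ref{l:MY0X2}: a non-discrete $T_1$-space is in particular a non-discrete $Y$-separated space only if it is $Y$-separated, which is not assumed here — so one cannot literally cite Lemma~\ref{l:MY0X2}, since that lemma's hypotheses include "$Y$-separated" rather than "$T_1$". The honest route is therefore to reprove the short argument verbatim, using only that $\{x\}$ is compact (true in any $T_1$-space) so that Lemma~\ref{l:d} applies, Lemma~\ref{l:d} itself requiring only that $X$ be $Y$-separated — so wait, Lemma~\ref{l:d} also needs $Y$-separatedness. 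The main obstacle is thus precisely this: Lemma~\ref{l:d} is stated for $Y$-separated $X$, and here $X$ is only $T_1$ with dense $\dot X$. I expect the resolution is that one does not need the full strength of Lemma~\ref{l:d} for a singleton $\{x\}$ with $x\in X'$; the $\infty$-density of $\lceil\{x\};y_n\rceil$ can be checked directly because any continuous $\mu:Z\to\dCF(X,Y)$ can be pushed below $y_n$ on the single point $x$ by taking pointwise minima with a constant function valued slightly above $\inf Y$ but below $y_n$ — this uses only that $\dot X$ is dense (so that nearby isolated points can carry large values to preserve membership in the target neighbourhood) and that the constant function lies in $C(X,Y)$, i.e. no separation of $X$ is required, only $\inf Y\notin Y$ to guarantee room for such a constant. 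The bulk of the work, then, is adapting the min-with-constant density argument of Lemma~\ref{l:d} / Lemma~\ref{l:Dni} to the present weaker hypotheses; once that $\infty$-density claim is in hand, the countable-union decomposition above finishes the proof immediately.
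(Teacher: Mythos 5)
Your overall architecture coincides with the paper's: fix a non-isolated point $x$, fix a strictly decreasing sequence $(y_n)_{n\in\w}\subset Y$ with $\inf_ny_n=\inf Y$, show that each basic open set $\lceil\{x\};y_n\rceil$ is $\infty$-dense, and write $\dCF(X,Y)$ as the union of the closed $\infty$-codense complements. The decomposition at the end is correct (though your prose has the inequality backwards: what is needed is that every $f$ satisfies $f(x)\ge y_n$ for some $n$, i.e.\ lies \emph{outside} some $\lceil\{x\};y_n\rceil$, which holds because $f(x)>\inf Y=\inf_ny_n$). The genuine gap is that the one nontrivial ingredient --- the $\infty$-density of $\lceil\{x\};y_n\rceil$ when $X$ is not assumed $Y$-separated --- is never established. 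You correctly observe that Lemma~\ref{l:d} cannot be cited (its proof uses $Y$-separatedness through the extension Lemma~\ref{l:e}), but your repair is left at the level of ``I expect the resolution is\dots'' and ``the bulk of the work is adapting\dots''. Worse, the repair as literally sketched --- taking pointwise minima with a \emph{constant} function valued below $y_n$ --- fails: such a minimum is bounded above by that constant on all of $X$, so it drops out of every subbasic constraint $\lceil U;a\rfloor$ with $a$ at or above that constant, and such constraints occur in a generic basic neighbourhood $O_\mu$. The correct $\hbar$ must be non-constant: equal to a small $y\in Y$ off a finite set $E$ of isolated points (one witness $x_{z,U}\in U\cap\dot X$ per constraint, available precisely because $\dot X$ is dense) and equal to a large $s\in Y$ on $E$; continuity of such an $\hbar$ is automatic since $E$ is a finite clopen set, and no separation hypothesis on $X$ is needed.

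This adapted density statement is exactly Lemma~\ref{l:dXy} of the paper, which is stated for an \emph{arbitrary} topological space $X$: since $\dot X$ is dense, $X'^\circ=\emptyset$, so $[X';{\downarrow}y]$ is $\infty$-dense in $[X'^\circ;{\downarrow}y]=\dCF(X,Y)$; choosing $y\in Y$ with $\inf Y<y<y_n$ (possible because $\inf Y\notin Y$) gives $[X';{\downarrow}y]\subset\lceil\{x\};y_n\rceil$, so the latter is $\infty$-dense. The paper's proof is precisely your decomposition with Lemma~\ref{l:dXy} supplying the density; invoking that lemma (or reproducing its argument) is what closes your gap.
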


\begin{proof} Fix a strictly decreasing sequence of real numbers $\{y_n\}_{n\in\w}\subset Y$ such that $\inf_{n\in\w}y_n=\inf Y$.
Take any non-isolated point $x\in X$. By Lemma~\ref{l:dXy}, for every $n\in\w$ the  basic open set $\lceil\{x\};y_n\rceil$ is $\infty$-dense in $\dCF(X,Y)$. Then its complement $\dCF(X,Y)\setminus\lceil\{x\};y_n\rceil$ is a closed $\infty$-codense set in $\dCF(X,Y)$, and the space $\dCF(X,Y)=\bigcup_{n\in\w}(\dCF(X,Y)\setminus\lceil\{x\};y_n\rceil)$ is $\infty$-meager.
\end{proof}

\begin{lemma}\label{l:om} Let $Y\subset \IR$ be a subset with $\inf Y\in Y\ne\{\inf Y\}$. Let $X$ be a $Y$-separated topological space containing a meager $\sigma$-compact set $M$ such that $\emptyset\ne X'^\circ\subset \overline{M}^Y$. Then the basic open set $\lceil X'^\circ,\inf Y\rfloor$ is $\infty$-meager in $\dCF(X,Y)$.
\end{lemma}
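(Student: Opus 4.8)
The plan is to exhibit $\lceil X'^\circ;\inf Y\rfloor$ as a countable union of closed $\infty$-codense subsets of $\dCF(X,Y)$. Since $X$ is $Y$-separable via the $\sigma$-compact meager set $M=\bigcup_{n\in\w}M_n$ with compact nowhere dense pieces $M_n\subset M_{n+1}$ and $X'^\circ\subset\overline{M}^Y$, fix a strictly decreasing sequence $(y_n)_{n\in\w}$ in $Y$ with $\inf_n y_n=\inf Y$ (possible because $\inf Y$ is not isolated when $\inf Y\in Y\ne\{\inf Y\}$; if $\inf Y$ \emph{is} isolated one argues slightly differently, but here we only need some point of $Y$ above $\inf Y$ and a decreasing sequence converging to $\inf Y$ that may repeat values — in fact the cleanest route is to pick any $\bar y\in Y$ with $\bar y>\inf Y$ and set all $y_n=\bar y$ if needed). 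The idea is that a function $f\in\lceil X'^\circ;\inf Y\rfloor$ satisfies $\sup f(X'^\circ)>\inf Y$, and since $X'^\circ\subset\overline{M}^Y$, the value $\sup f(X'^\circ)$ is controlled by the values of $f$ on the sets $M_n$; so $\lceil X'^\circ;\inf Y\rfloor\subset\bigcup_{n\in\w}(\dCF(X,Y)\setminus\lceil M_n;y_n\rceil)$, or more precisely the trace of the left side on the subspace $[\overline{X'^\circ};{\downarrow}y]$.

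More carefully, I would first reduce to working inside the subspace $S:=[X'^\circ;{\downarrow}y]=[\overline{X'^\circ};{\downarrow}y]$ for a suitable $y\in Y$ with $\inf Y<y$: note that $\lceil X'^\circ;\inf Y\rfloor=\bigcup_{y\in Y,\,y>\inf Y}\lceil X'^\circ;\inf Y\rfloor\cap[X'^\circ;{\downarrow}y]$ can be written as a countable union (choosing a countable cofinal-from-above sequence $y\downarrow\inf Y$), so it suffices to show each set $\lceil X'^\circ;\inf Y\rfloor\cap S$ is $\infty$-meager in $\dCF(X,Y)$. Within $S$, for each $n$ the set $C_n:=S\setminus\lceil M_n;y_n\rceil=\{f\in S:\max f(M_n)\ge y_n\}$ is closed in $\dCF(X,Y)$; and because $X'^\circ\subset\overline{M}^Y=\overline{\bigcup_n M_n}^Y$, any $f\in S$ with $\sup f(X'^\circ)>\inf Y$ must satisfy $\max f(M_n)>y_n$ for some $n$ (here one uses that $f$ is continuous for the $Y$-topology on the relevant closures, which it is, being continuous $X\to Y$, together with $\sup f(X'^\circ)\le\sup f(\overline{X'^\circ})\le\sup f(\overline{M}^Y)=\sup_n\max f(M_n)$). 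Hence $\lceil X'^\circ;\inf Y\rfloor\cap S\subset\bigcup_n C_n$.

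It then remains to verify that each $C_n$ is $\infty$-codense in $\dCF(X,Y)$, i.e.\ that its complement $\lceil M_n;y_n\rceil\cup(\dCF(X,Y)\setminus S)$ contains an $\infty$-dense set — equivalently that $\lceil M_n;y_n\rceil$ itself is $\infty$-dense, since an $\infty$-dense subset of an $\infty$-dense set is $\infty$-dense and $\lceil M_n;y_n\rceil\subset\dCF(X,Y)\setminus C_n$. This is exactly where Lemma~\ref{l:d} applies: $M_n$ is a non-empty compact nowhere dense subset of $X$, $X$ is $Y$-separated, $y_n\in Y$ (or $\inf Y\in Y$, whichever we used as the lower endpoint), and $y_n<u$ can be arranged by taking the cutoff slightly larger; thus $\lceil M_n;y_n\rceil$ is $\infty$-dense in $\dCF(X,Y)$, so $C_n$ is closed and $\infty$-codense. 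Assembling, $\lceil X'^\circ;\inf Y\rfloor$ is contained in a countable union of closed $\infty$-codense sets, hence is $\infty$-meager in $\dCF(X,Y)$. The main obstacle I anticipate is the bookkeeping around the endpoints — ensuring the chosen real numbers $y_n$ genuinely lie in $Y$ (or are legitimate parameters for Lemma~\ref{l:d}, which only requires $y\in Y$ with $y<u$) while still forcing the covering inclusion via $\overline{M}^Y$; a clean way to sidestep it is to fix once and for all a single $\bar y\in Y$ with $\bar y>\inf Y$, work in $S=[\overline{X'^\circ};{\downarrow}\bar y]$, choose real numbers $\inf Y<y_n<\bar y$ with $y_n\downarrow\inf Y$ (these need not be in $Y$ since Lemma~\ref{l:d} only needs the lower value $\inf Y\in Y$ and an upper value $u=y_n$), and note $\sup f(\overline{X'^\circ})>\inf Y$ forces $\max f(M_n)>y_n$ for large $n$ by the $Y$-closure identity.
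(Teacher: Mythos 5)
Your core argument is the paper's own proof and it is complete: write $M=\bigcup_{n\in\w}M_n$ with $M_n\subset M_{n+1}$ compact and nowhere dense, fix real numbers $y_n$ strictly decreasing to $\inf Y$, invoke Lemma~\ref{l:d} with lower value $y=\inf Y\in Y$ and upper value $u=y_n$ to conclude that each $\lceil M_n;y_n\rceil$ is $\infty$-dense (so its complement is closed and $\infty$-codense), and deduce the covering $\lceil X'^\circ;\inf Y\rfloor\subset\bigcup_{n\in\w}(\dCF(X,Y)\setminus\lceil M_n;y_n\rceil)$ from $X'^\circ\subset\overline{M}^Y$ and the $Y$-continuity of each $f\in C(X,Y)$. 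That is exactly the argument in the paper; nothing more is needed.

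Two of your intermediate moves, however, are wrong and should simply be deleted rather than patched. First, $\inf Y\in Y\ne\{\inf Y\}$ does \emph{not} force $\inf Y$ to be non-isolated in $Y$ (take $Y=\{0,1\}$), and the fallback of taking all $y_n=\bar y$ constant destroys the covering inclusion: a function $f$ with $\inf Y<\sup f(X'^\circ)$ but $f(M)\subset Y\cap[\inf Y,\bar y)$ lies in every $\lceil M_n;\bar y\rceil$. The correct resolution is the one you reach at the end: the $y_n$ enter only as the parameter $u$ of Lemma~\ref{l:d}, so they may be arbitrary real numbers decreasing to $\inf Y$ and need not belong to $Y$. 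Second, the reduction to $S=[\overline{X'^\circ};{\downarrow}\bar y]$ is both unnecessary and faulty: the sets $[X'^\circ;{\downarrow}y]$ increase with $y$, so a sequence $y\downarrow\inf Y$ gives only the smallest of them rather than a cover (you would need $y$ cofinal upwards in $Y$), and even then functions unbounded on $X'^\circ$ (possible when $\sup Y=\infty$, e.g. $Y=[0,\infty)$) lie in no $[X'^\circ;{\downarrow}y]$ at all. Since the inclusion $\lceil X'^\circ;\inf Y\rfloor\subset\bigcup_{n\in\w}(\dCF(X,Y)\setminus\lceil M_n;y_n\rceil)$ already holds globally --- if $f(x)>y_{n_0}$ for some $x\in X'^\circ\subset\overline{M}^Y$, then the $Y$-open set $f^{-1}(\{t\in Y:t>y_{n_0}\})$ meets some $M_k$, so $\max f(M_n)>y_n$ for all $n\ge\max\{n_0,k\}$ --- the detour through $S$ buys nothing and, as written, leaves a gap.
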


\begin{proof} Fix a strictly decreasing sequence of real numbers $(y_n)_{n\in\w}$ with  $\lim_{n\to\infty}y_n=\inf Y$. Write the meager $\sigma$-compact set $M$ as the union $M=\bigcup_{n\in\w}M_n$ of compact nowhere dense sets $M_n\subset M_{n+1}$ in $X$. By Lemma~\ref{l:d}, for every $n\in\w$ the basic open  set $\lceil M_n,y_n\rceil$ is $\infty$-dense in $\dCF(X,Y)$. Then the complement $\dCF(X,Y)\setminus\lceil M_n,y_n\rceil$ is a closed $\infty$-codense set in $\dCF(X,Y)$. It follows from $X'^\circ\subset\overline{M}^Y$ that
$$\lceil X'^\circ,\inf Y\rfloor\subset\bigcup_{n\in\w}(\dCF(X,Y)\setminus\lceil M_n,y_n\rceil),$$which means that the set $\lceil X'^\circ,\inf Y\rfloor$ is $\infty$-meager in $\dCF(X,Y)$.
\end{proof}

\begin{lemma}\label{l:MX23} Let $Y\subset \IR$ be a subset with $\inf Y\in Y\ne\{\inf Y\}$. Let $X$ be a $Y$-separated topological space containing a meager $\sigma$-compact set $M$ such that $\emptyset\ne X'^\circ\subset\overline{M}^Y$. If\/ $\inf Y\notin\dot Y$ or $\overline{X'^\circ}$ is not compact, then the function space $\dCF(X,Y)$ is $\infty$-meager.
\end{lemma}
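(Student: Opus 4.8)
The plan is to split into the two cases corresponding to the disjunction in the hypothesis and in each case produce a countable cover of $\dCF(X,Y)$ by closed $\infty$-codense sets, combining Lemma~\ref{l:om} (applied to the open subset $\lceil X'^\circ,\inf Y\rfloor$) with an appropriate $\infty$-density statement covering the complementary open set $\{f\in\dCF(X,Y):\max f(\overline{X'^\circ})=\inf Y\}$ — equivalently, the set $[\overline{X'^\circ};\{\inf Y\}]=[X'^\circ;\{\inf Y\}]$ (these are equal because every $f\in C(X,Y)$ is $Y$-continuous, hence constant on $X'^\circ$ forces the value $\inf Y$ on the $Y$-closure). Note that by Lemma~\ref{l:C'd} the space $\dCF(X,Y)$ is the union $\lceil X'^\circ,\inf Y\rfloor\cup[X'^\circ;\{\inf Y\}]$ up to the fact that the second set is the complement of the first, so it suffices to show both pieces are $\infty$-meager in $\dCF(X,Y)$: the first by Lemma~\ref{l:om}, the second by the case analysis below.

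\textbf{Case 1: $\inf Y\notin\dot Y$.} Here I would apply Lemma~\ref{l:Dni} to the non-empty open set $V=X'^\circ$: the basic open set $\lceil X'^\circ,\inf Y\rfloor$ is $\infty$-dense in $\dCF(X,Y)$, so its complement $[X'^\circ;\{\inf Y\}]$ is a closed $\infty$-codense — hence in particular $\infty$-meager — subset of $\dCF(X,Y)$. Combined with Lemma~\ref{l:om}, which gives that $\lceil X'^\circ,\inf Y\rfloor$ itself is $\infty$-meager in $\dCF(X,Y)$, and using that a finite (indeed countable) union of $\infty$-meager sets is $\infty$-meager, we conclude $\dCF(X,Y)=\lceil X'^\circ,\inf Y\rfloor\cup[X'^\circ;\{\inf Y\}]$ is $\infty$-meager.

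\textbf{Case 2: $\overline{X'^\circ}$ is not compact.} Since $X'^\circ$ is a non-empty open set whose closure is not compact, I would apply Lemma~\ref{l:Dnc} (with $V=X'^\circ$ and any $u$ with $\{y\in Y:y>u\}\ne\emptyset$, e.g.\ $u=\inf Y$, which is legitimate as $Y\ne\{\inf Y\}$): the basic open set $\lceil X'^\circ,\inf Y\rfloor$ is $\infty$-dense in $\dCF(X,Y)$, so again its complement $[X'^\circ;\{\inf Y\}]$ is closed $\infty$-codense, hence $\infty$-meager. Together with Lemma~\ref{l:om} this gives $\dCF(X,Y)$ is $\infty$-meager, exactly as in Case~1.

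The only point requiring a little care — and the main (minor) obstacle — is the identification of the complement of $\lceil X'^\circ,\inf Y\rfloor$ with a set of the form $[\,\cdot\,;\{\inf Y\}]$ and the verification that $\lceil X'^\circ,\inf Y\rfloor\cup[X'^\circ;\{\inf Y\}]$ really is all of $\dCF(X,Y)$; this is immediate since for $f\in\dCF(X,Y)$ either $\sup f(X'^\circ)>\inf Y$ (so $f\in\lceil X'^\circ,\inf Y\rfloor$) or $f(X'^\circ)=\{\inf Y\}$ (so $f\in[X'^\circ;\{\inf Y\}]$, these being complementary). Everything else is a direct invocation of the already-proved Lemmas~\ref{l:om}, \ref{l:Dni} and \ref{l:Dnc} together with the trivial stability of the class of $\infty$-meager sets under finite unions.
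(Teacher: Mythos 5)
Your proposal is correct and follows essentially the same route as the paper's own proof: the paper likewise combines Lemma~\ref{l:om} (giving the $\infty$-meagerness of $\lceil X'^\circ,\inf Y\rfloor$) with Lemmas~\ref{l:Dni} and \ref{l:Dnc} (giving its $\infty$-density in the two respective cases, so that the complement is closed and $\infty$-codense), and then writes $\dCF(X,Y)$ as the union of these two $\infty$-meager pieces. The only difference is cosmetic: the paper works directly with the complement $\dCF(X,Y)\setminus\lceil X'^\circ,\inf Y\rfloor$ and never needs your identification of it with $[X'^\circ;\{\inf Y\}]$, so that extra discussion, while harmless, is not required.
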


\begin{proof} By Lemma~\ref{l:om}, the basic open set $\lceil X'^\circ,\inf Y\rfloor$ is $\infty$-meager in $\dCF(X,Y)$ and by Lemmas~\ref{l:Dni} and \ref{l:Dnc}, the set $\lceil X'^\circ,\inf Y\rfloor$ is $\infty$-dense in $\dCF(X,Y)$. Then its complement $\dCF(X,Y)\setminus\lceil X'^\circ,\inf Y\rfloor$ is closed and $\infty$-codense in $\dCF(X,Y)$. Consequently, the set
$$\dCF(X,Y)=(\dCF(X,Y)\setminus\lceil X'^\circ,\inf Y\rfloor)\cup \lceil X'^\circ,\inf Y\rfloor$$is $\infty$-meager (being the countable union of two $\infty$-meager sets in $\dCF(X,Y)$).
\end{proof}

\begin{lemma}\label{l:MXM1} Let $Y\subset \IR$ be a subset containing more than one point and $X$ be a $Y$-separable $T_1$-space with dense set $\dot X$ of isolated points. If the space $X$ does not have $\DMOP$, then the function space $\dCF(X,Y)$ is $\infty$-meager.
\end{lemma}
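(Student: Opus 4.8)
The idea is to reduce the $\infty$-meagerness of the big function space $\dCF(X,Y)$ to the $\infty$-meagerness of the small subspace $\dCF'(X,Y)$ (equivalently $C_k'(X,Y)$), which we control via Theorem~\ref{t:BW}(1). Since $X$ has dense set $\dot X$ of isolated points and $X$ is not discrete, we have $X'^\circ=\emptyset$; hence by Lemma~\ref{l:C'd} the set $\dCF'(X,Y)=[X';\{\inf Y\}]$ is $\infty$-dense in the whole space $[{X'^\circ};\{\inf Y\}]=\dCF(X,Y)$. The first step is therefore: if $\inf Y\notin Y$, then Lemma~\ref{l:MY0X1} already gives the conclusion, so we may assume $\inf Y\in Y$; and since $Y$ has more than one point, $\inf Y\in Y\ne\{\inf Y\}$, putting us in the setting of Lemma~\ref{l:C'd} and of Theorem~\ref{t:BW}.

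The second step is to transfer $\infty$-meagerness along the $\infty$-dense embedding. Because $X$ fails $\DMOP$, Theorem~\ref{t:BW}(1) tells us that $C_k'(X,Y)$ is $\infty$-meager, i.e. it is covered by countably many sets $A_n$ that are closed and $\infty$-codense \emph{in $C_k'(X,Y)$}. By Lemma~\ref{l:F=k} these $A_n$ are also closed and $\infty$-codense in the space $\dCF'(X,Y)$ with its Fell hypograph topology. By Lemma~\ref{l:Gdelta}, $\dCF'(X,Y)$ is a $G_\delta$-subset of $\dCF(X,Y)$, so we may write its complement as a countable union of closed sets $\dCF(X,Y)\setminus\dCF'(X,Y)=\bigcup_{k\in\w}E_k$, and by Lemma~\ref{l:C'd} the set $\dCF'(X,Y)$ is $\infty$-dense in $\dCF(X,Y)$, so each $E_k$ is $\infty$-codense in $\dCF(X,Y)$. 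It remains to take the closures $\overline{A_n}$ of the $A_n$ in $\dCF(X,Y)$ and check that they are still $\infty$-codense there. Then $\dCF(X,Y)=\bigl(\bigcup_n\overline{A_n}\bigr)\cup\bigl(\bigcup_k E_k\bigr)$ exhibits $\dCF(X,Y)$ as a countable union of closed $\infty$-codense sets, i.e. as $\infty$-meager.

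The key point — and the only nontrivial verification — is that $\infty$-codenseness of $A_n$ in the dense subspace $\dCF'(X,Y)$ propagates to $\infty$-codenseness of $\overline{A_n}$ in the ambient space $\dCF(X,Y)$. This is where I expect to invoke a general lemma (either stated elsewhere in the paper or to be proved here): if $D$ is an $\infty$-dense subspace of a space $T$ and $A\subset D$ is $\infty$-codense in $D$, then the closure $\overline{A}^{\,T}$ is $\infty$-codense in $T$. The proof goes by composing approximations: given a compact Hausdorff $Z$ and $\mu\in C_k(Z,T)$ with a basic neighborhood $O_\mu$, first perturb $\mu$ into $C_k(Z,D)$ using $\infty$-density of $D$, then perturb again \emph{inside} $C_k(Z,D)$ into the complement of $A$ using $\infty$-codenseness of $A$ in $D$; the resulting map lands in $C_k(Z,D\setminus A)\subset C_k(Z,T\setminus\overline{A}^{\,T})$ up to shrinking, because $D\setminus A$ is (relatively) open in $D$ and one can arrange the final approximation to avoid a neighborhood of $\overline A$. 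The mild technical care needed is that the two perturbation steps be carried out with respect to compatible neighborhood bases so that the final function still lies in $O_\mu$; this is routine given the explicit description of the Fell hypograph subbase and the proofs of Lemmas~\ref{l:d}--\ref{l:dXy}, which already handle exactly this kind of two-stage approximation. Modulo that transfer lemma, the proof is a short assembly of Lemmas~\ref{l:MY0X1}, \ref{l:C'd}, \ref{l:Gdelta}, \ref{l:F=k} and Theorem~\ref{t:BW}(1).
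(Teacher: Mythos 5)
Your proposal is correct and follows essentially the same route as the paper: reduce to $\inf Y\in Y$ via Lemma~\ref{l:MY0X1}, cover $\dCF(X,Y)\setminus\dCF'(X,Y)$ by closed $\infty$-codense sets coming from the $G_\delta$-representation of $\dCF'(X,Y)$, apply Theorem~\ref{t:BW}(1) to $C_k'(X,Y)=\dCF'(X,Y)$, and transfer $\infty$-codensity of the covering sets $F_n$ to their closures $\bar F_n$ in $\dCF(X,Y)$ by the two-stage approximation you describe. The only remark worth making is that your ``transfer lemma'' needs no shrinking or avoidance of a neighborhood of $\overline{A}$: since $A$ is closed in the $\infty$-dense subspace $D$, one has $D\cap\overline{A}^{\,T}=A$, so any map landing in $D\setminus A$ already misses $\overline{A}^{\,T}$ outright, which is exactly the paper's inline computation.
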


\begin{proof} Since the space $X$ does not have $\DMOP$, it is not discrete. If $\inf Y\notin Y$, then the space $\dCF(X,Y)$ is $\infty$-meager according to Lemma~\ref{l:MY0X1}. So, we assume that $\inf Y\in Y$. Choose a strictly decreasing sequence of real numbers $(y_n)_{n\in\w}$ with $\inf_{n\in\w}y_n=\inf Y$.

Being $Y$-separable, the space $X$ contains a meager $\sigma$-compact subset $M$ with $X'\subset\overline{M}^Y$. Write $M$ as the union $M=\bigcup_{n\in\w}M_n$ of compact nowhere dense sets $M_n\subset M_{n+1}$ in $X$.

 By Lemma~\ref{l:dXy}, for every $n\in\w$ the open basic set $\lceil M_n,y_n\rceil$ is $\infty$-dense in $\dCF(X,Y)$. Then the complement $\dCF(X,Y)\setminus\lceil M_n,y_n\rceil$ is a closed $\infty$-codense set in $\dCF(X,Y)$.

By Lemma~\ref{l:C'd}, the subspace $\dCF'(X,Y)$ is $\infty$-dense in $\dCF(X,Y)$ and by Lemma~\ref{l:F=k} and Theorem~\ref{t:BW}(1), the space $C'_k(X,Y)=\dCF'(X,Y)$ is $\infty$-meager. So, $\dCF'(X,Y)$ can be written as the countable union $\dCF'(X,Y)=\bigcup_{n\in\w}F_n$ of closed $\infty$-codense sets in $\dCF'(X,Y)$. For every $n\in\w$ let $\bar F_n$ be the closures of the set $F_n$ in $\dCF(X,Y)$.

We claim that the set $\bar  F_n$ is $\infty$-codense in $\dCF(X,Y)$. Given any compact Hausdorff space $K$ and a non-empty open set $W\subset C_k(K,\dCF(X,Y))$, we need to find a map $\mu\in W$ with $\mu(K)\cap\bar F_n=\emptyset$. Since the space $\dCF'(X,Y)$ is $\infty$-dense in $\dCF(X,Y)$, the intersection $W\cap C_k(K,\dCF'(X,Y))$ is a non-empty open set in the function space $C_k(K,\dCF'(X,Y))$. Since the set $F_n$ is $\infty$-codense in $\dCF'(X,Y)$, there exists a map $\mu\in W\cap C_k(K,\dCF'(X,Y))$ such that $\mu(K)\cap F_n=\emptyset$. Then $$\mu(K)\cap\bar F_n=(\mu(K)\cap \dCF'(X,Y))\cap\bar F_n=\mu(K)\cap (\dCF'(X,Y)\cap\bar F_n)=\mu(K)\cap F_n=\emptyset.$$

Now we see that the space $$\dCF(X,Y)=\dCF'(X,Y)\cup(\dCF(X,Y)\setminus\dCF'(X,Y))\subset \bigcup_{n\in\w}\bar F_n\cup\bigcup_{n\in\w}(\dCF(X,Y)\setminus \lceil M_n;y_n\rceil)$$is $\infty$-meager, being the countable union of
closed $\infty$-codense sets.
\end{proof}

\begin{lemma}\label{l:MXM2} Let $Y\subset \IR$ be a set containing more than one point and $X$ be a $Y$-separable $Y$-separated topological space. If the space $X$ does not have $\DMOP$, then the function space $\dCF(X,Y)$ is  $\infty$-meager.
\end{lemma}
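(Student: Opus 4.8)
The plan is to reduce Lemma~\ref{l:MXM2} to the already-established Lemma~\ref{l:MXM1} by separating two cases according to whether the interior $X'^\circ$ of the set of non-isolated points is empty. First I would observe that, since $X$ is $Y$-separated and $Y$ has more than one point, $X$ is functionally Hausdorff or totally disconnected; in either case $X$ is $T_1$, so Lemma~\ref{l:MXM1} will be applicable once we know that $\dot X$ is dense.

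\emph{Case 1: $X'^\circ=\emptyset$.} Then $\dot X$ is dense in $X$ (its complement $X'$ is closed with empty interior). Since $X$ is $Y$-separable, is $T_1$, has dense set $\dot X$ of isolated points, and does not have $\DMOP$, Lemma~\ref{l:MXM1} immediately gives that $\dCF(X,Y)$ is $\infty$-meager.

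\emph{Case 2: $X'^\circ\ne\emptyset$.} Here I would use the $Y$-separability of $X$: it provides a meager $\sigma$-compact set $M\subset X$ with $X'=\overline{M}^Y$, and hence $\emptyset\ne X'^\circ\subset X'=\overline{M}^Y$. Now distinguish further. If $\inf Y\notin Y$, then $X$ is non-discrete and $\dCF(X,Y)$ is $\infty$-meager by Lemma~\ref{l:MY0X2}. If $\inf Y\in Y$ but $\overline{X'^\circ}$ is not compact (equivalently, $X$ has property $X_3$), then Lemma~\ref{l:MX23} directly yields $\infty$-meagerness. The remaining subcase is $\inf Y\in Y$ with $\overline{X'^\circ}$ compact and nonempty; this is the genuinely delicate situation, and it is where I expect the main obstacle. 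The point is that on the open set $\lceil X'^\circ,\inf Y\rfloor$ (or on its complement) one cannot appeal to Lemma~\ref{l:MX23}, because neither of its disjunctive hypotheses ($\inf Y\notin\dot Y$ or $\overline{X'^\circ}$ noncompact) need hold. Instead I would argue as follows: by Lemma~\ref{l:C'd} the subspace $\dCF'(X,Y)$ is $\infty$-dense in the subspace $[{X'^\circ};\{\inf Y\}]$, and by Lemma~\ref{l:F=k} together with Theorem~\ref{t:BW}(1) the space $C_k'(X,Y)=\dCF'(X,Y)$ is $\infty$-meager (since $X$ fails $\DMOP$ and contains isolated points because $\dot X\supset\dot X$ is dense away from $X'$; here one must check $\dot X\ne\emptyset$, which follows from $X'^\circ$ having nonempty complement or from $X$ being $T_1$ non-discrete). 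Then, exactly as in the proof of Lemma~\ref{l:MXM1}, I would transfer an $\infty$-codense decomposition of $\dCF'(X,Y)$ to one of $[{X'^\circ};\{\inf Y\}]$ by taking closures $\bar F_n$ in $[{X'^\circ};\{\inf Y\}]$ and using $\infty$-density of $\dCF'(X,Y)$ to conclude each $\bar F_n$ is $\infty$-codense there. This shows $[{X'^\circ};\{\inf Y\}]$ is $\infty$-meager. Finally, combining this with the $\infty$-density of $\lceil X'^\circ,\inf Y\rfloor$ — which here requires a little care, since Lemma~\ref{l:Dni}/\ref{l:Dnc} do not apply; one uses instead that $[{X'^\circ};\{\inf Y\}]$ is a closed subset of $\dCF(X,Y)$ whose complement is $\lceil X'^\circ;\inf Y\rfloor$ restricted appropriately, and applies Lemma~\ref{l:dXy} to see $[X';{\downarrow}y]$ is $\infty$-dense in $[{X'^\circ};{\downarrow}y]$ across a decreasing sequence $y_n\downarrow\inf Y$ — I would write $\dCF(X,Y)$ as the union of the $\infty$-meager set $[{X'^\circ};\{\inf Y\}]$ and a countable family of complements $\dCF(X,Y)\setminus[{X'^\circ};{\downarrow}y_n]$, each of which is closed and $\infty$-codense.

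The main obstacle is this last subcase with $\overline{X'^\circ}$ compact nonempty and $\inf Y$ isolated in $Y$: the clean dichotomy of Lemma~\ref{l:MX23} breaks, and one must instead run the transfer-of-decomposition argument of Lemma~\ref{l:MXM1} relative to the subspace $[{X'^\circ};\{\inf Y\}]$ and then patch it to the whole space using the $\infty$-density statements of Lemma~\ref{l:dXy}. I expect the bookkeeping — verifying that $\dot X\ne\emptyset$ so that Theorem~\ref{t:BW}(1) applies, that the closures $\bar F_n$ remain $\infty$-codense, and that the residual complements are $\infty$-dense — to be routine but to require the separation into the three subcases above; no essentially new idea beyond what is already present in Lemmas~\ref{l:MXM1} and~\ref{l:MX23} should be needed.
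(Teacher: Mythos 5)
Your overall architecture is the same as the paper's: dispose of $\inf Y\notin Y$ via Lemma~\ref{l:MY0X2}, of $X'^\circ=\emptyset$ via Lemma~\ref{l:MXM1} (your observation that $Y$-separatedness gives $T_1$ is the right way to make that lemma applicable), of the cases ``$\overline{X'^\circ}$ non-compact or $\inf Y\notin\dot Y$'' via Lemma~\ref{l:MX23}, and in the residual case ($\inf Y\in\dot Y$, $\overline{X'^\circ}$ compact and non-empty) transfer the $\infty$-meager decomposition of $\dCF'(X,Y)=C_k'(X,Y)$ given by Theorem~\ref{t:BW}(1) to the closures $\bar F_n$ in $\dCF(X,Y)$, using the $\infty$-density of $\dCF'(X,Y)$ in $[X'^\circ;\{\inf Y\}]$ from Lemma~\ref{l:C'd}. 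One point you should make explicit: $[X'^\circ;\{\inf Y\}]=\lceil\overline{X'^\circ};\e\rceil$ is not merely closed but \emph{clopen} (this uses both $\inf Y\in\dot Y$ and the compactness of $\overline{X'^\circ}$), and it is this clopenness that lets $\infty$-codensity of $\bar F_n$ in the subspace pass to $\infty$-codensity in all of $\dCF(X,Y)$.

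The genuine gap is in your final patching step. First, the transfer argument only shows that $\dCF'(X,Y)\subset\bigcup_{n\in\w}\bar F_n$; it does not show that $[X'^\circ;\{\inf Y\}]$ is $\infty$-meager, since functions with $f(X'^\circ)=\{\inf Y\}$ but $f(x)>\inf Y$ at some $x\in X'\setminus X'^\circ$ lie in $[X'^\circ;\{\inf Y\}]\setminus\dCF'(X,Y)$ and need not be covered by the $\bar F_n$. Second, and more seriously, the sets $\dCF(X,Y)\setminus[X'^\circ;{\downarrow}y_n]=\lceil X'^\circ;y_n\rfloor$ you propose for the residual cover are \emph{open}, not closed, and for all sufficiently large $n$ they are not $\infty$-codense: choosing $s\in Y$ with $s>y_n>\inf Y$, the basic neighbourhood $\lceil U;y_n\rfloor$ (for any non-empty open $U\subset X'^\circ$) of the constant function $s$ is disjoint from $[X'^\circ;{\downarrow}y_n]$, so $[X'^\circ;{\downarrow}y_n]$ is not even dense; Lemma~\ref{l:dXy} asserts density of $[X';{\downarrow}y]$ only \emph{inside} the subspace $[X'^\circ;{\downarrow}y]$ and cannot rescue this. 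The repair is the device you already invoked earlier in Case~2 and then dropped: take the meager $\sigma$-compact set $M=\bigcup_{n\in\w}M_n$ with $X'=\overline{M}^Y$ furnished by $Y$-separability, with each $M_n$ compact and nowhere dense, and cover $\dCF(X,Y)\setminus\dCF'(X,Y)$ by the sets $\dCF(X,Y)\setminus\lceil M_n;y_n\rceil$. These are closed and $\infty$-codense because each $\lceil M_n;y_n\rceil$ is $\infty$-dense by Lemma~\ref{l:d}, and they do cover the complement of $\dCF'(X,Y)$ since every $f$ is continuous in the $Y$-topology and $X'=\overline{M}^Y$. With that substitution the proof coincides with the paper's.
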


\begin{proof} Since the space $X$ does not have $\DMOP$, it is not discrete. If $\inf Y\notin Y$, then the function space $\dCF(X,Y)$ is $\infty$-meager by Lemma~\ref{l:MY0X2}. So, we assume that $\inf Y\in Y$.

If the set $X'$ is nowhere dense in $X$, then the space $\dCF(X,Y)$ is $\infty$-meager by Lemma~\ref{l:MXM1}. So, we assume that the interior $X'^\circ$ is not empty. If the closure $\overline{X'^\circ}$ is not compact or $\inf Y\notin \dot Y$, then the space $\dCF(X,Y)$ is $\infty$-meager by Lemma~\ref{l:MX23}. So, we assume that  $\overline{X'^\circ}$ is compact and $\inf Y\in\dot Y$.

In this case we can choose a real number $\e$ such that $\{\inf Y\}=\{y\in Y:y<\e\}$ and conclude that the set $$[X'^\circ;\{\inf Y\}]=\lceil \overline{X'^\circ},\e\rceil=\dCF(X,Y)\setminus\lceil X'^\circ,\inf Y\rfloor$$  is clopen in $\dCF(X,Y)$. By Lemma~\ref{l:C'd}, the space $\dCF'(X,Y)$ is $\infty$-dense in  the clopen subspace $[{X'^\circ};\{\inf Y\}]$ of $\dCF(X,Y)$.

By Theorem~\ref{t:BW}(1), the space $\dCF'(X,Y)$ is $\infty$-meager. So, $\dCF'(X,Y)=\bigcup_{n\in\w}F_n$ for some closed $\infty$-codense sets $F_n$ in $\dCF'(X,Y)$.
Let $\bar F_n$ be the closure of the set $F_n$ in the space $\dCF(X,Y)$.
Taking into account that  $\dCF'(X,Y)$ is $\infty$-dense in $[{X'^\circ};\{\inf Y\}]$, we can show that each set $\bar F_n$ is $\infty$-codense in $[{X'^\circ};\{\inf Y\}]$ by analogy with the proof of Lemma~\ref{l:MXM1}. Since  $[{X'^\circ};\{\inf Y\}]$ is clopen in $\dCF(X,Y)$, the set $\bar F_n$ is $\infty$-codense in $\dCF(X,Y)$.

Fix a strictly decreasing sequence of real numbers $(y_n)_{n\in\w}$ with $\lim_{n\to\infty}y_n=\inf Y$.  Being $Y$-separable, the space $X$ contains a meager $\sigma$-compact $M$ such that $X'\subset\overline{M}^Y$. Write $M$ as the union $M=\bigcup_{n\in\w}M_n$ of compact nowhere dense sets $M_n\subset M_{n+1}$ in $X$. By Lemma~\ref{l:MY0X2}, for every $n\in\w$ the open basic set $\lceil M_n,y_n\rceil$ is $\infty$-dense in $\dCF(X,Y)$. Then the complement $\dCF(X,Y)\setminus\lceil M_n,y_n\rceil$ is a closed $\infty$-codense set in $\dCF(X,Y)$.

Now we see that the space
$$\dCF(X,Y)=\dCF'(X,Y)\cup(\dCF(X,Y)\setminus\dCF'(X,Y))\subset\bigcup_{n\in\w}\bar F_n\cup\bigcup_{n\in\w}(\dCF(X,Y)\setminus \lceil M_n,y_n\rceil)$$is $\infty$-meager (being the countable union of closed $\infty$-codense sets).
\end{proof}

\begin{lemma}\label{l:m} Let $Y\subset \IR$ be a subset, $Z\subset Y$ be an open zero-dimensional subspace in $Y$ and $F\subset Z$ be a closed nowhere dense subset in $Y$. Then the set $F$ is $\infty$-codense in $Y$.
\end{lemma}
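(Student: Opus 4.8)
The plan is to unwind the definition of $\infty$-codensity directly: I must show that for every compact Hausdorff space $K$ the set $C_k(K,Y\setminus F)$ is dense in $C_k(K,Y)$. Since $Y$ is a metric space and $K$ is compact, the compact-open topology on $C_k(K,Y)$ coincides with the topology of uniform convergence, so a base of neighborhoods of a map $f$ is given by the uniform $\e$-balls. Hence it suffices to fix $f\in C(K,Y)$ and $\e>0$ and to produce $g\in C(K,Y)$ with $\sup_{x\in K}|f(x)-g(x)|<\e$ and $g(K)\cap F=\emptyset$.

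The key construction exploits the zero-dimensionality of $Z$ to build a clopen "landing zone". Consider the compact set $C:=f(K)\cap F\subset Z$; if $C=\emptyset$ take $g=f$. Since $C$ is compact and disjoint from the closed set $Y\setminus Z$, it lies at a positive distance $3\delta$ from $Y\setminus Z$ when $Z\ne Y$ (and I set $\delta=\e$ in the degenerate case $Z=Y$). Using that $Z$ is zero-dimensional metrizable, I cover $C$ by finitely many pairwise disjoint sets $N_1,\dots,N_m$ that are clopen in $Z$, each of diameter $<\min\{\e,\delta\}$, and put $N=\bigcup_{k=1}^m N_k$. The crucial observation is that $N$ and each $N_k$ are actually clopen in $Y$: they are open in $Y$ because $Z$ is open in $Y$, and they are closed in $Y$ because the diameter bound forces the closure of $N$ in $\IR$ to stay within distance $\delta$ of $C$, hence inside $Z$; consequently the closure of $N$ in $Y$ equals its closure in $Z$, which is $N$ itself. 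Finally, since $F$ is nowhere dense in $Y$, hence in $Z$, every nonempty open-in-$Z$ set $N_k$ meets $Y\setminus F$, so I may choose a point $p_k\in N_k\setminus F$.

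I then define $g$ by setting $g\equiv p_k$ on $f^{-1}(N_k)$ and $g=f$ on $f^{-1}(Y\setminus N)$. Because $N_1,\dots,N_m,\,Y\setminus N$ are clopen in $Y$, their $f$-preimages form a finite clopen partition of $K$ on each piece of which $g$ is continuous, so $g\in C(K,Y)$. On $f^{-1}(N_k)$ both $f$ and $g$ take values in $N_k$, a set of diameter $<\e$, and on $f^{-1}(Y\setminus N)$ they coincide, whence $\sup_{x\in K}|f(x)-g(x)|<\e$. Moreover $g$ avoids $F$: on $f^{-1}(N_k)$ we have $g=p_k\notin F$, while on $f^{-1}(Y\setminus N)$ we have $g=f$ and $f(x)\in F$ would force $f(x)\in f(K)\cap F=C\subset N$, a contradiction.

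The main obstacle is precisely the clopenness of $N$ (and of each $N_k$) in $Y$, which is what makes the glued map $g$ continuous even though $Z$, and hence $Y$ near $F$, may be totally disconnected. One cannot continuously interpolate values off $F$ along a path, so the only available mechanism for moving off $F$ is to jump across a clopen wall; manufacturing such a wall is exactly where the compactness of $C$ and its positive distance from $Y\setminus Z$ enter, letting the clopen-in-$Z$ pieces be upgraded to clopen-in-$Y$ pieces.
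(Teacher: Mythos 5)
Your proof is correct and follows essentially the same route as the paper's: both replace $f$ by a map that is constant, with value chosen off $F$, on the preimage of each member of a finite pairwise disjoint cover of the relevant compact set by clopen pieces of diameter $<\e$, and agree with $f$ elsewhere, using nowhere density of $F$ to pick the constants and the resulting clopen partition of $K$ for continuity. The only difference is how the pieces are certified clopen in $Y$ rather than merely in $Z$: the paper first sandwiches a clopen set $V$ between $F$ and $Z$ via normality and large inductive dimension zero, whereas you use the positive distance from the compact set $f(K)\cap F$ to $Y\setminus Z$; both work, provided you discard those $N_k$ that miss $f(K)\cap F$ so that the diameter bound really keeps $\overline{N}$ inside $Z$.
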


\begin{proof} Given any compact Hausdorff space $K$, a continuous map $f:K\to Y$ and a neighborhood $O_f\subset C_k(K,Y)$, we need to find a continuous map $g\in O_f$ such that $g(K)\cap F=\emptyset$.  By the normality of the space $Y$, there exists an open set $W\subset Y$ such that $F\subset W\subset\overline{W}\subset Z$. The zero-dimensional space $Z$, being metrizable and separable, has large inductive dimension zero, see \cite[7.3.3]{Eng}. Consequently,  there exists a clopen set $V\subset Z$ such that $F\subset V\subset W$. Since $V\subset W\subset \overline{W}\subset Z$, the clopen subset $V$ of $Z$ remains clopen in the space $Y$.

By \cite[8.2.7]{Eng}, the compact-open topology on the function space $C_k(K,Y)$ is generated by the metric $\rho(g,g')=\sup_{x\in K}|g(x)-g'(x)|$, where $g,g'\in C_k(K,Y)$. Consequently, we can find $\e>0$ such that any map $g:K\to Y$ with $\rho(f,g)<\e$ belongs to the neighborhood $O_f$ of $f$. The space $f(K)\cap V$ is compact and zero-dimensional. So, admits a finite cover $\{V_1,\dots,V_n\}$ by pairwise disjoint clopen sets in $V$ of diameter $<\e$. For each $i\le n$ choose a point $v_i\in V_i\setminus F$. Consider the continuous map $g:K\to Y$ defined by the formula:
$$g(x)=\begin{cases}v_i&\mbox{if $f(x)\in V_i$ for some $i\le n$;}\\
f(x)&\mbox{otherwise.}
\end{cases}
$$
Then $\rho(f,g)<\e$ and hence $g\in O_f$. Also $$g(K)=g(f^{-1}(f(K)\cap V))\cup g(f^{-1}(f(K)\setminus V))\subset\{v_1,\dots,v_n\}\cup (f(K)\setminus V)\subset Y\setminus F.$$
\end{proof}

\begin{lemma}\label{l:MYMx} For any meager space $Y\subset\IR$ and any topological space $X$ containing an isolated point $x$, the function space $\dCF(X,Y)$ is $\infty$-meager.
\end{lemma}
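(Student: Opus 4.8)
The plan is to reduce the $\infty$-meagerness of $\dCF(X,Y)$ to the meagerness of $Y$ by evaluating functions at the isolated point $x$. Since $\{x\}$ is clopen in $X$, every continuous function $f\colon X\to Y$ has an unconstrained value at $x$, so the evaluation map
$$e_x\colon\dCF(X,Y)\to Y,\qquad e_x\colon f\mapsto f(x),$$
is well-defined; moreover, by Lemma~\ref{l:e} (with $K=\{x\}$), $e_x$ is surjective, and because $\lceil\{x\};y\rfloor=e_x^{-1}(\{t\in Y:t>y\})$ and $\lceil\{x\};y\rceil=e_x^{-1}(\{t\in Y:t<y\})$ are subbasic open sets in $\dCF(X,Y)$, the map $e_x$ is continuous. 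First I would record these facts, and also note that $e_x$ is open: given a basic Fell-hypograph neighborhood of $f$, one can shrink it to one of the form $\lceil\{x\};a\rfloor\cap\lceil\{x\};b\rceil\cap(\text{conditions away from }x)$, and using Lemma~\ref{l:e} again the image of this neighborhood under $e_x$ is the order-interval $\langle a,b\rangle_Y$, which is open in $Y$.

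Next, since $Y\subset\IR$ is meager, write $Y=\bigcup_{n\in\w}N_n$ where each $N_n$ is closed and nowhere dense in $Y$ (we may take $N_n$ increasing). The key point is that each $N_n$ is in fact $\infty$-codense in $Y$: this is immediate from Lemma~\ref{l:m}, since $Y\subset\IR$ is zero-dimensional, so we may take $Z=Y$ and $F=N_n$. Then I would pull these sets back: set $F_n:=e_x^{-1}(N_n)=[\{x\};N_n]$, a closed subset of $\dCF(X,Y)$, and observe $\dCF(X,Y)=\bigcup_{n\in\w}F_n$ because $e_x$ is onto $Y=\bigcup_n N_n$. It remains to check that each $F_n$ is $\infty$-codense in $\dCF(X,Y)$.

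For the $\infty$-codensity of $F_n$: let $K$ be a compact Hausdorff space and $W\subset C_k(K,\dCF(X,Y))$ a non-empty open set; we must produce $\mu\in W$ with $\mu(K)\cap F_n=\emptyset$, i.e. with $(e_x\circ\mu)(K)\cap N_n=\emptyset$. The composition $\Phi\colon C_k(K,\dCF(X,Y))\to C_k(K,Y)$, $\mu\mapsto e_x\circ\mu$, is continuous, and using openness of $e_x$ together with Lemma~\ref{l:e} one shows $\Phi$ is open and surjective: given any $h\in C_k(K,Y)$, composing with the section $Y\to\dCF(X,Y)$ furnished by extending each value at $x$ (Lemma~\ref{l:e} applied to the constant map on $\{x\}$) gives a continuous lift. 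Hence $\Phi(W)$ is a non-empty open subset of $C_k(K,Y)$. Since $N_n$ is $\infty$-codense in $Y$, there is $h\in\Phi(W)$ with $h(K)\cap N_n=\emptyset$; pick $\mu\in W$ with $\Phi(\mu)=h$. Then $(e_x\circ\mu)(K)=h(K)$ misses $N_n$, so $\mu(K)\cap F_n=\emptyset$, as desired. Combined with $\dCF(X,Y)=\bigcup_n F_n$, this exhibits $\dCF(X,Y)$ as a countable union of closed $\infty$-codense sets, i.e. $\dCF(X,Y)$ is $\infty$-meager.

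The main obstacle is verifying that the evaluation-at-$x$ machinery (the maps $e_x$ and $\Phi$) is genuinely open and surjective onto $Y$ and $C_k(K,Y)$ respectively; everything hinges on constructing, uniformly, continuous extensions from $\{x\}$ (or from a compact subset of $K$) to all of $X$, which is exactly what Lemma~\ref{l:e} provides, and on the harmless reduction of basic Fell neighborhoods to ones whose only constraint at $x$ is an order-interval condition. Once this is in place, the reduction to Lemma~\ref{l:m} is routine.
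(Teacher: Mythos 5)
Your proposal is correct and follows essentially the same route as the paper: the paper splits $\dCF(X,Y)$ as the product $Y\times\dCF(X\setminus\{x\},Y)$ via the homeomorphism $f\mapsto(f(x),f{\restriction}X\setminus\{x\})$, of which your evaluation map $e_x$ is just the first projection, and it uses the same decomposition of the meager zero-dimensional space $Y$ into closed nowhere dense sets made $\infty$-codense by Lemma~\ref{l:m}. One caveat: Lemma~\ref{l:e} assumes $X$ is $Y$-separated, which is not a hypothesis of this lemma, but your appeals to it are harmless because $\{x\}$ is clopen, so the required extensions (constant functions, or a function redefined only at $x$) exist trivially without any separation assumption.
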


\begin{proof}  Assume that the space $Y\subset\IR$ is meager. Then $Y$ can be written as the countable union $Y=\bigcup_{n\in\w}Y_n$ of closed nowhere dense sets $Y_n$. Being a meager subset of the real line, the space $Y$ is zero-dimensional. By Lemma~\ref{l:m}, the set $Y_n$ is $\infty$-codense in $Y$.
Since the point $x$ of $X$ is isolated,  the map
$$H:\dCF(X,Y)\to Y\times \dCF(X\setminus\{x\},Y),\;\;H:f\mapsto (f(x),f{\restriction}X\setminus\{x\}),$$is a homeomorphism.
The $\infty$-codensity of the closed set $Y_n$ in $Y$ implies the $\infty$-codensity of the closed set $Y_n\times \dCF(X\setminus\{x\},Y)$ in $Y\times \dCF(X\setminus \{x\},Y)=H(\dCF(X,Y))$. Since $H$ is a homeomorphism, the closed set
 $$F_n=\{f\in \dCF(X,Y):f(x)\in Y_n\}=H^{-1}\big(Y_n\times \dCF(X\setminus\{x\},Y)\big)$$ is $\infty$-codense in $\dCF(X,Y)$. Then the space $\dCF(X,Y)=\bigcup_{n\in\w}F_n$ is $\infty$-meager, being a countable union of closed $\infty$-codense sets $F_n$, $n\in\w$.
\end{proof}

\begin{lemma}\label{l:MYM} Let $Y\subset\IR$ be a subspace containing more than one point and $X$ be a non-empty  $Y$-separable $Y$-separated space. If the space $Y$ is meager, then the function space $\dCF(X,Y)$ is $\infty$-meager.
\end{lemma}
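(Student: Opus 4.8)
The plan is to reduce the claim to the $\infty$-meagerness lemmas already established in this section, by a short case analysis keyed on the set $\dot X$ of isolated points of $X$. The single observation that makes everything go through is that a meager space has no isolated points (an isolated point is a non-empty open subspace which is Baire), so $\dot Y=\emptyset$; in particular, if $\inf Y\in Y$ then $\inf Y\in Y\setminus\dot Y$. Note also that meagerness of $Y$ already forces $Y$ to be zero-dimensional, so we are well inside the scope of the earlier lemmas.

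First I would treat the case where $X$ has an isolated point. Then Lemma~\ref{l:MYMx}, applied with the meager space $Y$, directly gives that $\dCF(X,Y)$ is $\infty$-meager, and we are done; here neither $Y$-separability nor $Y$-separatedness of $X$ is needed. So assume $\dot X=\emptyset$, i.e.\ $X'=X$; since $X\ne\emptyset$ this yields $\emptyset\ne X'^\circ=X'=X$ and shows that $X$ is non-discrete. If $\inf Y\notin Y$, then $X$ is a non-discrete $Y$-separated space and Lemma~\ref{l:MY0X2} gives that $\dCF(X,Y)$ is $\infty$-meager. If instead $\inf Y\in Y$, I would use the $Y$-separability of $X$ to choose a meager $\sigma$-compact set $M\subset X$ with $X'=\overline{M}^Y$; then $\emptyset\ne X'^\circ=X'=\overline{M}^Y$, so the hypotheses imposed on $X$ in Lemma~\ref{l:MX23} are satisfied. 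Since $Y$ is meager it has no isolated points, hence $\inf Y\notin\dot Y$; and $Y\ne\{\inf Y\}$ because $Y$ contains more than one point. Thus Lemma~\ref{l:MX23} applies and shows that $\dCF(X,Y)$ is $\infty$-meager, completing the proof.

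The only delicate point — and the thing worth flagging — is the verification that no further case is needed: the one configuration not covered by Lemma~\ref{l:MX23} (namely $X$ with no isolated points but $\overline{X'^\circ}$ compact and $\inf Y\in\dot Y$) simply cannot occur under our hypotheses, precisely because meagerness of $Y$ forbids $\inf Y$ (indeed any point of $Y$) from being isolated. Everything else is a routine check that the hypotheses of Lemmas~\ref{l:MYMx}, \ref{l:MY0X2} and \ref{l:MX23} are met in the respective cases; I do not expect any real obstacle beyond bookkeeping.
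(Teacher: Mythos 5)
Your proposal is correct and follows essentially the same route as the paper's proof: the same three-way case split (an isolated point of $X$ handled by Lemma~\ref{l:MYMx}; $\dot X=\emptyset$ with $\inf Y\notin Y$ handled by Lemma~\ref{l:MY0X2}; $\dot X=\emptyset$ with $\inf Y\in Y$ handled by Lemma~\ref{l:MX23}), hinging on the same observation that a meager $Y$ has no isolated points, so $\inf Y\notin\dot Y$. Your extra bookkeeping (checking $X'^\circ=X'=\overline{M}^Y\ne\emptyset$ via $Y$-separability) just makes explicit what the paper leaves implicit.
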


\begin{proof} If the space $X$ contains an isolated point, then the function space $\dCF(X,Y)$ is $\infty$-meager by Lemma~\ref{l:MYMx}. So, we assume that the space $X$ contains no isolated points. If $\inf Y\notin Y$, then the space $\dCF(X,Y)$ is $\infty$-meager by Lemma~\ref{l:MY0X2}. So, we assume that $\inf Y\in Y$. Since the space $Y$ is meager, the point $\inf Y$ is not isolated in $Y$. In this case the space $\dCF(X,Y)$ is $\infty$-meager by Lemma~\ref{l:MX23}.
\end{proof}

\begin{lemma}\label{l:MYNd} Let $Y\subset \IR$ be a non-empty subspace, $X$ be a topological space, and $D\subset\dot X$ be an infinite closed set in $X$.
If the space $Y$ is not Baire, then the function space $\dCF(X,Y)$ is $\infty$-meager.
\end{lemma}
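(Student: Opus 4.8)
The plan is to use the infinite closed discrete set $D$ to split off a product factor $Y^D$, reduce the problem to showing $Y^D$ is $\infty$-meager, and then combine Lemma~\ref{l:m} with an ``overwrite a free coordinate'' trick.

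First I would note that $D$, being an infinite subset of $\dot X$ all of whose points are isolated (hence have clopen singletons), is a clopen discrete subspace of $X$; passing to a countably infinite subset (which is again closed in $X$, since any subset of a closed set of isolated points is closed), I may assume $D$ is countably infinite. Then I claim $\dCF(X,Y)$ is homeomorphic to $Y^D\times\dCF(X\setminus D,Y)$, the first factor carrying the product topology: this is the same routine verification as the one‑isolated‑point case used in Lemma~\ref{l:MYMx}. Indeed, each evaluation $ev_d\colon\dCF(X,Y)\to Y$ ($d\in D$) is continuous because $\{d\}$ is compact‑and‑open, and every subbasic set $\lceil U;y\rfloor$, $\lceil K;y\rceil$ of $\dCF(X,Y)$ unpacks—splitting $U$, resp.\ $K$, along the clopen partition $X=D\sqcup(X\setminus D)$ and using that $K\cap D$ is finite—into an open subset of the product. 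Since the class of $\infty$‑meager spaces is closed under products with an arbitrary space (if $A=\bigcup_nF_n$ with $F_n$ closed $\infty$‑codense in $A$, then $A\times B=\bigcup_n(F_n\times B)$ and $F_n\times B$ is closed $\infty$‑codense in $A\times B$, because $C_k(K,(A\setminus F_n)\times B)=C_k(K,A\setminus F_n)\times C_k(K,B)$ is dense in $C_k(K,A\times B)$), it suffices to prove that $Y^D$ (with the product topology) is $\infty$‑meager.

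Next, since $Y$ is not Baire it has a non‑empty open subspace $W$ that is meager; being a meager subspace of $\IR$, $W$ has empty interior in $\IR$ and is therefore zero‑dimensional (exactly as in the proof of Lemma~\ref{l:MYMx}). Because $Y$ is metrizable, $W$ is an $F_\sigma$‑set in $Y$, say $W=\bigcup_nZ_n$ with each $Z_n$ closed in $Y$ and $Z_n\subset W$; and because $W$ is meager in itself, $W=\bigcup_mN_m$ with each $N_m$ closed and nowhere dense in $W$. Then the sets $G_{n,m}:=Z_n\cap N_m=Z_n\cap\overline{N_m}^{\,Y}$ are closed in $Y$, contained in $W$, nowhere dense in $Y$ (since $G_{n,m}\subset N_m$ and $W$ is open in $Y$), and $W=\bigcup_{n,m}G_{n,m}$; re‑index them as $(G_k)_{k\in\w}$. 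By Lemma~\ref{l:m}, applied with the open zero‑dimensional set $Z=W$, each $G_k$ is $\infty$‑codense in $Y$. Hence for each $d\in D$ and $k\in\w$ the set $\pi_d^{-1}(G_k)$ (with $\pi_d\colon Y^D\to Y$ the $d$‑th projection) is closed and $\infty$‑codense in $Y^D$—identifying $Y^D=Y\times Y^{D\setminus\{d\}}$ it equals $G_k\times Y^{D\setminus\{d\}}$, and $\infty$‑codensity is inherited from the first factor as above.

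It remains to treat $F:=\{f\in Y^D:f(D)\cap W=\emptyset\}=\bigcap_{d\in D}\pi_d^{-1}(Y\setminus W)$, which is closed; I must show it is $\infty$‑codense, i.e.\ that $Y^D\setminus F=\bigcup_{d\in D}\pi_d^{-1}(W)$ is $\infty$‑dense. Given a compact Hausdorff space $K$, a map $\phi\in C_k(K,Y^D)$, and a basic neighborhood $O_\phi=\bigcap_j[Z_j;V_j]$ (with $Z_j\subset K$ compact and $V_j$ a basic open box in $Y^D$), the neighborhood $O_\phi$ restricts only finitely many coordinates; choosing $d^*\in D$ outside that finite set, fixing $w_0\in W$, and letting $\phi'\in C_k(K,Y^D)$ be $\phi$ with its $d^*$‑th coordinate overwritten by the constant $w_0$, one gets $\phi'\in O_\phi$ and $\phi'(K)\subset\pi_{d^*}^{-1}(W)$. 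Thus $F$ is $\infty$‑codense, and
\[
Y^D=F\cup\bigcup_{d\in D}\pi_d^{-1}(W)=F\cup\bigcup_{d\in D}\bigcup_{k\in\w}\pi_d^{-1}(G_k)
\]
exhibits $Y^D$ as a countable union (here the countability of $D$ is used) of closed $\infty$‑codense subsets, so $Y^D$ is $\infty$‑meager, and therefore so is $\dCF(X,Y)$.

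The main obstacle I anticipate is the bookkeeping in the decomposition $W=\bigcup_kG_k$: one needs the pieces to be \emph{simultaneously} closed in $Y$, nowhere dense in $Y$, and contained in an open zero‑dimensional subset of $Y$, and it is precisely the combination of the $F_\sigma$‑decomposition of $W$ in $Y$ with the meager decomposition of $W$ in itself that secures all three at once. A secondary point requiring care is that the Fell hypograph topology on the discrete factor $D$ is exactly the product topology, which is what allows basic neighborhoods in $C_k(K,Y^D)$ to touch only finitely many coordinates and hence lets a free coordinate be overwritten.
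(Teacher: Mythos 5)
Your proof is correct and follows essentially the same route as the paper's: the same decomposition of the non-empty open meager set $W\subset Y$ into closed nowhere dense pieces lying in $W$, the same appeal to Lemma~\ref{l:m} for the $\infty$-codensity of the sets $\{f:f(d)\in Z_n\}$, and the same ``overwrite a free coordinate of $D$'' argument for the $\infty$-codensity of the set of functions avoiding $W$ on $D$. The only difference is packaging: you first split off the clopen discrete factor via $\dCF(X,Y)\cong Y^D\times\dCF(X\setminus D,Y)$ and prove product-stability of $\infty$-meagerness, whereas the paper performs the same finite-coordinate bookkeeping directly inside basic neighborhoods of $C_k(K,\dCF(X,Y))$.
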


\begin{proof} Replacing $D$ by a smaller infinite subset, we can assume that $D$ is countable.

The space $Y$ is not Baire and hence contains a non-empty open meager subset $Z\subset Y$. Being a meager subset of the real line, the space $Z$ is zero-dimensional. Being a meager $F_\sigma$-set in $Y$, the space $Z$ can be written as  the countable union $Z=\bigcup_{n\in\w}Z_n$ of nowhere dense closed subsets $Z_n$ of $Y$.

Observe that $$\dCF(X,Y)=\{f\in \dCF(X,Y):f(D)\subset Y\setminus Z\}\cup\bigcup_{x\in D}\bigcup_{n\in\w}\{f\in \dCF(X,Y):f(x)\in Z_n\}.$$
Using Lemma~\ref{l:m}, it can be shown that for every $x\in D$ and $n\in\w$ the closed set $\{f\in \dCF(X,Y):f(x)\in Z_n\}$ is $\infty$-codense in $\dCF(X,Y)$. It remains to prove that the closed set $[D;Y{\setminus}Z]:=\{f\in \dCF(X,Y):f(D)\subset Y\setminus Z\}$ is $\infty$-codense in $\dCF(X,Y)$.

Given a continuous map $\mu:K\to \dCF(X,Y)$, defined on a compact Hausdorff space $K$ and a neighborhood $O_\mu\subset C_k(K,\dCF(X,Y))$ of $\mu$, we need to find a continuous map $\mu'\in O_\mu$ such that $\mu'(K)\subset \dCF(X,Y)\setminus [D;Y\setminus Z]$. We can assume that the neighborhood $O_\mu$ is of basic form $O_\mu=\bigcap_{i=1}^m[K_i;B_i]$ where $K_1,\dots,K_m$ are compact sets in $K$ and each set $B_i\subset\dCF(X,Y)$ is of basic form $B_i=\bigcap_{U\in\U_i}\lceil U;a_i(U)\rfloor\cap\bigcap_{\kappa\in\K_i}\lceil \kappa,b_i(\kappa)\rceil,$ where $\U_i$ is a non-empty finite family of non-empty open sets in $X$, $\K_i$ is a non-empty finite family of non-empty compact sets in $X$ and $a_i:\U_i\to\IR$, $b_i:\K_i\to \IR$ are functions. We can also assume that for every $i\le m$ and $U\in\U_i$ the open set is either singleton $\{x\}\subset\dot X$ or $U\subset X'^\circ$. In this case the set $E=\bigcup_{i\le m} (\dot X\cap\bigcup\U_i)$ is finite.

Since the infinite set $D$ is discrete and closed in $X$, there exists a point $d\in D\setminus(E\cup\bigcup_{i=1}^n\bigcup\K_i)$. Fix any point $z\in Z$ and consider the continuous map $s:\dCF(X,Y)\to \dCF(X,Y)\setminus [D,Y\setminus Z]$ assigning to each function $f\in\dCF(X,Y)$ the function $f':X\to Y$ such that $f'(d)=z$ and $f'{\restriction}X\setminus\{d\}=f{\restriction}X\setminus \{d\}$. Then the continuous map $\mu'=s\circ\mu:K\to \dCF(X,Y)\setminus [D;Y\setminus Z]$ belongs to the neighborhood $O_\mu$, witnessing that the closed set $[D;Y\setminus Z]$ is $\infty$-codense in $\dCF(X,Y)$.
\end{proof}

\begin{lemma}\label{l:MYN} Let $Y\subset \IR$ be a non-empty subspace and $X$ be a $Y$-separable $Y$-separated space  such that the set $\dot X$ is not contained in a compact subset of $X$.
If the space $Y$ is not Baire, then the function space $\dCF(X,Y)$ is $\infty$-meager.
\end{lemma}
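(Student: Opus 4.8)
The plan is to argue by a dichotomy on whether the space $X$ has the Discrete Moving Off Property, reducing each alternative to a lemma already proved in this section; the only genuinely new ingredient is a short observation linking the non-compactness of $\dot X$ to $\DMOP$.

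First I would establish the following claim: \emph{if $\dot X$ is not contained in any compact subset of $X$ and $X$ has $\DMOP$, then $\dot X$ contains an infinite subset $D$ that is closed in $X$.} The proof is short: consider the family $\F=\{\{x\}:x\in\dot X\}$ of singletons of isolated points, which are finite subsets of $\dot X$. Since no compact subset of $X$ contains $\dot X$, for every compact $K\subset X$ there is a point $x\in\dot X\setminus K$, so $\F$ is a moving off family. By $\DMOP$, $\F$ contains an infinite subfamily $\mathcal D=\{\{x_i\}:i\in I\}$ that is discrete in $X$; I would then set $D=\{x_i:i\in I\}$. This set is infinite, and it is closed in $X$, because the union of a discrete family of closed sets is closed (each singleton $\{x_i\}$ is clopen, hence closed) --- equivalently, any $p\in X\setminus D$ has a neighborhood $O_p$ meeting at most one member $\{x_{i_0}\}$ of $\mathcal D$, and then $O_p\setminus\{x_{i_0}\}$ is a neighborhood of $p$ disjoint from $D$.

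Granting the claim, the lemma follows at once. Since a one-point space is Baire while $Y$ is not, the set $Y$ has more than one point. If $X$ has $\DMOP$, the claim yields an infinite set $D\subset\dot X$ closed in $X$, and Lemma~\ref{l:MYNd} (using that $Y$ is not Baire) gives that $\dCF(X,Y)$ is $\infty$-meager. If $X$ does not have $\DMOP$, then Lemma~\ref{l:MXM2}, applied to the $Y$-separable $Y$-separated space $X$, gives the same conclusion. In either case we are done.

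The main --- indeed essentially the only --- obstacle is the claim, and even there the single idea is that a discrete family of clopen singletons has closed union; the remainder is bookkeeping with the hypotheses of Lemmas~\ref{l:MYNd} and~\ref{l:MXM2}. I note in passing that the hypothesis ``$Y$ is not Baire'' is actually used only in the $\DMOP$ branch, through Lemma~\ref{l:MYNd}; in the $\neg\DMOP$ branch it is superfluous.
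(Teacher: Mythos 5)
Your proof is correct and follows essentially the same route as the paper: the same dichotomy on $\DMOP$, the same observation that $\{\{x\}:x\in\dot X\}$ is a moving off family yielding an infinite closed discrete set $D\subset\dot X$, and the same appeals to Lemmas~\ref{l:MXM2} and~\ref{l:MYNd}. Your explicit verification that a non-Baire $Y$ has more than one point (needed for Lemma~\ref{l:MXM2}) and that a discrete family of singletons has closed union are details the paper leaves implicit, but the argument is the same.
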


\begin{proof} If the topological space $X$ does not have $\DMOP$, then the function space $\dCF(X,Y)$ is $\infty$-meager by Lemma~\ref{l:MXM2}. So, we assume that the space $X$ has $\DMOP$. Since the set $\dot X$ is not contained in a compact subset of $X$, the family of singletons $\{\{x\}:x\in\dot X\}$ is moving off. Since $X$ has $\DMOP$, this family has an infinite discrete subfamily, which implies that $X$ contains an infinite subset $D\subset\dot X$, which is closed in $X$.
Now we can apply Lemma~\ref{l:MYNd} to conclude that the space $\dCF(X,Y)$ is $\infty$-meager.
\end{proof}

\section{Recognizing Baire spaces among function spaces $\dCF(X,Y)$}\label{s:B2}

\begin{lemma}\label{l:BX0} Let $Y$ be a Baire subspace of the real line. For any discrete topological space $X$, the function space $\dCF(X,Y)$ is Baire.
\end{lemma}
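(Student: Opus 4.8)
The plan is the following. Since $X$ is discrete, every map $f\colon X\to Y$ is continuous, so as a set $C(X,Y)=Y^X$; the first step is to check that on this set the Fell hypograph topology coincides with the Tychonoff product topology. Compact subsets of the discrete space $X$ are finite, so the compact-open topology on $C(X,Y)$ is already the product topology, and by the general observation recalled in the introduction the Fell hypograph topology is weaker than the compact-open one. It therefore remains to verify the reverse inclusion, i.e. that every product-subbasic set $\{f\in Y^X:f(x)\in W\}$ with $x\in X$ and $W$ open in $Y$ is open in the Fell hypograph topology. Writing $W=Y\cap\bigcup_{n}(a_n,b_n)$ as a countable union of open intervals of $\IR$, each set $\{f:f(x)\in Y\cap(a_n,b_n)\}$ equals $\lceil\{x\};a_n\rfloor\cap\lceil\{x\};b_n\rceil$ (or just one of these two subbasic sets when the interval is a ray, or the whole space when $W=Y$), using that the singleton $\{x\}$ is at the same time a non-empty open set and a non-empty compact set in $X$; hence $\{f:f(x)\in W\}$ is a union of Fell-open sets and so is Fell-open. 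Consequently $\dCF(X,Y)$ is homeomorphic to the product space $Y^X$.

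Once this is done, the statement reduces to the assertion that $Y^X$ is Baire. Being a subspace of $\IR$, the space $Y$ is regular (in particular quasi-regular) and second countable, hence it has a countable $\pi$-base; and it is Baire by hypothesis. I would then apply Oxtoby's theorem on Cartesian products of Baire spaces \cite{Oxtoby}: an arbitrary product of quasi-regular spaces, each having a countable $\pi$-base, is Baire if and only if each factor is Baire. Applied to the $X$-indexed family of copies of $Y$, this yields that $Y^X\cong\dCF(X,Y)$ is Baire, as required.

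The real content sits in the product theorem, and this is where the difficulty would lie in a self-contained treatment: Baireness --- unlike the strong Choquet or Choquet property --- is not productive in general, and even the countable power $Y^{\w}$ of a Baire second-countable space requires a Banach--Mazur game argument of the kind carried out by Oxtoby. One can at least reduce the arbitrary index set $X$ to the countable case by a coordinate-closure argument: a hypothetical winning strategy for player $\mathsf E$ in the Choquet game $\GEN(Y^X)$, which may be assumed to use only the natural box $\pi$-base, can be closed off on a countable set $J\subseteq X$ of coordinates (each countable subproduct being second countable, hence of countable $\pi$-weight), producing a winning strategy for $\mathsf E$ in $\GEN(Y^J)$ and contradicting the Baireness of $Y^J\cong Y^{\w}$; but the last fact itself still rests on Oxtoby's game-theoretic input, so invoking the product theorem directly is the efficient route.
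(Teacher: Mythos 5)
Your proof is correct and follows essentially the same route as the paper: identify $\dCF(X,Y)$ with the Tychonoff power $Y^X$ (using that compact subsets of a discrete space are finite and that singletons are simultaneously open and compact, so the Fell hypograph topology coincides with the product topology), and then invoke Oxtoby's product theorem for Baire spaces with countable $\pi$-bases. The extra details you supply --- the explicit verification that product-subbasic sets are Fell-open and the remark on reducing to countable subproducts --- are sound but not needed beyond what the paper records.
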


\begin{proof} Taking into account that $Y$ is second countable and applying \cite[Theorem 3]{Oxtoby}, we conclude that the Tychonoff power $Y^X$ is Baire. Since $X$ is discrete, the Fell hypograph topology on $C(Y,X)=Y^X$ coincides with the Tychonoff product topology on $Y^X$, which implies that the function space $\dCF(X,Y)$ is Baire.
\end{proof}

\begin{lemma} Let $Y\subset\IR$ be a non-empty space with $\inf Y\notin Y$ and $X$ be a $T_1$-space with dense set $\dot X$ of isolated points. If the function space $\dCF(X,Y)$ is Baire, then $X$ is discrete and $Y$ is Baire.
\end{lemma}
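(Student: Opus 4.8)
The plan is to prove the contrapositive in two parts: first show that if $X$ is not discrete (equivalently, if the closed set $X'$ of non-isolated points is non-empty), then $\dCF(X,Y)$ is meager, hence not Baire; and separately show that if $Y$ is not Baire, then $\dCF(X,Y)$ is not Baire. Both directions should follow from the $\infty$-density lemmas already established (Lemmas~\ref{l:d}, \ref{l:dXy}, \ref{l:Dni}), since an $\infty$-dense open set is in particular dense, and a space which is the countable union of closed $\infty$-codense — hence closed nowhere dense — subsets is meager, hence not Baire (as every non-empty open subspace of a Baire space is non-meager).

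For the first part, assume $X$ is not discrete, so there is a non-isolated point $x\in X'$. Since $\inf Y\notin Y$, fix a strictly decreasing sequence $(y_n)_{n\in\w}\subset Y$ with $\inf_n y_n=\inf Y$. Because $X$ is a $T_1$-space with dense set $\dot X$ of isolated points, the singleton $\{x\}$ is closed and nowhere dense in $X$, so Lemma~\ref{l:dXy} applies (with $X'^\circ\subset X'$, and noting $\{x\}\subset X'$) — more directly, since $\{x\}$ is compact and nowhere dense, Lemma~\ref{l:d} shows that each $\lceil\{x\};y_n\rceil$ is $\infty$-dense, hence dense, in $\dCF(X,Y)$, so each complement $\dCF(X,Y)\setminus\lceil\{x\};y_n\rceil$ is closed and nowhere dense. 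As $f(x)\in Y$ and $\inf_n y_n=\inf Y\notin Y$, for every $f$ there is some $n$ with $f(x)\ge y_n$, i.e. $f\notin\lceil\{x\};y_n\rceil$; therefore $\dCF(X,Y)=\bigcup_{n\in\w}\bigl(\dCF(X,Y)\setminus\lceil\{x\};y_n\rceil\bigr)$ is meager. This is essentially the argument of Lemma~\ref{l:MY0X1}, which I may simply cite.

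For the second part, assume $Y$ is not Baire; I must show $X$ is discrete, so it suffices to show that if $X$ is not discrete the space is already meager (handled above), and if $X$ \emph{is} discrete with $\dCF(X,Y)$ Baire, then $Y$ must be Baire. So suppose $X$ is discrete. Then $\dCF(X,Y)$ is the Tychonoff power $Y^X$, and for the coordinate projection $\pi_x\colon Y^X\to Y$, which is continuous, open and surjective, the preimage of an open dense subset of $Y$ is open dense in $Y^X$; a Baire space whose continuous open image is $Y$ forces $Y$ to be Baire (open continuous images of Baire spaces are Baire). Alternatively: if $Y$ is not Baire, pick a non-empty open meager $Z\subset Y$ written as $\bigcup_n Z_n$ with $Z_n$ closed nowhere dense in $Y$; then fixing any $x\in X$, the sets $\{f\in Y^X: f(x)\in Z_n\}=\pi_x^{-1}(Z_n)$ are closed nowhere dense in $Y^X$ restricted to the non-empty open set $\pi_x^{-1}(Z)$, showing $\pi_x^{-1}(Z)$ is a non-empty open meager subset of $\dCF(X,Y)$, so $\dCF(X,Y)$ is not Baire.

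The main obstacle is a minor bookkeeping point rather than a conceptual one: I must be careful that the hypothesis "$X$ a $T_1$-space with dense set $\dot X$ of isolated points" is exactly what licenses the claim that a non-isolated point has a nowhere-dense (indeed closed) singleton, which is what feeds into Lemma~\ref{l:d} or Lemma~\ref{l:dXy}. Once that is in place, both halves are short citations of results in the excerpt — Lemma~\ref{l:MY0X1} for non-discreteness $\Rightarrow$ meager, and the elementary fact that open continuous images of Baire spaces are Baire for the $Y$-is-Baire conclusion — so the write-up should be only a few lines.
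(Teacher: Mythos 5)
Your proposal matches the paper's proof: the paper likewise deduces discreteness of $X$ from the non-meagerness of the Baire space $\dCF(X,Y)$ via Lemma~\ref{l:MY0X1}, and then identifies $\dCF(X,Y)$ with the Tychonoff power $Y^X$ to conclude that $Y$ is Baire. One caution: your ``more directly'' aside invoking Lemma~\ref{l:d} is not licensed here, since that lemma assumes $X$ is $Y$-separated, which is not a hypothesis of this statement --- this is precisely why the paper keeps the two parallel meagerness lemmas, with Lemma~\ref{l:MY0X1} (resting on Lemma~\ref{l:dXy}, which needs no separation axiom beyond $T_1$ plus density of $\dot X$) covering the present case; your primary route of simply citing Lemma~\ref{l:MY0X1} is the correct one.
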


\begin{proof} Being Baire, the space $\dCF(X,Y)$ is not meager and by Lemma~\ref{l:MY0X1}, the space $X$ is discrete. In this case the Fell hypograph topology on $C(X,Y)$ coincides with the topology of pointwise convergence on $C(X,Y)=Y^X$, which implies that the Tychonoff power $Y^X$ is Baire and so is the space $Y$.
\end{proof}

\begin{lemma}\label{l:B1} Let $Y\subset\IR$ be a non-empty space with $\inf Y\notin Y$ and $X$ be a $Y$-separated space. If the function space $\dCF(X,Y)$ is Baire, then $X$ is discrete and $Y$ is Baire.
\end{lemma}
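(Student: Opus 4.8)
The plan is to reduce to the discrete case and then invoke Lemma~\ref{l:MY0X2}. First I would observe that a Baire space is never $\infty$-meager (by the chain of implications $\infty$-meager $\Ra$ meager $\Ra$ not Baire recorded in the introduction). Consequently the hypothesis that $\dCF(X,Y)$ is Baire is incompatible with the conclusion of Lemma~\ref{l:MY0X2}, which asserts that $\dCF(X,Y)$ is $\infty$-meager whenever $X$ is a non-discrete $Y$-separated space and $\inf Y\notin Y$. Reading that lemma contrapositively, $X$ must be discrete. This is the only step in which the standing hypotheses $\inf Y\notin Y$ and the $Y$-separatedness of $X$ are really used, and it is also the crux of the argument; the remaining steps are routine.

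Next, with $X$ discrete, I would note that every compact subset of $X$ is finite, so the defining subbasic sets $\lceil U;y\rfloor=\{f:\sup f(U)>y\}$ and $\lceil K;y\rceil=\{f:\max f(K)<y\}$ of the Fell hypograph topology are, respectively, a union and a finite intersection of ``coordinate'' sets $\{f:f(x)>y\}$ and $\{f:f(x)<y\}$ over $x$ ranging in $U$, resp.\ $K$; conversely each such coordinate set is itself subbasic, being $\lceil\{x\};y\rfloor$ or $\lceil\{x\};y\rceil$. Hence the Fell hypograph topology on $C(X,Y)=Y^X$ coincides with the topology of pointwise convergence, i.e.\ the Tychonoff product topology, and therefore the power $Y^X$ is Baire.

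Finally, assuming (as we may) that $X\ne\emptyset$, I would fix a point $x_0\in X$ and use that the coordinate projection $Y^X\to Y$ is a continuous open surjection, together with the standard fact that continuous open surjective images of Baire spaces are Baire, to conclude that $Y$ is Baire. (The same last step is used tacitly in the proof of the preceding lemma.) The main obstacle is the first step, and it has already been overcome in Lemma~\ref{l:MY0X2}; everything after it is bookkeeping about the discrete case.
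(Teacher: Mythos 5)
Your proposal is correct and follows essentially the same route as the paper: the paper likewise deduces from Lemma~\ref{l:MY0X2} that $X$ is discrete (via Baire $\Rightarrow$ not meager $\Rightarrow$ not $\infty$-meager), then identifies the Fell hypograph topology with the Tychonoff product topology on $Y^X$ and concludes that $Y$ is Baire. Your extra details (the coordinate-set description of the subbasic sets and the open-projection argument) merely make explicit steps the paper leaves to the reader.
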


\begin{proof} Being Baire, the space $\dCF(X,Y)$ is not meager and by Lemma~\ref{l:MY0X2}, the space $X$ is discrete. In this case the Fell hypograph topology on $C(X,Y)$ coincides with the topology of pointwise convergence on $C(X,Y)=Y^X$, which implies that the Tychonoff power $Y^X$ is Baire and so is the space $Y$.
\end{proof}

These three lemmas imply the following characterization.

\begin{lemma} \label{l:Bnon} Let $X$ be a non-empty $T_1$-space and $Y\subset\IR$ be a non-empty space with $\inf Y\notin Y$. Assume that $X$ is $Y$-separated or $\dot X$ is dense in $X$. The function space $\dCF(X,Y)$ is Baire if and only if $X$ is discrete and $Y$ is Baire.
\end{lemma}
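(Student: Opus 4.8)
The plan is to deduce Lemma~\ref{l:Bnon} directly from the three preceding lemmas (Lemma~\ref{l:BX0}, the unnamed lemma, and Lemma~\ref{l:B1}), treating it purely as a bookkeeping combination rather than proving anything new. First I would observe that the statement is an equivalence under one of two alternative hypotheses on $X$ (either $X$ is $Y$-separated, or $\dot X$ is dense in $X$), together with the standing assumptions that $X$ is a non-empty $T_1$-space and $Y\subset\IR$ is non-empty with $\inf Y\notin Y$. So the proof naturally splits into the ``if'' direction and the ``only if'' direction.

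For the ``if'' direction, I would argue that it requires no case split at all: if $X$ is discrete and $Y$ is Baire, then Lemma~\ref{l:BX0} immediately gives that $\dCF(X,Y)$ is Baire, regardless of which of the two alternative hypotheses holds. (One might note that a non-empty discrete space trivially has $\dot X=X$ dense and is $Y$-separated for any $Y$ with at least two points, so both hypotheses are automatically compatible with discreteness; but strictly we do not even need this remark.)

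For the ``only if'' direction, I would split according to which alternative hypothesis is in force. If $\dot X$ is dense in $X$, then since $X$ is a $T_1$-space with $\inf Y\notin Y$, the unnamed lemma preceding Lemma~\ref{l:B1} (the one assuming ``$X$ is a $T_1$-space with dense set $\dot X$ of isolated points'') applies verbatim: $\dCF(X,Y)$ Baire forces $X$ discrete and $Y$ Baire. If instead $X$ is $Y$-separated, then Lemma~\ref{l:B1} applies verbatim and yields the same conclusion. Since at least one of the two hypotheses holds by assumption, the conclusion follows in either case.

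There is essentially no obstacle here: the lemma is a packaging of already-established facts, and the only point to be careful about is making sure the hypotheses of each invoked lemma are genuinely met in the relevant case (in particular that ``non-empty $T_1$-space'' plus the respective structural hypothesis matches exactly what Lemma~\ref{l:B1} and the unnamed lemma require). A short proof in the style below should suffice.

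\begin{proof} If $X$ is discrete and $Y$ is Baire, then the function space $\dCF(X,Y)$ is Baire by Lemma~\ref{l:BX0}.

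To prove the ``only if'' part, assume that the function space $\dCF(X,Y)$ is Baire. If the set $\dot X$ is dense in $X$, then by the lemma preceding Lemma~\ref{l:B1}, the space $X$ is discrete and $Y$ is Baire. If the space $X$ is $Y$-separated, then the same conclusion follows from Lemma~\ref{l:B1}. By our assumption, one of these two cases occurs, so $X$ is discrete and $Y$ is Baire.
\end{proof}
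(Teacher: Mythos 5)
Your proof is correct and is exactly the argument the paper intends: the paper gives no written proof of Lemma~\ref{l:Bnon}, stating only that ``these three lemmas imply the following characterization,'' and your combination of Lemma~\ref{l:BX0} for the ``if'' direction with the unnamed lemma (dense $\dot X$ case) and Lemma~\ref{l:B1} ($Y$-separated case) for the ``only if'' direction is precisely that intended bookkeeping. The hypotheses of each invoked lemma are indeed met in the respective cases, so there is nothing to add.
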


\begin{lemma}\label{l:d'} Let $Y\subset \IR$ be a subspace with $\inf Y\in Y\ne\{\inf Y\}$, and $X$ be a $Y$-separable $Y$-separated space. If the function space $\dCF(X,Y)$ is Baire, then the set $\dot X$ of isolated points is dense in $X$.
\end{lemma}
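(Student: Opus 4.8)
The plan is to establish the contrapositive: assuming that $\dot X$ is \emph{not} dense in $X$, I will produce a non-empty meager open subspace of $\dCF(X,Y)$, which by \cite[8.1]{Ke} forces $\dCF(X,Y)$ to fail to be Baire.

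First I would observe that if $\dot X$ is not dense, then $V:=X\setminus\overline{\dot X}$ is a non-empty open set disjoint from $\dot X$; hence $V\subset X'$, and since $V$ is open, $V\subset X'^\circ$. In particular $X'^\circ\neq\emptyset$. Next I would invoke the $Y$-separability of $X$: by definition it supplies a meager $\sigma$-compact set $M\subset X$ with $X'=\overline{M}^Y$, so that $\emptyset\ne X'^\circ\subset X'=\overline{M}^Y$. At this point every hypothesis of Lemma~\ref{l:om} is met — $X$ is $Y$-separated, $\inf Y\in Y\ne\{\inf Y\}$, and $X$ contains a meager $\sigma$-compact set $M$ with $\emptyset\ne X'^\circ\subset\overline{M}^Y$ — so Lemma~\ref{l:om} applies and shows that the basic open set $\lceil X'^\circ,\inf Y\rfloor$ is $\infty$-meager, and therefore meager, in $\dCF(X,Y)$.

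To conclude, I would note that $\lceil X'^\circ,\inf Y\rfloor=\{f\in\dCF(X,Y):\sup f(X'^\circ)>\inf Y\}$ is non-empty: since $Y\ne\{\inf Y\}$, one can pick $y\in Y$ with $y>\inf Y$, and the constant function $X\to\{y\}$ lies in this set. Thus $\dCF(X,Y)$ possesses a non-empty open subspace that is meager, so $\dCF(X,Y)$ is not Baire, contrary to the hypothesis. Hence $\dot X$ must be dense in $X$.

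I do not expect a genuine obstacle here: the statement is in essence a bookkeeping combination of Lemma~\ref{l:om} (which already packages the $\infty$-density computations needed to see that $\lceil X'^\circ,\inf Y\rfloor$ is $\infty$-meager) with the definition of $Y$-separability. The only points requiring any care are the elementary observation that an open set avoiding $\dot X$ lies inside $X'^\circ$, the verification that $\lceil X'^\circ,\inf Y\rfloor$ is a legitimate non-empty subbasic open set (using $X'^\circ\ne\emptyset$ together with $Y\ne\{\inf Y\}$), and the standard characterization of Baire spaces via their open subspaces.
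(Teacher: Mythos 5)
Your proof is correct and follows essentially the same route as the paper: the paper's own argument for Lemma~\ref{l:d'} unwinds the content of Lemma~\ref{l:om} inline (covering $\lceil X'^\circ;\inf Y\rfloor$ by the complements of the dense sets $\lceil M_n;y_n\rceil$ supplied by Lemma~\ref{l:d}), whereas you simply cite Lemma~\ref{l:om} directly, which is a legitimate and slightly cleaner packaging of the identical computation. Your explicit check that $\lceil X'^\circ;\inf Y\rfloor$ is non-empty is a small point the paper asserts without comment.
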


\begin{proof} To derive a contradiction, assume that the set $X'$ of isolated points of $X$ has non-empty interior $X'^\circ$. Being $Y$-separable, the space $X$ contains meager $\sigma$-compact subset $M$ with $X'\subset\overline{M}^Y$. Write $M$  as the countable union $M=\bigcup_{n\in\w}M_n$ of nowhere dense compact sets $M_n\subset M_{n+1}$ in $X$.

Choose a strictly decreasing sequence of real numbers $(y_n)_{n\in\w}$ such that $\lim_{n\to\infty}y_n=\inf Y$. By Lemma~\ref{l:d}, for any $n\in\w$ the basic open set $\lceil M_n;y_n\rceil$ is dense in $\dCF(X,Y)$ and hence its complement $\dCF(X,Y)\setminus\lceil M_n;y_n\rceil$ is closed and nowhere dense in $\dCF(X,Y)$.

Since $\lceil X'^\circ;\inf Y\rfloor\subset \bigcup_{n\in\w}(\dCF(X,Y)\setminus\lceil M_n;y_n\rceil)$, the non-empty basic open set $\lceil X'^\circ;\inf Y\rfloor$ is meager. So, $\dCF(X,Y)$ cannot be Baire.
\end{proof}

\begin{lemma}\label{l:B1} Let $Y\subset \IR$ be a subset with $\inf Y\in Y$ and $X$ be a $T_1$-space with dense set $\dot X$ of isolated points.
\begin{enumerate}
\item[\textup{(1)}] If $X$ has $\DMOP$ and $Y$ is almost Polish, then $\dCF(X,Y)$ is Baire;
\item[\textup{(2)}] If $\dCF(X,Y)$ is not meager and $X$ is $Y$-separable, then the space $X$ has $\DMOP$;
\item[\textup{(3)}] If $\dCF(X,Y)$ is Baire, then so is the space $Y$.
\end{enumerate}
\end{lemma}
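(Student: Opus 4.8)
The three parts can be tackled by transferring known results about $C_k'(X,Y)=\dCF'(X,Y)$ to $\dCF(X,Y)$ via the density and $G_\delta$ machinery already assembled in the excerpt. First I would observe that when $X$ has dense $\dot X$, the interior $X'^\circ$ is empty, so Lemma~\ref{l:C'd} tells us that $\dCF'(X,Y)$ is $\infty$-dense (in particular dense) in $\dCF(X,Y)$; combining this with Lemma~\ref{l:F=k} lets us treat $\dCF'(X,Y)$ as the compact-open function space $C_k'(X,Y)$, to which Theorem~\ref{t:BW} applies. This is the conceptual heart of the argument: a dense subspace carries the Baire/meager dichotomy, and a dense $G_\delta$ subspace carries Choquet-type properties.

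For part (1), assuming $X$ has $\DMOP$ and $Y$ is almost Polish, Theorem~\ref{t:BW}(2) gives that $C_k'(X,Y)=\dCF'(X,Y)$ is Baire. A dense Baire subspace of a regular space makes the whole space Baire (a non-empty open $W\subset\dCF(X,Y)$ meets $\dCF'(X,Y)$ in a non-empty relatively open, hence non-meager, set, and a set meager in $\dCF(X,Y)$ would be meager in the subspace); so $\dCF(X,Y)$ is Baire. For part (2), suppose $\dCF(X,Y)$ is not meager and $X$ is $Y$-separable. If $\inf Y\notin Y$ we are essentially in the situation of Lemma~\ref{l:MY0X1}: a non-discrete such $X$ would make $\dCF(X,Y)$ $\infty$-meager, hence meager, a contradiction, so $X$ is discrete and $\DMOP$ holds trivially. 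If $\inf Y\in Y$, then by Lemma~\ref{l:Gdelta} the set $\dCF'(X,Y)$ is a dense $G_\delta$ in $\dCF(X,Y)$; a dense $G_\delta$ in a non-meager space is non-meager, so $C_k'(X,Y)=\dCF'(X,Y)$ is non-meager, and by the "meager iff fails $\DMOP$" clause of Theorem~\ref{t:BW}(1) (contrapositive) the space $X$ has $\DMOP$.

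For part (3), if $\dCF(X,Y)$ is Baire then it is non-meager; pick any isolated point $x\in\dot X$ (which exists since $\dot X$ is dense and $X$ is non-empty — if $X=\emptyset$ the statement is vacuous, and if $Y$ is a singleton it is trivially Baire). The evaluation-at-$x$ map together with restriction, $f\mapsto(f(x),f{\restriction}X\setminus\{x\})$, is a homeomorphism $\dCF(X,Y)\cong Y\times\dCF(X\setminus\{x\},Y)$ exactly as in the proof of Lemma~\ref{l:MYMx}; being Baire is inherited by (continuous open, in fact projection) images onto factors in such product decompositions when the other factor is non-empty, so $Y$ is Baire. The main obstacle is being careful with the exact inheritance statements: that "dense subspace Baire $\Rightarrow$ ambient Baire" and "dense $G_\delta$ of non-meager is non-meager" hold, and that the coordinate projection of a Baire product onto a factor is Baire — each of these is standard (see \cite[8.1, 8.3]{Ke}) but needs $\dCF(X,Y)$ to be at least regular, which holds here since $\dCF(X,Y)$ embeds in the Fell hyperspace of the metrizable space $X\times\IR$ is not automatic; instead one uses that $\dCF'(X,Y)=C_k'(X,Y)$ is regular and the density/$G_\delta$ transfer only needs openness of subsets, so no separation axiom beyond what is available is required.
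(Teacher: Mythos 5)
Your proposal is correct and follows essentially the same route as the paper: all three parts are proved by transferring properties between $\dCF(X,Y)$ and its dense (respectively dense $G_\delta$) subspace $\dCF'(X,Y)=C_k'(X,Y)$ via Lemmas~\ref{l:C'd}, \ref{l:Gdelta}, \ref{l:F=k} and Theorem~\ref{t:BW}, with part (3) using the same splitting homeomorphism $\dCF(X,Y)\cong Y\times\dCF(X\setminus\{x\},Y)$ at an isolated point. The closing worry about regularity is unnecessary (and the case $\inf Y\notin Y$ in part (2) is vacuous under the hypothesis $\inf Y\in Y$), since the facts you invoke --- dense Baire subspace implies Baire, a dense subset meager in itself is meager in the ambient space, and open continuous images of Baire spaces are Baire --- hold without any separation axioms.
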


\begin{proof} 1. Assume that the space $X$ has $\DMOP$ and the space $Y$ is almost Polish. %Then  $Y$ contains a dense Polish subspace $P$. The density of the space $P$ in $Y$ implies the density of the space $\dCF'(X,P)$ in $\dCF'(X,Y)$.
By Lemma~\ref{l:C'd}, the space $\dCF'(X,Y)$ is dense in $\dCF(X,Y)$.
%, which implies that $\dCF'(X,P)$ is dense in $\dCF(X,Y)$. By \cite[3.11]{Ke}, the Polish space $P$ is a $G_\delta$-set in the real line.
By Lemma~\ref{l:F=k} and Theorem~\ref{t:BW}(2), the function space $C_k'(X,Y)=\dCF'(X,Y)$ is Baire. Then the space $\dCF(X,Y)$ is Baire, too (because it contains a dense Baire subspace).
\smallskip

2. Assume that the function space $\dCF(X,Y)$ is not meager and the space $X$ is $Y$-separable. By Lemmas~\ref{l:C'd} and \ref{l:Gdelta}, $\dCF'(X,Y)$ is a dense $G_\delta$-set in $\dCF(X,Y)$, which implies that the complement $\dCF(X,Y)\setminus \dCF'(X,Y)$  is meager in $\dCF(X,Y)$. Assuming that the space $\dCF'(X,Y)$ is meager, we would conclude that the space $\dCF(X,Y)=\dCF'(X,Y)\cup(\dCF(X,Y)\setminus\dCF'(X,Y))$ is meager, which contradicts our assumption. This contradiction shows that the space $\dCF'(X,Y)$ is not meager. By Lemma~\ref{l:F=k},
$\dCF'(X,Y)=C_k'(X,Y)$ and by Theorem~\ref{t:BW}(1), the space $X$ has $\DMOP$.
\smallskip

3. Assuming that the space $\dCF(X,Y)$ is Baire, we shall prove that the space $Y$ is Baire. Take any isolated point $x\in X$ and consider the subspace $Z:=X\setminus\{x\}$ of $X$. It is easy to see that the map $$H:\dCF(X,Y)\to \dCF(Z,Y)\times Y,\;\;H:f\mapsto (f{\restriction}Z,f(x)),$$is a homeomorphism. Then the product $\dCF(Z,Y)\times Y$ is Baire and so is the space $Y$.
\end{proof}

\begin{lemma}\label{l5.2} Let $Y\subset \IR$ be an Polish+meager space with $\inf Y\in Y\ne\{\inf Y\}$. For any $Y$-separable space $X$ with dense set $\dot X$ of  isolated points, the following conditions are equivalent:
\begin{enumerate}
\item[\textup{(1)}] the function space  $\dCF(X,Y)$ is Baire;
\item[\textup{(2)}] $Y$ is Baire and $X$ has $\DMOP$.
\end{enumerate}
\end{lemma}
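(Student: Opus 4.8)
The plan is to obtain the equivalence by assembling the three parts of Lemma~\ref{l:B1} together with the elementary fact, recorded right after the definition of Polish+meager spaces, that a Polish+meager space is Baire if and only if it is almost Polish. No new construction is needed here: the substantive work has already been packaged into Lemma~\ref{l:B1}, which in turn rests on Theorem~\ref{t:BW} for the auxiliary space $C_k'(X,Y)=\dCF'(X,Y)$ and on the density and $G_\delta$ statements of Lemmas~\ref{l:C'd} and~\ref{l:Gdelta}. I first record that $\dCF(X,Y)$ is non-empty (since $Y$ is non-empty, $C(X,Y)$ contains the constant functions) and that, by hypothesis, $X$ has a dense set $\dot X$ of isolated points while $Y$ satisfies $\inf Y\in Y\neq\{\inf Y\}$; thus the hypotheses needed to invoke the relevant parts of Lemma~\ref{l:B1} are all in place.

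To prove $(2)\Rightarrow(1)$, I would assume that $Y$ is Baire and $X$ has $\DMOP$. Since $Y$ is Polish+meager, being Baire it is almost Polish. Then Lemma~\ref{l:B1}(1) applies directly and yields that the function space $\dCF(X,Y)$ is Baire.

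To prove $(1)\Rightarrow(2)$, I would assume that $\dCF(X,Y)$ is Baire. By Lemma~\ref{l:B1}(3) the space $Y$ is Baire. Moreover, being a non-empty Baire space, $\dCF(X,Y)$ is not meager; since $X$ is $Y$-separable, Lemma~\ref{l:B1}(2) then shows that $X$ has $\DMOP$, so condition~$(2)$ holds. Consequently the proof is short, and the only point demanding a little care is the bookkeeping of hypotheses: one must notice that the Polish+meager assumption on $Y$ is exactly what upgrades ``$Y$ is Baire'' to ``$Y$ is almost Polish'' in the direction $(2)\Rightarrow(1)$, and that it is the $Y$-separability of $X$ (via Lemma~\ref{l:B1}(2), and behind it the $G_\delta$-property of $\dCF'(X,Y)$ from Lemma~\ref{l:Gdelta}) that makes the converse go through. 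I expect the final write-up to occupy only a few lines.
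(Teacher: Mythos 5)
Your proposal is correct and follows essentially the same route as the paper: both directions are reduced to Lemma~\ref{l:B1}(1)--(3), with the Polish+meager hypothesis used only to upgrade ``$Y$ is Baire'' to ``$Y$ is almost Polish'' (the paper spells out the density of the Polish subspace $P$ where you cite the recorded equivalence, but the argument is the same). Nothing is missing.
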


\begin{proof} $(1)\Ra(2)$ If the function space $\dCF(X,Y)$ is Baire, then by Lemma~\ref{l:B1}(2,3), the space $X$ has $\DMOP$ and the space $Y$ is Baire.
\smallskip

$(2)\Ra(1)$ Assume that the space $Y$ is Baire and the space $X$ has $\DMOP$.
By definition, the Polish+meager space $Y$ contains a Polish subspace $P\subset Y$ whose complement $Y\setminus P$ is meager in $Y$. We claim that the Polish space $P$ is dense in $Y$. In the opposite case the non-empty open subset $Y\setminus\bar P$ of $Y$ is meager and $Y$ cannot be Baire. Now Lemma~\ref{l:B1}(1) implies that the function space $\dCF(X,Y)$ is Baire.
\end{proof}

Combining Lemmas~~\ref{l:d'} and \ref{l5.2}, we obtain the following proposition which implies Theorem~\ref{t:main}(1) announced in the introduction.

\begin{proposition}\label{p:Bm2} Let $Y\subset \IR$ be a Polish+meager subspace such that $\inf Y\in Y\ne\{\inf Y\}$. For any $Y$-separable $Y$-separated space, the following conditions are equivalent:
\begin{enumerate}
\item[\textup{(1)}] the function space $\dCF(X,Y)$ is Baire;
\item[\textup{(2)}] the space $Y$ is Baire, the space $X$ has $\DMOP$ and the set $\dot X$ is dense in $X$.
\end{enumerate}
\end{proposition}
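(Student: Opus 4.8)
The plan is to derive this proposition directly from the two preceding lemmas, Lemma~\ref{l:d'} and Lemma~\ref{l5.2}. The key observation is that Lemma~\ref{l5.2} already packages the desired equivalence, but only under the standing hypothesis that the set $\dot X$ of isolated points is dense in $X$. So the role of Lemma~\ref{l:d'} is to supply this density automatically whenever $\dCF(X,Y)$ is Baire, while in the converse direction the density is simply part of condition (2).

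First I would prove $(1)\Ra(2)$. Assume $\dCF(X,Y)$ is Baire. Since $Y$ is Polish+meager with $\inf Y\in Y\ne\{\inf Y\}$ and $X$ is a $Y$-separable $Y$-separated space, Lemma~\ref{l:d'} applies and yields that $\dot X$ is dense in $X$. Now $X$ is a $Y$-separable space with dense set of isolated points, so Lemma~\ref{l5.2} applies, and its implication $(1)\Ra(2)$ gives that $Y$ is Baire and $X$ has $\DMOP$. Combined with the density of $\dot X$ just established, this is exactly condition (2).

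For the converse $(2)\Ra(1)$, assume $Y$ is Baire, $X$ has $\DMOP$, and $\dot X$ is dense in $X$. Then $X$ is again a $Y$-separable space with dense set of isolated points, so Lemma~\ref{l5.2} applies, and its implication $(2)\Ra(1)$ gives that $\dCF(X,Y)$ is Baire.

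I do not expect a genuine obstacle here: all the substantive work — the $\infty$-density computations of the previous section, the identification of $\dCF'(X,Y)$ with $C_k'(X,Y)$ via Lemma~\ref{l:F=k}, the transfer of the Baire/meager dichotomy through the dense $G_\delta$ subspace $\dCF'(X,Y)$ (Lemmas~\ref{l:C'd} and \ref{l:Gdelta}), and the appeal to Theorem~\ref{t:BW} — is already absorbed into Lemmas~\ref{l:d'} and \ref{l5.2}. The only point deserving a moment's care is verifying that the hypotheses of the invoked lemmas are met: a $Y$-separated space with $Y\ne\{\inf Y\}$ is Hausdorff, hence $T_1$, so there is no clash with the $T_1$ assumptions underlying Lemma~\ref{l5.2}, and the Polish+meager hypothesis on $Y$ is exactly what Lemma~\ref{l5.2} requires.
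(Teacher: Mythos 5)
Your proposal is correct and coincides with the paper's own proof, which is precisely ``combining Lemmas~\ref{l:d'} and \ref{l5.2}'': Lemma~\ref{l:d'} supplies the density of $\dot X$ in the direction $(1)\Rightarrow(2)$, and Lemma~\ref{l5.2} then gives the equivalence in both directions. Your extra remark that a $Y$-separated space with $|Y|\ge 2$ is Hausdorff (hence $T_1$, as needed for the lemmas underlying Lemma~\ref{l5.2}) is a sound and welcome verification that the paper leaves implicit.
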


\section{Recognizing Choquet spaces among function spaces $\dCF(X,Y)$}

We shall use the following known properties of Choquet spaces, see \cite[8.13]{Ke} and  \cite{White}.

\begin{lemma}\label{l:bC}
\begin{enumerate}
\item[\textup{(1)}] The Tychonoff product of Choquet spaces is Choquet.
\item[\textup{(2)}] Each dense $G_\delta$-set of a Choquet space $X$ is Choquet.
\item[\textup{(3)}] A topological space is Choquet if it contains a dense Choquet subspace.
\item[\textup{(4)}] An open continuous image of a Choquet space is Choquet.
\item[\textup{(5)}] A metrizable space is Choquet if and only if it is almost complete-metrizable.
\item[\textup{(6)}] A metrizable separable space is Choquet if and only if it is almost Polish.
\end{enumerate}
\end{lemma}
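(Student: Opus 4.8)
The plan is to verify the six assertions, all of which are standard facts about the Choquet game. Items (1)--(4) I would prove by transferring a winning strategy of the player $\mathsf N$ from one Choquet game to another, and (5)--(6) by invoking the metrization-theoretic characterization of Choquet spaces.

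For (1), fix Choquet spaces $(X_i)_{i\in I}$ with winning strategies $\sigma_i$ of $\mathsf N$ in $\GEN(X_i)$. In $\GEN(\prod_{i\in I}X_i)$ the player $\mathsf N$ first shrinks each move $U_n$ of $\mathsf E$ to a non-empty basic open box $B_n\subseteq U_n$ with finite support $F_n$, arranged so that $F_{n-1}\subseteq F_n$; he then applies $\sigma_i$ coordinatewise for $i\in F_n$ (reading the coordinate projections of the boxes $B_m$, $m\le n$, as the moves of $\mathsf E$ in $\GEN(X_i)$) and plays the resulting box $V_n$. From the stage at which a coordinate $i$ first enters some $F_m$, its coordinate play is a legal run of $\GEN(X_i)$ won by $\mathsf N$; choosing a point in each such coordinate intersection (and arbitrarily in the inactive coordinates) gives a point of $\bigcap_nV_n\subseteq\bigcap_nU_n$.

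For (3), if $D$ is a dense Choquet subspace of $X$ with winning $\mathsf N$-strategy $\sigma$ in $\GEN(D)$, then $\mathsf N$ wins $\GEN(X)$ by running a shadow game in $D$: a move $U_n$ of $\mathsf E$ restricts to $U_n\cap D$, the strategy $\sigma$ returns $V^D_n\subseteq U_n\cap D$, and writing $V^D_n=W_n\cap D$ the player $\mathsf N$ plays $W_n\cap U_n$ in $X$; the $X$-intersection then contains the non-empty $D$-intersection. Item (2) is the same idea with the shadow game run in $X$: for $D=\bigcap_nG_n$ dense $G_\delta$ (each $G_n$ open dense), feed the legal move $U_n\cap G_n$ to $\mathsf N$'s winning $X$-strategy, so that every $\mathsf N$-move lies in $G_n$ and the non-empty intersection of the play lies in $\bigcap_nG_n=D$. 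For (4), given an open continuous surjection $f\colon X\to Y$ with $\mathsf N$ winning $\GEN(X)$, the player $\mathsf N$ wins $\GEN(Y)$ by lifting: a move $U_n$ of $\mathsf E$ in $Y$ is lifted to $f^{-1}(U_n)$ intersected with the previous $X$-move $W_{n-1}$ (non-empty since $U_n\subseteq f(W_{n-1})$), $\mathsf N$'s $X$-strategy returns $W_n$, and $\mathsf N$ plays the open image $f(W_n)$ in $Y$; as $f(\bigcap_nW_n)\subseteq\bigcap_nf(W_n)$ and $\bigcap_nW_n\ne\emptyset$, the player $\mathsf N$ wins. Finally, for (5) and (6) I would quote that a metrizable space is Choquet if and only if it contains a dense completely metrizable (respectively, in the separable case, dense Polish) subspace; see \cite[8.17]{Ke} and \cite{White}, cf. also \cite[8.16]{Ke}.

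The only delicate point is the bookkeeping in (1)--(4): one has to check that the lifted or restricted moves of $\mathsf E$ still form a decreasing sequence of non-empty open sets, so that the winning strategy of $\mathsf N$ applies, and that the intersection produced in the target space is non-empty. The main obstacle is really (1) for an \emph{arbitrary} index set $I$ rather than a countable one: since each basic box restricts only finitely many coordinates, only countably many coordinates ever become active, and one must ensure that every coordinate entering some $F_m$ thereafter receives a well-defined legal run of $\GEN(X_i)$, which is handled by letting $\sigma_i$ start responding exactly when coordinate $i$ first appears in the support of a move.
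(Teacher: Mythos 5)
Your proposal is correct, and all six strategy-transfer arguments (shrinking to finitely supported boxes for products, shadow games for dense subspaces and dense $G_\delta$-sets, lifting along open surjections) are the standard proofs of these facts. The paper itself offers no proof of Lemma~\ref{l:bC}: it treats the six statements as known and simply cites \cite[8.13, 8.16, 8.17]{Ke} and \cite{White}, which contain exactly the arguments you have written out, so there is no substantive difference in approach to report.
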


%The following three lemmas are proved in \cite{BW}.

%\begin{lemma} Let $Y\subset\IR$ be a non-empty space with $\inf Y\in Y\ne\{\inf Y\}$. For a topological space $X$ the function space $C'_k(X,Y)$ is Choquet if and only if $Y$ is Choquet and $X$ has $\WDMOP$.
%\end{lemma}

%\begin{lemma} A topological space $X$ with countable set $\dot X$ of isolated points has $\WDMOP$ if and only if it is a $\dot\kappa_\w$-space.
%\end{lemma}

%\begin{lemma} Let $X$ be a topological space with $\WDMOP$. A countable subset $A\subset\dot X$ is closed in $X$ if and only if for any compact subset $K\subset X$ the intersection $K\cap A$ is finite.
%\end{lemma}

\begin{lemma}\label{l:C1} Let $Y$ be a non-empty almost Polish subspace of the real line. Then for any discrete topological space $X$ the function space $\dCF(X,Y)$ is Choquet.
\end{lemma}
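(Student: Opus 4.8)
The plan is to reduce the claim to the fact that the Fell hypograph topology on $\dCF(X,Y)$ coincides with the Tychonoff product topology on $Y^X$ when $X$ is discrete, and then invoke the stability of the Choquet property under Tychonoff products. First I would observe that for a discrete space $X$, every singleton $\{x\}$ is simultaneously open and compact, so the subbasic sets $\lceil\{x\};y\rfloor=\{f:f(x)>y\}$ and $\lceil\{x\};y\rceil=\{f:f(x)<y\}$ generate exactly the topology of pointwise convergence; moreover these are the only compact subsets one needs, since any compact $K\subset X$ is finite, and any nonempty open $U\subset X$ contains a singleton, so $\lceil U;y\rfloor=\bigcup_{x\in U}\lceil\{x\};y\rfloor$. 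Hence $\dCF(X,Y)=C_k(X,Y)=C_p(X,Y)=Y^X$ as topological spaces.

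Next I would use that $Y$ is almost Polish, so $Y$ contains a dense Polish subspace $P$. By the second-countability of $Y$ (hence of $P$) and the fact that a separable metrizable Polish space is Choquet — indeed complete-metrizable spaces are strong Choquet, hence Choquet — the space $P$ is Choquet. By Lemma~\ref{l:bC}(1) the Tychonoff power $P^X$ is Choquet. Since $P$ is dense in $Y$, the power $P^X$ is dense in $Y^X$, and by Lemma~\ref{l:bC}(3) a space containing a dense Choquet subspace is Choquet; therefore $Y^X=\dCF(X,Y)$ is Choquet.

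Alternatively, and perhaps more cleanly, since $Y$ is separable metrizable and almost Polish, Lemma~\ref{l:bC}(6) gives that $Y$ itself is Choquet, so $Y^X$ is Choquet directly by Lemma~\ref{l:bC}(1) without passing through $P$. Either route works; I would present the second one for brevity, remarking that the identification $\dCF(X,Y)=Y^X$ for discrete $X$ is the only nontrivial ingredient.

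The only real obstacle is the careful verification that the Fell hypograph topology genuinely collapses to the product topology on a discrete domain — i.e. that no compact set other than a finite one arises and that the $\lceil U;y\rfloor$ sets add nothing beyond what the singletons give. This is routine once one notes $X$ discrete $\Rightarrow$ every compact subset finite, but it should be spelled out since it is exactly the point where discreteness is used; after that the argument is a one-line appeal to Lemma~\ref{l:bC}.
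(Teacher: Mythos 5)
Your proposal is correct and your ``second route'' is exactly the paper's argument: identify $\dCF(X,Y)$ with the Tychonoff power $Y^X$ using discreteness of $X$, then apply Lemma~\ref{l:bC}(1,6). The extra detour through a dense Polish subspace $P$ and Lemma~\ref{l:bC}(3) is valid but unnecessary, as you yourself note.
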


\begin{proof}
Since $X$ is discrete, the Fell hypograph topology on $\dCF(X,Y)$ coincides with the topology of pointwise convergence. So, $\dCF(X,Y)$ can be identified with the Tychonoff power $Y^X$, which is Choquet by Lemma~\ref{l:bC}(1,6).
\end{proof}

\begin{lemma}\label{l:C2} Let $Y\subset\IR$ be a non-empty space and $X$ be a topological space containing an isolated point $x$. If the function space $\dCF(X,Y)$ is Choquet, then $Y$ is Choquet and almost Polish.
\end{lemma}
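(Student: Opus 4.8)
The plan is to reduce everything to the splitting of $\dCF(X,Y)$ along the isolated point $x$, exactly as in the proof of Lemma~\ref{l:B1}(3). Since $\{x\}$ is clopen, $X$ is the topological sum of $\{x\}$ and $Z:=X\setminus\{x\}$. First I would observe (or simply cite from the proof of Lemma~\ref{l:B1}(3)) that the restriction map
$$H:\dCF(X,Y)\to \dCF(Z,Y)\times Y,\qquad H:f\mapsto(f{\restriction}Z,\,f(x)),$$
is a homeomorphism. The only bookkeeping here is to check that each subbasic Fell-hypograph set behaves correctly under $H$: writing an open set $U\subset X$ as a piece inside $Z$ together with possibly $\{x\}$, and a compact $K\subset X$ as $K'\cup(K\cap\{x\})$ with $K'=K\cap Z$ compact, one sees that $\lceil\{x\};y\rfloor$ and $\lceil\{x\};y\rceil$ produce exactly the factor $Y$ (the sets $\dCF(Z,Y)\times\langle y,\cdot\rangle_Y$), while $\lceil U;y\rfloor$ with $U\subset Z$ and $\lceil K';y\rceil$ produce exactly the factor $\dCF(Z,Y)$; conversely the product topology pulls back into the Fell hypograph topology, so $H$ is a homeomorphism.

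Next, assuming $\dCF(X,Y)$ is Choquet, the homeomorphic space $\dCF(Z,Y)\times Y$ is Choquet. The coordinate projection $\mathrm{pr}_Y:\dCF(Z,Y)\times Y\to Y$ is open and continuous, and it is surjective because $\dCF(Z,Y)$ is non-empty: if $Z\ne\emptyset$ it contains all constant functions, and if $Z=\emptyset$ it is the one-point space consisting of the empty function. Hence, by Lemma~\ref{l:bC}(4), the space $Y$, being an open continuous image of a Choquet space, is Choquet.

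Finally, since $Y\subset\IR$ is metrizable and separable, Lemma~\ref{l:bC}(6) upgrades "Choquet" to "almost Polish", so $Y$ is simultaneously Choquet and almost Polish, which is the assertion. I do not anticipate a real obstacle: the statement is essentially a transport-of-structure argument, and the only mildly technical point is the verification that $H$ respects the Fell hypograph topology, which repeats the computation already needed for Lemma~\ref{l:B1}(3).
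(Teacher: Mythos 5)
Your argument is correct and is essentially the paper's: the paper simply uses the evaluation map $\delta_x:\dCF(X,Y)\to Y$, $f\mapsto f(x)$, observes directly that it is surjective, continuous and open (no need for the full homeomorphism $H$ onto $\dCF(Z,Y)\times Y$), and then applies Lemma~\ref{l:bC}(4,6) exactly as you do. Since $\mathrm{pr}_Y\circ H=\delta_x$, your detour through the product decomposition proves the same openness statement with a bit more bookkeeping but no new idea.
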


\begin{proof} Since the point $x$ is isolated in $X$, the map $\delta_x:\dCF(X,Y)\to Y$, $\delta_x:f\mapsto f(x)$, is surjective, continuous and open. If $\dCF(X,Y)$ is  Choquet, then its open continuous image $Y$ is Choquet and almost Polish by Lemma~\ref{l:bC}(4,6).
\end{proof}

\begin{lemma}\label{l:C3} Let $X$ be a non-empty $T_1$-space and $Y\subset \IR$ be a non-empty subspace with $\inf Y\notin Y$. Assume that $X$ is $Y$-separated or $\dot X$ is dense in $X$. The function space $\dCF(X,Y)$ is Choquet if and only if $X$ is discrete and $Y$ is almost Polish.
\end{lemma}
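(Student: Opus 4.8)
The plan is to obtain both implications by assembling the three preceding lemmas, so that the real work has already been done.

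For the easy direction, suppose $X$ is discrete and $Y$ is almost Polish. Then $\dCF(X,Y)$ is Choquet by Lemma~\ref{l:C1}, and there is nothing more to check.

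For the substantive direction, assume that the function space $\dCF(X,Y)$ is Choquet. By the implications recorded in the introduction, every Choquet space is Baire, so $\dCF(X,Y)$ is Baire. Since $X$ is a non-empty $T_1$-space that is $Y$-separated or has dense set $\dot X$ of isolated points, and since $\inf Y\notin Y$, Lemma~\ref{l:Bnon} applies and forces $X$ to be discrete (it also yields that $Y$ is Baire, but we will not need this). In particular, being non-empty and discrete, $X$ contains an isolated point $x$. Now I would invoke Lemma~\ref{l:C2}: since $\dCF(X,Y)$ is Choquet and $X$ contains an isolated point, the space $Y$ is Choquet and almost Polish. Together with discreteness of $X$ this establishes the equivalence.

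There is no genuine obstacle here, as the content of the statement has been packaged into Lemmas~\ref{l:Bnon}, \ref{l:C1} and \ref{l:C2}. The only point requiring a moment's care is routing through the implication \emph{Choquet $\Rightarrow$ Baire} in order to activate the meagerness obstructions (Lemmas~\ref{l:MY0X2} and \ref{l:MY0X1}, repackaged in Lemma~\ref{l:Bnon}) that force $X$ to be discrete; once $X$ is known to be discrete and hence to possess an isolated point, the open surjective evaluation map $\delta_x\colon\dCF(X,Y)\to Y$ transports the Choquet property to $Y$, and Lemma~\ref{l:bC}(4,6) — already used inside Lemma~\ref{l:C2} — upgrades ``Choquet'' to ``almost Polish'' for the separable metrizable space $Y$.
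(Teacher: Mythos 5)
Your proposal is correct and follows essentially the same route as the paper: the ``if'' part via Lemma~\ref{l:C1}, and the ``only if'' part by passing from Choquet to Baire, applying Lemma~\ref{l:Bnon} to get discreteness of $X$ (hence an isolated point), and then Lemma~\ref{l:C2} to conclude that $Y$ is almost Polish. Nothing is missing.
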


\begin{proof} The ``if'' part follows from Lemma~\ref{l:C1}. To prove the ``only if'' part, assume that the function space $\dCF(X,Y)$ is Choquet. Then it is Baire and by Lemma~\ref{l:Bnon}, the space $X$ is discrete and hence has an isolated point. By Lemma~\ref{l:C2}, the space $Y$ is almost Polish.
\end{proof}

\begin{lemma}\label{l:C4} Let $Y\subset \IR$ be a subspace with $\inf Y\in Y\ne\{\inf Y\}$ and let $X$ be a $T_1$-space with dense set $\dot X$ of isolated points. The function space $\dCF(X,Y)$ is Choquet if the space $Y$ is almost Polish and the space $X$ has $\WDMOP$.
\end{lemma}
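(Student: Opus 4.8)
The plan is to produce a dense Choquet subspace of $\dCF(X,Y)$ and then conclude via Lemma~\ref{l:bC}(3), which says that a space containing a dense Choquet subspace is itself Choquet. The obvious candidate for the dense subspace is $\dCF'(X,Y)=[X';\{\inf Y\}]$, whose Baire category properties are already completely described by Theorem~\ref{t:BW}.

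First I would record the trivial reductions. If $X=\emptyset$ the statement is vacuous, so we may assume $X\ne\emptyset$; since $\dot X$ is dense, it is then non-empty, so $X$ contains an isolated point, which is the hypothesis needed to apply Theorem~\ref{t:BW}. Next, because $\dot X$ is dense in $X$, the closed set $X'$ of non-isolated points has empty interior, i.e.\ $X'^\circ=\emptyset$. Hence the subspace $[X'^\circ;\{\inf Y\}]$ occurring in Lemma~\ref{l:C'd} coincides with $\dCF(X,Y)$, and that lemma shows that $\dCF'(X,Y)$ is $\infty$-dense in $\dCF(X,Y)$; in particular it is dense.

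Then I would identify the topology on the dense subspace. By Lemma~\ref{l:F=k} the Fell hypograph topology on $\dCF'(X,Y)$ agrees with the compact-open topology, so as a topological space $\dCF'(X,Y)=C_k'(X,Y)$. Now the hypotheses of Theorem~\ref{t:BW}(3) are met: $Y\subset\IR$ is almost Polish with $\inf Y\in Y\ne\{\inf Y\}$, $X$ contains an isolated point, and $X$ has $\WDMOP$. Therefore $C_k'(X,Y)$ is Choquet, hence so is $\dCF'(X,Y)$. Since $\dCF'(X,Y)$ is a dense Choquet subspace of $\dCF(X,Y)$, Lemma~\ref{l:bC}(3) gives that $\dCF(X,Y)$ is Choquet.

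There is no real obstacle here beyond bookkeeping: the argument is a concatenation of Lemmas~\ref{l:C'd}, \ref{l:F=k}, \ref{l:bC}(3) and Theorem~\ref{t:BW}(3). The only point requiring a moment's care is the observation that density of $\dot X$ forces $X'^\circ=\emptyset$, which is exactly what makes Lemma~\ref{l:C'd} yield density of $\dCF'(X,Y)$ in the \emph{whole} space $\dCF(X,Y)$ rather than in a proper clopen piece of it.
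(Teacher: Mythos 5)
Your proof is correct and follows exactly the paper's own argument: density of $\dCF'(X,Y)$ via Lemma~\ref{l:C'd}, identification with $C_k'(X,Y)$ via Lemma~\ref{l:F=k}, Choquetness from Theorem~\ref{t:BW}(3), and the conclusion from Lemma~\ref{l:bC}(3). Your explicit remark that density of $\dot X$ forces $X'^\circ=\emptyset$ (so Lemma~\ref{l:C'd} gives density in the whole space) is a small but welcome clarification that the paper leaves implicit.
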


\begin{proof} By Lemma~\ref{l:C'd}, the set $\dCF'(X,Y)$ is dense in $\dCF(X,Y)$ and by Lemma~\ref{l:F=k}, $\dCF'(X,Y)$ is homeomorphic to  the function space $C'_k(X,Y)$.

If $Y$ is almost Polish and $X$ has $\WDMOP$, then by Theorem~\ref{t:BW}(3), the function space $C_k'(X,Y)$ is Choquet and so is its topological copy $\dCF'(X,Y)$. Then the space $\dCF(X,Y)$ is Choquet since it contains a dense Choquet subspace $\dCF'(X,Y)$.
 \end{proof}

\begin{lemma}\label{l:C5} Let $Y\subset \IR$ be a subspace with $\inf Y\in Y\ne\{\inf Y\}$ and let $X$ be a $Y$-separable $T_1$-space with dense set $\dot X$ of isolated points. The function space $\dCF(X,Y)$ is Choquet if and only if the space $Y$ is almost Polish and $X$ has $\WDMOP$.
\end{lemma}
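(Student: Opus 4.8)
The plan is to prove the two implications separately. The ``if'' direction requires nothing new: it is precisely the content of Lemma~\ref{l:C4}, which already shows that $\dCF(X,Y)$ is Choquet whenever $Y$ is almost Polish and $X$ has $\WDMOP$.

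For the ``only if'' direction, suppose $\dCF(X,Y)$ is Choquet. First I would record that a Choquet space is Baire, hence non-meager. Since $X$ is non-empty and $\dot X$ is dense in $X$, the set $\dot X$ is non-empty, so $X$ contains an isolated point; Lemma~\ref{l:C2} then yields that $Y$ is (Choquet and) almost Polish. It remains to deduce that $X$ has $\WDMOP$.

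The key step is to pass to the subspace $\dCF'(X,Y)$. By Lemma~\ref{l:C'd} the set $\dCF'(X,Y)$ is $\infty$-dense, in particular dense, in $\dCF(X,Y)$, and by Lemma~\ref{l:Gdelta} — this is where the $Y$-separability of $X$ is used — the set $\dCF'(X,Y)$ is a $G_\delta$-set in $\dCF(X,Y)$. Thus $\dCF'(X,Y)$ is a dense $G_\delta$-subset of the Choquet space $\dCF(X,Y)$, so by Lemma~\ref{l:bC}(2) it is itself Choquet. By Lemma~\ref{l:F=k}, $\dCF'(X,Y)$ is homeomorphic to the function space $C_k'(X,Y)$, which is therefore Choquet. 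Finally, Theorem~\ref{t:BW}(3) — applicable since $X$ contains an isolated point and $\inf Y\in Y\ne\{\inf Y\}$ — gives that $X$ has $\WDMOP$ (and re-confirms that $Y$ is almost Polish), completing the argument.

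I do not expect a genuine obstacle here: the whole proof reduces to assembling the already-established Lemmas~\ref{l:C4}, \ref{l:C'd}, \ref{l:Gdelta}, \ref{l:F=k}, \ref{l:bC}(2) and Theorem~\ref{t:BW}(3). The only point that needs a little attention is checking the hypotheses of these results, in particular that $\dCF'(X,Y)$ is simultaneously dense and $G_\delta$ in $\dCF(X,Y)$; this is exactly the place where the $Y$-separability assumption cannot be dispensed with, since without it $\dCF'(X,Y)$ need not be a $G_\delta$-set.
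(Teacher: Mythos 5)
Your proposal is correct and follows essentially the same route as the paper: the ``if'' part via Lemma~\ref{l:C4}, and the ``only if'' part by showing $\dCF'(X,Y)$ is a dense $G_\delta$-set (Lemmas~\ref{l:C'd} and \ref{l:Gdelta}), hence Choquet by Lemma~\ref{l:bC}(2), identifying it with $C_k'(X,Y)$ via Lemma~\ref{l:F=k}, and invoking Theorem~\ref{t:BW}(3). Your extra preliminary appeal to Lemma~\ref{l:C2} to get that $Y$ is almost Polish is harmless but redundant, since Theorem~\ref{t:BW}(3) already yields this conclusion.
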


\begin{proof} The ``if'' part follows from Lemma~\ref{l:C4}. To prove the ``only if'' part, assume that the function space $\dCF(X,Y)$ is Choquet. By Lemmas~\ref{l:C'd} and \ref{l:Gdelta}, $\dCF'(X,Y)$ is a dense $G_\delta$-set in $\dCF(X,Y)$. By Lemma~\ref{l:bC}(2), the space $\dCF'(X,Y)$ is Choquet and so is its topological copy $C_k'(X,Y)$. Applying  Theorem~\ref{t:BW}(3), we conclude that the space $Y$ is Choquet and the space $X$ has $\WDMOP$.
\end{proof}

The following proposition implies Theorem~\ref{t:main}(2) announced in the introduction.

\begin{proposition}\label{p:C} Let $Y\subset \IR$ be a subspace with $\inf Y\in Y\ne\{\inf Y\}$. For a $Y$-separable $Y$-separated space $X$, the function space $\dCF(X,Y)$ is Choquet if and only if  the space $Y$ is almost Polish, the set $\dot X$ is dense in $X$, and the space $X$ has $\WDMOP$.
\end{proposition}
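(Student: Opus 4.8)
The plan is to prove Proposition~\ref{p:C} by reducing to the already-established Lemma~\ref{l:C5}, which gives the desired equivalence under the extra hypothesis that $\dot X$ is dense in $X$. So the real work is to show that, for a $Y$-separable $Y$-separated space $X$, if $\dCF(X,Y)$ is Choquet then $\dot X$ is dense in $X$; the converse implication ``$\dot X$ dense $\Ra$ (Choquet $\Leftrightarrow$ $Y$ almost Polish and $X$ has $\WDMOP$)'' is then exactly Lemma~\ref{l:C5} (note a $Y$-separated $T_1$-space is $T_1$, and $Y$-separated spaces are Hausdorff, so the hypotheses of Lemma~\ref{l:C5} are met once density of $\dot X$ is known).

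First I would observe that a Choquet space is Baire, so it suffices to show that if $\dCF(X,Y)$ is Baire then $\dot X$ is dense in $X$, i.e. $X'^\circ=\emptyset$. This is precisely the content of Lemma~\ref{l:d'}: for $Y\subset\IR$ with $\inf Y\in Y\ne\{\inf Y\}$ and $X$ a $Y$-separable $Y$-separated space, if $\dCF(X,Y)$ is Baire then $\dot X$ is dense in $X$. Hence from ``$\dCF(X,Y)$ is Choquet'' we immediately get $\dot X$ dense in $X$, and then Lemma~\ref{l:C5} applies verbatim to yield ``$Y$ is almost Polish and $X$ has $\WDMOP$'' (the $Y$-separability of $X$ is inherited, and a $Y$-separated space is in particular a Hausdorff, hence $T_1$, space).

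For the converse direction, I would assume $Y$ is almost Polish, $\dot X$ is dense in $X$, and $X$ has $\WDMOP$. Then again $X$ is a $Y$-separable $T_1$-space with dense set of isolated points, so Lemma~\ref{l:C5} (its ``if'' part, i.e. Lemma~\ref{l:C4}) gives that $\dCF(X,Y)$ is Choquet. This closes the loop.

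The only subtlety — and the step I expect to require the most care — is bookkeeping the hypotheses: verifying that a $Y$-separated space is automatically $T_1$ (indeed Hausdorff, since distinct points are separated by a continuous real-valued-into-$Y$ function, and $Y$ has at least two points as $Y\ne\{\inf Y\}$), and that $Y$-separability passes through unchanged, so that Lemmas~\ref{l:d'},~\ref{l:C4},~\ref{l:C5} are all genuinely applicable. Once this is checked, the proof is a two-line assembly. I would write it as: ``The ``if'' part follows from Lemma~\ref{l:C4}. For the ``only if'' part, if $\dCF(X,Y)$ is Choquet then it is Baire, so by Lemma~\ref{l:d'} the set $\dot X$ is dense in $X$; since a $Y$-separated space is $T_1$, Lemma~\ref{l:C5} implies that $Y$ is almost Polish and $X$ has $\WDMOP$.''
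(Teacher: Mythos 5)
Your proposal is correct and follows essentially the same route as the paper: the ``if'' part is delegated to Lemma~\ref{l:C5} (via Lemma~\ref{l:C4}), and for the ``only if'' part one notes that Choquet implies Baire, invokes Lemma~\ref{l:d'} to get that $\dot X$ is dense in $X$, and then applies Lemma~\ref{l:C5}. Your extra check that a $Y$-separated space with $Y\ne\{\inf Y\}$ is Hausdorff (hence $T_1$), so that Lemma~\ref{l:C5} is applicable, is a sound piece of bookkeeping that the paper leaves implicit.
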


\begin{proof} The ``if'' part is proved in Lemma~\ref{l:C5}. To prove the ``only'' if part, assume that the function space $\dCF(X,Y)$ is Choquet. Then it is Baire and by Lemma~\ref{l:d'}, the set $\dot X$ is dense in $X$. By Lemma~\ref{l:C5}, the space $Y$ is almost Polish and $X$ has $\WDMOP$.
\end{proof}

\section{Recognizing strong Choquet spaces among function spaces $\dCF(X,Y)$}

We shall use the following known properties of strong Choquet spaces, see \cite[8.16, 8.17]{Ke}.

\begin{lemma}\label{l:bsC}
\begin{enumerate}
\item[\textup{(1)}] The Tychonoff product of strong Choquet spaces is strong Choquet.
\item[\textup{(2)}] An open continuous image of a strong Choquet space is strong Choquet.
\item[\textup{(3)}] A metrizable separable space is strong Choquet if and only if it is  Polish.
\end{enumerate}
\end{lemma}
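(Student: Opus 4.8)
All three statements are standard (see \cite[8.16, 8.17]{Ke}), but let me indicate how I would argue them. Statement (3) is immediate from what is already quoted after the implications diagram: for a separable metrizable space, being strong Choquet is equivalent to being complete-metrizable (\cite[8.17]{Ke}), and a separable complete-metrizable space is by definition Polish. The remaining two are proved by transferring winning strategies of the player $\mathsf N$.

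For (2), let $q:X\to Z$ be an open continuous surjection with $X$ strong Choquet, and fix a winning strategy $\sigma$ of $\mathsf N$ in $\SGEN(X)$. I describe a winning strategy of $\mathsf N$ in $\SGEN(Z)$: keeping auxiliary $\mathsf N$-moves $V_n^X$ of a parallel play of $\SGEN(X)$ (with $V_{-1}^X:=X$), whenever $\mathsf E$ plays $(U_n,z_n)$ in $\SGEN(Z)$ (so $z_n\in U_n\subset V_{n-1}^Z$, where $V_{n-1}^Z:=q(V_{n-1}^X)$), pick a point $x_n\in q^{-1}(z_n)\cap V_{n-1}^X$ (possible since $z_n\in q(V_{n-1}^X)$), feed $\sigma$ the legal $\mathsf E$-move $\bigl(q^{-1}(U_n)\cap V_{n-1}^X,\;x_n\bigr)$ to obtain $V_n^X$, and let $\mathsf N$ answer in $\SGEN(Z)$ with $V_n^Z:=q(V_n^X)$, which is open (as $q$ is open), contains $z_n=q(x_n)$, and lies inside $q(q^{-1}(U_n))=U_n$ (as $q$ is onto). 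Since $\mathsf N$ followed $\sigma$ in a legal run of $\SGEN(X)$, the set $\bigcap_n V_n^X$ is non-empty, and any $x$ in it has $q(x)\in\bigcap_n q(V_n^X)=\bigcap_n V_n^Z$; hence $\mathsf N$ wins $\SGEN(Z)$.

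For (1), let $X=\prod_{i\in I}X_i$ be the Tychonoff product of non-empty strong Choquet spaces and fix winning strategies $\sigma_i$ of $\mathsf N$ in $\SGEN(X_i)$. The plan is a coordinatewise parallel play: $\mathsf N$ answers with basic open boxes $V_n$ whose finite sets $F_n$ of constrained coordinates grow ($F_0\subset F_1\subset\cdots$), and on each active coordinate $i$ it plays $\SGEN(X_i)$ according to $\sigma_i$. When $\mathsf E$ plays $(U_n,x_n)$ with $U_n$ inside the previous box $V_{n-1}$, one first intersects a basic box around $x_n$ inside $U_n$ with $V_{n-1}$ to get a box $B_n'$ with $x_n\in B_n'\subset U_n\cap V_{n-1}$ and constrained set $F_n\supseteq F_{n-1}$; then for each $i\in F_n$ one feeds $\sigma_i$ the $\mathsf E$-move $\bigl(\pi_i(B_n'),\pi_i(x_n)\bigr)$ — legal against the previous $\sigma_i$-response because $\pi_i(B_n')\subset\pi_i(V_{n-1})$ whenever $i\in F_{n-1}$ — and lets $V_n$ be the box carrying these $\sigma_i$-responses on $F_n$ and $X_i$ elsewhere. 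Because the game lasts $\w$ innings, only countably many coordinates are ever activated; for each activated $i$ the induced one-coordinate play is a legal run in which $\mathsf N$ follows $\sigma_i$, so $\bigcap_n\pi_i(V_n)\ne\emptyset$, and for a non-activated coordinate one picks $p_i\in X_i$ arbitrarily. Then $(p_i)_{i\in I}\in\bigcap_n V_n\subset\bigcap_n U_n$, so $\mathsf N$ wins.

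The one place needing genuine care is the bookkeeping in (1): one must keep the finite active sets $F_n$ non-decreasing, so that each $\mathsf E$-move $U_n\subset V_{n-1}$ automatically restricts the coordinates in $F_{n-1}$ and the induced single-coordinate plays are honest legal runs of $\SGEN(X_i)$ against $\sigma_i$. Once this monotonicity is set up (which is why the intersection with $V_{n-1}$ above is needed), the verification is routine, and parts (2) and (3) present no real obstacle.
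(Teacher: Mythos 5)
Your proposal is correct, but it is worth noting that the paper does not actually prove this lemma at all: it is stated as a list of ``known properties of strong Choquet spaces'' with a bare citation to \cite[8.16, 8.17]{Ke}, so there is no in-paper argument to compare against. What you supply are the standard strategy-transfer proofs, and they are sound. In (2) the key points are all present: the lift $x_n\in q^{-1}(z_n)\cap V_{n-1}^X$ exists because $z_n\in q(V_{n-1}^X)$, the fed move $(q^{-1}(U_n)\cap V_{n-1}^X,x_n)$ is legal against $\sigma$, openness of $q$ makes $q(V_n^X)$ a legal reply, and surjectivity gives $q(q^{-1}(U_n))=U_n$. In (1) your bookkeeping is the right one, and in fact slightly more than you need: for non-empty open boxes $B\subset V$ one automatically has $\pi_i(B)\subset\pi_i(V)$ for every $i$, so the intersection with $V_{n-1}$ serves only to force the constrained sets $F_n$ to be non-decreasing (so that an activated coordinate keeps being played); the final point $(p_i)_{i\in I}$ lies in each box $V_n$ precisely because membership in a box is checked coordinatewise. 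Part (3) is indeed immediate from Choquet's characterization \cite[8.17]{Ke} plus the definition of Polish. In short: the paper buys these facts by citation, while you prove them from scratch; your version is self-contained and correct, at the cost of about a page of routine game bookkeeping.
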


\begin{lemma}\label{l:sC1} Let $Y\subset \IR$ be a non-empty subspace. If $Y$ is Polish, then for any discrete topological space $X$ the function space $\dCF(X,Y)$ is strong Choquet.
\end{lemma}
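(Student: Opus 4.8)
The plan is to follow the template of Lemmas~\ref{l:BX0} and \ref{l:C1}, which settle the Baire and Choquet versions of the same statement, and simply upgrade the conclusion from ``Baire''/``Choquet'' to ``strong Choquet''.

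First I would reduce the Fell hypograph topology to a product topology. Since $X$ is discrete, every singleton $\{x\}\subset X$ is simultaneously open and compact, so the subbasic sets $\lceil\{x\};y\rfloor=\{f\in C(X,Y):f(x)>y\}$ and $\lceil\{x\};y\rceil=\{f\in C(X,Y):f(x)<y\}$ are open in $\dCF(X,Y)$ and already generate the topology of pointwise convergence on $C(X,Y)=Y^X$. Conversely, every Fell hypograph subbasic set is open in the product topology, since $\{f:\sup f(U)>y\}=\bigcup_{x\in U}\{f:f(x)>y\}$ and, for a compact (hence finite) set $K\subset X$, $\{f:\max f(K)<y\}=\bigcap_{x\in K}\{f:f(x)<y\}$. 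Hence $\dCF(X,Y)$ is homeomorphic to the Tychonoff power $Y^X$. This step is used verbatim in the proofs of Lemmas~\ref{l:BX0} and \ref{l:C1}, so it requires no new idea.

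Next I would invoke the hypothesis: a Polish space is metrizable and separable, hence strong Choquet by Lemma~\ref{l:bsC}(3) (equivalently, from a complete metric on $Y$ one reads off an obvious winning strategy for $\mathsf N$ in $\SGEN(Y)$ that shrinks the chosen neighbourhoods so their closures nest and have diameters tending to $0$). Finally, Lemma~\ref{l:bsC}(1) asserts that a Tychonoff product of strong Choquet spaces is strong Choquet; applying it to the power $Y^X$ yields that $Y^X$, and therefore the homeomorphic space $\dCF(X,Y)$, is strong Choquet.

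The only points where anything subtle occurs are the topology identification in the first step and the appeal to Lemma~\ref{l:bsC}(1) for a possibly uncountable index set $X$; both are harmless here, the former being elementary and the latter being exactly what Lemma~\ref{l:bsC}(1) states (and is used in the same way in Lemma~\ref{l:C1}). So I do not anticipate any real obstacle: the argument amounts to a short chain of citations of the structural lemmas already established.
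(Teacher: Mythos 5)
Your proposal matches the paper's proof exactly: identify $\dCF(X,Y)$ with the Tychonoff power $Y^X$ using discreteness of $X$, then conclude via Lemma~\ref{l:bsC}(1,3) that this power is strong Choquet. The extra detail you supply on why the two topologies coincide is fine but not needed beyond what the paper states.
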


\begin{proof}
Since $X$ is discrete, the Fell hypograph topology on $\dCF(X,Y)$ coincides with the topology of pointwise convergence. So, $\dCF(X,Y)$ can be identified with the Tychonoff power $Y^X$, which is strong Choquet by Lemma~\ref{l:bsC}(1,3).
\end{proof}

\begin{lemma}\label{l:sC2} Let $Y\subset\IR$ be a non-empty space and $X$ be a topological space containing an isolated point $x$. If the function space $\dCF(X,Y)$ is strong Choquet, then $Y$ is Polish.
\end{lemma}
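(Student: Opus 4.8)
The plan is to repeat verbatim the argument used for Lemma~\ref{l:C2}, substituting ``strong Choquet'' for ``Choquet'' and invoking Lemma~\ref{l:bsC} in place of Lemma~\ref{l:bC}. The key observation is that, since $x$ is isolated in $X$, the evaluation map
$$\delta_x:\dCF(X,Y)\to Y,\qquad \delta_x:f\mapsto f(x),$$
is surjective, continuous and open. Continuity is immediate from the description of the Fell hypograph topology: $\delta_x^{-1}(\{y'\in Y:y'>y\})=\lceil\{x\};y\rfloor$ and $\delta_x^{-1}(\{y'\in Y:y'<y\})=\lceil\{x\};y\rceil$ are subbasic open sets. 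Openness is exactly where the isolatedness of $x$ is used: since $\{x\}$ is clopen, $X=\{x\}\sqcup(X\setminus\{x\})$ and the map $H:f\mapsto\big(f(x),f{\restriction}(X\setminus\{x\})\big)$ is a homeomorphism of $\dCF(X,Y)$ onto $Y\times\dCF(X\setminus\{x\},Y)$ (this homeomorphism already appeared in the proof of Lemma~\ref{l:MYMx}), and $\delta_x$ is the composition of $H$ with the coordinate projection onto $Y$, which is open.

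Granting this, I would finish as follows. Assume $\dCF(X,Y)$ is strong Choquet. Then by Lemma~\ref{l:bsC}(2) its open continuous image $Y=\delta_x(\dCF(X,Y))$ is strong Choquet. Since $Y$ is a subspace of $\IR$, it is separable and metrizable, so Lemma~\ref{l:bsC}(3) applies and shows that $Y$ is Polish, as desired.

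There is no real obstacle in this proof: the only step requiring the hypothesis that $X$ contains an isolated point is the openness of $\delta_x$, which fails in general without it; everything else is a direct appeal to the permanence properties of strong Choquet spaces recorded in Lemma~\ref{l:bsC}.
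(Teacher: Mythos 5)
Your proposal is correct and follows essentially the same route as the paper: the paper's proof also uses the evaluation map $\delta_x$, observes it is surjective, continuous and open because $x$ is isolated, and then invokes Lemma~\ref{l:bsC}(2,3) to conclude that $Y$ is strong Choquet and hence Polish. Your additional justification of the openness of $\delta_x$ via the product decomposition $H$ is a harmless elaboration of what the paper leaves implicit.
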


\begin{proof} Since the point $x$ is isolated in $X$, the map $\delta_x:\dCF(X,Y)\to Y$, $\delta_x:f\mapsto f(x)$, is surjective, continuous and open. If $\dCF(X,Y)$ is strong Choquet, then its open continuous image $Y$ is strong Choquet and Polish by Lemma~\ref{l:bsC}(2,3).
\end{proof}

\begin{lemma}\label{l:sC3} Let $X$ be a topological space and  $Y\subset \IR$ be a non-empty subspace such that $\inf Y\notin Y$. Assume that $X$ is $Y$-separated or the set $\dot X$ is dense in $X$. The function space $\dCF(X,Y)$ is strong  Choquet if and only if $X$ is discrete and $Y$ is Polish.
\end{lemma}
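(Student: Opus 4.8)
The plan is to follow, almost verbatim, the argument used for the Choquet analogue in Lemma~\ref{l:C3}, with ``Choquet'' replaced by ``strong Choquet'' and with Lemmas~\ref{l:sC1} and \ref{l:sC2} taking the place of their Choquet counterparts. The entire proof is a short assembly of already-established facts, so I do not expect any serious obstacle.

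For the ``if'' direction I would simply invoke Lemma~\ref{l:sC1}: when $X$ is discrete, the Fell hypograph topology on $C(X,Y)$ is the topology of pointwise convergence, so $\dCF(X,Y)$ is homeomorphic to the Tychonoff power $Y^X$, which is strong Choquet since $Y$ is Polish.

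For the ``only if'' direction, starting from the assumption that $\dCF(X,Y)$ is strong Choquet, I would first note that strong Choquet spaces are Baire (by the implications chart in the introduction), so $\dCF(X,Y)$ is Baire. Since $\inf Y\notin Y$ and $X$ is a non-empty $T_1$-space that is either $Y$-separated or has dense set $\dot X$ of isolated points --- here one records that a $Y$-separated space is functionally Hausdorff or totally disconnected, hence $T_1$ --- Lemma~\ref{l:Bnon} gives that $X$ is discrete, and in particular $X$ has an isolated point $x$. Then the evaluation map $\delta_x:\dCF(X,Y)\to Y$, $\delta_x:f\mapsto f(x)$, is a surjective continuous open map, so Lemma~\ref{l:sC2} yields that $Y$ is Polish, which completes the proof.

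The one place calling for a little care is checking that the hypotheses of Lemma~\ref{l:Bnon} hold, in particular the $T_1$ and non-emptiness conditions: the former is automatic in the $Y$-separated case and should be taken as part of the standing assumptions in the case where $\dot X$ is dense. Beyond that there is nothing to do --- no new combinatorics, no new topology --- the statement is a pure corollary of Lemmas~\ref{l:Bnon}, \ref{l:sC1} and \ref{l:sC2}.
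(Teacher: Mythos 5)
Your proposal is correct and follows essentially the same route as the paper's own proof: the ``if'' part is Lemma~\ref{l:sC1}, and the ``only if'' part deduces Baireness from strong Choquetness, applies Lemma~\ref{l:Bnon} to get discreteness of $X$, and then Lemma~\ref{l:sC2} to get that $Y$ is Polish. Your extra remark verifying the $T_1$ and non-emptiness hypotheses of Lemma~\ref{l:Bnon} is a harmless (and reasonable) addition that the paper leaves implicit.
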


\begin{proof}  The ``if'' part follows from Lemma~\ref{l:sC1}. To prove the ``only if'' part, assume that the function space $\dCF(X,Y)$ is strong Choquet. Then it is Baire and by Lemma~\ref{l:Bnon}, the space $X$ is discrete. Then $x$ has an isolated point and by Lemma~\ref{l:sC2}, the space $Y$ is Polish.
\end{proof}

The case of $Y$ with $\inf Y\in Y$ is more complicated and requires playing the strong Choquet game on the function space $\dCF(X,Y)$.

\begin{lemma}\label{l:SCg} Let $Y\subset \IR$ be a subset containing more than one point. If a topological space $X$ contains a metrizable compact subset $K\subset X$ with infinite intersection $K\cap\dot X$, then the player $\mathsf E$ has a winning strategy in the strong Choquet game $\SGEN(\dCF(X,Y))$.
\end{lemma}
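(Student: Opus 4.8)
The plan is to exploit the fact that a metrizable compact subset $K\subset X$ with infinite intersection $K\cap \dot X$ gives us a sequence of isolated points converging (inside $K$) to some point, and use this sequence to defeat player $\mathsf N$. First I would fix an injective sequence $(x_n)_{n\in\IN}$ in $K\cap\dot X$; by compactness and metrizability of $K$, after passing to a subsequence we may assume $(x_n)_{n\in\IN}$ converges to a point $x_\infty\in K$ (which necessarily lies in $X'$, since the $x_n$ are distinct isolated points). Also fix two points $y_0<y_1$ in $Y$. The strategy for $\mathsf E$ will force the value $\max f(K)$ of the functions under construction to oscillate so that no limit function can be continuous at $x_\infty$, while the points $x_n$ provide ``handles'' on which $\mathsf E$ can pin down large values using the basic open sets $\lceil\{x_n\};y\rfloor$ (note each $x_n$ being isolated means $\{x_n\}$ is compact, so $\lceil\{x_n\};y\rceil$ is also a legitimate basic open set).

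The key steps, carried out inductively over innings, are as follows. At the $n$-th inning $\mathsf E$ should play an open set $U_n\subset V_{n-1}$ together with a point $\xi_n$ of $\dCF(X,Y)$; the trick is to alternate between forcing "high at $x_{2k}$" and "low at $x_{2k+1}$" near the limit point. Concretely, given $\mathsf N$'s previous move $V_{n-1}$, which is a nonempty Fell-hypograph-open set, I would first shrink it to a basic open set $\bigcap_i\lceil U_i;a_i\rfloor\cap\bigcap_j\lceil K_j;b_j\rceil$ contained in $V_{n-1}$, pick some function $f$ in it, and then choose a fresh index $m=m_n$ (large enough that $x_m$ avoids all the finitely many compacta $K_j$ appearing, which is possible since $x_m\to x_\infty$ and these $K_j$ are compact while $\{x_m\}$ are isolated singletons — one may need $x_\infty\notin\bigcup_j K_j$, which can be arranged by discarding finitely many $x_m$ and, if necessary, replacing $x_\infty$). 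Then $\mathsf E$ plays $U_n:=V_{n-1}\cap\lceil\{x_{m_n}\};y_1-\e\rfloor$ on even innings (forcing $\sup f(\{x_{m_n}\})>y_1-\e$, i.e. $f(x_{m_n})$ close to or above $y_1$ if $Y$ permits; otherwise simply $f(x_{m_n})>$ some threshold above $y_0$) and $U_n:=V_{n-1}\cap\lceil\{x_{m_n}\};y_0+\e\rceil$ on odd innings (forcing $f(x_{m_n})<y_0+\e$), choosing a point $\xi_n\in U_n$ arbitrarily. Using Lemma~\ref{l:e} or just the $Y$-separated hypothesis one checks these intersections are nonempty, so the moves are legal.

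At the end of the game, suppose $f\in\bigcap_n U_n$. Then for the even-stage indices $f(x_{m_n})$ is bounded below away from $y_0$ (say $\ge y_0+\delta$ for a fixed $\delta>0$ coming from our thresholds), while for the odd-stage indices $f(x_{m_n})<y_0+\e$ with $\e<\delta$. Since all the chosen indices $m_n$ were taken from a subsequence converging to $x_\infty$, we get two subsequences $x_{m_n}\to x_\infty$ along which $f$ has values on opposite sides of the gap $(y_0+\e,\,y_0+\delta)$; hence $f$ is discontinuous at $x_\infty\in X$, contradicting $f\in C(X,Y)=\dCF(X,Y)$. Therefore $\bigcap_n U_n=\emptyset$ and $\mathsf E$ wins, which is exactly the assertion that $\mathsf E$ has a winning strategy in $\SGEN(\dCF(X,Y))$.

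The main obstacle I anticipate is the bookkeeping that makes the moves legal: one must verify at each inning that the basic open set $V_{n-1}$, when intersected with the new constraint $\lceil\{x_{m_n}\};\cdot\rfloor$ or $\lceil\{x_{m_n}\};\cdot\rceil$, is still nonempty. This requires choosing $m_n$ so that $x_{m_n}$ is disjoint from all compact sets $K_j$ occurring in the description of $V_{n-1}$ and so that the imposed value at $x_{m_n}$ is compatible with the finitely many $\lceil U_i;a_i\rfloor$ constraints — the $Y$-separatedness of $X$ (and the extension Lemma~\ref{l:e}) is what allows us to build an explicit witnessing function with a prescribed value at the single point $x_{m_n}$ and unchanged behaviour elsewhere, much as in the proofs of Lemmas~\ref{l:d} and \ref{l:dXy}. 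A secondary subtlety is the case $\inf Y\in Y$ versus not, and whether $Y$ has points both above and below the target values near $x_\infty$; but since $Y$ contains more than one point, fixing $y_0<y_1$ in $Y$ at the outset and working with thresholds strictly between them suffices.
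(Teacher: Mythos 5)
Your overall plan---pass to a convergent sequence $(x_m)_{m\in\IN}$ of isolated points of $K$ with limit $x_\infty\in X'$ and force the values $f(x_m)$ to oscillate so that no $f$ in $\bigcap_n U_n$ can be continuous at $x_\infty$---is the same as the paper's. But there is a genuine gap in the legality of your ``force high'' moves. You claim you can always choose a fresh index $m_n$ so that $x_{m_n}$ avoids all the compact sets $K_j$ occurring in a basic subset of $\mathsf N$'s move $V_{n-1}$, ``discarding finitely many $x_m$ and, if necessary, replacing $x_\infty$''. This fails: $\mathsf N$ controls the $K_j$'s and may take one of them to be $\kappa=\{x_\infty\}\cup\{x_m:m\in\IN\}$ (the whole convergent sequence, which is compact), paired with an upper bound $b$ only slightly above the maximum on $\kappa$ of the point $\xi_{n-1}$ you just played. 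Then no $x_m$ avoids $\kappa$, and if $b\le y_1-\e$ the set $V_{n-1}\cap\lceil\{x_{m_n}\};y_1-\e\rfloor$ is empty, so your even-inning move is illegal: redefining a function of $V_{n-1}$ at the isolated point $x_{m_n}$ to the value $y_1$ (your intended witness of non-emptiness) violates the cap $\max f(\kappa)<b$. Since you place no constraint on the points $\xi_n$ you play, $\mathsf N$ can indeed drive these caps below your high threshold. (A smaller slip: Lemma~\ref{l:e} is not available, as this lemma carries no $Y$-separatedness hypothesis; fortunately redefining a function at a single isolated point needs none.)

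The paper's proof closes exactly this gap by maintaining an invariant: the function $f_s$ that $\mathsf E$ plays as its point always satisfies $f_s(X')\subset\{\bar u\}$ for a fixed $\bar u\in Y$ above the low level $\underline u$. Consequently every neighborhood $W$ played by $\mathsf N$ meets $[X';\{\bar u\}]$, and any constraint $\lceil\kappa;b\rceil$ in a basic subset of $W$ with $\kappa\cap X'\ne\emptyset$ must have $b>\bar u$, because a function $g\in W\cap[X';\{\bar u\}]$ has $\max g(\kappa)\ge\max g(\kappa\cap X')=\bar u$. The high constraint at a fresh point $w_s$ is then witnessed by that $g$ itself---using its continuity at the unique accumulation point $x'$ of $K$ and $g(x')=\bar u$ to find $w_s$ with $g(w_s)>\frac13\underline u+\frac23\bar u$---rather than by modification; only the low value at the second fresh point $v_s$ is produced by redefinition, and lowering a value never violates an upper cap. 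Without some such device your strategy cannot guarantee that the high constraints remain compatible with $\mathsf N$'s moves, so the proof as proposed does not go through.
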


\begin{proof} Since the intersection $\dot X\cap K$ contains a non-trivial convergent sequence, we can replace $K$ by a smaller compact space and assume that $K\cap\dot X$ is dense in $K$ and $K$ has a unique non-isolated point $x'\in K\cap X'$. Write the countable infinite set $K\cap \dot X$ as the union $K\cap\dot X=\bigcup_{n\in\w}F_n$ of an increasing sequence $(F_n)_{n\in\w}$ of finite sets.

By our assumption, the set $Y$ contains two real numbers $\underline{u}<\bar u$.

Let $\tau$ be the family of all non-empty open sets in $\dCF(X,Y)$ and let $$[X',\{\bar u\}]:=\{f\in \dCF(X,Y):f(X')\subset\{\bar u\}\}.$$ For every sequence $s=(W_0,\dots,W_n)\in\tau^{<\w}$  of non-empty open sets we shall define a function $f_s\in W_n$, a neighborhood $V_s\subset W_n$ of $f_s$, and two points $v_s,w_s\in\dot X\cap K\setminus F_n$ such that if $W_n\cap[X';\{\bar u\}]\ne\emptyset$, then
$$f_s\in [X',\{\bar u\}] \mbox{ \ and \ }f_s\in V_s\subset W_n\cap \lceil \{w_s\},\tfrac13\underline{u}+\tfrac23\bar u\rfloor\cap\lceil\{v_s\};\tfrac23\underline{u}+\tfrac13\bar u\rceil.$$
Now we shall explain how to construct $f_s$, $V_s$, $v_s$ and $w_s$.

If $W_n\cap[X';\{\bar u\}]=\emptyset$, then put $V_s=W_n$, $f_s$ be any element of $V_s$, and $v_s,w_s\in\dot X\cap K\setminus F_n$ be any distinct points.

If $W_n\cap[X';\{\bar u\}]\ne\emptyset$, then choose any function $g_s\in W_n\cap[X',\{\bar u\}]$. Next, using the definition of the Fell hypograph topology, we can find a finite family $\U_s$ of non-empty open sets in $X$, a non-empty finite family $\K_s$ of non-empty compact sets in $X$ and two functions $a_s:\U_s\to\IR$, $b_s:\K_s\to\IR$ such that the basic open set
$$\lceil\U_s,\K_s;a_s,b_s]:=\bigcap_{U\in\U_s}\lceil U;a_s(U)\rfloor\cap\bigcap_{\kappa\in\K_i}\lceil\kappa;b_s(\kappa)\rceil$$is a neighborhood of $g_s$, contained in $W_n$. Replacing the sets $U\in\U_s$ by smaller sets, we can assume that each set $U\in\U_s$ intersecting the set $\dot X$ is a singleton. In this case the union $\bigcup\dot\U_s$ of the family $\dot\U_s=\{U\in\U_s:U\cap\dot X\ne\emptyset\}$ is finite. Since each compact subset of $\dot X$ is finite, the union $\bigcup\dot\K_s$ of the family $\dot\K_s=\{\kappa\in\K_s:\kappa\cap X'=\emptyset\}$ also is finite.

Let $\underline{b}_s=\min\{b_s(\kappa):\kappa\in\K_s\setminus \dot\K_s\}$. We claim that $\bar u< \underline{b}_s$. Indeed, find $\kappa\in\K_s\setminus\dot \K_s$ with $\underline{b}_s=b_s(\kappa)$ and observe that $g_s\in \lceil\kappa,b_s(\kappa)\rceil$ implies that $\bar u=\max g_s(\kappa\cap X')\le\max g_s(\kappa)<b_s(\kappa)=\underline{b}_s$.

Using the continuity of the function $g_s$ at the unique accumulation point $x'$ of the compact set $K$, we can find a point $w_s\in\dot X\cap K\setminus(F_n\cup \bigcup(\dot\U\cup\dot\K))$ such that $$\tfrac13\underline{u}+\tfrac23\bar u<g_s(w_s)<\underline{b}_s.$$ Next, choose any point $v_s\in \dot X\cap K\setminus(\{w_s\}\cup F_n\cup \bigcup(\dot\U\cup\dot\K))$.
Put $$V_s:=\lceil\U_s,\K_s;a_s,b_s]\cap \lceil \{w_s\},\tfrac13\underline{u}+\tfrac23\bar u\rfloor\cap\lceil\{v_s\};\tfrac23\underline{u}+\tfrac13\bar u\rceil.$$
Finally, define a function $f_s\in\dCF(X,Y)$ letting $f_s(v_s)=\underline{u}$ and $f_s(x)=g_s(x)$ for any $x\in X\setminus\{v_s\}$. It is easy to see that $f_s,V_s,v_s,w_s$ have the required properties.

 Now we define a strategy $S_{\mathsf E}$ of the player $\mathsf E$ in the strong Choquet game $\SGEN(\dCF(X,Y))$ assigning to each $s=(W_0,\dots,W_n)\in\tau^{<\w}$ of non-empty open sets of $\dCF(X,Y)$ the pair $(f_s,V_s)$. For the empty sequence, we assume that $V_\emptyset=\dCF(X,Y)$ and $f_\emptyset:X\to\{\bar u\}$ is the constant function. We claim that this strategy of the player $\mathsf E$ is winning. Given any sequence $s=(W_n)_{n\in\w}\in\tau^\w$ with $f_{s{\restriction}n}\in W_n\subset V_{s{\restriction}n}$ for every $n\in\w$, we need to show that the intersection $\bigcap_{n\in\w}W_n=\bigcap_{n\in\w}V_{s{\restriction}n}$ is empty. To derive a contradiction, assume that this intersection contains some function $f\in\dCF(X,Y)$.

   By induction it can be shown that $f_{s{\restriction}n}\in [X',\{\bar u\}]$ and hence
 $$f\in V_{s{\restriction}n}\subset  \lceil \{w_{s{\restriction}n}\},\tfrac13\underline{u}+\tfrac23\bar u\rfloor\cap\lceil\{v_{s{\restriction}n}\};\tfrac23\underline{u}+\tfrac13\bar u\rceil,$$which contradicts the continuity of $f$ as the sequences $(v_{s{\restriction}n})_{n\in\w}$ and $(w_{s{\restriction}n})_{n\in\w}$ both accumulate at the unique non-isolated point $x'$ of the compact set $K$.
\end{proof}

The following proposition implies Theorem~\ref{t:main}(3) announced in the introduction.

\begin{proposition}\label{p:sC} Let $Y\subset \IR$ be a subspace containing more than one point and $X$ be a non-empty $Y$-separable $Y$-separated space.  The function space $\dCF(X,Y)$ is strong Choquet if (and only if) the space $Y$ is Polish, $\dot X$ is dense in $X$ and the set $\dot X$ is (sequentially) closed in $X$.
\end{proposition}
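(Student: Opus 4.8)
The plan is to prove the two implications separately, and within the ``only if'' direction to split according to whether $\inf Y\in Y$; I read the parentheses in the statement in the usual way, so the ``if'' part assumes that $\dot X$ is closed while the ``only if'' part concludes only that $\dot X$ is sequentially closed.

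The ``if'' direction carries essentially no content. The set $\dot X$ of isolated points is always open in $X$, so if it is in addition closed and dense, then $\dot X=\overline{\dot X}=X$, that is, $X$ is discrete. As $Y$ is Polish, Lemma~\ref{l:sC1} then immediately gives that $\dCF(X,Y)$ is strong Choquet, and this direction is finished in one line.

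For the ``only if'' direction, assume that $\dCF(X,Y)$ is strong Choquet; then it is Baire, and $X$ is Hausdorff, since $X$ is $Y$-separated and $Y$ has more than one point. If $\inf Y\notin Y$, I would apply Lemma~\ref{l:sC3} (valid because $X$ is $Y$-separated): it yields that $X$ is discrete and $Y$ is Polish, so $\dot X=X$ is dense and closed, hence sequentially closed, and all three conclusions hold. If instead $\inf Y\in Y$, then $\inf Y\in Y\ne\{\inf Y\}$, so Lemma~\ref{l:d'} applies and shows that $\dot X$ is dense in $X$; since $X\ne\emptyset$, it then has an isolated point, and Lemma~\ref{l:sC2} shows that $Y$ is Polish. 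It remains to prove that $\dot X$ is sequentially closed in $X$, which I would do by contradiction. If it were not, there would be a sequence $(x_n)_{n\in\w}$ of pairwise distinct points of $\dot X$ converging in $X$ to some point $x'\in X\setminus\dot X=X'$; then $K=\{x'\}\cup\{x_n:n\in\w\}$ is a compact subspace of the Hausdorff space $X$ which, being a non-trivial convergent sequence together with its limit, is metrizable, while $K\cap\dot X\supseteq\{x_n:n\in\w\}$ is infinite. By Lemma~\ref{l:SCg}, the player $\mathsf E$ would then have a winning strategy in the strong Choquet game $\SGEN(\dCF(X,Y))$, which is incompatible with $\dCF(X,Y)$ being strong Choquet (both players cannot have winning strategies). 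This contradiction finishes the proof.

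There is no single hard step here: the argument is an assembly of Lemmas~\ref{l:sC1}, \ref{l:sC2}, \ref{l:sC3} and \ref{l:d'} together with the game-theoretic Lemma~\ref{l:SCg}, which does the real work. The points that require care are the collapse ``$\dot X$ dense and closed implies $X$ discrete'' that trivializes the ``if'' direction; making the correct case split on whether $\inf Y\in Y$, so that the applicable lemma (\ref{l:sC3} in one case, \ref{l:d'} and \ref{l:sC2} in the other) can be invoked; and extracting from the failure of sequential closedness a metrizable compact subset of $X$ that meets $\dot X$ in an infinite set --- exactly the hypothesis needed to invoke Lemma~\ref{l:SCg}.
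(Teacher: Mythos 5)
Your proof is correct and follows essentially the same route as the paper: the same one-line ``if'' direction via $\dot X=\overline{\dot X}=X$ and Lemma~\ref{l:sC1}, the same case split on $\inf Y\in Y$ using Lemmas~\ref{l:sC3}, \ref{l:sC2} and \ref{l:SCg}. The only cosmetic difference is that you invoke Lemma~\ref{l:d'} directly to get density of $\dot X$ (after noting strong Choquet implies Baire), whereas the paper routes this through Proposition~\ref{p:C}, whose proof does exactly the same thing.
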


\begin{proof} To prove the ``if'' part, assume that $Y$ is Polish, $\dot X$ is dense in $X$ and $\dot X$ is closed in $X$. In this case $\dot X=X$ and the space $X$ is discrete. By Lemma~\ref{l:sC1}, the function space $\dCF(X,Y)$ is strong Choquet.
\smallskip

To prove the ``only if'' part, assume that the function space $\dCF(X,Y)$ is strong Choquet. If $\inf Y\notin Y$, then we can apply Lemma~\ref{l:sC3} to conclude that $X$ is discrete and $Y$ is Polish. So, assume that $\inf Y\in Y$. By Proposition~\ref{p:C}, the set $\dot X$ is dense in $X$ and hence $X$ contains an isolated point $x$. By Lemma~\ref{l:sC2}, the space $Y$ is Polish. Lemma~\ref{l:SCg} implies that the set $\dot X$ is sequentially closed in $X$.
\end{proof}

\begin{example} For the Stone-\v Cech compactification $X=\beta\IN$ of the countable discrete space $\IN$ and any closed subset $Y\subset \IR$ with $\inf Y\in Y\ne\{\inf Y\}$ the function space $\dCF(X,Y)$ is strong Choquet (by Theorem~\ref{t:F}). The set $\dot X=\IN$ is sequentially closed but not closed in $X=\beta\IN$.
\end{example}

\section{Recognizing almost Polish spaces among function spaces $\dCF(X,Y)$}

Since the countable Tychonoff product of (almost) complete-metrizable spaces is (almost) complete-metrizable, we have the following simple lemma.

\begin{lemma}\label{l:AP1} If $Y\subset \IR$ is an (almost) Polish space, then for any countable discrete space $X$ the function space $\dCF(X,Y)$ is (almost) Polish.
\end{lemma}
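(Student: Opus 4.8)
The plan is to reduce the statement to a standard permanence property of countable products. First I would observe that since $X$ is discrete every map $f\colon X\to Y$ is continuous, so $C(X,Y)=Y^X$ as sets; moreover the Fell hypograph topology on $\dCF(X,Y)$ coincides with the topology of pointwise convergence on $Y^X$, exactly as was already used in the proofs of Lemmas~\ref{l:C1} and \ref{l:sC1}. Indeed, in a discrete space every compact set is finite, and the subbasic sets satisfy $\lceil K;y\rceil=\bigcap_{x\in K}\lceil\{x\};y\rceil$ and $\lceil U;y\rfloor=\bigcup_{x\in U}\lceil\{x\};y\rfloor$, so everything is generated by the sets $\lceil\{x\};y\rfloor=\{f:f(x)>y\}$ and $\lceil\{x\};y\rceil=\{f:f(x)<y\}$, which generate precisely the pointwise topology on $Y^X$. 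Hence $\dCF(X,Y)$ is homeomorphic to the Tychonoff power $Y^X$, and since $X$ is countable this is a countable product of copies of $Y$.

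Next I would invoke the permanence properties recalled just before the lemma. A countable Tychonoff product of complete-metrizable spaces is complete-metrizable, and a countable product of second countable spaces is second countable; since $Y\subset\IR$ is second countable, it follows that if $Y$ is Polish then $Y^X$ is complete-metrizable and separable, hence Polish. If $Y$ is only almost Polish, fix a dense Polish subspace $P\subset Y$. Then $P^X$ is Polish (countable product of Polish spaces) and dense in $Y^X$ (a product of dense subsets is dense in the product), so $Y^X$ is almost Polish. Transporting these conclusions back along the homeomorphism $\dCF(X,Y)\cong Y^X$ yields the claim.

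There is essentially no obstacle in this argument: the only point deserving an explicit word is the coincidence of the Fell hypograph topology with the pointwise topology for discrete $X$, which is immediate from the subbasic description above; the remainder is routine bookkeeping with standard stability properties of Polish spaces under countable products and under passage to dense subspaces.
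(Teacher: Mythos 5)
Your proposal is correct and follows essentially the same route as the paper, which justifies the lemma by the single observation that for discrete $X$ the space $\dCF(X,Y)$ is the countable Tychonoff power $Y^X$ and that countable products preserve the (almost) Polish property. Your extra details (the subbasic computation showing the Fell hypograph topology reduces to the pointwise topology, and the density of $P^X$ in $Y^X$) are accurate elaborations of steps the paper leaves implicit.
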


\begin{lemma}\label{l:AP2} Let $X$ be a non-empty $T_1$-space and $Y\subset \IR$ be a non-empty subspace with $\inf Y\notin Y$. Assume that $X$ is $Y$-separated or $\dot X$ is dense in $X$. The following conditions are equivalent:
\begin{enumerate}
\item[\textup{(1)}]  $\dCF(X,Y)$ is almost complete-metrizable;
\item[\textup{(2)}]  $\dCF(X,Y)$ is almost Polish;
\item[\textup{(3)}] $Y$ is almost Polish and $X$ is countable and discrete.
\end{enumerate}
\end{lemma}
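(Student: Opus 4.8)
The plan is to establish the cycle of implications $(1)\Ra(3)\Ra(2)\Ra(1)$. Of these, $(2)\Ra(1)$ is immediate from the general implication ``almost Polish $\Ra$ almost complete-metrizable'' recorded in the introduction, and $(3)\Ra(2)$ is precisely Lemma~\ref{l:AP1} (a countable product of almost Polish subspaces of $\IR$ is almost Polish). So essentially all of the work lies in proving $(1)\Ra(3)$.

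For $(1)\Ra(3)$, I would first observe that an almost complete-metrizable space is Choquet, hence Baire; since $X$ is $Y$-separated or has a dense set $\dot X$ of isolated points, Lemma~\ref{l:Bnon} then forces $X$ to be discrete and $Y$ to be Baire. With $X=\dot X\ne\emptyset$ discrete, the Fell hypograph topology on $C(X,Y)=Y^X$ coincides with the Tychonoff product topology, so $\dCF(X,Y)=Y^X$. For each $x\in X$ the evaluation $\delta_x\colon Y^X\to Y$, $\delta_x\colon f\mapsto f(x)$, is a continuous open surjection; since $Y^X$ is Choquet, Lemma~\ref{l:bC}(4) gives that $Y$ is Choquet, and then Lemma~\ref{l:bC}(6), applied to the separable metrizable space $Y$, yields that $Y$ is almost Polish. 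This disposes of one half of~(3).

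The remaining and main task is to show that $X$ is countable. Note first that $\inf Y\notin Y$ forces $Y$ to be infinite, so $Y$ has at least two points. Let $D\subset Y^X$ be a dense complete-metrizable subspace. Being complete-metrizable, $D$ is \v Cech-complete, hence a $G_\delta$-set in every compactification of $D$; applying this to $\beta(Y^X)$ (a compactification of $D$, since $D$ is dense in the Tychonoff space $Y^X$), I get that $D$ is a $G_\delta$-set in $Y^X$, say $D=\bigcap_{n\in\w}G_n$ with $G_n$ open in $Y^X$. Fix $p\in D$. Since $D$ is metrizable, $p$ has a decreasing countable neighbourhood base $(B_n\cap D)_{n\in\w}$ in $D$ with each $B_n$ open in $Y^X$; after replacing $B_n$ by $B_1\cap\dots\cap B_n\cap G_1\cap\dots\cap G_n$ I may assume $B_{n+1}\subset B_n\subset G_n$. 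Then $\bigcap_{n\in\w}B_n\subset\bigcap_{n\in\w}G_n=D$, so $\bigcap_{n\in\w}B_n=\bigcap_{n\in\w}(B_n\cap D)=\{p\}$, i.e. $\{p\}$ is a $G_\delta$-set in $Y^X$. Now for each $n$ choose a basic neighbourhood $\prod_{x\in F_n}U_x^n\times\prod_{x\in X\setminus F_n}Y\subset B_n$ of $p$ with $F_n\subset X$ finite, and set $F=\bigcup_{n\in\w}F_n$, a countable set. If $X\setminus F\ne\emptyset$, pick $x_0\in X\setminus F$ and $y\in Y$ with $y\ne p(x_0)$; the function $q\in Y^X$ with $q(x_0)=y$ and $q{\restriction}(X\setminus\{x_0\})=p{\restriction}(X\setminus\{x_0\})$ lies in every $B_n$ (its value on $F_n$ agrees with $p$) but $q\ne p$, contradicting $\bigcap_{n\in\w}B_n=\{p\}$. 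Hence $X=F$ is countable, finishing $(1)\Ra(3)$.

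I expect the last paragraph to be the main obstacle: the crux is that a dense \v Cech-complete subspace of $Y^X$ produces a $G_\delta$-point of $Y^X$, which is impossible when $X$ is uncountable and $Y$ has more than one point. The rest — the reduction to discrete $X$ via Lemma~\ref{l:Bnon} and the almost-Polishness of $Y$ via the open projections $\delta_x$ — is routine.
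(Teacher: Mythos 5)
Your proposal is correct and follows essentially the same route as the paper: both reduce via the Choquet/Baire property to the case of a discrete $X$ (the paper cites Lemma~\ref{l:C3}, whose proof is exactly your combination of Lemma~\ref{l:Bnon} with the open evaluation maps), identify $\dCF(X,Y)$ with $Y^X$, and then use the dense complete-metrizable subspace to produce a $G_\delta$-singleton in $Y^X$, which is impossible for uncountable $X$. The only cosmetic difference is in that last step: the paper transfers first-countability from the dense metrizable subspace $D$ to $Y^X$ using regularity, while you reach the same $G_\delta$-point conclusion via the \v Cech-completeness of $D$; both are valid.
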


\begin{proof} The implication $(3)\Ra(2)$ was proved in Lemma~\ref{l:AP1} and $(2)\Ra(1)$ is trivial.
\smallskip

$(1)\Ra(3)$ Assume that $\dCF(X,Y)$ is almost complete-metrizable. Then it is Choquet and by Lemma~\ref{l:C3}, the space $Y$ is almost Polish and the space $X$ is discrete. In this case the Fell hypograph topology coincides with the topology of pointwise convergence and the function space $\dCF(X,Y)$ can be identified with the Tychonoff power $Y^X$. Being almost complete-metrizable, the space $Y^X$ contains a dense first-countable subspace $D$. Being regular, $Y^X$ is first-countable at each point of the set $D$. This implies that the set $X$ is countable (otherwise singletons in $Y^X$ are not $G_\delta$).
\end{proof}

\begin{lemma}\label{l:AP3} Let $Y\subset \IR$ be a subspace with $\inf Y\in Y\ne\{\inf Y\}$ and let $X$ be a $T_1$-space with dense set $\dot X$ of isolated points. The function space $\dCF(X,Y)$ is almost complete-metrizable (resp. almost Polish) if the space $Y$ is almost Polish and $X$ is a $\dot\kappa_\w$-space (with countable set $\dot X$ of isolated points).
\end{lemma}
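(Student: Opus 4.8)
The plan is to reduce the whole statement to the already-understood function space $C_k'(X,Y)$, using $\dCF'(X,Y)$ as a dense intermediary, and then quote Theorem~\ref{t:BW}.

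First I would record that the density of $\dot X$ in $X$ forces $X'^\circ=\emptyset$. Indeed, $\dot X$ is open and $X'=X\setminus\dot X$, so a non-empty open subset of $X'$ would be a non-empty open set disjoint from the dense set $\dot X$, which is impossible. Hence by Lemma~\ref{l:C'd} the subspace $\dCF'(X,Y)$ is $\infty$-dense (in particular dense) in $[{X'^\circ};\{\inf Y\}]=\dCF(X,Y)$, i.e. $\dCF'(X,Y)$ is dense in $\dCF(X,Y)$. By Lemma~\ref{l:F=k} the Fell hypograph topology on $\dCF'(X,Y)$ coincides with the compact-open topology, so $\dCF'(X,Y)$ is (homeomorphic to) the space $C_k'(X,Y)$.

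Now I would invoke Theorem~\ref{t:BW}. If $Y$ is almost Polish and $X$ is a $\dot\kappa_\w$-space, then Theorem~\ref{t:BW}(4) gives that $C_k'(X,Y)\cong\dCF'(X,Y)$ is almost complete-metrizable, hence $\dCF'(X,Y)$ contains a dense complete-metrizable subspace $D$. Since $D$ is dense in $\dCF'(X,Y)$ and $\dCF'(X,Y)$ is dense in $\dCF(X,Y)$, the subspace $D$ is dense in $\dCF(X,Y)$, so $\dCF(X,Y)$ is almost complete-metrizable. The parenthetical claim is handled in exactly the same way: if in addition $\dot X$ is countable, Theorem~\ref{t:BW}(5) yields that $C_k'(X,Y)$ is almost Polish, so $\dCF'(X,Y)$ contains a dense Polish subspace, which is again dense in $\dCF(X,Y)$, proving $\dCF(X,Y)$ almost Polish.

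There is no real obstacle here, as all the substantive content is packaged in the cited results. The only points needing a moment of care are the observation that $\dot X$ dense makes $\dCF'(X,Y)$ dense in the \emph{entire} function space (not merely in $[{X'^\circ};\{\inf Y\}]$), and the elementary transitivity of density (a dense subspace of a dense subspace is dense) that transports ``almost complete-metrizable / almost Polish'' from $\dCF'(X,Y)$ up to $\dCF(X,Y)$.
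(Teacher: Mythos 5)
Your proof is correct and follows essentially the same route as the paper: density of $\dCF'(X,Y)$ via Lemma~\ref{l:C'd}, identification with $C_k'(X,Y)$ via Lemma~\ref{l:F=k}, and an appeal to Theorem~\ref{t:BW}(4,5), transported upward by transitivity of density. The only difference is that you spell out the (correct) observation that $\dot X$ dense forces $X'^\circ=\emptyset$, which the paper leaves implicit.
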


\begin{proof} By Lemma~\ref{l:C'd}, the set $\dCF'(X,Y)$ is dense in $\dCF(X,Y)$ and by Lemma~\ref{l:F=k}, $\dCF'(X,Y)$ is homeomorphic to  the function space $C'_k(X,Y)$.

If $Y$ is almost Polish and $X$ is a $\dot\kappa_\w$-space (with countable set $\dot X$ of isolated points), then by Theorem~\ref{t:BW}(4,5), the function space $C_k'(X,Y)$ is almost complete-metrizable (resp. almost Polish) and so is its topological copy $\dCF'(X,Y)$. Then the space $\dCF(X,Y)$ is almost complete-metrizable (resp. almost Polish) since it contains a dense almost complete-metrizable (resp. almost Polish) subspace $\dCF'(X,Y)$.
 \end{proof}

\begin{lemma}\label{l:AP4} Let $Y\subset \IR$ be a subspace with $\inf Y\in Y\ne\{\inf Y\}$ and let $X$ be a $Y$-separable $T_1$-space with dense set $\dot X$ of isolated points. The function space $\dCF(X,Y)$ is almost complete-metrizable (resp. almost Polish) if and only if the space $Y$ is almost Polish and $X$ is a $\dot\kappa_\w$-space (with countable set $\dot X$ of isolated points).
\end{lemma}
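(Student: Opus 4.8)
The ``if'' direction is precisely Lemma~\ref{l:AP3}, so the plan is to establish the ``only if'' direction by transferring the hypothesis from $\dCF(X,Y)$ to the function space $C_k'(X,Y)$, whose almost complete-metrizability and almost Polishness are characterized by Theorem~\ref{t:BW}(4) and (5). Assume that $\dCF(X,Y)$ is almost complete-metrizable (resp.\ almost Polish); we may assume $X\ne\emptyset$, so that, $\dot X$ being dense, the space $X$ contains an isolated point. Since $\dot X$ is dense in $X$, the set $X'$ has empty interior, so by Lemma~\ref{l:C'd} the subspace $\dCF'(X,Y)$ is dense in $\dCF(X,Y)$; and since $X$ is $Y$-separable, Lemma~\ref{l:Gdelta} shows that $\dCF'(X,Y)$ is moreover a $G_\delta$-set in $\dCF(X,Y)$. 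Note that the Choquet machinery (Lemma~\ref{l:C5}) would only yield $\WDMOP$, which is weaker than being a $\dot\kappa_\w$-space, so a direct $G_\delta$-argument is needed.

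The one point that needs a short separate argument is a general topological fact: \emph{a dense $G_\delta$-subspace $G$ of a space $Z$ which contains a dense complete-metrizable (resp.\ Polish) subspace $D$ is itself almost complete-metrizable (resp.\ almost Polish)}. I would prove this as follows. Write $G=\bigcap_{n\in\w}U_n$ with each $U_n$ open dense in $Z$. Then each $U_n\cap D$ is open and dense in $D$, and since $D$ is a Baire space (being complete-metrizable), the set $G\cap D=\bigcap_{n\in\w}(U_n\cap D)$ is dense in $D$, hence dense in $Z$, hence dense in $G$. On the other hand $G\cap D$ is a $G_\delta$-subset of the complete-metrizable (resp.\ Polish) space $D$, so by the Alexandrov theorem it is complete-metrizable (resp.\ Polish). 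Thus $G\cap D$ is a dense complete-metrizable (resp.\ Polish) subspace of $G$.

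Applying this fact with $Z=\dCF(X,Y)$ and $G=\dCF'(X,Y)$, we conclude that $\dCF'(X,Y)$ is almost complete-metrizable (resp.\ almost Polish). By Lemma~\ref{l:F=k} it is homeomorphic to $C_k'(X,Y)$, so $C_k'(X,Y)$ is almost complete-metrizable (resp.\ almost Polish). Since $X$ has an isolated point and $\inf Y\in Y\ne\{\inf Y\}$, Theorem~\ref{t:BW}(4) (resp.\ (5)) applies and gives that $Y$ is almost Polish and $X$ is a $\dot\kappa_\w$-space (resp.\ a $\dot\kappa_\w$-space with countable set $\dot X$ of isolated points), which is exactly what we had to prove. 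The argument is a direct assembly of Lemmas~\ref{l:C'd}, \ref{l:Gdelta}, \ref{l:F=k} and Theorem~\ref{t:BW} together with the elementary $G_\delta$-stability fact above, so I do not anticipate any serious obstacle; the only place requiring care is the density part of that fact, which rests on the Baire property of the dense complete-metrizable subspace $D$.
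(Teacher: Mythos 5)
Your proposal is correct and follows essentially the same route as the paper: both reduce to $\dCF'(X,Y)=C_k'(X,Y)$ via Lemmas~\ref{l:C'd}, \ref{l:Gdelta}, \ref{l:F=k}, and both show that $D\cap\dCF'(X,Y)$ is a dense $G_\delta$-subset of the dense complete-metrizable (resp.\ Polish) subspace $D$ (using the Baire property of $D$ and Alexandrov's theorem) before invoking Theorem~\ref{t:BW}(4,5). The only difference is cosmetic: the paper phrases the density step through the meagerness of $\dCF(X,Y)\setminus\dCF'(X,Y)$, while you intersect the dense open sets with $D$ directly.
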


\begin{proof} The ``if'' part follows from Lemma~\ref{l:AP3}. To prove the ``only if'' part, assume that the function space $\dCF(X,Y)$ is almost complete-metrizable. So, $\dCF(X,Y)$ contains a dense complete-metrizable space $D$. By Lemmas~\ref{l:C'd} and \ref{l:Gdelta}, $\dCF'(X,Y)$ is a dense $G_\delta$-set in $\dCF(X,Y)$. Then $D\cap \dCF'(X,Y)$ is a $G_\delta$-set in the completely-metrizable space $D$. Since the complement $\dCF(X,Y)\setminus \dCF'(X,Y)$ is meager in $\dCF(X,Y)$, the complement $D\setminus \dCF'(X,Y)$ is meager in $D$ by the density of $D$ in $\dCF(X,Y)$. By the Baire Theorem, the intersection $D\cap\dCF'(X,Y)$ is a dense $G_\delta$-set in $D$ and also in $\dCF'(X,Y)$. By \cite[3.11]{Ke}, the $G_\delta$-subset space $D\cap\dCF'(X,Y)$ of the completely-metrizble space $D$ is complete-metrizable, so $\dCF'(X,Y)$ is almost complete-metrizable. By Theorem~\ref{t:BW}(4), the space $Y$ is almost Polish and $X$ is a $\dot\kappa_\w$-space.

If the space $\dCF(X,Y)$ is almost Polish, then we can assume that the complete-metrizable space $D$ is Polish. Then $\dCF'(X,Y)$ is almost Polish and by Theorem~\ref{t:BW}(5), the set $\dot X$ is at most countable.
\end{proof}

The following two propositions imply Theorem~\ref{t:main}(4,5).

\begin{proposition}\label{p:AP5} Let $Y\subset \IR$ be a subspace with $\inf Y\in Y\ne\{\inf Y\}$ and $X$ be a $Y$-separable $Y$-separated space $X$. The function space $\dCF(X,Y)$ is almost complete-metrizable if and only if the space $Y$ is almost Polish and $X$ is a $\dot\kappa_\w$-space with dense set $\dot X$ of isolated points.
\end{proposition}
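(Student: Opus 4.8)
The plan is to derive Proposition~\ref{p:AP5} by combining two results already available: the characterization of Choquet function spaces in Proposition~\ref{p:C}, and the characterization of almost complete-metrizable spaces $\dCF(X,Y)$ in Lemma~\ref{l:AP4}, which applies to $T_1$-spaces $X$ with dense set of isolated points. First I would note a preliminary point used throughout: since $\inf Y\in Y\ne\{\inf Y\}$, the set $Y$ contains more than one point, so every $Y$-separated space is either functionally Hausdorff or totally disconnected, and in particular is a $T_1$-space.

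For the ``if'' part, assume that $Y$ is almost Polish and $X$ is a $\dot\kappa_\w$-space with dense set $\dot X$ of isolated points. Then $X$ is a $Y$-separable $T_1$-space with dense $\dot X$, so Lemma~\ref{l:AP4} applies directly and gives that $\dCF(X,Y)$ is almost complete-metrizable.

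For the ``only if'' part, assume that $\dCF(X,Y)$ is almost complete-metrizable. By the chain of implications recalled in the introduction, $\dCF(X,Y)$ is then Choquet, so Proposition~\ref{p:C} yields that $Y$ is almost Polish, that $\dot X$ is dense in $X$, and that $X$ has $\WDMOP$. The point is precisely that the Choquet property already forces $\dot X$ to be dense, which is exactly the hypothesis needed to feed the $Y$-separated space $X$ into Lemma~\ref{l:AP4}: now $X$ is a $Y$-separable $T_1$-space with dense set of isolated points and $\dCF(X,Y)$ is almost complete-metrizable, so Lemma~\ref{l:AP4} gives that $X$ is a $\dot\kappa_\w$-space (and re-confirms that $Y$ is almost Polish). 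Combining, $Y$ is almost Polish and $X$ is a $\dot\kappa_\w$-space with dense set $\dot X$ of isolated points, as desired.

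There is no real obstacle here: all of the substantive work (the $\infty$-density arguments, Theorem~\ref{t:BW}, and the Baire-category bookkeeping) is packaged inside Proposition~\ref{p:C} and Lemma~\ref{l:AP4}. The only genuinely new ingredient is the reduction step, i.e.\ the observation that ``almost complete-metrizable $\Rightarrow$ Choquet $\Rightarrow$ $\dot X$ dense'', which lets us drop the $Y$-separated hypothesis in favour of the dense-$\dot X$ hypothesis required by Lemma~\ref{l:AP4}.
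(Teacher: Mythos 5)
Your proof is correct and follows essentially the same route as the paper: both arguments reduce the statement to Lemma~\ref{l:AP4} (equivalently, Lemma~\ref{l:AP3} for the ``if'' part) after first establishing that $\dot X$ is dense in $X$. The only, harmless, difference is that you extract the density of $\dot X$ from the Choquet property via Proposition~\ref{p:C}, whereas the paper uses the weaker Baire property together with Lemma~\ref{l:d'} directly; since Proposition~\ref{p:C} itself rests on Lemma~\ref{l:d'}, the two arguments coincide in substance.
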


\begin{proof}  The ``if'' part follows from Lemma~\ref{l:AP3}. To prove the ``only if'' part, assume that the function space $\dCF(X,Y)$ is almost complete-metrizable. So, $\dCF(X,Y)$ contains a dense complete-metrizable space $D$. Then it is Baire and by Lemma~\ref{l:d'}, the set $\dot X$ is dense in $X$. By Lemma~\ref{l:AP4},  the space $Y$ is almost complete-metrizable and $X$ is a $\dot\kappa_\w$-space.
\end{proof}

By analogy we can prove

\begin{proposition}\label{p:AP6} Let $Y\subset \IR$ be a subspace with $\inf Y\in Y\ne\{\inf Y\}$, and let $X$ be a $Y$-separable $Y$-separated space. The function space $\dCF(X,Y)$ is almost Polish if and only if the space $Y$ is almost Polish and $X$ is a $\dot\kappa_\w$-space with dense and countable set $\dot X$ of isolated points.
\end{proposition}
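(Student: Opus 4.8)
The plan is to mimic the proof of Proposition~\ref{p:AP5}, replacing ``almost complete-metrizable'' by ``almost Polish'' throughout and carrying along the extra countability requirement on $\dot X$. For the ``if'' direction I would simply invoke Lemma~\ref{l:AP3}: since $X$ is $Y$-separated, it is in particular a $T_1$-space, and the hypotheses provide that $\dot X$ is dense in $X$, that $X$ is a $\dot\kappa_\w$-space with countable set $\dot X$, and that $Y$ is almost Polish. Hence the ``almost Polish'' clause of Lemma~\ref{l:AP3} yields at once that $\dCF(X,Y)$ is almost Polish.

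For the ``only if'' direction I would start from the assumption that $\dCF(X,Y)$ is almost Polish (so it contains a dense Polish subspace). By the implication diagram in the introduction, almost Polish $\Ra$ Baire, so $\dCF(X,Y)$ is Baire. Now Lemma~\ref{l:d'} applies, because $Y\subset\IR$ satisfies $\inf Y\in Y\ne\{\inf Y\}$ and $X$ is a $Y$-separable $Y$-separated space; it gives that the set $\dot X$ of isolated points is dense in $X$. At this point $X$ is a $Y$-separable $T_1$-space with dense set $\dot X$ of isolated points, so the ``only if'' part of Lemma~\ref{l:AP4} (in its ``almost Polish'' form) applies and shows that $Y$ is almost Polish and $X$ is a $\dot\kappa_\w$-space with countable set $\dot X$. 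Combined with the density of $\dot X$ already established, this is precisely the right-hand condition, which completes the argument.

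Since every step is a verbatim appeal to one of Lemmas~\ref{l:AP3}, \ref{l:AP4}, \ref{l:d'}, there is essentially no obstacle here. The only points that require a moment's attention are the bookkeeping that first extracts the Baire property from ``almost Polish'' via the implication diagram (so that Lemma~\ref{l:d'} becomes applicable) and the observation that $Y$-separatedness supplies the $T_1$ axiom needed to enter Lemma~\ref{l:AP4}.
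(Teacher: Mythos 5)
Your proposal is correct and coincides with the paper's intended argument: the paper proves Proposition~\ref{p:AP6} ``by analogy'' with Proposition~\ref{p:AP5}, whose proof is exactly your chain (``if'' from Lemma~\ref{l:AP3}; ``only if'' via Baireness, Lemma~\ref{l:d'} for density of $\dot X$, and Lemma~\ref{l:AP4} for the remaining conditions), read in its ``almost Polish'' form with the countability of $\dot X$ carried along.
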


\section{Recognizing Polish spaces among the function spaces $\dCF(X,Y)$}

In this section we recognize complete-metrizable and Polish spaces among function spaces $\dCF(X,Y)$. The case $\inf Y\notin Y$ is simple.

\begin{lemma}\label{l:P0} Let $X$ be a non-empty $T_1$-space and $Y\subset \IR$ be a non-empty subspace with $\inf Y\notin Y$. Assume that $X$ is $Y$-separated or $\dot X$ is dense in $X$. Then the following conditions are equivalent:
\begin{enumerate}
\item[\textup{(1)}]  $\dCF(X,Y)$ is complete-metrizable;
\item[\textup{(2)}]  $\dCF(X,Y)$ is Polish;
\item[\textup{(3)}] $Y$ is Polish and $X$ is countable and discrete.
\end{enumerate}
\end{lemma}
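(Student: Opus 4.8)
The plan is to obtain this lemma almost immediately from the results already proved for strong Choquet spaces, together with Lemma~\ref{l:AP1}. The implication $(2)\Ra(1)$ is trivial, since every Polish space is complete-metrizable, and $(3)\Ra(2)$ is exactly the ``Polish'' half of Lemma~\ref{l:AP1}. So all the content is in $(1)\Ra(3)$.

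To prove $(1)\Ra(3)$, suppose $\dCF(X,Y)$ is complete-metrizable. By the chain of implications recorded in the introduction, $\dCF(X,Y)$ is then strong Choquet. Since $\inf Y\notin Y$ and $X$ is $Y$-separated or $\dot X$ is dense in $X$, Lemma~\ref{l:sC3} applies and yields that $X$ is discrete and $Y$ is Polish. It remains to check that $X$ is countable.

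When $X$ is discrete every compact subset of $X$ is finite, and for an isolated point $x$ the Fell hypograph subbasic sets $\lceil\{x\};y\rfloor=\{f:f(x)>y\}$ and $\lceil\{x\};y\rceil=\{f:f(x)<y\}$ are precisely the standard subbasic sets of the topology of pointwise convergence; consequently the Fell hypograph topology on $C(X,Y)$ coincides with the Tychonoff product topology on $Y^X$ (this is the same identification used in the proofs of Lemmas~\ref{l:sC1} and \ref{l:AP2}). Because $\inf Y\notin Y$, the set $Y$ contains a strictly decreasing sequence converging to $\inf Y$, hence $Y$ has more than one point. A Tychonoff power $Y^X$ of a space $Y$ with $|Y|\ge 2$ is first-countable only if $X$ is countable; since $\dCF(X,Y)=Y^X$ is complete-metrizable, and in particular first-countable, we conclude that $X$ is countable, which gives condition $(3)$.

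I do not anticipate any real difficulty; the only step worth a sentence of justification is ``$Y^X$ first-countable $\Ra$ $X$ countable''. This is the standard fact that an uncountable product of non-singleton spaces is never first-countable; alternatively one can argue as in the proof of Lemma~\ref{l:AP2}, noting that a dense first-countable subspace of the regular space $Y^X$ would force the singletons of $Y^X$ to be $G_\delta$, which is impossible when $X$ is uncountable.
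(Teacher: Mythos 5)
Your proof is correct, but for the only nontrivial implication $(1)\Rightarrow(3)$ it takes a different route from the paper. The paper invokes Lemma~\ref{l:AP2} (complete-metrizable $\Rightarrow$ almost complete-metrizable, hence $Y$ almost Polish and $X$ countable and discrete), identifies $\dCF(X,Y)$ with $Y^X$, deduces that $Y$ is complete-metrizable as a factor of the complete-metrizable power $Y^X$, and then upgrades ``almost Polish $+$ complete-metrizable'' to ``Polish'' via separability. You instead pass through strong Choquetness and Lemma~\ref{l:sC3}, which hands you ``$X$ discrete and $Y$ Polish'' in one step, and then you must recover the countability of $X$ separately from the first-countability of the metrizable power $Y^X$ (using that $\inf Y\notin Y$ forces $|Y|\ge 2$, which is a correct observation). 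The trade-off is symmetric: the paper gets countability of $X$ for free from Lemma~\ref{l:AP2} but must work to upgrade $Y$ from almost Polish to Polish, whereas you get $Y$ Polish for free from Lemma~\ref{l:sC3} but must redo the cardinality argument (which is the same argument as in the proof of Lemma~\ref{l:AP2}, so nothing new is really needed). Both arguments rest on the same identification of $\dCF(X,Y)$ with the Tychonoff power $Y^X$ for discrete $X$, and your handling of $(3)\Rightarrow(2)$ via Lemma~\ref{l:AP1} matches the paper's appeal to preservation of Polishness under countable products.
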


\begin{proof} The implication $(3)\Ra(2)$ trivially follows from the preservation of Polish spaces by countable Tychonoff products and $(2)\Ra(1)$ is trivial.
\smallskip

$(1)\Ra(3)$ Assume that the space $\dCF(X,Y)$ complete-metrizable. By Lemma~\ref{l:AP2}, the space $Y$ is almost Polish and the space $X$ is countable and discrete. Since $X$ is discrete, the Fell-hypograph topology on $\dCF(X,Y)$ coincides with the Tychonoff product topology on $Y^X$. The complete-metrizability of the function space $\dCF(X,Y)=Y^X$ implies the complete-metrizability of $Y$. Being almost Polish, the complete-metrizable space $Y$ is separable and hence Polish.
\end{proof}

The case $\inf Y\in Y$ is more complicated and requires some preliminary work.
We start with reminding two known definitions.

A topological space $X$
\begin{itemize}
\item is {\em hemicompact} if there exists a countable family $\K$ of compact subsets of $X$ such that each compact set $C\subset X$ is contained in some set $K\in\K$;
\item has a {\em countable network} if there exists a countable family $\mathcal N$ of subsets of $X$ such that for any open set $U\subset X$ and point $x\in U$ there exists a set $N\in\mathcal N$ such that $x\in N\subset U$.
\end{itemize}

A partial case (for $Y=\IR$ and Tychonoff $X$) the following lemma was proved by McCoy and Ntantu \cite{McN}.

\begin{lemma}\label{l:P1}  Let $Y\subset \IR$ be a subspace containing more than one point and $X$ be a $Y$-separated space.
\begin{enumerate}
\item[\textup{(1)}] If $\dCF(X,Y)$ is first-countable, then $X$ is hemicompact and $\dot X$ is countable.
\item[\textup{(2)}] If $\dCF(X,Y)$ has a countable network, then the $Y$-topology of $X$ has countable network; consequently, $\dot X$ is at most countable and $X$ is $Y$-separable.
\item[\textup{(3)}] If $\dCF(X,Y)$ has a countable network, then each compact subset of $X$ is metrizable.
\item[\textup{(4)}] If $\dCF(X,Y)$ is first-countable and has a countable network, then the space $X$ has a countable network.
\end{enumerate}
\end{lemma}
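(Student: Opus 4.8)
\medskip

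The plan is to derive each of the four properties of $X$ by transferring topological smallness from the function space, exploiting that the subbasic sets $\lceil U;a\rfloor$ (with $U$ open in $X$) and $\lceil K;b\rceil$ are open in $\dCF(X,Y)$, together with the extension Lemma~\ref{l:e} and the lattice operations of Lemma~\ref{l:min}; fix once and for all two points $y_0<y_1$ in $Y$, and note that $X$, being $Y$-separated with $|Y|\ge 2$, is Hausdorff. First I would observe that (3) and (4) reduce to (1) and (2). Since $\tau_Y\subseteq\tau_X$ and $(X,\tau_Y)$ is Hausdorff, for every compact $K\subseteq X$ the identity $(K,\tau_X{\restriction}K)\to(K,\tau_Y{\restriction}K)$ is a continuous bijection of a compact space onto a Hausdorff space, hence a homeomorphism, so $\tau_X$ and $\tau_Y$ agree on $K$; thus if (2) gives $(X,\tau_Y)$ a countable network, then $K$ has a countable network, and a compact Hausdorff space with a countable network is metrizable, which is (3). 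For (4), first-countability of $\dCF(X,Y)$ makes $X$ hemicompact by (1), so $X=\bigcup_{n\in\w}C_n$ with $C_n$ compact; by (3) each $C_n$ is compact metrizable, hence second countable, and a countable union of subspaces with countable networks has a countable network, giving (4). So the real work is in (1) and (2).

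For (1), to prove hemicompactness I would take a countable neighbourhood base $(B_k)_{k\in\w}$ of basic open sets at the constant function $\mathbf y_0\equiv y_0$, say $B_k=\bigcap_i\lceil U^k_i;a^k_i\rfloor\cap\bigcap_j\lceil K^k_j;b^k_j\rceil$, and show that the countable family of finite unions of the compacta $K^k_j$ is cofinal among compact subsets of $X$: given compact $K$, from $\mathbf y_0\in\lceil K;y_1\rceil$ pick $B_k\subseteq\lceil K;y_1\rceil$; if some $x\in K$ lay outside $\bigcup_j K^k_j$, then using Lemma~\ref{l:e} I would extend the function equal to $y_0$ on the compactum $\bigcup_j K^k_j$ and to $y_1$ at $x$ to a continuous $g\colon X\to Y$, and check that $g':=\max\{g,\mathbf y_0\}$ lies in $B_k$ (using $a^k_i<y_0<b^k_j$, which hold since $\mathbf y_0\in B_k$) yet has $\max g'(K)\ge y_1$, contradicting $g'\in\lceil K;y_1\rceil$. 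For the countability of $\dot X$ I would instead work at $\mathbf y_1\equiv y_1$: for each basic neighbourhood $B_k$ from a countable base at $\mathbf y_1$, the finite set $E_k:=\{x\in\dot X:\{x\}=U^k_i\text{ for some }i\}$ has the property that for $x\in\dot X\setminus E_k$ one can modify $\mathbf y_1$ at the single isolated point $x$ to the value $y_0$ and stay in $B_k$ (on each $U^k_i$ the non-empty open set $U^k_i\setminus\{x\}$ still carries the value $y_1>a^k_i$, and the $\max$-conditions are preserved), so $B_k\not\subseteq\lceil\{x\};y_0\rfloor$; since $\mathbf y_1\in\lceil\{x\};y_0\rfloor$ for every isolated $x$, this forces $\dot X\subseteq\bigcup_{k\in\w}E_k$.

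The genuinely hard part is the first assertion of (2): from a countable network $\mathcal M$ of $\dCF(X,Y)$ I must produce a countable network of the $Y$-topology of $X$, which is the analogue of the classical identity $\mathrm{nw}(C_k(Z))=\mathrm{nw}(Z)$ that McCoy and Ntantu proved for $Y=\IR$. My plan is to associate with each $M\in\mathcal M$ and each pair of rationals $p<q$ the sets $\bigcap_{f\in M}f^{-1}((p,q))$, $\bigcap_{f\in M}f^{-1}((p,\infty))$, $\bigcap_{f\in M}f^{-1}((-\infty,q))$, and to verify that this countable family of subsets of $X$ is a network for $\tau_Y$: given $x_0\in X$ and a basic $\tau_Y$-neighbourhood $W=\bigcap_{i\le n}f_i^{-1}((a_i,b_i))$ of $x_0$, I would use Lemma~\ref{l:e} to manufacture a continuous $h\colon X\to Y$ matching the $f_i$ near $x_0$ and an open set $O\ni h$ in $\dCF(X,Y)$ whose members are pinned down on a neighbourhood of $x_0$, and then take $M\in\mathcal M$ with $h\in M\subseteq O$ and read off suitable rationals. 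The obstacle here — and what I expect to be the crux of the whole lemma — is that, unlike for the compact-open topology, the evaluation $f\mapsto f(x_0)$ is \emph{not} continuous on $\dCF(X,Y)$ at a non-isolated $x_0$, so $O$ must be assembled out of subbasic sets $\lceil V;a\rfloor$ for small open neighbourhoods $V$ of $x_0$, and Lemma~\ref{l:e} is needed to realise the prescribed local behaviour by honest continuous functions.

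Once $(X,\tau_Y)$ is known to have a countable network the remaining conclusions follow quickly; both $\dCF(X,Y)$ and $(X,\tau_Y)$ are then hereditarily separable and hereditarily Lindel\"of. For an isolated point $x$, the function equal to $y_1$ at $x$ and to $y_0$ elsewhere is continuous (as $X$ is $T_1$), so $\{x\}$ is $\tau_Y$-open; hence $\dot X$ is $\tau_Y$-open, $X'$ is $\tau_Y$-closed, and $(\dot X,\tau_Y{\restriction}\dot X)$ is discrete, so, being a subspace of a space with a countable network, it is countable (this also drops out directly from the hereditary Lindel\"ofness of $\dCF(X,Y)$: cover $\bigcup_{x\in\dot X}\lceil\{x\};y_0\rfloor$ by the sets $\lceil\{x\};y_0\rfloor$, extract a countable subcover indexed by $S\subseteq\dot X$, and perturb $\mathbf y_0$ at an isolated point outside $S$ to get a contradiction unless $\dot X=S$). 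Finally, by hereditary separability $(X',\tau_Y{\restriction}X')$ has a countable dense subset $M$; being a countable subset of $X'$, $M$ is $\sigma$-compact and meager in $X$, and since $X'$ is $\tau_Y$-closed we get $\overline M^Y=X'$, so $X$ is $Y$-separable.
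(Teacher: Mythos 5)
Your parts (1), (3), (4), and the two ``consequently'' clauses of (2) are essentially correct and run along the same lines as the paper: hemicompactness is extracted from a countable base at the low constant function, countability of $\dot X$ from a countable base at the high constant function (your observation that $U^k_i\setminus\{x\}$ is still non-empty when $U^k_i\ne\{x\}$ replaces the paper's preliminary shrinking of the sets $U^k_i$ to singletons, and is fine), compact subsets carry the $Y$-topology and hence inherit a countable network, and (4) follows by writing $X$ as a countable union of compacta.

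The genuine gap is exactly where you flag it: the first assertion of (2). You propose the candidate network $\bigl\{\bigcap_{f\in M}f^{-1}((p,q)),\ \bigcap_{f\in M}f^{-1}((p,\infty)),\ \bigcap_{f\in M}f^{-1}((-\infty,q))\bigr\}$ and then plan to verify it by manufacturing, around a given $x_0$ and a basic $\tau_Y$-neighbourhood, an open set $O$ in $\dCF(X,Y)$ ``whose members are pinned down on a neighbourhood of $x_0$,'' assembled from subbasic sets $\lceil V;a\rfloor$. This cannot work as stated: $\lceil V;a\rfloor$ only controls $\sup f(V)$, and at a non-isolated point $x_0$ there is \emph{no} Fell-open set containing a given $h$ all of whose members satisfy $f(x_0)>p$; so membership of $x_0$ in the sets of the form $\bigcap_{f\in M}f^{-1}((p,q))$ or $\bigcap_{f\in M}f^{-1}((p,\infty))$ can never be certified. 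The missing idea (which the paper supplies) is to use \emph{only} the upper-bound constraint, which is genuinely open because $\{x\}$ is compact: for $N$ in the countable network of $\dCF(X,Y)$ set $N^*:=\{x\in X:N\subset\lceil\{x\};\bar u\rceil\}=\bigcap_{f\in N}f^{-1}\bigl((-\infty,\bar u)\bigr)$, and, given $x_0$ and a $\tau_Y$-neighbourhood $O_{x_0}$, choose (via $Y$-separatedness, Urysohn or zero-dimensionality) a continuous $f:X\to Y$ with $f(x_0)=\underline u$ and $f(X\setminus O_{x_0})\subset\{\bar u\}$; then any $N$ with $f\in N\subset\lceil\{x_0\};\bar u\rceil$ satisfies $x_0\in N^*\subset O_{x_0}$, since $f(z)=\bar u$ for $z\notin O_{x_0}$ forces $N\not\subset\lceil\{z\};\bar u\rceil$. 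Your candidate family does contain these sets, so it is the verification, not the family, that needs to be replaced; as written, though, the central claim of (2) remains unproved.
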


\begin{proof} Fix two real numbers $\underline{u}<\bar u$ in $Y$.
\smallskip

1h. Assume that the space $\dCF(X,Y)$ is first-countable at the constant function $\underline{c}:X\to\{\underline{u}\}\subset Y$ and fix a countable neighborhood base $\{O_n\}_{n\in\w}$ at $\underline{c}$. By the definition of the Fell hypograph topology, for every $n\in\w$ there exits a finite family $\U_n$ of non-empty open sets in $X$, a non-empty compact subset $K_n\subset X$, a function $a_n:\U_n\to\IR$,  and a real number $b_n>\underline{u}$ such that $$\underline{c}\in \lceil K_n;b_n\rceil\cap \bigcap_{U\in\U_n}\lceil U_n;a_n(U)\rfloor\subset O_n.$$
Observe that for every $U\in\U_n$ we get $a_n(U)<\sup \underline{c}(U)=\underline{u}$.
Replacing $K_n$ by a larger compact set in $X$, we can assume that $K$ has non-empty intersection with each set $U\in\U_n$.

We claim that the countable family $\{K_n\}_{n\in\w}$ witnesses that the space $X$ is hemicompact.
Given any compact subset $K\subset X$, consider the open neighborhood $\lceil K;\bar u\rceil\subset\dCF(X,Y)$ of $\underline{c}$ and find $n\in\w$ such that $O_n\subset \lceil K;\bar u\rceil$. We claim that $K\subset K_n$. Assuming that $K\not\subset K_n$, find a point $x\in K\setminus K_n$. Using Lemma~\ref{l:e}, construct a function $f:X\to Y$ such that $f(K_n)\subset\{\underline{u}\}$ and $f(x)=\bar u$.  Observe that for every $U\in\U_n$, we have $\sup f(U)\ge \sup f(U\cap K_n)\ge \underline{u}>a_n(U)$. Consequently,
$$f\in \lceil K_n;b_n\rceil\cap\bigcap_{U\in\U_n}\lceil U;a_n(U)\rfloor\subset O_n\subset \lceil K;\bar u\rceil,$$ and hence $f(x)\le\max f(K)<\bar u$, which contradicts the choice of $f$. This contradiction completes the proof of the hemicompactness of $X$.
\smallskip

1c.  Assume that the space $\dCF(X,Y)$ is first-countable at the constant function $\bar{c}:X\to\{\bar{u}\}\subset Y$ and fix a countable neighborhood base $\{O_n\}_{n\in\w}$ at $\bar{c}$. By the definition of the Fell hypograph topology, for every $n\in\w$ there exits a finite family $\U_n$ of non-empty open sets in $X$, a non-empty compact subset $K_n\subset X$, a function $a_n:\U_n\to\IR$, and a real number $b_n>\bar{u}$ such that $$\bar{c}\in \lceil K_n;b_n\rceil\cap \bigcap_{U\in\U_n}\lceil U_n;a_n(U)\rfloor\subset O_n.$$
Replacing each set $U\in\U_n$ by a suitable non-empty open subset of $U$, we can assume that either $U\subset X'$ or $U=\{x_U\}\subset \dot X$ for some isolated point $x_U$ of $X$. Replacing $K_n$ by a larger compact set, we can assume that $K$ intersects each set $U\in\U_n$. It follows that $a_n(U)<\sup\bar c(U)=\bar u$.
Let $$\dot\U_n:=\{U\in\U_n:U\cap\dot X\ne\emptyset\}=\big\{\{x\}\in \U_n:x\in\dot X\big\}.$$ We claim that $\dot X=\bigcup_{n\in\w}\bigcup\dot\U_n$.

Given any point $x\in\dot X$, consider the open neighborhood $\lceil \{x\};\underline{u}\rfloor$ of $\bar c$ in $\dCF(X,Y)$ and find $n\in\w$ such that $O_n\subset\lceil \{x\};\underline{u}\rfloor$. We claim that $\{x\}\in\U_n$.   Assuming that $\{x\}\notin \U_n$, we conclude that $x\notin\bigcup\U_n$.  Consider the function $\chi_x:X\to\{\underline{u},\bar u\}\subset Y$, defined by $\chi_x^{-1}(\underline{u})=\{x\}$. It follows from $\max_{U\in\U_n}a_n(U)<\bar u<b_n$ that $$\chi_x\in \lceil K_n;b_n\rceil\cap\bigcap_{U\in\U_n}\lceil U;a_n(U)\rfloor\subset O_n\subset \lceil \{x\};\underline{u}\rfloor,$$and hence $\chi_x(x)>\underline{u}$, which contradicts the definition of the function $\chi_x$. This contradiction shows that $\{x\}\in\dot\U_n$.
Now we see that the set $\dot X\subset\bigcup_{n\in\w}\cup\dot\U_n$ is countable (we recall that each set $\cup\dot\U_n$, $n\in\w$, is finite).
\smallskip

2. Assume that the space $\dCF(X,Y)$ has a countable network $\mathcal N$. We shall show that the $Y$-topology of $X$ has countable network. For every set $N\in\mathcal N$ consider the set $N^*:=\{x\in X:N\subset \lceil\{x\};\bar u\rceil\}$. We claim that the family $\mathcal N^*=\{N^*:N\in\mathcal N\}$ is a countable network for the $Y$-topology of the space $X$.
Fix any point $x\in X$ and its neighborhood $O_x$ in the $Y$-topology of $X$.

If the space $Y$ is connected, then the $Y$-topology coincides with the $\IR$-topology of $X$. So, we can find a continuous function $f:X\to [\underline{u},\bar{u}]\subset Y$ such that $f(x)=\underline{u}$ and $f(X\setminus O_x)\subset\{\bar u\}$.

If the space $Y$ is disconnected, then the $Y$-topology on $X$ is generated by the base consisting of clopen subsets of $X$. In this case we can find a continuous  function $f:X\to\{\underline{u},\bar u\}\subset Y$ such that $f(x)=\underline{u}$ and $f(X\setminus O_x)\subset \{\bar u\}$.

In both cases we have a continuous function  $f:X\to Y$ such that $f(x)=\underline{u}$ and $f(X\setminus O_x)\subset \{\bar u\}$.

 For the open neighborhood $\lceil\{x\};\bar u\rceil\subset \dCF(X,Y)$ of $f$, find a set $N\in\mathcal N$ such that $f\in N  \subset\lceil\{x\};\bar u\rceil$. Then $x\in N^*$ by the definition of $N^*$. On the other hand, for any $z\in X\setminus O_x$ we get $f\notin \lceil \{z\};\bar u\rceil$ and then $N\not\subset \lceil\{z\};\bar u\rceil$ and  $z\notin N^*$, which implies $x\in N^*\subset O_x$. Therefore, $\mathcal N^*$ is a countable network for $Y$-topology.  Since spaces with countable network are hereditarily separable, the space $X'$ contains a countable set $M$ such that $X'=\overline{M}^Y$, which means that the space $X$ is $Y$-separable.

Since each isolated point of $X$ remains isolated in the $Y$-topology (which has a countable network), the set $\dot X$ is at most countable.
\smallskip

3. Assume that the space $\dCF(X,Y)$ has a countable network $\mathcal N$. By the preceding statement, the $Y$-topology of $X$ has a countable network.
Since the space $X$ is $Y$-separated, on any compact subset $K$ of $X$ the $Y$-topology induces the original subspace topology of $K$, which implies that $K$ has a countable network and hence is metrizable by \cite[3.1.19]{Eng}.
\smallskip

4. If the space $\dCF(X,Y)$ is first-countable and has countable network, then $X$ is hemicomact and hence $\sigma$-compact (by the first statement). Consequently, $X$ contains a countable family $\{K_i\}_{i\in\w}$ of compact subsets such that $X=\bigcup_{i\in\w}K_i$.
By the third statement, each compact set $K_i$ has a countable network $\mathcal N_i$. Then $\mathcal N=\bigcup_{i\in\w}\mathcal N_i$ is a countable network for the space $X$.
\end{proof}

\begin{lemma}\label{l:P2} Let $Y\subset \IR$ be a subspace containing more than one point. For any  $Y$-separated space $X$, the function space $\dCF(X,Y)$ is Polish if and only if $Y$ is Polish and the space $X$ is countable and discrete.
\end{lemma}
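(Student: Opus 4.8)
The plan is to reduce the statement to the machinery already developed in this section, splitting into the two cases $\inf Y\notin Y$ and $\inf Y\in Y$.

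The ``if'' part is immediate: if $X$ is countable and discrete and $Y$ is Polish, then $\dCF(X,Y)$ is Polish by Lemma~\ref{l:AP1}. For the ``only if'' part, assume $\dCF(X,Y)$ is Polish; we may assume $X\ne\emptyset$, and note that $X$ is a $T_1$-space (in fact functionally Hausdorff) since it is $Y$-separated and $|Y|>1$. Being Polish, $\dCF(X,Y)$ is separable and metrizable, hence first-countable and with a countable network, so Lemma~\ref{l:P1} applies and yields that $X$ is $Y$-separable, that $\dot X$ is countable, and, crucially, that \emph{every compact subspace of $X$ is metrizable}. If $\inf Y\notin Y$, then $\dCF(X,Y)$ is complete-metrizable, and since $X$ is $Y$-separated, Lemma~\ref{l:P0} immediately gives that $Y$ is Polish and $X$ is countable and discrete; so this case is settled and we assume henceforth $\inf Y\in Y$, which (as $|Y|>1$) means $\inf Y\in Y\ne\{\inf Y\}$.

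In this case I would first extract a Polish copy of $C_k'(X,Y)$ from $\dCF(X,Y)$. Being Polish, $\dCF(X,Y)$ is Baire, so by Lemma~\ref{l:d'} the set $\dot X$ is dense in $X$; in particular $X'^\circ=\emptyset$ and $X$ has an isolated point. By Lemma~\ref{l:Gdelta} (using the $Y$-separability of $X$), the set $\dCF'(X,Y)$ is a $G_\delta$ in the Polish space $\dCF(X,Y)$, hence Polish, and by Lemma~\ref{l:F=k} it is homeomorphic to $C_k'(X,Y)$. Thus $C_k'(X,Y)$ is Polish, and Theorem~\ref{t:BW}(5) tells us that $Y$ is Polish and that $X$ is a $\dot\kappa_\w$-space with countable set $\dot X$ of isolated points.

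It remains to upgrade ``$\dot X$ dense in $X$'' to ``$X$ discrete'', and this is the main obstacle, since $\beta\IN$ is a $\dot\kappa_\w$-space with dense countable set of isolated points yet is not discrete. The extra leverage is strong Choquetness: being Polish, $\dCF(X,Y)$ is strong Choquet, so by Lemma~\ref{l:SCg} no metrizable compact $K\subset X$ can have $K\cap\dot X$ infinite; combined with the fact (from Lemma~\ref{l:P1}) that every compact subset of $X$ is metrizable, this forces $K\cap\dot X$ to be finite for \emph{every} compact $K\subset X$. Now fix a countable family $\mathcal K$ of compact subsets of $X$ witnessing that $X$ is a $\dot\kappa_\w$-space. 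For any $D\subset\dot X$ and any $K\in\mathcal K$ the set $D\cap K\subset\dot X\cap K$ is finite, so $D$ is closed in $X$; taking $D=\dot X$ shows that $\dot X$ is closed in $X$, and being also dense it equals $X$. Hence $X$ is discrete, and it is countable by Lemma~\ref{l:P1} (or by Theorem~\ref{t:BW}(5)), while $Y$ is Polish. This completes the proof.
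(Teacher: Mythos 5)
Your proof is correct, and while it follows the paper's skeleton for most of the argument (splitting on whether $\inf Y\in Y$, invoking Lemma~\ref{l:P1} and Lemma~\ref{l:d'}, and using strong Choquetness via Lemma~\ref{l:SCg}), it diverges genuinely at the final and most delicate step, namely upgrading ``$\dot X$ dense'' to ``$X$ discrete''. The paper gets $Y$ Polish directly from Lemma~\ref{l:sC2}, deduces from Lemma~\ref{l:SCg} only that $\dot X$ is \emph{sequentially} closed, and then argues that the countable network of $X$ (Lemma~\ref{l:P1}(4)) gives countable tightness, whence the sequentially closed set $\dot X$ is closed. You instead first extract the Polish $G_\delta$-subspace $\dCF'(X,Y)=C_k'(X,Y)$ via Lemmas~\ref{l:Gdelta} and \ref{l:F=k}, apply Theorem~\ref{t:BW}(5) to obtain both that $Y$ is Polish and that $X$ is a $\dot\kappa_\w$-space, and then combine Lemma~\ref{l:SCg} with Lemma~\ref{l:P1}(3) (metrizability of all compact subsets) to conclude that \emph{every} compact set meets $\dot X$ in a finite set, so that the $\dot\kappa_\w$-property forces $\dot X$ to be closed outright. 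Your route costs an extra appeal to Theorem~\ref{t:BW}(5) but buys a cleaner closure argument: it bypasses the passage from sequential closedness to closedness via countable tightness, which is the least transparent step of the paper's proof (countable tightness alone does not in general make sequentially closed sets closed, so the paper is implicitly leaning on additional structure there). Both arguments reach the same conclusion; yours is, if anything, the more self-contained of the two.
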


\begin{proof} The ``if''  follows from the preservation of Polish spaces by countable Tychonoff products.
\smallskip

To prove the ``only if'' part, assume that the space $\dCF(X,Y)$ is Polish. If $\inf Y\notin Y$, then by Lemma~\ref{l:AP2}, the space $X$ is countable and discrete. Then $Y$ is Polish, being homeomorphic to a closed subset of the Polish space $\dCF(X,Y)=Y^X$.

Now assume that $\inf Y\in Y$. By Lemma~\ref{l:P1}(2), the space $X$ is $Y$-separable and the set $\dot X$ is at most countable. By Lemma~\ref{l:d'}, the set $\dot X$ is dense in $X$. Being Polish, the space $\dCF(X,Y)$ is strong Choquet. By Lemma~\ref{l:sC2}, the space $Y$ is Polish and by Lemma~\ref{l:SCg}, the set $\dot X$ is sequentially closed in $X$. By Lemma~\ref{l:P1}(4), the space $X$ has a countable network and hence it has countable tightness. Then the sequentially closed set $\dot X$ is closed in $X$, which implies that the space $X=\dot X$ is discrete and at most countable.
\end{proof}

\begin{lemma}\label{l:P2c} Let $Y\subset \IR$ be a subspace containing more than one point and $X$ be a $Y$-separable $Y$-separated $T_1$-space with dense set $\dot X$ of isolated points. The function space $\dCF(X,Y)$ is complete-metrizable if and only if $\dCF(X,Y)$ is Polish.
\end{lemma}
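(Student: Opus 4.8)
The implication ``Polish $\Ra$ complete-metrizable'' is trivial, so the real content is the converse; assume $\dCF(X,Y)$ is complete-metrizable. We may assume $X\ne\emptyset$, since otherwise $\dCF(X,Y)$ is a singleton and there is nothing to prove. A complete-metrizable space is metrizable and hence first-countable, so Lemma~\ref{l:P1}(1) applies and gives that $X$ is hemicompact and the set $\dot X$ of isolated points of $X$ is countable.

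If $\inf Y\notin Y$, the conclusion is immediate from Lemma~\ref{l:P0}, whose conditions (1) and (2) assert precisely that $\dCF(X,Y)$ is complete-metrizable if and only if it is Polish. So from now on assume $\inf Y\in Y$; since $Y$ has more than one point this means $\inf Y\in Y\ne\{\inf Y\}$, which is exactly the setting in which the subspace $\dCF'(X,Y)=[X';\{\inf Y\}]$ and its properties are available.

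The plan is to pass to $\dCF'(X,Y)$, apply the structure theorem for $C_k'(X,Y)$, and come back. Since $\dot X$ is open and dense in $X$, the closed set $X'$ has empty interior, i.e. $X'^\circ=\emptyset$; hence Lemma~\ref{l:C'd} shows that $\dCF'(X,Y)$ is ($\infty$-)dense in $\dCF(X,Y)$, and Lemma~\ref{l:Gdelta} (using that $X$ is $Y$-separable) shows that $\dCF'(X,Y)$ is a $G_\delta$-set in $\dCF(X,Y)$. A $G_\delta$-subset of a complete-metrizable space is complete-metrizable (Alexandrov's theorem, \cite[3.11]{Ke}), so $\dCF'(X,Y)$ is complete-metrizable; by Lemma~\ref{l:F=k} it coincides with $C_k'(X,Y)$, and therefore Theorem~\ref{t:BW}(4) yields that $Y$ is Polish and $X$ is a $\dot\kappa_\w$-space. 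Since in addition $\dot X$ is countable, Theorem~\ref{t:BW}(5) strengthens this: $C_k'(X,Y)=\dCF'(X,Y)$ is Polish.

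Thus $\dCF(X,Y)$ contains a dense Polish, in particular dense separable, subspace, so $\dCF(X,Y)$ is separable; being also complete-metrizable, it is Polish. None of these steps is a genuine obstacle --- the argument is essentially a bookkeeping of the already-established structure theorems for $\dCF'(X,Y)\cong C_k'(X,Y)$ --- the one point requiring attention being the split into the cases $\inf Y\notin Y$ and $\inf Y\in Y$, since Lemmas~\ref{l:C'd}, \ref{l:Gdelta}, \ref{l:F=k} and Theorem~\ref{t:BW} are available only when $\inf Y\in Y$.
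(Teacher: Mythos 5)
Your proof is correct and follows essentially the same route as the paper: split on whether $\inf Y\in Y$, handle the first case by the discrete/pointwise-convergence lemmas, and in the second case pass to the dense $G_\delta$-subspace $\dCF'(X,Y)=C_k'(X,Y)$, invoke the structure results of Theorem~\ref{t:BW} together with the countability of $\dot X$ from Lemma~\ref{l:P1}(1), and conclude separability and hence Polishness of $\dCF(X,Y)$. The only cosmetic difference is that you apply Theorem~\ref{t:BW}(4,5) directly to the complete-metrizable $G_\delta$-set $\dCF'(X,Y)$, while the paper routes the same facts through Lemma~\ref{l:AP4} and Theorem~\ref{t:BW}(6); both are sound.
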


\begin{proof} The ``if' part is trivial. To prove the ``only if'' part, assume that the space $\dCF(X,Y)$ complete-metrizable. If $\inf Y\notin Y$, then by Lemma~\ref{l:AP2}, the space $Y$ is almost Polish and the space $X$ is countable and discrete. Then $Y$ is complete-metrizable, being homeomorphic to a closed subset of the complete-metrizable space $\dCF(X,Y)=Y^X$. Being almost Polish, the complete-metrizable space $Y$ is separable and hence Polish.

Now assume that $\inf Y\in Y$. In this case Lemma~\ref{l:AP4} implies that $X$ is a $\dot\kappa_\w$-space. By Lemma~\ref{l:P1}(1), the set $\dot X$ is countable. By Theorem~\ref{t:BW}(6), the space $C_k'(X,Y)$ is separable. By Lemmas~\ref{l:C'd} and \ref{l:F=k}, the set $\dCF'(X,Y)=C_k'(X,Y)$ is dense in $\dCF(X,Y)$, which implies that the complete-metrizable space $\dCF(X,Y)$ is separable and hence Polish.
\end{proof}

The following proposition implies Theorem~\ref{t:main}(6) announced in the introduction.

\begin{proposition}\label{p:P} Let $Y\subset \IR$ be a subspace containing more than one point. For a  non-empty $Y$-separable $Y$-separated space $X$, the following conditions are equivalent:
\begin{enumerate}
\item[\textup{(1)}] $\dCF(X,Y)$ is complete-metrizable;
\item[\textup{(2)}] $\dCF(X,Y)$ is Polish;
\item[\textup{(3)}] $Y$ is Polish and the space $X$ is countable and discrete.
\end{enumerate}
\end{proposition}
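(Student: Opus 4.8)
The plan is to prove the cycle of implications $(3)\Ra(2)\Ra(1)\Ra(3)$, the first two of which are essentially free. For $(3)\Ra(2)$ I would observe that when $X$ is countable and discrete the Fell hypograph topology on $\dCF(X,Y)=Y^X$ coincides with the Tychonoff product topology, so $\dCF(X,Y)$ is a countable power of the Polish space $Y$ and hence Polish; $(2)\Ra(1)$ is trivial. Thus essentially all the work is in $(1)\Ra(3)$.

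For $(1)\Ra(3)$ I would start from two cheap observations: a complete-metrizable space is Baire (indeed Choquet), and $X$ is automatically a $T_1$-space, since $Y$ has more than one point and $X$ is $Y$-separated (given distinct $x_1,x_2\in X$, choose $f\in C(X,Y)$ with $f(x_1)\ne f(x_2)$; then $f^{-1}(Y\setminus\{f(x_2)\})$ is an open neighbourhood of $x_1$ missing $x_2$). The key intermediate claim is that the set $\dot X$ of isolated points of $X$ is dense in $X$, and here I would split into two cases. If $\inf Y\notin Y$, then $\dCF(X,Y)$ is in particular almost complete-metrizable, so Lemma~\ref{l:AP2} forces $X$ to be countable and discrete, whence $\dot X=X$ is dense. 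If $\inf Y\in Y$, then $Y\ne\{\inf Y\}$ because $|Y|>1$, and the density of $\dot X$ follows directly from Lemma~\ref{l:d'}, whose hypotheses ($X$ is $Y$-separable and $Y$-separated, and $\dCF(X,Y)$ is Baire) are all in force.

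With $\dot X$ dense, $X$ is a $Y$-separable $Y$-separated $T_1$-space with dense set of isolated points, so Lemma~\ref{l:P2c} applies and upgrades the complete-metrizability of $\dCF(X,Y)$ to the assertion that $\dCF(X,Y)$ is Polish. Then Lemma~\ref{l:P2} immediately yields that $Y$ is Polish and $X$ is countable and discrete, which is exactly statement $(3)$, and the cycle closes. I do not anticipate a real obstacle: this proposition is a clean assembly of Lemmas~\ref{l:P2}, \ref{l:P2c}, \ref{l:d'} and \ref{l:AP2}. The only subtleties worth flagging are checking that $X$ is $T_1$ so that Lemma~\ref{l:P2c} is legitimately available, and separating out the degenerate case $\inf Y\notin Y$ — where $\dot X$ turns out to be dense only because $X$ is already discrete, and where Lemma~\ref{l:d'} cannot be cited — before invoking the main argument.
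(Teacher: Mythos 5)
Your proof is correct and follows essentially the same route as the paper: the paper also reduces $(1)\Rightarrow(3)$ to Lemmas~\ref{l:d'}, \ref{l:P2c} and \ref{l:P2} after splitting on whether $\inf Y\in Y$. The only cosmetic difference is that in the degenerate case $\inf Y\notin Y$ the paper cites Lemma~\ref{l:P0} directly rather than routing through Lemma~\ref{l:AP2} and then Lemmas~\ref{l:P2c}, \ref{l:P2}; your explicit verification that $X$ is $T_1$ is a detail the paper leaves implicit.
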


\begin{proof} The implication $(3)\Ra(2)$ trivially follows from the preservation of Polish spaces by countable Tychonoff products, and $(2)\Ra(1)$ is trivial.
\smallskip

$(1)\Ra(3)$ Assume that the function space $\dCF(X,Y)$ is complete-metrizable. If $\inf Y\notin Y$, then we can apply Lemma~\ref{l:P0} and conclude that $Y$ is Polish and $X$ is countable and discrete.

Next, consider the case $\inf Y\in Y$. Being complete-metrizable, the space $\dCF(X,Y)$ is Baire. By Lemma~\ref{l:d'}, the set $\dot X$ is dense in $X$. Now we can apply Lemmas~\ref{l:P2}, \ref{l:P2c} and conclude that the space $Y$ is Polish and $X$ is countable and discrete.
\end{proof}
\section{Recognizing function spaces $\dCF(X,Y)$ which are neither Baire nor meager}

\begin{theorem}\label{t:N} Let $Y\subset \IR$ be a Polish+meager subspace of the real line and $X$ be a non-empty $Y$-separable $Y$-separated topological space. The function space $\dCF(X,Y)$ is neither Baire nor meager if and only if one of the following conditions is satisfied:
\begin{enumerate}
\item[\textup{(1)}] $X$ is finite and  $Y$ is neither Baire nor meager;
\item[\textup{(2)}] $X$ is infinite compact, $X'^\circ=\emptyset$, $\inf Y\in Y$ and $Y$ is neither Baire nor meager;
\item[\textup{(3)}] $X$ is compact, $X'^\circ\ne\emptyset$ and $\inf Y\in \dot Y$;
\item[\textup{(4)}] $X$ is not compact, $X$ has $\DMOP$, $\inf Y\in \dot Y$, $\overline{X'^\circ}$ is compact and not empty,  and $Y$ is Baire.
\end{enumerate}
\end{theorem}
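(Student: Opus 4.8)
The plan is to deduce the theorem from the $\infty$-meagerness criteria established earlier (recalling that $\infty$-meager implies meager) together with the Baire criteria of Section~\ref{s:B2}, organised as a case analysis on the position of $\inf Y$ relative to $Y$ and on the structure of the non-isolated part $X'$ of $X$. Throughout one may assume $Y$ has more than one point (otherwise $\dCF(X,Y)$ is a singleton and hence Baire), so that $X$ is Hausdorff and the results on $\dCF'(X,Y)$ apply.

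For the ``only if'' part I would start from the assumption that $\dCF(X,Y)$ is neither Baire nor meager and squeeze out all available constraints. Non-meagerness, via Lemmas~\ref{l:MYM}, \ref{l:MXM2}, \ref{l:MY0X2}, \ref{l:MX23}, \ref{l:MYNd} and \ref{l:MYN}, yields: $Y$ is non-meager; $X$ has $\DMOP$; if $\inf Y\notin Y$ then $X$ is discrete; if $X'^\circ\ne\emptyset$ then $\overline{X'^\circ}$ is compact and $\inf Y\in\dot Y$; and if $Y$ is not Baire then $\dot X$ is contained in a compact set (indeed $X$ has no infinite closed discrete set of isolated points). Non-Baireness then splits into cases. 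If $X$ is discrete, then $\dCF(X,Y)=Y^X$, which is Baire iff $Y$ is (Lemma~\ref{l:Bnon} or Proposition~\ref{p:Bm2}), so $Y$ is neither Baire nor meager, and $X$ must be finite (an infinite $\dot X=X$ would be closed and Lemma~\ref{l:MYNd} would force meagerness) --- case (1). If $X$ is non-discrete then $\inf Y\in Y$ (by the previous paragraph), so Proposition~\ref{p:Bm2} together with ``$X$ has $\DMOP$'' shows $Y$ is not Baire or $\dot X$ is not dense. If $\dot X$ is not dense, then $\overline{X'^\circ}$ is compact nonempty and $\inf Y\in\dot Y$; if $X$ is compact this is case (3), while if $X$ is not compact then $\dot X$ is not contained in a compact set (in a Hausdorff space $X'=\overline{X'^\circ}\cup(X'\cap\overline{\dot X})$, so $\dot X\subset K$ compact would force $X$ compact), whence $Y$ is Baire by Lemma~\ref{l:MYN}, i.e. case (4). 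If $\dot X$ is dense, then $Y$ is not Baire (hence neither Baire nor meager), so $\dot X\subset K$ compact and $X=\overline{\dot X}\subset K$ is compact, and infinite since non-discrete --- case (2).

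For the ``if'' part I would verify non-Baireness and non-meagerness separately in each configuration. Non-Baireness is quick: a nonempty open meager $U\subset Y$ yields the nonempty open meager set $U\times Y^{|X|-1}$ in $\dCF(X,Y)=Y^{|X|}$ (case (1)); in cases (3) and (4) the set $\dot X$ is not dense, so Proposition~\ref{p:Bm2} applies; and in case (2), $\dCF'(X,Y)\cong C_k'(X,Y)$ is a dense $G_\delta$ subspace (Lemmas~\ref{l:C'd}, \ref{l:Gdelta}, \ref{l:F=k}) that splits off a factor $Y$ by evaluation at an isolated point of $X$, hence is not Baire, and therefore neither is $\dCF(X,Y)$. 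Non-meagerness is the substantial part: I would use that a non-meager Polish+meager $Y\subset\IR$ contains a nonempty open almost-Polish subspace and, exploiting $\inf Y\in\dot Y$ in cases (3)--(4) (and $\inf Y\in Y$ in case (2)), exhibit an open almost-Polish $W\subset Y$ with $\inf Y\in W$; then $C_k'(X,W)$ is Baire by Theorem~\ref{t:BW}(2) (the relevant $X$ has $\DMOP$: it is compact in cases (2)--(3) and has $\DMOP$ by hypothesis in case (4)), and since $C_k'(X,W)$ is open in $C_k'(X,Y)=\dCF'(X,Y)$, which is dense in $\dCF(X,Y)$ when $X'^\circ=\emptyset$ (case (2)) and dense in the clopen set $[X'^\circ;\{\inf Y\}]=\lceil\overline{X'^\circ};\e\rceil$ when $\inf Y\in\dot Y$ (cases (3)--(4)), the space $\dCF(X,Y)$ contains a nonempty open non-meager subspace and hence is non-meager; in case (1) the open subspace $V^{|X|}$ of $Y^{|X|}$ is almost Polish, hence non-meager.

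The main obstacle I anticipate is the non-meagerness half of the ``if'' direction. The only clean handle on non-meagerness of $\dCF(X,Y)$ runs through $\dCF'(X,Y)=C_k'(X,Y)$ and Theorem~\ref{t:BW}(2), but $C_k'(X,Y)$ is dense in $\dCF(X,Y)$ only when $\dot X$ is dense; otherwise one must first split off the clopen piece $[X'^\circ;\{\inf Y\}]$, which is exactly why the hypothesis $\inf Y\in\dot Y$ enters cases (3) and (4). In case (2) the point $\inf Y$ need not be isolated, so the open almost-Polish subspace of $Y$ plugged into Theorem~\ref{t:BW}(2) has to be found as a neighbourhood of $\inf Y$ itself, which requires a short argument with the meager kernel of $Y$ and the Polish+meager decomposition; beyond that the work is the bookkeeping of matching each $\infty$-meagerness lemma to the correct combination of the properties $Y_0,Y_1,Y_2$ and $X_C,X_B$.
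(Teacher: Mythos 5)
Your overall architecture coincides with the paper's: the ``only if'' direction is squeezed out of Lemmas~\ref{l:MYM}, \ref{l:MXM2}, \ref{l:MY0X2}, \ref{l:MX23}, \ref{l:MYNd}, \ref{l:MYN} and Proposition~\ref{p:Bm2} exactly as in the paper (your split on whether $\dot X$ is dense, versus the paper's split on compactness of $X$, is only a reordering of the same case analysis), and the ``if'' direction verifies non-Baireness and non-meagerness separately in each of the four configurations using the same devices, namely the clopen set $[X'^\circ;\{\inf Y\}]$, the dense $G_\delta$ subspace $\dCF'(X,Y)=C_k'(X,Y)$, and Theorem~\ref{t:BW}(2). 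For case (3) the paper is more economical than your route through $C_k'(X,W)$: since $\inf Y\in\dot Y$ and $X$ is compact, the constant function $\inf Y$ is the unique element of the open set $\lceil X;\e\rceil$, i.e.\ an isolated point of $\dCF(X,Y)$, which gives non-meagerness in one line.

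The genuine gap sits exactly where you anticipated it: non-meagerness in case (2) when $\inf Y\in Y\setminus\dot Y$. Your plan is to ``exhibit an open almost-Polish $W\subset Y$ with $\inf Y\in W$'', but no such $W$ need exist. Take $Y=([0,1]\cap\mathbb{Q})\cup[2,3]$: this is Polish$+$meager and neither Baire nor meager, $\inf Y=0$ is not isolated, and every open neighbourhood of $0$ in $Y$ contains the non-empty open meager set $[0,\delta)\cap\mathbb{Q}$ for some $\delta>0$, hence is not Baire; so $\inf Y$ has no Baire (a fortiori no almost-Polish) open neighbourhood, and the ``short argument with the meager kernel'' you defer to cannot produce one. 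The paper does something different at this point: it picks a non-empty open Baire set $B_0\subset Y$ and works with the generally \emph{non-open} set $B=B_0\cup\{\inf Y\}$, showing $C_k'(X,B)$ is Baire and invoking the compactness of $X$ to claim that $C_k'(X,B)$ is open in $C_k'(X,Y)$. Be aware that this last openness claim is itself the delicate heart of the matter and not a formality: for $f\in C_k'(X,B)$ taking the value $\inf Y$ at infinitely many isolated points of $X$, a compact--open basic neighbourhood only confines those values to some open $V\ni\inf Y$ in $Y$, and $V$ may meet $Y\setminus B$. So whichever way you complete case (2), you must engage directly with this sub-case $Y_{N1}$ rather than reduce it to an open almost-Polish neighbourhood of $\inf Y$; the rest of your proposal matches the paper's proof.
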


\begin{proof} First we prove that each of the conditions (1)--(4) implies that the function space $\dCF(X,Y)$ is neither meager nor Baire.
\smallskip

1. Assume that $X$ is finite and $Y$ is neither Baire nor meager. It follows that the Fell hypograph topology on $\dCF(X,Y)$ coincides with the topology of Tychonoff product $Y^X$. Taking into account that $Y$ is neither meager nor Baire, we can find an non-empty open meager subspace $M\subset Y$ and a non-empty open Baire subspace $B\subset Y$. Then $M^X$ is an open meager subspace in $Y^X=\dCF(X,Y)$, which implies that the space $\dCF(X,Y)$ is not Baire. Since the Baire space $B$ is second-countable its power $B^X$ is Baire according to \cite{Oxtoby}.
Then the space $Y^X=\dCF(X,Y)$ contains the non-empty open Baire subspace $B^X$ and hence is not meager.
\smallskip

2. Assume that the space $X$ is compact and infinite, $X'^\circ=\emptyset$, $\inf Y\in Y$ and $Y$ is neither Baire nor meager. The compactness of the space $X$ implies that $X$ has $\DMOP$. By Proposition~\ref{p:Bm2}, the space $\dCF(X,Y)$ is not Baire (otherwise $Y$ would be Baire). Since the space $Y$ is not meager, it contains a non-empty open Baire subspace $B\subset Y$. Replacing $B$ by $B\cup\{\inf Y\}$, we can assume that $\inf Y\in B$. The space $B$ is Polish+meager (being an open subspace of the Polish+meager space $Y$) and hence $B$ contains a dense Polish subspace (being Baire). By Theorem~\ref{t:BW}(2), the space $C_k'(X,B)$ is Baire. The compactness of the space $X$ ensures that $C_k'(X,B)$ is an open subspace of the space $C_k'(X,Y)$. Then the space $C_k'(X,Y)$ is not meager. By Lemma~\ref{l:F=k}, the space $C_k'(X,Y)$ is homeomorphic to $\dCF'(X,Y)$ and by Lemma~\ref{l:C'd}, the space $\dCF'(X,Y)$ is dense in the space $\dCF(X,Y)$. Then the space $\dCF(X,Y)$ is not meager since it contains a dense non-meager subspace $\dCF'(X,Y)$.
\smallskip

3. Assume that $X$ is compact, $X'^\circ\ne\emptyset$, and $\inf Y\in \dot Y$. Find a real number $\e$ such that $\{\inf Y\}=\{y\in Y:y<\e\}$ and  observe that the constant function $c:X\to\{\inf Y\}\subset Y$ is a unique point of the basic open set $\lceil X;\e\rceil$. This implies that the function space $\dCF(X,Y)$ is not meager. To see that it is not Baire, observe that the basic open set $\lceil X'^\circ,\inf Y\rfloor$ is meager (according to Lemma~\ref{l:om}).
\smallskip

4. Assume that the space $X$ is not compact, $X$ has $\DMOP$,  $\inf Y\in \dot Y$, $\overline{X'^\circ}$ is compact and not empty,  and $Y$ is Baire. The space $Y$, being Baire and Polish+meager, contains a dense Polish subspace. By Theorem~\ref{t:BW}(2) and Lemma~\ref{l:F=k}, the space $C_k'(X,Y)=\dCF'(X,Y)$  is Baire. Since $\overline{X'^\circ}$ is compact and $\inf Y\in\dot Y$, the set $$[X'^\circ,\{\inf Y\}]=[\overline{X'^\circ};\{\inf Y\}]=\dCF(X,Y)\setminus\lceil X'^\circ,\inf Y\rfloor$$ is clopen in $X$. By Lemma~\ref{l:C'd}, the Baire space $\dCF'(X,Y)$ is dense in $[{X'^\circ};\{\inf Y\}]$, which implies that the clopen subspace  $[{X'^\circ};\{\inf Y\}]$ of $\dCF(X,Y)$ is Baire and hence $\dCF(X,Y)$ is not meager. On the other hand, Lemma~\ref{l:om} ensures that its complement $\dCF(X,Y)\setminus [{X'^\circ};\{\inf Y\}]=\lceil X'^\circ,\inf Y\rfloor$ is a meager open set in $\dCF(X,Y)$, which implies that $\dCF(X,Y)$ is not Baire.
\smallskip

Now assuming that function space $\dCF(X,Y)$ is neither Baire or meager, we shall prove that one of the conditions (1)--(4) is satisfied. By Lemmas~\ref{l:MYM} and \ref{l:MXM2}, the space $Y$ is not meager and the space $X$ has $\DMOP$.

First assume that $X$ is discrete. In this case the function space $\dCF(X,Y)$ can be identified with the power $Y^X$ of $Y$. By Lemma~\ref{l:BX0}, the space $Y$ is not Baire (as $\dCF(X,Y)=Y^X$ is not Baire). By Lemma~\ref{l:MYNd}, the space $X$ is finite. So the condition (1) holds.

So, assume that $X$ is not discrete. In this case Lemma~\ref{l:MY0X2} implies that $\inf Y\in Y$. Now consider two subcases.

First we assume that $X$ is compact. If $X'^\circ=\emptyset$, then Proposition~\ref{p:Bm2} implies that Polish+meager space space $Y$ is not Baire and hence the condition (2) holds.
If $X'^\circ\ne\emptyset$, then Lemma~\ref{l:MX23} implies that $\inf Y\in \dot Y$,  which yields the condition (3).

Next, assume that $X$ is not compact. If $\dot X$ is not contained in a compact subset of $X$, then $Y$ is Baire by Lemma~\ref{l:MYN}. Being Polish+meager, the Baire space $Y$ is almost Polish. By Lemma~\ref{l:B1}(1), the set $X'$ has non-empty interior in $X$. By Lemma~\ref{l:MX23}, $\inf Y\in\dot Y$ and $\overline{X'^\circ}$ is compact. This means that the condition (4) is satisfied.

Finally, assume that the set $\dot X$ is contained in a compact subset $K$ of $X$. Since $X$ is not compact, the set $X'$ has non-empty interior.  By Lemma~\ref{l:MX23}, $\inf Y\in\dot Y$ and $\overline{X'^\circ}$ is compact. Then the space $X=\overline{\dot X}\cup \overline{X'^\circ}$ is compact, which contradicts our assumption.
\end{proof}

\section{A dichotomy for analytic function spaces $\dCF(X,Y)$}\label{s:dych}

In this section we prove Theorem~\ref{t:dycho}, announced in the introduction.

\begin{theorem}\label{t:dycho2} Let $Y\subset \IR$ be a non-empty Polish subspace with $\inf Y\notin\dot Y$. If for a $Y$-separated topological space $X$ the function space $\dCF(X,Y)$  is analytic, then $\dCF(X,Y)$ is either $\infty$-meager or $\infty$-comeager.
\end{theorem}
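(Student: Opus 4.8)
The whole argument is a case split organised around the position of $\inf Y$ and the structure of $X$, feeding each case into one of the already–established tools: the dichotomy Theorem~\ref{t:dycha} for $C_k'(X,Y)$, the $\infty$-meagerness Lemmas (especially Lemma~\ref{l:MY0X2} and Lemma~\ref{l:MX23}), and the fact that a countable power of a Polish space is Polish. The crucial preliminary step, which unlocks everything, is this: if $\dCF(X,Y)$ is analytic it is a continuous image of a second‑countable space, hence has a countable network. If $Y=\{\inf Y\}$ then $\dCF(X,Y)$ is a singleton, hence Polish, hence $\infty$-comeager, and we are done; so assume $|Y|>1$. Then Lemma~\ref{l:P1}(2) applies (here $X$ is $Y$-separated), yielding that $X$ is $Y$-separable and $\dot X$ is at most countable. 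The $Y$-separability is exactly what we need to invoke Lemmas~\ref{l:Gdelta} and \ref{l:MX23} later.

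Next, observe that if $X$ is discrete then $X=\dot X$ is at most countable, the Fell hypograph topology on $C(X,Y)=Y^X$ coincides with the product topology, and $\dCF(X,Y)$ is a countable power of the Polish space $Y$, hence Polish and $\infty$-comeager. So from now on assume $X$ is not discrete, i.e. $X'\ne\emptyset$. Since $\inf Y\notin\dot Y$, either $\inf Y\notin Y$ or $\inf Y\in Y\setminus\dot Y$. If $\inf Y\notin Y$, then Lemma~\ref{l:MY0X2} immediately gives that $\dCF(X,Y)$ is $\infty$-meager. So suppose $\inf Y\in Y\setminus\dot Y$; in particular $Y\ne\{\inf Y\}$ and the hypothesis ``$\inf Y\notin\dot Y$'' of Lemma~\ref{l:MX23} is met. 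If $X'^\circ\ne\emptyset$, then, using $Y$-separability, fix a meager $\sigma$-compact $M\subset X$ with $X'=\overline{M}^Y$; then $\emptyset\ne X'^\circ\subset\overline{M}^Y$, so Lemma~\ref{l:MX23} gives that $\dCF(X,Y)$ is $\infty$-meager.

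It remains to treat the case $\inf Y\in Y\setminus\dot Y$, $X$ non-discrete, and $X'^\circ=\emptyset$. Then $X'$ is a non-empty closed nowhere dense set, $[{X'^\circ};\{\inf Y\}]=\dCF(X,Y)$, and Lemma~\ref{l:C'd} says the subspace $\dCF'(X,Y)$ is $\infty$-dense (in particular dense) in $\dCF(X,Y)$, while Lemma~\ref{l:Gdelta} (using $Y$-separability) says it is a $G_\delta$-set in $\dCF(X,Y)$. A $G_\delta$ subset of an analytic space is analytic, so by Lemma~\ref{l:F=k} the homeomorphic copy $C_k'(X,Y)$ is analytic, and Theorem~\ref{t:dycha} applies: $\dCF'(X,Y)\cong C_k'(X,Y)$ is either Polish or $\infty$-meager. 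In the first case $\dCF(X,Y)$ contains an $\infty$-dense Polish subspace and is $\infty$-comeager. In the second case, write $\dCF'(X,Y)=\bigcup_{n\in\w}F_n$ with $F_n$ closed and $\infty$-codense in $\dCF'(X,Y)$; as in the proof of Lemma~\ref{l:MXM1}, the $\infty$-density of $\dCF'(X,Y)$ in $\dCF(X,Y)$ forces the closures $\bar F_n$ of $F_n$ in $\dCF(X,Y)$ to remain $\infty$-codense in $\dCF(X,Y)$. Moreover, from the proof of Lemma~\ref{l:Gdelta} we have $\dCF'(X,Y)=\bigcap_{n\in\w}\lceil M_n;y_n\rceil$ with $M_n$ compact nowhere dense and $y_n\downarrow\inf Y$; by Lemma~\ref{l:d} (applied with $y=\inf Y\in Y$, $u=y_n>\inf Y$) each $\lceil M_n;y_n\rceil$ is $\infty$-dense, so each $\dCF(X,Y)\setminus\lceil M_n;y_n\rceil$ is closed and $\infty$-codense. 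Hence
$$\dCF(X,Y)=\bigcup_{n\in\w}\bar F_n\ \cup\ \bigcup_{n\in\w}\big(\dCF(X,Y)\setminus\lceil M_n;y_n\rceil\big)$$
is $\infty$-meager. This exhausts all cases.

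\textbf{Where the difficulty lies.} Nothing here is computationally heavy; the only genuinely delicate point is to make the case split airtight so that every invocation is legitimate — in particular, deducing $Y$-separability of $X$ from analyticity of $\dCF(X,Y)$ (via the countable network and Lemma~\ref{l:P1}(2)), since without it neither the $G_\delta$-property of $\dCF'(X,Y)$ nor Lemma~\ref{l:MX23} would be available, and then transporting $\infty$-meagerness of $\dCF'(X,Y)$ up to $\dCF(X,Y)$ across the dense $G_\delta$ inclusion by the closure argument of Lemma~\ref{l:MXM1}.
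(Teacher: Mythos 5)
Your proof is correct and follows the same route as the paper's: reduce via the countable network and Lemma~\ref{l:P1}(2) to the $Y$-separable case, dispose of the singleton-$Y$, discrete-$X$, $\inf Y\notin Y$, and $X'^\circ\ne\emptyset$ cases exactly as the paper does, and settle the remaining case by applying the dichotomy of Theorem~\ref{t:dycha} to the $\infty$-dense analytic $G_\delta$-subspace $\dCF'(X,Y)=C_k'(X,Y)$. The only (immaterial) difference is in the final $\infty$-meager subcase, where you inline the covering argument of Lemma~\ref{l:MXM1} directly (closures $\bar F_n$ plus the complements of the sets $\lceil M_n;y_n\rceil$) instead of first deducing the failure of $\DMOP$ from Theorem~\ref{t:BW}(2) and then citing that lemma.
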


\begin{proof} If $Y$ is a singleton, then the function space $\dCF(X,Y)$ is a singleton. In this case $\dCF(X,Y)$ is Polish and hence $\infty$-comeager.

So, we assume that the set $Y$ contains more than two points. Being analytic, the function space $\dCF(X,Y)$ has a countable network. By Lemma~\ref{l:P1}(2), the space $X$ is $Y$-separable and the set $\dot X$ is at most countable. If the space $X$ is discrete, then $X=\dot X$ is at most countable and $\dCF(X,Y)=Y^X$ is Polish and hence $\infty$-comeager.

So, we assume that $X$ is not discrete. If $\inf Y\notin Y$, then the function space $\dCF(X,Y)$ is $\infty$-meager by Lemma~\ref{l:MY0X2}. So, we assume that $\inf Y\in Y$. If the set $X'$ has non-empty interior in $X$, then the space $\dCF(X,Y)$ is $\infty$-meager by Lemma~\ref{l:MX23} (since $\inf Y\notin\dot Y$).

So, we assume that the set $X'$ is nowhere dense in $X$. By Lemmas~\ref{l:C'd} and \ref{l:Gdelta}, the subset $\dCF'(X,Y)$ is an $\infty$-dense $G_\delta$-set in $\dCF(X,Y)$.  Being a $G_\delta$-subset of the analytic space $\dCF(X,Y)$, the space $\dCF'(X,Y)$ is analytic.  By Lemma~\ref{l:F=k}, the space $\dCF'(X,Y)$ can be identified with function space $C_k'(X,Y)$. So, the space $C_k'(X,Y)$ is analytic and by Theorem~\ref{t:dycha},  $C_k'(X,Y)$ is either Polish or $\infty$-meager. If $C'_k(X,Y)$ is Polish, then the space $\dCF(X,Y)$ contains the $\infty$-dense Polish subspace $\dCF'(X,Y)$ and hence is $\infty$-comeager.

If $C_k'(X,Y)$ is $\infty$-meager, then by Theorem~\ref{t:BW}(2), the space $X$ does not have $\DMOP$.  Applying Lemma~\ref{l:MXM1}, we conclude that the space $\dCF(X,Y)$ is $\infty$-meager.
\end{proof}

In fact, if $\inf Y\in Y$, then the analyticity of the space $\dCF(X,Y)$ in Theorem~\ref{t:dycho2} can be replaced by the analyticity of the space $C_k'(X,Y)$.

\begin{proposition}\label{p:dycho1} Let $Y\subset \IR$ be a non-empty Polish subspace with $\inf Y\in Y\setminus\dot Y$, and $X$ be a $Y$-separable $T_1$-space with dense set $\dot X$ of isolated points. If the function space  $C_k'(X,Y)$ is analytic, then $\dCF(X,Y)$ is either $\infty$-meager or $\infty$-comeager.
\end{proposition}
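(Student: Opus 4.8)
The plan is to mimic the argument used for Theorem~\ref{t:dycho2} in the case $\inf Y\in Y$, but feeding in the hypothesis on $C_k'(X,Y)$ directly instead of deriving it from analyticity of $\dCF(X,Y)$. First I would record two elementary reductions. Since $\dot X$ is dense in $X$, the closed set $X'$ has empty interior, so $X'^\circ=\emptyset$ and hence $[X'^\circ;\{\inf Y\}]=\dCF(X,Y)$. Since $\inf Y\notin\dot Y$, the point $\inf Y$ is not isolated in $Y$, so $Y$ contains more than one point and $Y\ne\{\inf Y\}$; being Polish, $Y$ is in particular almost Polish.

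Next I would invoke Lemma~\ref{l:C'd} to conclude that $\dCF'(X,Y)$ is $\infty$-dense in $[X'^\circ;\{\inf Y\}]=\dCF(X,Y)$; Lemma~\ref{l:Gdelta} (using the $Y$-separability of $X$) to conclude that $\dCF'(X,Y)$ is a $G_\delta$-set in $\dCF(X,Y)$; and Lemma~\ref{l:F=k} to identify $\dCF'(X,Y)$ topologically with the function space $C_k'(X,Y)$, which is analytic by assumption. Since $Y$ is Polish with $\inf Y\in Y$, Theorem~\ref{t:dycha} then applies and yields the dichotomy that $C_k'(X,Y)$ is either Polish or $\infty$-meager.

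If $C_k'(X,Y)$ is Polish, then $\dCF'(X,Y)$ is an $\infty$-dense Polish subspace of $\dCF(X,Y)$, so $\dCF(X,Y)$ is $\infty$-comeager by definition. If $C_k'(X,Y)$ is $\infty$-meager, then it is in particular not Baire; since $Y$ is almost Polish, the contrapositive of Theorem~\ref{t:BW}(2) forces $X$ to fail $\DMOP$. Then Lemma~\ref{l:MXM1} applies (its hypotheses hold: $Y$ has more than one point and $X$ is a $Y$-separable $T_1$-space with dense set $\dot X$ of isolated points), giving that $\dCF(X,Y)$ is $\infty$-meager. This exhausts the two cases, so $\dCF(X,Y)$ is $\infty$-meager or $\infty$-comeager.

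This is essentially a routine assembly of previously established facts, so no step is a genuine obstacle. The only point requiring a little care is the reduction $[X'^\circ;\{\inf Y\}]=\dCF(X,Y)$: it is precisely here that the two hypotheses ``$\dot X$ is dense in $X$'' and ``$\inf Y\notin\dot Y$'' are used together, so that Lemma~\ref{l:C'd} delivers $\infty$-density of $\dCF'(X,Y)$ in the whole space $\dCF(X,Y)$ rather than in a proper clopen piece of it, which is what makes the $\infty$-comeager conclusion in the first case possible.
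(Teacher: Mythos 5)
Your proof is correct and follows essentially the same route as the paper's: identify $\dCF'(X,Y)=C_k'(X,Y)$ as an $\infty$-dense $G_\delta$-set via Lemmas~\ref{l:C'd}, \ref{l:Gdelta}, \ref{l:F=k}, apply the dichotomy of Theorem~\ref{t:dycha}, and finish the two cases with the definition of $\infty$-comeager and with Theorem~\ref{t:BW}(2) plus Lemma~\ref{l:MXM1}. Your explicit remark that density of $\dot X$ forces $X'^\circ=\emptyset$, so that Lemma~\ref{l:C'd} gives $\infty$-density in all of $\dCF(X,Y)$, is exactly the point the paper leaves implicit.
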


\begin{proof}  By Lemmas~\ref{l:C'd}, \ref{l:Gdelta} and \ref{l:F=k}, the space $\dCF'(X,Y)=C_k'(X,Y)$ is an $\infty$-dense $G_\delta$-set in $\dCF(X,Y)$.
By Theorem~\ref{t:dycha}, the analytic space $C_k'(X,Y)$ is either Polish or $\infty$-meager. If $C_k'(X,Y)$ is Polish, then the space $\dCF(X,Y)$ contains the $\infty$-dense Polish subspace $\dCF'(X,Y)$ and hence is $\infty$-comeager.

If $C_k'(X,Y)$ is $\infty$-meager, then by Theorem~\ref{t:BW}(2), the space $X$ does not have $\DMOP$.  Applying Lemma~\ref{l:MXM1}, we conclude that the space $\dCF(X,Y)$ is $\infty$-meager.
\end{proof}

\begin{proposition}\label{p:dycho2} Let $Y\subset \IR$ be a non-empty Polish subspace with $\inf Y\in Y\setminus\dot Y$, and $X$ be a $Y$-separable $Y$-separated space. If the function space  $C_k'(X,Y)$ is analytic, then $\dCF(X,Y)$ is either $\infty$-meager or $\infty$-comeager.
\end{proposition}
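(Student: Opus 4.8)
The plan is to reduce Proposition~\ref{p:dycho2} to the already established Proposition~\ref{p:dycho1} by a dichotomy on the interior $X'^\circ$ of the set $X'$ of non-isolated points. As a preliminary remark I would note that the hypothesis $\inf Y\in Y\setminus\dot Y$ forces $Y$ to contain more than one point (otherwise its unique point would be isolated and would coincide with $\inf Y$). Consequently, since $X$ is $Y$-separated, any two distinct points of $X$ are separated by a continuous function into $Y$, hence $X$ is Hausdorff and in particular a $T_1$-space; this is the separation assumption needed to quote the earlier results.

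First I would dispose of the case $X'^\circ\ne\emptyset$. Using that $X$ is $Y$-separable, fix a meager $\sigma$-compact set $M\subset X$ with $X'=\overline{M}^Y$; then $\emptyset\ne X'^\circ\subset X'=\overline{M}^Y$. Since $\inf Y\notin\dot Y$, Lemma~\ref{l:MX23} applies verbatim and yields that $\dCF(X,Y)$ is $\infty$-meager, so the asserted dichotomy already holds in this branch. (In fact the analyticity of $C_k'(X,Y)$ is not even used here.)

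In the complementary case $X'^\circ=\emptyset$, the closed set $X'$ is nowhere dense, i.e.\ the set $\dot X$ of isolated points is dense in $X$. Then $X$ is a $Y$-separable $T_1$-space with dense set $\dot X$ of isolated points, $Y\subset\IR$ is a non-empty Polish subspace with $\inf Y\in Y\setminus\dot Y$, and $C_k'(X,Y)$ is analytic by hypothesis --- precisely the hypotheses of Proposition~\ref{p:dycho1}. Invoking it, $\dCF(X,Y)$ is either $\infty$-meager or $\infty$-comeager, and combining the two cases completes the proof.

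I do not expect a genuine obstacle here: the whole argument is a short case split whose content is already carried by Lemma~\ref{l:MX23} and Proposition~\ref{p:dycho1}. The only point requiring a little care --- and the only place where the weaker hypothesis of $Y$-separatedness must be made to stand in for the ``$T_1$ plus dense $\dot X$'' hypothesis of Proposition~\ref{p:dycho1} --- is the reduction itself: one checks that $Y$-separated spaces are automatically $T_1$, and that the sole way the ``dense $\dot X$'' hypothesis can fail, namely $X'^\circ\ne\emptyset$, is exactly the situation handled by Lemma~\ref{l:MX23} because $\inf Y\notin\dot Y$.
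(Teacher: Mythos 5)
Your proposal is correct and follows essentially the same route as the paper's own proof: a case split on whether $\dot X$ is dense in $X$ (equivalently, whether $X'^\circ=\emptyset$), invoking Proposition~\ref{p:dycho1} in the dense case and Lemma~\ref{l:MX23} (via the $Y$-separability witness $M$ with $X'^\circ\subset\overline{M}^Y$ and the hypothesis $\inf Y\notin\dot Y$) in the other. Your additional checks --- that $Y$ must contain more than one point and that $Y$-separatedness gives the $T_1$ property needed to quote Proposition~\ref{p:dycho1} --- are correct and only make explicit what the paper leaves implicit.
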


\begin{proof} Assume that the function space $C_k'(X,Y)$ is analytic.
If the set $\dot X$ is dense in $X$, then by  Proposition~\ref{p:dycho1}, the space $\dCF(X,Y)$ is either $\infty$-meager or $\infty$-comeager.

So, assume that $\dot X$ is not dense in $X$ and hence the set $X'^{\circ}$ is not empty. Since $\inf Y\notin\dot Y$, Lemma~\ref{l:MX23} implies that the space $\dCF(X,Y)$ is $\infty$-meager.
\end{proof}

\section{References to proofs of the statements in Table~1}\label{s:table}

In this section we provide references to lemmas that prove the statements in $8\times 7$ cells of Table 1.

\begin{center}
\begin{tabular}{|c||c|c|c|c|c|c| c|}
\hline
 & $Y_M$& $Y_{N0}$ & $Y_{N1}$ & $Y_{N2}$ & $Y_{B0}$ & $Y_{B1}$ & $Y_{B2}$\\
 \hline
 \hline
$X_{C0}$ & M \ref{l:MYM}  &\multicolumn{3}{ |c| }{N \ref{t:N}} & \multicolumn{3}{ |c| }{B \ref{l:BX0}}\\
 \hline
$X_{C1}$ &M \ref{l:MYM}   & M \ref{l:MY0X1} &\multicolumn{2}{ |c| }{N \ref{t:N}} & M \ref{l:MY0X1} & \multicolumn{2}{ |c| }{B \ref{p:Bm2}}\\
 \hline
$X_{C2}$ &M \ref{l:MYM}   & M \ref{l:MY0X2} & M \ref{l:MX23} & N \ref{t:N} & M \ref{l:MY0X2} & M \ref{l:MX23} & N \ref{t:N}\\
 \hline
$X_{B0}$ & M \ref{l:MYM} & \multicolumn{3}{ |c| }{ M \ref{l:MYN}} &\multicolumn{3}{ |c| }{B \ref{l:BX0}}\\
 \hline
$X_{B1}$ & M \ref{l:MYM} & M \ref{l:MY0X1} &\multicolumn{2}{ |c| }{ M \ref{l:MYN}} & M \ref{l:MY0X1} &\multicolumn{2}{ |c| }{B \ref{p:Bm2}}\\
 \hline
$X_{B2}$ & M \ref{l:MYM} & M \ref{l:MY0X2} & M  \ref{l:MX23} & M \ref{l:MYN} & M \ref{l:MY0X2} & M \ref{l:MX23} & N \ref{t:N}\\
 \hline
$X_{M}$ & M \ref{l:MYM} & \multicolumn{6}{ |c| }{M \ref{l:MXM2}} \\
\hline
$X_{3}$ & M \ref{l:MYM} & M \ref{l:MY0X2} &  \multicolumn{2}{ |c| }{M \ref{l:MX23}}& M \ref{l:MY0X2} & \multicolumn{2}{ |c| }{M \ref{l:MX23}}\\
\hline
 \end{tabular}
 \end{center}
 \bigskip

The equivalence of the meagerness and $\infty$-meagerness (claimed in Theorem~\ref{t:eq}) follows from the facts that in Lemmas~\ref{l:MY0X2}, \ref{l:MY0X1},  \ref{l:MX23},  \ref{l:MXM2}, \ref{l:MYM}, \ref{l:MYN} we establish the $\infty$-meagerness of the spaces $\dCF(X,Y)$ and the cells in the above table exhaust all possible cases of the interplay between the properties of the spaces $X$ and $Y$.

\end{document}